\newtheorem{theorem}{Theorem}
\newtheorem{definition}{Definition}
\newtheorem{example}{Example}
\newtheorem{lemma}{Lemma}
\newtheorem{corollary}{Corollary}
\newtheorem{remark}{Remark}
\newtheorem{conjecture}{Conjecture}
\newtheorem{proposition}{Proposition}
\newtheorem{problem}{Problem}
\newcommand{\be}{\begin{enumerate}}
\newcommand{\ee}{\end{enumerate}}
\newcommand{\beq}{\begin{equation}}
\newcommand{\eeq}{\end{equation}}
\newcommand{\ov}[1]{\mbox{${\overline{#1}}$}} 
\def\FZt{{F^{\mathbb{Z}[t]}}}
\def\Zt{{\mathbb{Z}[t]}}
\def\N{{\mathbb{N}}}
\def\Z{{\mathbb{Z}}}
\def\R{{\mathbb{R}}}
\def\bF{{\mathbb F}}
\DeclareMathOperator{\PGL}{PGL}
\newcommand{\tr}{{\rm tr}} 
\title{Beyond Serre's ``Trees"  in two directions:  $\Lambda$--trees and products of trees}
\author{Olga Kharlampovich\thanks{Hunter College and Grad. Center CUNY}, Alina Vdovina\thanks{Newcastle University and Hunter College CUNY}}
\begin{document}

\maketitle
\begin{abstract}
Serre \cite{Serre} laid down the fundamentals of the theory of
groups acting  on simplicial trees.  In particular, Bass-Serre theory makes it possible to
extract information about the structure of a group from its action on a simplicial tree. Serre's original motivation was to understand the structure of certain algebraic groups whose Bruhat--Tits buildings are trees.  In this survey we will discuss the following generalizations of  ideas from \cite{Serre}: the theory of isometric group actions on $\Lambda$-trees and the theory of lattices in the product of trees where we describe in more detail results on arithmetic groups acting on a product of trees.  
\end{abstract}

\section{Introduction}
Serre \cite{Serre} laid down the fundamentals of the theory of
groups acting  on simplicial trees. The book \cite{Serre} consists of two parts. The first part describes the basics of what is now called Bass-Serre theory. This theory makes it possible to 
extract information about the structure of a group from its action on a simplicial tree. Serre's original motivation was to understand the structure of certain algebraic groups whose Bruhat--Tits buildings are trees. These groups are considered in the second part of the book.  

Bass-Serre theory states that a  group acting on a tree can be decomposed (splits) as a  free product with amalgamation or an HNN extension. Such a group can be represented as a fundamental group of a graph of groups. This  became a wonderful tool in geometric group theory and geometric topology, in particular in the study of 3-manifolds.  The theory was further developing in the following directions: 
\begin{enumerate} 
\item Various accessibility results were proved for finitely presented groups that bound the complexity (that is, the number of edges) in a graph of groups decomposition of a finitely presented group, where some algebraic or geometric restrictions on the types of groups  were imposed  \cite{D,BF2,SA,Del,Weid}. \item The theory of JSJ-decompositions for finitely presented groups was developed \cite{Bowditch:98,RipsSels:97,DunwoodySageev,ScottSwarup,FujiwaraPapasoglu}.   \item The theory of lattices in automorphism groups of trees. The group of automorphisms of a locally  finite tree $Aut (T)$ (equipped with the compact open topology, where open neighborhoods of $f\in Aut (T)$ consist of all automorphisms that agree with $f$ on a fixed finite subtree) is a locally 
compact group which  behaves  similarly to a rank one simple Lie group.  This analogy has motivated many recent works  in particular the study of lattices in $Aut(T)$ by Bass,  Kulkarni,  Lubotzky \cite{BK}, \cite{Lu1} and others.  A survey of results about tree lattices and methods is given in \cite{BL}  as well as  proofs of many results.
\end{enumerate}
 In this survey we will discuss the following generalizations of  Bass-Serre theory and other Serre's ideas from \cite{Serre}:
\begin{enumerate}\item The theory of isometric group actions on real trees (or $\mathbb R$-trees) which are metric spaces generalizing the graph-theoretic notion of a tree. This topic will be only discussed briefly, we refer the reader to the survey \cite{Best}. \item  The theory of isometric group actions on
$\Lambda$-trees, see Sections \ref{sec:lambda}, \ref{S3}. Alperin and Bass \cite{AB} developed the initial framework and  stated the fundamental research goals: find the group theoretic information
carried by an action (by isometries) on a $\Lambda$-tree; generalize Bass-Serre theory to actions
on arbitrary $\Lambda$-trees. From the viewpoint of Bass-Serre theory, the question of free actions of finitely generated groups became very important.  There is a book \cite{Ch1} on the subject and many new results were obtained in \cite{KMS12}.  This is a topic of interest of the first author. 
 \item The theory of complexes of groups   provides a higher-dimensional generalization of Bass--Serre theory.  The methods developed for the study  of lattices in $Aut(T)$  were extended to the study of (irreducible) lattices in a product of two trees, as a first step toward generalizing  the theory of lattices in semisimple non-archimedean Lie  groups. Irreducible lattices in higher rank semisimple Lie  groups have a very
rich structure theory and there are superrigidity and arithmeticity theorems by Margulis.
The results of Burger, Moses, Zimmer  \cite{burger-mozes:simple, burger-mozes:lattices, BMZ} about cocompact lattices in 
 the group of automorphisms of a product of trees  or rather in groups of the form $Aut (T_1)\times\  Aut (T_2)$,  where each of the trees is regular, are described in \cite{mozes:survey}.  The results  obtained  concerning the structure of lattices in $Aut(T_1)\times\    Aut(T_2)$  enable them to construct the  first examples of  finitely presented torsion free simple groups.  We will  mention further results on simple and non-residually finite groups  \cite{wise1,rattaggi:simple, rattaggi:thesis} and describe in more detail results on arithmetic groups acting on the product of trees \cite{glasner-mozes,stix-av}. This is a topic of interest of the second author.
\end{enumerate}

 In \cite{Lyndon2} Lyndon introduced real-valued length functions as a tool to
extend Nielsen cancelation theory from free groups over to a more general setting.   Some results in this direction were obtained in \cite{Hoare1,Hoare2,Harrison,Prom, AM}.
The term ${\mathbb R}$-tree was coined by Morgan and Shalen
\cite{MS} in 1984 to describe a type of space that was first defined by Tits \cite{Tits:1977}. In \cite{Chiswell:1976}  Chiswell described a construction which shows that a group with a real-valued
length function has an action on an $\mathbb{R}$-tree, and vice versa. Morgan and Shalen
realized that a similar construction and results hold for an arbitrary group with a Lyndon length function which
takes values in an arbitrary ordered abelian group $\Lambda$ (see \cite{MS}). In particular, they
introduced $\Lambda$-trees as a natural generalization of $\mathbb{R}$-trees which they studied in
relation with Thurston's Geometrization Program. Thus, actions on $\Lambda$-trees and Lyndon length
functions with values in $\Lambda$ are two equivalent languages describing the same class of groups.
In the case when the action is free (the stabilizer of every point is trivial) we call groups in this
class $\Lambda$-free or tree-free. We refer to the book \cite{Ch1} for a detailed discussion on the subject.

A joint effort of several researchers culminated in a description of finitely generated groups acting
freely on $\mathbb{R}$-trees \cite{BF,GLP}, which is now known as Rips' theorem: a finitely
generated group acts freely on an $\mathbb{R}$-tree if and only if it is a free product of free
abelian groups and surface groups (with an exception of non-orientable surfaces of genus $1, 2$, and
$3$). The key ingredient of this theory is the so-called ``Rips machine'', the idea of which comes
from Makanin's algorithm for solving equations in free groups (see \cite{Mak82}). The Rips machine
appears in applications as a general tool that takes a sequence of isometric actions of a group $G$
on some ``negatively curved spaces'' and produces an isometric action of $G$ on an $\mathbb{R}$-tree
as the Gromov-Hausdorff limit of the sequence of spaces. Free actions on $\mathbb{R}$-trees cover
all Archimedean actions, since every group acting freely on a $\Lambda$-tree for an Archimedean ordered
abelian group $\Lambda$ also acts freely on an $\mathbb{R}$-tree.

In the non-Archimedean case the following results were obtained.
First of all, in \cite{B} Bass studied finitely generated groups acting freely on $\Lambda_0 \oplus
\mathbb{Z}$-trees with respect to the right lexicographic order on $\Lambda_0 \oplus \mathbb{Z}$, where $\Lambda _0$ is any ordered abelian group. In
this case it was shown that the group acting freely on a $\Lambda_0 \oplus \mathbb{Z}$-tree splits
into a graph of groups with $\Lambda_0$-free vertex groups and maximal abelian edge groups. Next,
Guirardel (see \cite{G}) obtained the structure of finitely generated groups acting freely on
$\mathbb{R}^n$-trees (with the lexicographic order). In \cite{KMRS1} the authors described the
class of finitely generated groups acting freely and regularly on $\mathbb{Z}^n$-trees in terms of
HNN-extensions of a very particular type. The action is {\em regular} if all branch points are in the same orbit. The importance of regular actions becomes clear from the
results of \cite{KMS2}, where it was proved that a finitely generated group acting freely on a
$\mathbb{Z}^n$-tree is a subgroup of a finitely generated group acting freely and regularly on a
$\mathbb{Z}^m$-tree for $m \geqslant n$, and the paper \cite{Ch2}, where it was shown that a group
acting freely on a $\Lambda$-tree (for arbitrary $\Lambda$) can always be embedded in a
length-preserving way into a group acting freely and regularly on a $\Lambda$-tree (for the same $\Lambda$).
The structure of finitely presented $\Lambda$-free groups was described in  \cite{KMS12}.  They  all are 
$\mathbb{R}^n$-free.

Another natural generalization of Bass-Serre theory is considering group actions
on products of trees started in \cite{burger-mozes:simple}.  The structure of a group acting freely and cocompactly on a simplicial tree is well
understood. Such a group is a finitely generated free group. By way of contrast,
a group which acts  similarly  on a product of trees can have remarkably
subtle properties. 

Returning to the case of one tree, recall that there is a close relation between certain simple Lie groups and groups of tree automorphisms.  The theory of tree lattices was developed  in \cite{BK}, \cite{Lu1} by analogy with the theory of lattices in Lie groups (that is discrete subgroups of Lie groups of finite co-volume). Let $G$ be a simple algebraic group of rank one over a non-archimedean local  field $K$.   Considering the action of $G$ on its associated Bruhat--Tits tree $T$   we have a continuous embedding of $G$ in $Aut (T)$ with co-compact image.  In  \cite{Tit}   Tits has shown that if $T$ is a locally  finite tree and its automorphism group $Aut (T)$ acts minimally  (i.e.  without an invariant proper subtree and not  fixing an end)  on it,  then the subgroup  generated by edge stabilizers is a simple group.  In particular the automorphism group of a regular tree is virtually simple.  These results motivated the study of $Aut (T)$ looking at the analogy with rank one Lie groups. 

When $T$ is a locally finite tree,  $G=Aut (T)$ is locally compact.  The vertex stabilizers $G_v$ are open and compact. A subgroup $\Gamma\leq G$ is {\em discrete} if $\Gamma _v$ is finite for some (and hence for every) vertex $v\in VT$, where $VT$ is the set of vertices of $T.$ In this case we can define  $$Vol (\Gamma \backslash T)=\sum _{v\in \Gamma \backslash VT}1/|\Gamma _v|.$$ We call $\Gamma$ a $T$-lattice if $Vol (\Gamma \backslash T)<\infty$. We call $\Gamma$ a {\em uniform $T$-lattice} if $\Gamma\backslash T$ is finite.  In case $G\backslash T$ is finite, this is equivalent to $\Gamma$ being a lattice (resp., uniform lattice) in $G$. Uniform tree lattices   correspond to  finite graphs of groups in which all vertex and edge groups are finite. 
In the study of lattices in semisimple Lie groups an important role is playied by their commensurators. Margulis has shown that an irreducible lattice $\Gamma<G$ in a semisimple Lie group is arithmetic if and only if its commensurator is dense in $G$. It was shown by Liu \cite{Liu} that the commensurator of a uniform tree lattice is dense.
All uniform tree lattices of a given tree are commensurable up to conjugation, the isomorphism class of the commensurator of a uniform tree lattice is determined by the tree.  It was also shown \cite{LMZ} that for regular trees the commensurator determines the tree.

Consider now products of trees.  The following results about  lattices in semisimple Lie groups were established by Margulis: 
an irreducible lattice in a higher rank  ($\geq 2$)  semisimple Lie group is arithmetic; 
any linear representation of such a lattice with unbounded image essentially extends to a continuous representation of the ambient Lie group. 
Recall that a lattice     $G$ in a semisimple Lie group is called reducible if $\Gamma$ 
     contains a  finite index subgroup of the form    $\Gamma _1\times\Gamma _2$ where $\Gamma _i<G_i$ is a lattice and $G=G_1\times G_2$.    A reducible torsion free  group acting simply transitively on a product of two trees  is virtually a direct product of two finitely generated free groups, in particular   is residually finite. Burger, Moses, Zimmer \cite{burger-mozes:simple} were studying a structure for lattices of groups of the form $Aut (T_1)\times\  Aut (T_2)$. For example, Burger and Mozes have proved
rigidity and arithmeticity results analogous to the theorems of Margulis for lattices
in semisimple Lie groups. We will describe their results  about cocompact lattices in 
 groups of the form $Aut (T_1)\times\  Aut (T_2)$,  where each of the trees is regular.
We will discuss this and similar topics in Section \ref{sec:products}.

The smallest explicit example of a simple group, as an index 4 subgroup of a group presented by 10 generators and 24 short relations, was constructed by Rattaggi \cite{rattaggi:simple}. For comparison, the smallest virtually simple group of \cite{burger-mozes:simple}, Theorem 6.4, needs more than 18000 relations, and the smallest simple group constructed in \cite{burger-mozes:simple}, §6.5, needs even more than 360000 relations in any finite presentation.

If we restrict our attention to torsion free lattices that act simply transitively on the vertices of the product of trees (not interchanging the factors), then those lattices are fundamental groups of square complexes with just one vertex, complete bipartite link and a vertical/horizontal structure on edges (see \ref{sec:VHstructure} for precise definition). This is combinatorially well understood and there are plenty of such lattices, see \cite{stix-av} for a mass formula, but very rarely these lattices arise from an arithmetic context. 

Let $T_n$ denote the tree of constant valency $n$. 
For example, there are $541$ labelled candidate square complexes that after forgetting the label give rise to $43$ torsion free and vertex transitive lattices acting on $T_4 \times T_4$. Among those only one lattice is arithmetic. For lattices acting on $T_6 \times T_6$ the number of labelled square complexes is $\approx 27 \cdot 10^6$, but only few thousands have finite abelianization (a necessary condition) and only $2$ lattices are known to be arithmetic.

For different odd prime numbers $p \not= \ell$, Mozes  \cite{mozes}, 
 for $p$ and $\ell$ congruent to $1$ mod $4$, 
and later, for any two distinct odd primes, Rattaggi  \cite{rattaggi:thesis} found an arithmetic lattice acting on $T_{p+1} \times T_{\ell +1}$ with  
simply transitive action on the vertices. 

For products of trees of the same valency arithmetic lattices were constructed in \cite{stix-av}.
By means of a quaternion algebra over $\mathbb{F}_q(t)$, there is an explicit construction of
an infinite  series of torsion free, simply transitive, irreducible lattices in $\PGL_2(\mathbb{F}_q((t))) \times \PGL_2(\mathbb{F}_q((t)))$.  The lattices depend  on an odd prime power $q = p^r$ and a parameter $\tau \in \mathbb{F}_q^\times, \tau \not= 1$, and are the fundamental groups of a square complex with just one vertex and universal covering $T_{q+1} \times T_{q+1}$, a product of trees with constant valency $q+1$. For $p=2$ there are examples of arithmetic lattices acting
on products of trees of valency 3 in \cite{nithi}.

The last subsection of our survey is dedicated to lattices in products of $n \geq 2$ trees.

 We would like to thank the referee  for giving constructive comments which substantially helped improving the quality of the exposition.

\section{$\Lambda$-trees}
\label{sec:lambda}

The theory of $\Lambda$-trees (where $\Lambda = \mathbb{R}$) has its origins in the papers by
Chiswell \cite{Chiswell:1976} and Tits \cite{Tits:1977}. The first paper contains a construction
of an $\mathbb{R}$-tree starting from a Lyndon length function on a group (see Section
\ref{sec:length_func}), an idea considered earlier by Lyndon in \cite{Lyndon:1963}.

\smallskip

Later, in their very influential paper \cite{Morgan_Shalen:1991} Morgan and Shalen linked group
actions on $\mathbb{R}$-trees with topology and generalized parts of Thurston's Geometrization
Theorem. Next, they introduced $\Lambda$-trees for an arbitrary ordered abelian group $\Lambda$ and
the general form of Chiswell's construction. Thus, it became clear that abstract length functions with
values in $\Lambda$ and group actions on $\Lambda$-trees are just two equivalent approaches to the
same realm of group theory questions. The unified theory was further developed in the important paper by Alperin and
Bass \cite{AB}, where authors state a fundamental problem in the theory of group
actions on $\Lambda$-trees: find the group theoretic information carried by a $\Lambda$-tree action
(analogous to Bass-Serre theory), in particular, describe finitely generated groups acting freely
on $\Lambda$-trees ($\Lambda$-free groups).

\smallskip

Here we introduce basics of the theory of $\Lambda$-trees, which can be found in more detail in
\cite{AB},  \cite{Ch1} and \cite{KMS12}.

\subsection{Ordered abelian groups}
\label{subs:ord_ab}

In this section some well-known results on ordered abelian groups are collected. For proofs and
details we refer to the books \cite{Glass:1999} and \cite{Kopytov_Medvedev:1996}.

A set $A$ equipped with addition ``$+$'' and a partial order ``$\leqslant$'' is called a {\em
partially ordered} abelian group if:
\begin{enumerate}
\item[(1)] $\langle A, + \rangle$ is an abelian group,
\item[(2)] $\langle A, \leqslant \rangle$ is a partially ordered set,
\item[(3)] for all $a,b,c \in A,\ a \leqslant b$ implies $a + c \leqslant b + c$.
\end{enumerate}

An abelian group $A$ is called {\em orderable} if there exists a linear order ``$\leqslant$'' on $A$,
satisfying the condition (3) above. In general, the ordering on $A$ is not unique.

Let $A$ and $B$ be ordered abelian groups. Then the direct sum  $A \oplus B$ is orderable with
respect to the {\em right lexicographic order}, defined as follows:
$$(a_1,b_1) < (a_2,b_2) \Leftrightarrow b_1 < b_2 \ \mbox{or} \ (b_1 = b_2 \ \mbox{and} \ a_1 < a_2).$$

Similarly, one can define the  right lexicographic order on finite direct sums of ordered abelian
groups or even on infinite direct sums if the set of indices is linearly ordered.

For elements $a,b$ of an ordered group $A$  the {\em closed segment} $[a,b]$ is defined by
$$[a,b] = \{c \in A \mid a \leqslant c \leqslant b\}.$$

A  subset $C \subset A$ is called {\em convex}, if for every $a, b \in C$ the set $C$ contains
$[a,b]$. In particular, a subgroup $B$ of $A$ is convex if $[0,b] \subset B$ for every positive $b
\in B$. In this event, the quotient $A / B$ is an ordered abelian group with respect to the order
induced from $A$.

A group $A$ is called {\em archimedean} if it has no non-trivial proper convex subgroups. It is
known  that $A$ is archimedean if and only if $A$ can be embedded into the ordered abelian group
of real numbers $\mathbb{R}_+$, or equivalently, for any $0 < a \in A$ and any $b \in A$ there
exists an integer $n$ such that $na > b$.

It is not hard to see that the set of convex subgroups of an ordered abelian group $A$ is linearly
ordered by inclusion (see, for example, \cite{Glass:1999}), it is called  {\em the complete chain of
convex subgroups} in $A$. Notice that
$$E_n = \{f(t) \in \Zt \mid {\rm deg}(f(t)) \leqslant n\}$$
is a convex subgroup of $\Zt$ (here ${\rm deg}(f(t))$ is the degree of $f(t)$) and
$$0 < E_0 < E_1 < \cdots < E_n < \cdots$$
is the complete chain of convex subgroups of $\Zt$.

If $A$ is finitely generated  then the complete chain of convex subgroups of $A$
$$0 = A_0 < A_1 < \cdots < A_n = A$$
is finite. The following result (see, for example, \cite{Ch1}) shows that this chain completely
determines the order on $A$, as well as the structure of $A$. Namely, the groups  $A_i / A_{i-1}$
are archimedean (with respect to the induced order) and $A$ is isomorphic (as an ordered group) to
the direct sum
\begin{equation}
\label{eq:order-convex}
A_1 \oplus A_2 / A_1 \oplus \cdots \oplus A_n / A_{n-1}
\end{equation}
with the right lexicographic order.

An ordered abelian group $A$ is called {\em discretely ordered} if $A$ has a non-trivial minimal
positive element (we denote it by $1_A$). In this event, for any $a \in A$ the following hold:

\begin{enumerate}
\item[(1)] $a + 1_A = \min\{b \mid b > a\}$,
\item[(2)] $a - 1_A = \max\{b \mid b < a\}$.
\end{enumerate}

For example, $A = \mathbb{Z}^n$ with the right lexicographic order is discretely ordered with
$1_{\mathbb{Z}^n} = (1, 0, \ldots, 0)$. The additive group of integer polynomials $\Zt$
is discretely ordered with $1_{\Zt} = 1$.

Recall that an ordered abelian group $A$ is {\em hereditary discrete} if for any convex subgroup
$E \leqslant A$ the quotient $A / E$ is discrete with respect to the induced order.
A finitely generated discretely ordered archimedean abelian group is infinite cyclic. A finitely generated hereditary discrete ordered abelian group is isomorphic
to the direct product of finitely many copies of $\mathbb{Z}$ with the lexicographic order. \cite{MRS2}

\subsection{$\Lambda$-metric spaces}
\label{subs:lambda-def}

Let $X$ be a non-empty set, and  $\Lambda$ an ordered abelian group. A {\em $\Lambda$-metric on $X$} is
a mapping $d: X \times X \longrightarrow \Lambda$ such that for all $x, y, z \in X$:
\begin{enumerate}
\item[(M1)] $d(x,y) \geqslant 0$,
\item[(M2)] $d(x,y) = 0$ if and only if $x = y$,
\item[(M3)] $d(x,y) = d(y,x)$,
\item[(M4)] $d(x,y) \leqslant d(x,z) + d(y,z)$.
\end{enumerate}

So a {\em $\Lambda$-metric space} is a pair $(X,d)$, where $X$ is a non-empty set and $d$ is a
$\Lambda$-metric on $X$. If $(X,d)$ and $(X',d')$ are $\Lambda$-metric spaces, an {\em isometry}
from $(X,d)$ to $(X',d')$ is a mapping $f: X \rightarrow X'$ such that $d(x,y) = d'(f(x),f(y))$ for
all $x, y \in X$.

A {\em segment} in a $\Lambda$-metric space is the image of an isometry $\alpha: [a,b]_\Lambda
\rightarrow X$ for some $a, b \in \Lambda$ and $[a,b]_\Lambda$ is a segment in $\Lambda$. The
endpoints of the segment are $\alpha(a), \alpha(b)$.

We call a $\Lambda$-metric space $(X,d)$ {\em geodesic} if for all $x, y \in X$, there is a segment
in $X$ with endpoints $x, y$ and $(X,d)$ is {\em geodesically linear} if for all $x, y \in X$,
there is a unique segment in $X$ whose set of endpoints is $\{x, y\}$.

It is not hard to see, for example, that $(\Lambda, d)$ is a geodesically linear $\Lambda$-metric
space, where $d(a,b) = |a - b|$, and the segment with endpoints $a,b$ is $[a,b]_\Lambda$.

Let $(X,d)$ be a $\Lambda$-metric space. Choose a point $v \in X$, and for $x,y \in X$, define
$$(x \cdot y)_v = \frac{1}{2} (d(x,v) + d(y,v) - d(x,y)).$$

Observe, that in general $(x \cdot y)_v \in \frac{1}{2} \Lambda$.

The following simple result follows immediately

\begin{lemma} \cite{Ch1}
If $(X,d)$ is a $\Lambda$-metric space then the following are equivalent:
\begin{enumerate}
\item for some $v \in X$ and all $x, y \in X, (x \cdot y)_v \in \Lambda$,
\item for all $v, x, y \in X, (x \cdot y)_v \in \Lambda$.
\end{enumerate}
\end{lemma}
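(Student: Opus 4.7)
The implication (2) $\Rightarrow$ (1) is immediate, so the whole content is in the converse. The plan is to fix the basepoint $v$ guaranteed by (1), take an arbitrary $w \in X$, and express the Gromov product $(x \cdot y)_w$ as an $\Lambda$-linear combination of Gromov products based at $v$ together with honest $\Lambda$-valued distances.

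The key computation I would carry out is to compare $(x\cdot y)_w$ with $(x\cdot y)_v$ using only the definition. Subtracting the two defining expressions gives
\[
2(x\cdot y)_w - 2(x\cdot y)_v \;=\; d(x,w) + d(y,w) - d(x,v) - d(y,v),
\]
and then I would rewrite $d(x,w)$ and $d(y,w)$ by solving the definition of the Gromov product based at $v$ for the distance:
\[
d(a,w) \;=\; d(a,v) + d(w,v) - 2(a \cdot w)_v \qquad (a \in \{x,y\}).
\]
Substituting and cancelling $d(x,v)+d(y,v)$ on both sides yields, after dividing by $2$, the identity
\[
(x\cdot y)_w \;=\; (x\cdot y)_v + d(w,v) - (x\cdot w)_v - (y\cdot w)_v.
\]

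Once this identity is in hand the lemma is immediate: by hypothesis (1), each of $(x\cdot y)_v$, $(x\cdot w)_v$, $(y\cdot w)_v$ lies in $\Lambda$, and $d(w,v) \in \Lambda$ by the definition of a $\Lambda$-metric, so the right-hand side lies in $\Lambda$. Hence $(x\cdot y)_w \in \Lambda$ for arbitrary $w, x, y$, which is (2). There is no real obstacle here — the only subtlety is to notice that the factor of $\tfrac12$ in the definition of $(x\cdot y)_w$ is absorbed cleanly when one expresses differences of Gromov products in terms of other Gromov products, so no halving of elements of $\Lambda$ is ever needed.
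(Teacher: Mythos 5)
Your proof is correct: the identity $(x\cdot y)_w = (x\cdot y)_v + d(w,v) - (x\cdot w)_v - (y\cdot w)_v$ checks out (the computation takes place in $\tfrac12\Lambda$, which is torsion-free, so the division by $2$ is harmless), and it immediately gives $(1)\Rightarrow(2)$. The paper cites this lemma from Chiswell's book without reproducing a proof, and your argument is essentially the standard one given there.
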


Let $\delta \in \Lambda$ with $\delta \geqslant 0$. Then $(X,p)$ is {\em $\delta$-hyperbolic with
respect to $v$} if, for all $x, y, z \in X$,
$$(x \cdot y)_v \geqslant min \{(x \cdot z)_v, (z \cdot y)_v\} - \delta.$$

\begin{lemma}
\cite{Ch1}
If $(X, d)$ is $\delta$-hyperbolic with respect to $v$, and $t$ is any other point of $X$, then
$(X, d)$ is $2 \delta$-hyperbolic with respect to $t$.
\end{lemma}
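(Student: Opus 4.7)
The plan is to reduce the hyperbolicity inequality at $t$ to an inequality purely among Gromov products at $v$, and then verify the latter by applying the hypothesis to each of the four triples contained in $\{x,y,z,t\}$.

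First, from the defining identity $d(p,q) = d(p,v) + d(q,v) - 2(p \cdot q)_v$, a direct expansion yields the base-change formula
\[
(p \cdot q)_t = d(v,t) + (p \cdot q)_v - (p \cdot t)_v - (q \cdot t)_v
\]
for all $p, q \in X$. Applying this to each of $(x \cdot y)_t$, $(x \cdot z)_t$, $(y \cdot z)_t$ and abbreviating $a = (x \cdot y)_v$, $b = (x \cdot z)_v$, $c = (y \cdot z)_v$, $\alpha = (x \cdot t)_v$, $\beta = (y \cdot t)_v$, $\gamma = (z \cdot t)_v$, the common term $d(v,t)$ cancels, and, after adding $\alpha + \beta + \gamma$ to both sides, the desired inequality $(x \cdot y)_t \geq \min\{(x \cdot z)_t,\ (y \cdot z)_t\} - 2\delta$ becomes equivalent to
\[
a + \gamma \;\geq\; \min\{b + \beta,\ c + \alpha\} - 2\delta. \qquad (*)
\]
This is a four-point condition on opposite pairs of Gromov products at $v$.

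To prove $(*)$, I would invoke $\delta$-hyperbolicity at $v$ on each of the four triples $\{x,y,z\}$, $\{x,y,t\}$, $\{x,z,t\}$, $\{y,z,t\}$; these translate into the statement that in each of the triples $\{a,b,c\}$, $\{a,\alpha,\beta\}$, $\{b,\alpha,\gamma\}$, $\{c,\beta,\gamma\}$ the two smallest elements differ by at most $\delta$. By the symmetry $x \leftrightarrow y$ (which swaps $b \leftrightarrow c$ and $\alpha \leftrightarrow \beta$ while fixing $a$ and $\gamma$) one may assume the right-hand side of $(*)$ equals $b + \beta$, so that $(*)$ becomes $(a-b) + (\gamma - \beta) \geq -2\delta$. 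Arguing by contradiction, suppose both $b + \beta > a + \gamma + 2\delta$ and $c + \alpha > a + \gamma + 2\delta$; a case analysis on whether the excess in each inequality is concentrated in the difference $b$ versus $a$ (resp.\ $c$ versus $a$) or in $\beta$ versus $\gamma$ (resp.\ $\alpha$ versus $\gamma$) forces, via the hypothesis applied to the triple whose smallest element is driven to a particular value, a configuration that violates one of the remaining triple-wise hyperbolicity inequalities.

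The main obstacle is this case analysis: the six values $a,b,c,\alpha,\beta,\gamma$ admit many orderings, and one has to verify that no ordering is compatible simultaneously with the failure of $(*)$ and with the four inequalities coming from the hypothesis. The factor $2\delta$ in the conclusion (rather than $\delta$) is intrinsic: in the tightest case the bound on $(a+\gamma) - (b+\beta)$ is assembled from two separate triple inequalities, each contributing a $\delta$-error, reflecting the fact that the base-change formula pairs the product $(x\cdot y)_v$ with the auxiliary products $(x\cdot t)_v$ and $(y\cdot t)_v$ at two different pairs of points.
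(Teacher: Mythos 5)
The paper itself does not prove this lemma (it is quoted from Chiswell's book \cite{Ch1}), so your argument has to stand on its own. Your base-change formula $(p\cdot q)_t = d(v,t) + (p\cdot q)_v - (p\cdot t)_v - (q\cdot t)_v$ is correct, and so is the resulting reduction of the claim to the four-point inequality $(*)$: $a + \gamma \geq \min\{b+\beta,\ c+\alpha\} - 2\delta$ in your notation. This is exactly the standard route. The gap is that you never actually establish $(*)$: the paragraph beginning ``Arguing by contradiction'' only announces a case analysis, and your final paragraph concedes that carrying it out is ``the main obstacle.'' As written, the decisive step is an unverified claim, and the contradiction framing (checking all orderings of six quantities against five constraints) makes the verification look far harder than it is.

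The step does close, in three short cases, if you organize it by which cross-product is smaller rather than by contradiction. After your reduction, use the symmetry $x \leftrightarrow y$ to assume $b \leq c$, i.e.\ $(x\cdot z)_v \leq (y\cdot z)_v$; this may swap the two terms inside the minimum but preserves the statement. The hypothesis at $v$ applied to the triple $\{x,y,z\}$ gives $a \geq \min\{b,c\} - \delta = b - \delta$, and applied to $\{y,z,t\}$ gives $\gamma \geq \min\{c,\beta\} - \delta$. If $\beta \leq c$, then $a + \gamma \geq b + \beta - 2\delta$ and you are done. Otherwise $c < \beta$, so $\gamma \geq c - \delta$; now apply the hypothesis to $\{x,y,t\}$ to get $a \geq \min\{\alpha,\beta\} - \delta$. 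If that minimum is $\alpha$, then $a + \gamma \geq \alpha + c - 2\delta = (c+\alpha) - 2\delta$; if it is $\beta$, then $a + \gamma \geq \beta + c - 2\delta \geq \beta + b - 2\delta = (b+\beta) - 2\delta$, using $b \leq c$. In every case $a + \gamma \geq \min\{b+\beta,\ c+\alpha\} - 2\delta$, which is $(*)$. Note that only three of your four triples are needed. Your closing remark about why the constant doubles is accurate: each case spends one $\delta$ on a triple controlling $a$ and one on a triple controlling $\gamma$.
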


A {\em $\Lambda$-tree} is a $\Lambda$-metric space $(X,d)$ such that:
\begin{enumerate}
\item[(T1)] $(X,d)$ is geodesic,
\item[(T2)] if two segments of $(X,d)$ intersect in a single point, which is an endpoint of both,
then their union is a segment,
\item[(T3)] the intersection of two segments with a common endpoint is also a segment.
\end{enumerate}

\begin{example}
$\Lambda$ together with the usual metric $d(a,b) = |a - b|$ is a $\Lambda$-tree. Moreover, any
convex set of $\Lambda$ is a $\Lambda$-tree.
\end{example}

\begin{example}
\label{examp:1}
A $\mathbb{Z}$-metric space $(X,d)$ is a $\mathbb{Z}$-tree if and only if there is a simplicial
tree $\Gamma$ such that $X = V(\Gamma)$ and $p$ is the path metric of $\Gamma$.
\end{example}

Observe  that in general a $\Lambda$-tree can not be viewed as a simplicial tree with the path
metric like in Example \ref{examp:1}.

\begin{lemma}
\cite{Ch1}
Let $(X,d)$ be $\Lambda$-tree. Then $(X,d)$ is $0$-hyperbolic, and for all $x, y, v \in X$ we have
$(x \cdot y)_v \in \Lambda$.
\end{lemma}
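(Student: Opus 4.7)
The plan is to construct, for any three points $v, x, y \in X$, a ``median'' point $c = Y(v,x,y)$ at which the three segments $[v,x]$, $[v,y]$, $[x,y]$ meet, and to express $(x \cdot y)_v$ as the single distance $d(v,c)$. Both assertions then follow at once.

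\textbf{Step 1 (the median exists).} I would apply axiom (T3) to the segments $[v,x]$ and $[v,y]$, which share the endpoint $v$: their intersection is then a segment of the form $[v,c]$. To show $c$ also lies on $[x,y]$, consider the subsegments $[c,x] \subseteq [v,x]$ and $[c,y] \subseteq [v,y]$. If they had a common point $c' \neq c$, then $[c,c']$ would be contained in $[v,x] \cap [v,y] = [v,c]$, forcing $c' = c$, a contradiction. Hence $[c,x] \cap [c,y] = \{c\}$, so by (T2) the union $[c,x] \cup [c,y]$ is a segment with endpoints $x$ and $y$. Uniqueness of segments between two points in a $\Lambda$-tree (a standard consequence of (T1)--(T3)) identifies this union with $[x,y]$.

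\textbf{Step 2 (computing the Gromov product).} From the three decompositions $[v,x] = [v,c] \cup [c,x]$, $[v,y] = [v,c] \cup [c,y]$, $[x,y] = [x,c] \cup [c,y]$, and the fact that these unions are isometric images of segments in $\Lambda$, I obtain
\[
d(v,x) = d(v,c) + d(c,x), \quad d(v,y) = d(v,c) + d(c,y), \quad d(x,y) = d(c,x) + d(c,y).
\]
Substituting into the definition of $(x \cdot y)_v$, the terms $d(c,x)$ and $d(c,y)$ cancel pairwise, yielding $(x \cdot y)_v = d(v,c) \in \Lambda$. This already proves the second assertion.

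\textbf{Step 3 ($0$-hyperbolicity).} Given a fourth point $z$, apply Step 1 to the triples $(v,x,y)$, $(v,x,z)$, $(v,y,z)$ to obtain medians $c_{xy}$, $c_{xz}$, $c_{yz}$, so that the three Gromov products equal $d(v, c_{xy})$, $d(v, c_{xz})$, $d(v, c_{yz})$, respectively. Each of these three medians is the unique furthest point from $v$ on the corresponding pair of segments out of $v$. I would now apply (T3) to the pair $[v,c_{xz}]$ and $[v,c_{yz}]$, and track on which ``branch'' out of the median $c_{xy}$ each of $x, y, z$ lies, to conclude that two of $\{c_{xy}, c_{xz}, c_{yz}\}$ must coincide and the third lies no further from $v$. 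Equivalently, the two smallest of the three values $d(v,c_{xy}), d(v,c_{xz}), d(v,c_{yz})$ are equal, which immediately yields $(x \cdot y)_v \geqslant \min\{(x \cdot z)_v, (y \cdot z)_v\}$, the required $0$-hyperbolicity inequality.

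The main obstacle is the combinatorial bookkeeping in Step 3 --- carefully arguing that the three medians form the expected tripod configuration with a single common central point. I expect to handle it by iterated use of the construction of Step 1: applying the median construction to pairs such as $(v, c_{xy}, c_{xz})$ and using the maximality of $[v,c]$ as a common subsegment to force coincidences among the medians.
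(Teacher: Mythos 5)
The paper offers no proof of this lemma --- it is quoted from Chiswell's book --- and your median-point strategy is exactly the standard argument there: construct $c = Y(v,x,y)$ with $[v,x]\cap[v,y]=[v,c]$, show $c$ lies on $[x,y]$, and read off $(x\cdot y)_v = d(v,c)\in\Lambda$. Your Steps 1 and 2 are correct and essentially complete; the only point left tacit is that a segment contained in $[v,x]$ and containing $v$ must be of the form $[v,c]$, which holds because an isometry of a $\Lambda$-interval into $\Lambda$ has an interval as its image.

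Step 3, as literally written, contains a directional error that would sink the argument if taken at face value. You assert that two of $c_{xy}, c_{xz}, c_{yz}$ coincide and ``the third lies no further from $v$,'' and call this equivalent to ``the two smallest of the three values are equal.'' These are not equivalent: if the multiset of distances is $\{t,s,s\}$ with $t\leqslant s$ (two coincide, the third no further), then taking $(x\cdot y)_v = t$ gives $\min\{(x\cdot z)_v,(z\cdot y)_v\} = s \geqslant t$, and the $0$-hyperbolicity inequality fails whenever $t<s$. The correct geometric fact is the opposite one: the two medians \emph{closest} to $v$ coincide, and the third lies no \emph{closer}, so the multiset is $\{s,s,u\}$ with $u\geqslant s$; that is what yields the inequality. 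The repair is short. Suppose $d(v,c_{xy})$ were strictly smallest. Since $c_{xz}\in[v,x]$ lies strictly beyond $c_{xy}$ and $[v,x]\cap[v,y]=[v,c_{xy}]$, we get $c_{xz}\notin[v,y]$; but $c_{xz}\in[v,z]$ and $[v,y]\cap[v,z]=[v,c_{yz}]$, so $d(v,c_{xz})>d(v,c_{yz})$. Symmetrically $d(v,c_{yz})>d(v,c_{xz})$, a contradiction. Hence the minimum is attained by two of the medians, and since any two of them lie on a common segment issuing from $v$, equal distance forces them to coincide. With that correction your outline goes through.
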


\smallskip

Eventually, we say that a group $G$ acts on a $\Lambda$-tree $X$ if any element $g \in G$ defines an
isometry $g : X \rightarrow X$. An action on $X$ is {\em non-trivial} if there is no point in $X$
fixed by all elements of $G$. Note, that every group has a {\em trivial action} on any $\Lambda$-tree,
when all group elements act as identity. An action of $G$ on $X$ is {\em minimal} if $X$ does not
contain a non-trivial $G$-invariant subtree $X_0$.

Let a group $G$ act as isometries on a $\Lambda$-tree $X$. $g \in G$ is called {\em elliptic} if
it has a fixed point. $g \in G$ is called an {\rm inversion} if it does not have a fixed point, but
$g^2$ does. If $g$ is not elliptic and not an inversion then it is called {\em hyperbolic}.

A group $G$  acts {\em freely} and {\em without inversions} on a $\Lambda$-tree $X$ if for all $1
\neq g \in G$, $g$ acts as a hyperbolic isometry. In this case we also say that $G$ is
{\em $\Lambda$-free}.

\subsection{$\Lambda$-free groups}
\label{subs:lambda-free}

Recall that a group $G$ is called {\em $\Lambda$-free} if for all $1 \neq g \in G$, $g$ acts as a
hyperbolic isometry. Here we list some known results about $\Lambda$-free groups for an arbitrary
ordered abelian group $\Lambda$. For all these results the reader can be referred to
\cite{AB, B, Ch1, Martino_Rourke:2005, KMS12}.

\begin{theorem}\label{prop}
\begin{enumerate}
\item[(a)] The class of $\Lambda$-free groups is closed under taking subgroups.
\item[(b)] If $G$ is $\Lambda$-free and $\Lambda$ embeds (as an ordered abelian group) in $\Lambda'$
then $G$ is $\Lambda'$-free.
\item[(c)] Any $\Lambda$-free group is torsion-free.
\item[(d)] $\Lambda$-free groups have the CSA property. That is, every maximal abelian subgroup $A$
is malnormal: $A^g \cap A = 1$ for all $g \notin A$.
\item[(e)] Commutativity is a transitive relation on the set of non-trivial elements of a
$\Lambda$-free group.
\item[(f)] Solvable subgroups of $\Lambda$-free groups are abelian.
\item[(g)] If $G$ is $\Lambda$-free then any abelian subgroup of $G$ can be embedded in $\Lambda$.
\item[(h)] $\Lambda$-free groups cannot contain Baumslag-Solitar subgroups other than $\mathbb{Z}
\times \mathbb{Z}$. That is, no group of the form $\langle a, t \mid t^{-1} a^p t = a^q \rangle$
can be a subgroup of a $\Lambda$-free group unless $p = q = \pm 1$.
\item[(i)] Any two generator subgroup of a $\Lambda$-free group is either free, or free abelian.
\item[(j)] The class of $\Lambda$-free groups is closed under taking free products.
\end{enumerate}
\end{theorem}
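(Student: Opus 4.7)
The ten assertions form an omnibus collection of foundational facts, so rather than one monolithic argument I would build a small toolkit first and then derive each item from it. The toolkit consists of three inputs: (i) for an isometry $g$ of a $\Lambda$-tree $X$, hyperbolicity is equivalent to having an invariant axis $A_g \subseteq X$ on which $g$ acts by non-trivial translation with translation length $\ell(g) \in \Lambda_{>0}$; (ii) if $g,h$ are hyperbolic and commute, then $A_g = A_h$ and $\ell(gh) = \ell(g)+\ell(h)$ (as signed translations along the common axis), in particular $h$ permutes $A_g$; and (iii) if $g$ is hyperbolic then $g^n$ is hyperbolic with axis $A_g$ and $\ell(g^n) = |n|\ell(g)$. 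These are standard for $\Lambda$-trees and are worked out in Chiswell's book.

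With these in hand, most items are short. For (a), restrict the $G$-action to the subgroup. For (b), extend the ordered abelian group: a Lyndon length function $G \to \Lambda$ composes with $\Lambda \hookrightarrow \Lambda'$ to give a length function valued in $\Lambda'$, and Chiswell's construction produces a $\Lambda'$-tree on which $G$ acts; hyperbolicity is preserved since translation lengths remain positive. Item (c) is immediate from (iii): a non-trivial $g$ has $\ell(g^n)=|n|\ell(g)>0$, so $g^n \neq 1$. Items (e), (g) follow directly from (ii): if $a$ commutes with $b$ and $b$ with $c$, all three share $b$'s axis, so $[a,c]=1$; and translation length is a homomorphism from any abelian subgroup into $\Lambda$, which is injective by (iii). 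For (d), let $A$ be maximal abelian: by (ii), $A$ consists exactly of the hyperbolic isometries whose axis is some fixed line $L$; if $g \notin A$ then $g$ carries $L$ to $gL \neq L$, so any non-trivial element of $A \cap A^g$ would have two distinct axes, impossible. Item (f) reduces to (e): in a group where commuting is transitive on non-identity elements, the derived subgroup of any solvable subgroup collapses stepwise to trivial unless the subgroup is already abelian.

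For (h) I would use CSA from (d): the relation $t^{-1}a^p t = a^q$ forces $a^q \in A \cap A^t$ where $A$ is the maximal abelian subgroup containing $a$, so either $a^q = 1$ (ruled out by (c)) or $t \in A$, whence $t$ commutes with $a$ and the relation becomes $a^p = a^q$, giving $p=q$; torsion-freeness then reduces this to $p=q=\pm 1$. For (j), given $\Lambda$-trees $X_1, X_2$ for $G_1, G_2$ with chosen basepoints, build a $\Lambda$-tree by gluing translates of $X_1$ and $X_2$ along basepoints in the Bass-Serre fashion (a Bass-Serre-type tree of $\Lambda$-trees); the resulting action of $G_1 \ast G_2$ is free because a non-trivial element in normal form translates across the gluing and has no fixed vertex, hence is hyperbolic.

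The genuinely hard part is (i). The routine dichotomy for two hyperbolic isometries $a,b$ splits on whether $A_a \cap A_b$ is empty, a point, a segment, or a line; if it is a line, (ii) puts $\langle a,b\rangle$ inside an abelian group, hence free abelian by (g), and otherwise one wants to produce free generators by a ping-pong/Nielsen argument on the bridge between the axes. Carrying this out over an arbitrary ordered abelian group (rather than $\mathbb{R}$) is where one has to be careful, because convex subgroups of $\Lambda$ force one to iterate the cancellation argument and keep track of which ``scale'' of $\Lambda$ the overlaps lie in; this is the step where I would expect to spend the most work, following the Nielsen-type analysis in Chiswell and Martino--Rourke. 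Once (i) is in place, the list is complete.
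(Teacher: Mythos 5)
The paper itself supplies no proof of this theorem: it is presented as a list of known facts with pointers to Alperin--Bass, Bass, Chiswell, Martino--O'Rourke and Kharlampovich--Myasnikov--Serbin, so there is no in-text argument to compare yours against. Your toolkit (existence of axes, positivity of translation length, $\ell(g^n)=|n|\,\ell(g)$, common axes for commuting hyperbolic elements) is exactly the machinery those sources use, and your treatments of (a), (b), (c), (d), (e), (g), (h) and (j) are correct in outline --- with the usual small caveats that ``same axis implies commuting'' uses freeness (the commutator fixes the axis pointwise, hence is elliptic, hence trivial), and that in (d) the conclusion from $1\neq b\in A\cap A^g$ is $gL=L$ and therefore $g\in A$, contradicting $g\notin A$. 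Item (i) you correctly identify as the only item requiring substantial work, but you do not prove it; as written it is a citation to the Nielsen-method analysis in Chiswell and Martino--O'Rourke rather than an argument, and should be flagged as such.

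The one genuine error is the reduction of (f) to (e). Commutative transitivity does \emph{not} imply that solvable subgroups are abelian, even for torsion-free groups: the group $\mathbb{Z}^2\rtimes_A\mathbb{Z}$ with $A=\matzz{2}{1}{1}{1}$ (the fundamental group of a torus bundle with Anosov monodromy) is torsion-free, metabelian and non-abelian, yet every centralizer of a non-trivial element is abelian --- no power $A^n$, $n\neq 0$, fixes a non-zero vector, so the centralizer of a non-trivial element of $\mathbb{Z}^2$ is $\mathbb{Z}^2$, and the centralizer of any element outside $\mathbb{Z}^2$ is abelian because the constraints it imposes are expressed by polynomials in $A$, which commute. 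Hence commutation is transitive on non-trivial elements and your proposed ``stepwise collapse of the derived series'' cannot be carried out on the basis of (e) alone. The correct route is through (d): if $S\le G$ is solvable and non-abelian, the last non-trivial term $N$ of its derived series is abelian and normal in $S$; embed $N$ into a maximal abelian subgroup $A$ of $G$; for every $s\in S$ one has $1\neq N=N^s\le A\cap A^s$, so malnormality of $A$ forces $s\in A$, whence $S\le A$ is abelian, a contradiction. (Equivalently, argue with axes: all non-trivial elements of $N$ share an axis $L$, conjugation by $s\in S$ permutes these axes, so $S$ stabilizes $L$ setwise, and in a free action without inversions the setwise stabilizer of a line is abelian.)
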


The following result was originally proved in \cite{Harrison} in the case of finitely many
factors and $\Lambda = \mathbb{R}$. A proof of the result in the general formulation given below
can be found in \cite[Proposition 5.1.1]{Ch1}.

\begin{theorem}
If $\{G_i \mid i \in I \}$ is a collection of $\Lambda$-free groups then the free product $\ast_{i
\in I} G_i$ is $\Lambda$-free.
\end{theorem}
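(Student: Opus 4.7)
The plan is to realize the free product $G = \ast_{i \in I} G_i$ as acting freely without inversions on a ``tree of trees'' built from the individual $\Lambda$-trees for the factors. Concretely, I would first apply the hypothesis to fix, for each $i$, a $\Lambda$-tree $(X_i, d_i)$ on which $G_i$ acts freely and without inversions (so every non-trivial element acts hyperbolically), together with a basepoint $v_i \in X_i$. Since any element of $G$ involves only finitely many factors in its reduced normal form, it suffices in verification to look at finitely many factors at a time, so no topology on $I$ is needed.

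Next I would perform the construction. Let $T$ be the Bass--Serre tree of $G = \ast_{i \in I} G_i$; its vertices of type $i$ are the cosets $gG_i$, its edge stabilizers are trivial, and the vertex stabilizers are exactly the conjugates of the $G_i$. Form a new space $X$ by blowing up each vertex $w = gG_i$ of $T$ into a copy $X_w$ of $X_i$, identifying $w$ with the translated basepoint $g \cdot v_i$ and using this point as the attaching point for the edges of $T$ incident to $w$. Define a $\Lambda$-valued distance $d$ on $X$ by: if two points lie in the same $X_w$, use $d_i$; otherwise, take the unique $T$-path between the two blown-up copies and sum up the internal distances to the basepoints in each copy plus the edge-lengths along that path in $T$. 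The $G$-action is the natural one, sending $X_w$ to $X_{hw}$ and acting on each copy via the translated $G_i$-action.

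The third step is to verify that $(X, d)$ is a $\Lambda$-tree. Axiom (T1) is immediate from the explicit ``through-the-basepoints'' segment construction. Axioms (T2) and (T3) follow because $T$ is a simplicial tree, each $X_i$ satisfies them, and the gluing is performed at single points (the basepoints); the triangle inequality reduces to the triangle inequality inside each $X_i$ combined with the tree property of $T$. The $G$-action is by isometries by construction, and it has no inversions because each $G_i$-action on $X_i$ has none and $G$ acts on $T$ without inversions.

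The final and main step is to check that every $1 \ne g \in G$ acts as a hyperbolic isometry on $X$. If $g$ is conjugate into some factor, say $g = h g_0 h^{-1}$ with $1 \ne g_0 \in G_i$, then $g_0$ has an axis in $X_i$ by hypothesis, and translating it by $h$ into $X_{hG_i}$ yields an axis for $g$. Otherwise, the cyclically reduced normal form of $g$ has length at least two, so $g$ acts hyperbolically on the simplicial tree $T$ with some axis $\ell$; this axis lifts canonically to a bi-infinite geodesic through the basepoints of the blown-up copies along $\ell$, and that geodesic is an axis for $g$ in $X$. The principal obstacle is the careful verification that the tree-of-trees $(X, d)$ satisfies the $\Lambda$-tree axioms uniformly for an arbitrary ordered abelian group $\Lambda$ (rather than only for $\mathbb{R}$), which is essentially the content of \cite[Proposition~5.1.1]{Ch1}; once that is established, the dichotomy above delivers the conclusion.
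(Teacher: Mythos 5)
Your overall strategy is sound, but it is not the route the paper takes: the paper gives no argument at all for this statement, deferring entirely to \cite[Proposition~5.1.1]{Ch1}, and that reference (reflected in the paper's own Example~\ref{exam:free_prod}) works on the \emph{length-function} side rather than the geometric side. There one defines $L(g)=\sum_j L_{i_j}(g_j)$ on the normal form $g=g_1\cdots g_k$, verifies the Lyndon axioms (L1)--(L4) by a normal-form computation, observes that $L$ is free (axiom (L5)) when each $L_i$ is (the paper's Example~\ref{exam:free_prod_2}), and then invokes Chiswell's realization theorem to recover a free action on a $\Lambda$-tree. Your ``tree of trees'' construction is the geometric dual of this: the glued space you describe is exactly the $\Lambda$-tree that Chiswell's construction produces from the summed length function, and your hyperbolicity dichotomy (conjugate into a factor versus cyclically reduced length $\geq 2$) is correct and matches the two cases $k=1$ and $k\geq 2$ of the normal form. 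The length-function route has the advantage that all the $\Lambda$-tree axioms come for free from the general realization theorem, at the cost of a somewhat opaque verification of (L3); your route makes the tree visible but puts the burden on checking (T1)--(T3) for the glued space, which you rightly acknowledge and defer.

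There is, however, one concrete error in the construction as written. You attach \emph{all} edges of $T$ incident to a blown-up vertex $w=gG_i$ at the single point $g\cdot v_i$. This is not equivariant and would make the space degenerate: the stabilizer $gG_ig^{-1}$ permutes the edges incident to $w$ and acts freely on $X_w$, so it cannot fix a common attaching point; moreover, with a single attaching point every intermediate copy along a $T$-path would contribute nothing to your distance formula, collapsing the metric. The correct gluing attaches the edge corresponding to the coset representative $gh$ (with $h\in G_i$) at the point $gh\cdot v_i$, i.e., the incident edges are attached along the full $G_i$-orbit of the basepoint inside $X_w$; these points are pairwise distinct precisely because the action of $G_i$ on $X_i$ is free. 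Your own distance formula and your lifting of the axis of a hyperbolic element through ``the basepoints of the blown-up copies'' implicitly assume this corrected version (entering a copy at one orbit point and leaving at another), so the fix is cosmetic but necessary for the argument to parse. With that repair, and granting the verification of (T1)--(T3) for point-wise gluings of $\Lambda$-trees along a simplicial tree, the proof goes through.
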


The following result gives a lot of information about the group structure in the case when $\Lambda
= \mathbb{Z} \times \Lambda_0$ with the left lexicographic order.

\begin{theorem} \cite[Theorem 4.9]{B}
Let a group $G$ act freely and without inversions on a $\Lambda$-tree, where $\Lambda = \mathbb{Z}
\times \Lambda_0$. Then there is a graph of groups $(\Gamma, Y^*)$ such that:
\begin{enumerate}
\item[(1)] $G = \pi_1(\Gamma, Y^*)$,
\item[(2)] for every vertex  $x^* \in Y^*$, a vertex group $\Gamma_{x^*}$ acts freely and without
inversions on a $\Lambda_0$-tree,
\item[(3)] for every edge  $e \in Y^*$ with an endpoint $x^*$ an edge group $\Gamma_e$ is either
maximal abelian subgroup in $\Gamma_{x^*}$ or is trivial and $\Gamma_{x^*}$ is not abelian,
\item[(4)] if $e_1,e_2,e_3 \in Y^*$  are edges with an endpoint $x^*$ then $\Gamma_{e_1},
\Gamma_{e_2}, \Gamma_{e_3}$ are not all conjugate in $\Gamma_{x^*}$.
\end{enumerate}

Conversely, from the existence of a graph $(\Gamma, Y^*)$ satisfying conditions (1)--(4) it follows
that $G$ acts freely and without inversions on a $\mathbb{Z} \times \Lambda_0$-tree in the
following cases: $Y^*$ is a tree, $\Lambda_0 \subset Q$ and either $\Lambda_0 = Q$ or $Y^*$ is finite.
\end{theorem}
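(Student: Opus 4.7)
The plan is to exploit the convex subgroup $\Lambda_0 \trianglelefteq \Lambda = \MZ \times \Lambda_0$ (with left lex order, so that $\Lambda_0$ is the infinitesimal convex part and $\Lambda/\Lambda_0 \cong \MZ$) to collapse the $\Lambda$-tree $T$ to a simplicial $\MZ$-tree, apply classical Bass-Serre theory there, and then read off the vertex and edge groups from what remains in the $\Lambda_0$-direction.

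First I would build the collapse. Define $x \sim y$ on $T$ iff $d(x,y) \in \Lambda_0$. Convexity of $\Lambda_0$ combined with the triangle inequality makes this an equivalence relation, and by restricting $d$ each class $[x]$ is itself a $\Lambda_0$-metric space satisfying (T1)-(T3), hence a $\Lambda_0$-tree. The assignment $\bar d([x],[y]) := d(x,y) + \Lambda_0 \in \Lambda/\Lambda_0 \cong \MZ$ is well defined (any two representatives differ by a $\Lambda_0$-amount), and the axioms of a $\Lambda$-tree descend to show $\bar T = T/\!\sim$ is a $\MZ$-tree, equivalently (by Example \ref{examp:1}) a simplicial tree.

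Next I would transport the group action. $G$ acts on $T$ by isometries, hence preserves $\sim$ and induces an action on $\bar T$. The stabilizer $G_{[x]}$ preserves the class $[x]$ and acts by isometries on the $\Lambda_0$-tree $([x], d|_{[x]})$; freeness on $T$ forces freeness (and absence of inversions) on each $[x]$, so $G_{[x]}$ is $\Lambda_0$-free. Classical Bass-Serre theory applied to $G \curvearrowright \bar T$ yields a graph of groups $(\Gamma, Y^*)$ with $Y^* = G\backslash \bar T$ and $G = \pi_1(\Gamma, Y^*)$, giving (1) and vertex groups as in (2).

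The delicate part is the edge analysis. An edge $e$ of $\bar T$ joining classes $[x]$ and $[y]$ corresponds in $T$ to a ``bridge'' whose length projects to $1_\MZ$. An element $g \in \Gamma_e$ stabilizes both endpoint classes, so its axis (it is hyperbolic by freeness) must cross this bridge; two such elements therefore have overlapping axes in $[x]$, forcing them to commute. Hence $\Gamma_e$ is abelian. Using the CSA property (Theorem \ref{prop}(d)) together with commutative transitivity (Theorem \ref{prop}(e)) inside the $\Lambda_0$-free vertex group $\Gamma_{x^*}$, a nontrivial $\Gamma_e$ coincides with the unique maximal abelian subgroup of $\Gamma_{x^*}$ containing it; if $\Gamma_e$ is trivial, $\Gamma_{x^*}$ must be non-abelian, since an abelian vertex group has all of itself available to stabilize any bridge. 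This gives (3). For (4), if three edges at $x^*$ had conjugate edge groups in $\Gamma_{x^*}$, CSA would force two of the corresponding bridges to coincide in $\bar T$, contradicting that they represent distinct edges.

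For the converse I would build the Bass-Serre simplicial tree $\bar T$ of $(\Gamma, Y^*)$ and then ``blow up'' each of its vertices into a copy of the $\Lambda_0$-tree on which the corresponding vertex group acts freely, gluing adjacent blow-ups at the unique point fixed by the edge group (unique by maximal abelianness). Equipping the result with the $\Lambda$-metric whose $\MZ$-component counts bridges crossed and whose $\Lambda_0$-component records the internal $\Lambda_0$-distance turns it into a $\Lambda$-tree on which $G$ acts freely without inversions; the hypotheses ($Y^*$ a tree, or $\Lambda_0 \subseteq \MQ$ with $\Lambda_0 = \MQ$ or $Y^*$ finite) are exactly what is needed for the requisite coherent system of translation lengths of edge groups to exist.

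I expect Step 3 (the edge-group analysis) to be the main obstacle: identifying precisely when $\Gamma_e$ is maximal abelian versus trivial requires carefully tracking how axes of commuting hyperbolic isometries interact across $\Lambda_0$-classes and leveraging CSA/commutative transitivity inside the vertex group. The converse construction under the listed technical hypotheses is the secondary difficulty, as it is here that one must solve the ``assembly problem'' of fitting the $\Lambda_0$-pieces together compatibly.
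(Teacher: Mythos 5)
First, note that the survey does not actually prove this statement: it is quoted verbatim from Bass \cite{B}, Theorem 4.9, so I am comparing your outline against the known argument rather than against a proof in the text. Your global strategy is the right one and is indeed Bass's: $\Lambda_0$ is the maximal proper convex subgroup of $\Lambda=\mathbb{Z}\times\Lambda_0$, the relation $d(x,y)\in\Lambda_0$ is an equivalence whose classes are $\Lambda_0$-subtrees, the quotient is a $\mathbb{Z}$-tree on which $G$ acts with $\Lambda_0$-free class stabilizers, and Bass--Serre theory produces the graph of groups. Up to the (unaddressed but minor) possibility of inversions on the quotient tree, parts (1) and (2) are fine.

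The genuine gap is in the edge-group analysis. The assertion that two elements of $\Gamma_e$ ``have overlapping axes in $[x]$, forcing them to commute'' is false as stated: in a free group acting on its Cayley graph, the axes of the two generators overlap in a vertex without the generators commuting. Overlap of axes never forces commutation by itself; what does is the standard lemma that if $A_g\cap A_h$ contains a segment of length at least $\|g\|+\|h\|$ then $[g,h]$ is elliptic, hence trivial under a free action. Moreover, localizing the overlap ``in $[x]$'' is exactly the wrong move: inside the class $[x]$ the overlap has length in $\Lambda_0$ and need not exceed $\|g\|+\|h\|$. The correct observation is that each nontrivial $g\in\Gamma_e$ has $\|g\|\in\Lambda_0$ while its axis comes within $\Lambda_0$-distance of both $[x]$ and $[y]$, so $A_g$ traverses essentially the whole segment joining the two classes; two such axes therefore share a segment whose $\mathbb{Z}$-component is positive, which dominates $\|g\|+\|h\|\in\Lambda_0$, and only then does the lemma apply. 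The same axis argument (not CSA or commutative transitivity, which are downstream consequences and do not by themselves give maximality) is what shows $\Gamma_e$ is maximal abelian in $\Gamma_{x^*}$: any $h\in\Gamma_{x^*}$ commuting with $1\neq g\in\Gamma_e$ preserves $A_g$, which crosses into the adjacent class, so $h$ stabilizes that class as well. Finally, the trivial-edge-group clause of (3) and condition (4) are not automatic for $G\backslash\bar{T}$; Bass obtains them by normalizing the resulting decomposition (collapsing and folding superfluous edges), a step your outline omits, and the converse direction, where the arithmetic hypotheses on $\Lambda_0$ and $Y^*$ actually do the work, remains a sketch.
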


\subsection{$\mathbb{R}$-trees}
\label{sec:r-tres}

The case when $\Lambda = \mathbb{R}$ in the theory of groups acting on $\Lambda$-trees is the most well-studied (other than $\Lambda = \mathbb{Z}$, of course). $\mathbb{R}$-trees are usual
metric spaces with nice properties which makes them very attractive from geometric point of view.
The term $\mathbb R$-tree  was introduced by Morgan and Shalen
\cite{MS}  to describe a type of space that was first defined by Tits \cite{Tits:1977}. In the last
three decades $\mathbb R$-trees have played a prominent role in topology, geometry, and geometric group
theory. They are the most simple of geodesic spaces, and yet by Theorem \ref{th:BP} below,
every length space is an orbit space of an $\mathbb R$-tree. Lots of results were obtained in the last two decades about group actions on these objects. The most
celebrated one is Rips' Theorem about free actions and a more general result of  Bestvina and
 Feighn about stable actions on $\mathbb{R}$-trees (see \cite{GLP, BF}).
In particular, the main result of Bestvina and Feighn together with the idea of obtaining a stable
action on an $\mathbb{R}$-tree as a limit of actions on an infinite sequence of $\mathbb{Z}$-trees
gives a very powerful tool in obtaining non-trivial decompositions of groups into fundamental groups of
graphs of groups which is known as {\em Rips machine}. Such decompositions of groups as iterated applications of the operations of free product with amalgamation and HNN extension are called {\em splittings}.

\smallskip

An $\mathbb{R}$-tree $(X,d)$ is a $\Lambda$-metric space which satisfies the axioms (T1) -- (T3)
listed in Subsection \ref{subs:lambda-def} for $\Lambda = \mathbb{R}$ with usual order. Hence, all
the definitions and notions given in Section \ref{sec:lambda} hold for $\mathbb{R}$-trees.

\begin{proposition} \cite[Proposition 2.2.3]{Ch1}
\label{pr:eq_def}
Let $(X,d)$ be an $\mathbb{R}$-metric space. Then the following are equivalent:
\begin{enumerate}
\item $(X,d)$ is an $\mathbb{R}$-tree,
\item given two points of $X$, there is a unique segment (with more than one point) having them as endpoints, 
\item $(X,d)$ is geodesic and it contains no subspace homeomorphic to the circle.
\end{enumerate}
\end{proposition}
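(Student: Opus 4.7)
My plan is to prove the cyclic chain $(1) \Rightarrow (2) \Rightarrow (3) \Rightarrow (1)$. Throughout I will use the fact, implicit in the definition of segment, that any segment from $x$ to $y$ is the isometric image of $[0, d(x,y)] \subset \mathbb{R}$, so its length equals $d(x,y)$ and any sub-segment is determined by its pair of endpoints.

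For $(1) \Rightarrow (2)$, existence of a segment between any two points is axiom (T1). For uniqueness, let $\sigma_1, \sigma_2$ be two segments from $x$ to $y$. Both contain the common endpoint $x$, so by (T3) the intersection $\sigma_1 \cap \sigma_2$ is again a segment. It contains $y$ as well, and since $y$ realizes the maximum distance from $x$ within each $\sigma_i$, the endpoints of the intersection must be exactly $x$ and $y$. Hence $\sigma_1 \cap \sigma_2$ has length $d(x,y)$; but $\sigma_i$ also has length $d(x,y)$ and contains $\sigma_1 \cap \sigma_2$, so the inclusion $\sigma_1 \cap \sigma_2 \subseteq \sigma_i$ is an equality for $i = 1, 2$, giving $\sigma_1 = \sigma_2$.

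For $(2) \Rightarrow (3)$, condition (2) directly yields geodesicity. To preclude embedded circles, suppose $h : S^1 \hookrightarrow X$ is a topological embedding with image $C$, pick $p, q \in C$ splitting $C$ into arcs $A_1, A_2$, and consider the unique segment $[p,q]$. If $[p,q]$ coincides with one of the $A_i$, say $A_1$, then along $A_2$ one can apply a minimality/continuity argument to locate two points $p', q' \in A_2$ whose unique segment must simultaneously be the subsegment of $A_1 = [p,q]$ cut out by projecting from $A_2$ and the subarc of $A_2$ between $p'$ and $q'$, contradicting $(2)$. If $[p,q]$ coincides with neither arc, an analogous analysis at the first and last points where $[p,q]$ leaves $C$ produces two distinct segments between two such ``deviation'' points.

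For $(3) \Rightarrow (1)$, I must verify (T2) and (T3) assuming $X$ is geodesic and contains no embedded circle. For (T2), given segments $\sigma_1 = [x,y]$ and $\sigma_2 = [y,z]$ meeting only in $\{y\}$, the triangle inequality gives $d(x,z) \leq d(x,y) + d(y,z)$; if equality holds, a routine check shows that the concatenated parameterization of $\sigma_1 \cup \sigma_2$ is isometric, so $\sigma_1 \cup \sigma_2$ is itself a segment. If the inequality is strict, a geodesic $[x,z]$ together with suitable sub-arcs of $\sigma_1 \cup \sigma_2$ can be spliced at their first and last common points to yield an injective loop, i.e.\ an embedded $S^1$, contradicting (3). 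A parallel surgery handles (T3). The main obstacle, common to both $(2) \Rightarrow (3)$ and $(3) \Rightarrow (1)$, is precisely this path-splicing step: one must identify maximal sub-arcs where two paths agree (using closedness and compactness of the parameter intervals), verify that the concatenated loop is injective, and ensure the deviation points are well-defined. Once this surgical machinery is in place, the two implications follow from the same geometric template.
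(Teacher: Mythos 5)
The paper itself gives no proof of this proposition --- it is quoted from Chiswell's book --- so your attempt can only be judged on its own terms, and there it runs into a genuine problem at the step $(2)\Rightarrow(3)$. With ``segment'' meaning, as in the paper, the image of an isometry $[a,b]\to X$, condition (2) as literally stated is satisfied by the Euclidean plane: isometric images of real intervals in $\mathbb{R}^2$ are exactly straight line segments, and any two distinct points are joined by a unique one. Yet $\mathbb{R}^2$ contains embedded circles and is not an $\mathbb{R}$-tree, so $(2)\Rightarrow(3)$ is false in the form you are proving it. In Chiswell's Proposition 2.2.3 the second condition is that any two points are joined by a unique \emph{arc} (topological embedding of an interval) and that this arc is a segment; that is the hypothesis actually strong enough to exclude circles. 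Your sketch of $(2)\Rightarrow(3)$ conceals exactly this defect: when the segment $[p,q]$ leaves the circle $C$, the ``deviation points'' give you two distinct \emph{arcs} between them (one inside $C$, one inside $[p,q]$), not two distinct \emph{segments}, so no contradiction with (2) arises. This is not a presentational gap but a wrong hypothesis; the statement must be read with ``arc'' in place of ``segment'' before your cycle of implications can close.

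Separately, even for the implications that are true, the hard content is deferred rather than proved. In $(3)\Rightarrow(1)$ the entire difficulty is the splicing construction: given two geodesics between the same pair of points (or a failure of (T2) or (T3)), one must produce an injective loop by locating a last parameter at which the two paths agree before diverging and a first parameter at which they meet again, and then verifying injectivity of the concatenated map from the circle. You explicitly label this ``surgical machinery'' and assert that both remaining implications ``follow from the same geometric template'' without carrying it out; that is precisely where the work lies. By contrast, your $(1)\Rightarrow(2)$ argument via axiom (T3) and a length comparison is correct and clean. To repair the proof: restate (2) as uniqueness of arcs together with the assertion that the unique arc is a segment, and then actually execute the loop construction for $(3)\Rightarrow(1)$ and the arc-versus-segment comparison for $(2)\Rightarrow(3)$.
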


\begin{example}
\label{examp:r_tree1}
Let $Y = \mathbb{R}^2$ be the plane, but with metric $p$ defined by
\[
p((x_1,y_1),(x_2,y_2)) = \left\{ \begin{array}{ll}
\mbox{$|y_1| + |y_2| + |x_1 - x_2|$}  & \mbox{if $x_1 \neq x_2$} \\
\mbox{$|y_1 - y_2|$ } & \mbox{if $x_1 = x_2$}
\end{array}
\right.
\]
That is, to measure the distance between two points not on the same vertical line, we take their
projections onto the horizontal axis, and add their distances to these projections and the distance
between the projections (distance in the usual Euclidean sense).
\end{example}

\begin{example} \cite[Proposition 2.2.5]{Ch1}
\label{examp:r_tree2}
Given a simplicial tree $\Gamma$, one can construct its realization $real(\Gamma)$ by identifying
each non-oriented edge of $\Gamma$ with the unit interval. The metric on $real(\Gamma)$ is induced
from $\Gamma$.
\end{example}

\begin{example}
\label{exam:asympt_cone}
Let $G$ be a $\delta$-hyperbolic group. Then its Cayley graph with respect to any finite generating
set $S$ is a $\delta$-hyperbolic metric space $(X, d)$ (where $d$ is a word metric) on which $G$ acts
by isometries. Now, the asymptotic cone $Cone_\omega(X)$ of $G$ is a real tree (see \cite{Gr, Drutu_Sapir:2005, WW}) on which $G$ acts by isometries. 
\end{example}

An $\mathbb{R}$-tree is called {\em polyhedral} if the set of all branch points and endpoints is
closed and discrete. Polyhedral $\mathbb{R}$-trees have strong connection with simplicial trees as
shown below.

\begin{theorem} \cite[Theorem 2.2.10]{Ch1}
\label{th:polyhedral}
An $\mathbb{R}$-tree $(X,d)$ is polyhedral if and only if it is homeomorphic to $real(\Gamma)$ (with
metric topology) for some simplicial tree $\Gamma$.
\end{theorem}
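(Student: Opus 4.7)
The plan is to prove the two implications separately; the forward direction is straightforward, while the converse requires analyzing the components of $X$ once the branch/endpoint set is removed.

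For the implication $X \cong real(\Gamma) \Rightarrow X$ polyhedral: every interior point of an edge in $real(\Gamma)$ has exactly two directions emanating from it, so it is neither a branch point nor an endpoint; hence the set of branch points and endpoints is contained in $V(\Gamma)$. Since distinct vertices of $\Gamma$ are at distance at least $1$ in $real(\Gamma)$, every subset of $V(\Gamma)$ is automatically closed and discrete, so the same holds for the branch-points-and-endpoints set.

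For the converse, let $V \subset X$ be the closed discrete set of branch points and endpoints. The first step is structural: I want to show that every connected component $C$ of $X \setminus V$ is an open arc (homeomorphic to an open interval, an open half-line, or $\mathbb{R}$), and that $\overline{C} \setminus C \subseteq V$ has at most two elements. The key input is that points of $X \setminus V$ are regular (have exactly two directions), so locally $X$ looks like an open interval; using axioms (T1)--(T3), I extend the local interval through a chosen $x \in C$ maximally in both directions, and closedness of $V$ guarantees that the interior of the extended interval stays inside $X \setminus V$. Conversely, since regular points have neighborhoods entirely inside $X \setminus V$, any limit point of $C$ not in $C$ must lie in $V$, bounding $|\overline{C} \setminus C| \leq 2$.

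Next, I construct $\Gamma$ with vertex set $V$ augmented by subdivision points for the unbounded components. For each bounded component with $\overline{C} \cap V = \{a,b\}$, add a single edge $\{a,b\}$. For each ray-like component with $\overline{C} \cap V = \{a\}$, parameterize $\overline{C}$ as $[0,\infty)$ with $a \mapsto 0$ and insert subdivision vertices at the integer points, joined by consecutive edges. The bi-infinite case occurs only when $V = \emptyset$ and forces $X \cong \mathbb{R}$; handle it via a two-sided integer chain. Define $\varphi : real(\Gamma) \to X$ edgewise as the affine-to-arc homeomorphism between each closed unit edge and the corresponding closed arc in $X$. Then $\varphi$ is a bijection, continuous and open because every vertex in both $real(\Gamma)$ and $X$ has basic neighborhoods of matching ``star'' shape (a finite union of initial sub-edges). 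Finally, $\Gamma$ is a tree: any cycle in $\Gamma$ would yield an embedded topological circle in $X$ via $\varphi$, contradicting Proposition~\ref{pr:eq_def}(3).

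The main obstacle is the component-structure step: justifying that each connected component of $X \setminus V$ really is an open arc whose closure picks up at most two points of $V$. This uses the $\mathbb{R}$-tree axioms together with the definition of branch point in an essential way, and involves a small case split (bounded, semi-infinite, bi-infinite); the degenerate case $V = \emptyset$ also needs to be checked separately. Once that structural result is in hand, constructing $\Gamma$ and checking that $\varphi$ is a homeomorphism reduces to routine piecewise bookkeeping, and the tree property of $\Gamma$ follows directly from the absence of embedded circles in $X$.
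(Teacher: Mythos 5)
The paper offers no proof of this statement---it is quoted verbatim from Chiswell's book (Theorem 2.2.10 of \cite{Ch1})---so there is nothing internal to compare against; your argument follows what is essentially the standard route (remove the closed discrete set $V$ of branch points and endpoints, show the components of the complement are open arcs with at most two boundary points in $V$, rebuild a simplicial tree from that data). The forward direction and the structural analysis of the components are sound: the ball of radius $\varepsilon$ about a regular point avoiding $V$ is an arc because a median point of two segments issuing from $x$ in the same direction would otherwise be a branch point inside the ball, each component is a linear subtree (a tripod center would be a branch point), openness of the component rules out an endpoint belonging to it, and $\overline{C}\setminus C\subseteq V$ follows since $C$ is a component of the open set $X\setminus V$.

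The one step that fails as literally written is the claim that the \emph{affine} edgewise map $\varphi$ is ``continuous and open because every vertex \ldots has basic neighborhoods of matching star shape.'' The theorem equips $real(\Gamma)$ with the \emph{metric} topology, and a polyhedral $\mathbb{R}$-tree may have a branch point of infinite valence whose incident arcs have unbounded lengths (e.g.\ an infinite star with arms of lengths $1,2,3,\dots$; its branch/endpoint set is still closed and discrete). For such a vertex $v$, the affine map sends the $\varepsilon$-star of $v$ in $real(\Gamma)$ to a union of initial subarcs of lengths $\varepsilon L_e$ with $\sup_e L_e=\infty$, which is contained in no metric ball about $\varphi(v)$; so $\varphi$ is not continuous at $v$. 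The fix is routine but must be made: reparameterize each edge $[0,1]\to[0,L_e]$ so that near each endpoint it has distortion bounded independently of $e$ (for instance, isometric on an initial segment of length $\min(1/3,L_e/3)$ at each end), after which the star-shaped neighborhood argument goes through. Two smaller items you dismiss as bookkeeping do need a sentence each: that $\Gamma$ is connected (the segment $[v,w]$ between two points of $V$ meets $V$ in a finite set, and consecutive intersection points bound a component, hence an edge), and that a cycle in $\Gamma$ genuinely yields an \emph{embedded} circle (distinct closed arcs $\overline{C_1},\overline{C_2}$ meet only in $\partial C_1\cap\partial C_2\subseteq V$, and cannot share both endpoints by uniqueness of segments). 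With these repairs your proof is complete.
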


Now we briefly recall some known results related to group actions on $\mathbb{R}$-trees. The first
result shows that an action on a $\Lambda$-tree always implies an action on an $\mathbb{R}$-tree.

\begin{theorem} \cite[Theorem 4.1.2]{Ch1}
\label{th:lambda_r}
If a finitely generated group $G$ has a non-trivial action on a $\Lambda$-tree for some ordered
abelian group $\Lambda$ then it has a non-trivial action on some $\mathbb{R}$-tree.
\end{theorem}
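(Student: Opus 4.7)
The plan is to reduce the given $\Lambda$-tree action to an $\mathbb{R}$-tree action in two stages: first passing to a finitely generated sub-ordered-group $\Lambda_0 \leq \Lambda$ that captures all the relevant metric data, then using the structure of the complete chain of convex subgroups of $\Lambda_0$ to step down to an archimedean (and hence $\mathbb{R}$-embeddable) ordered group.

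For the first stage, fix a finite generating set $S$ of $G$ and a basepoint $v$ in the given $\Lambda$-tree $X$; we may replace $X$ by the minimal $G$-invariant subtree so that the action is still non-trivial. Let $\Lambda_0 \leq \Lambda$ be the subgroup generated by the finite set
\[
\{d(v,sv) : s \in S \cup S^{-1}\} \cup \{(sv \cdot tv)_v : s,t \in S \cup S^{-1}\}.
\]
By induction on word length in $S$, using the recursion $d(v,gsv) = d(v,gv) + d(v,sv) - 2(g^{-1}v \cdot sv)_v$ together with axioms (T1)--(T3) to control how Gromov products propagate, I would show that every length $L(g) = d(v,gv)$ lies in $\Lambda_0$. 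Thus the restriction of the action to the subtree spanned by the $G$-orbit of $v$ is a non-trivial action on a $\Lambda_0$-tree in which $\Lambda_0$ is finitely generated.

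For the second stage, the structure theorem recalled in \eqref{eq:order-convex} gives $\Lambda_0$ a finite complete chain of convex subgroups $0 = A_0 < A_1 < \cdots < A_n = \Lambda_0$ with each quotient $A_i/A_{i-1}$ archimedean. I induct on $n$. When $n=1$, $\Lambda_0$ is itself archimedean and embeds in $\mathbb{R}$, and the $\Lambda_0$-tree extends in a standard base-change manner to an $\mathbb{R}$-tree carrying a non-trivial $G$-action. For $n>1$, collapse the $\Lambda_0$-tree along $A_{n-1}$ by declaring $x \sim y$ when $d(x,y) \in A_{n-1}$; convexity of $A_{n-1}$ ensures the quotient inherits a $\Lambda_0/A_{n-1}$-tree structure, and its archimedean value group makes it an $\mathbb{R}$-tree. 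Either the induced $G$-action on the quotient is non-trivial (and we are done), or $G$ fixes an equivalence class $\bar p$, in which case $G$ acts on the $A_{n-1}$-subtree over $\bar p$ --- non-trivially, since a global fixed point there would be one in $X$ --- and the inductive hypothesis applied to this shorter chain finishes the proof.

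The main obstacle is the first stage: verifying that \emph{all} lengths $L(g)$, not merely those of the generators, stay within the finitely generated $\Lambda_0$. This requires a careful inductive tracking of Gromov products under multiplication by generators, exploiting how branch points in a $\Lambda$-tree force linear relations among triples of points. A secondary verification is that the collapse in the second stage genuinely produces a $\Lambda_0/A_{n-1}$-tree satisfying axioms (T1)--(T3); this follows from convexity of $A_{n-1}$ but merits explicit checking.
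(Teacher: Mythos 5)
Your second stage is sound and close in spirit to the standard argument: collapsing along a convex subgroup to reach an archimedean (hence $\mathbb{R}$-embeddable) quotient, with the dichotomy ``either the quotient action is non-trivial or $G$ preserves a class and one descends,'' is exactly the right mechanism, and the required lemmas (the quotient by a convex subgroup is a tree over the quotient group; base change along $\Lambda' \hookrightarrow \mathbb{R}$) are standard. The fatal problem is your first stage. The claim that every $L(g)=d(v,gv)$ lies in the subgroup $\Lambda_0$ generated by the finitely many generator lengths and Gromov products is not something you can get by induction on word length: the recursion $d(v,gsv)=d(v,gv)+d(v,sv)-2(g^{-1}v\cdot sv)_v$ is a tautology, since $(g^{-1}v\cdot sv)_v=\tfrac{1}{2}\bigl(L(g)+L(s)-L(gs)\bigr)$ is \emph{defined} in terms of the very length you are trying to control, and there is no independent bound forcing the median of three orbit points to have distance to $v$ inside a prescribed finitely generated subgroup. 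Worse, the statement you are assuming --- that $\langle L(G)\rangle$ is finitely generated for a finitely generated $G$ --- is essentially Question 1 on p.~250 of \cite{Ch1}, which is open for finitely generated groups and was settled only for finitely \emph{presented} groups in \cite{KMS12} (Theorem~\ref{co:main1} of this survey) using the full structure theory. So your reduction, if it worked, would resolve a recognized open problem by elementary means; it cannot be taken for granted.

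The theorem does not need the value group to be finitely generated, only for it to have a distinguished \emph{convex} subgroup with archimedean top quotient, and that is where finite generation is really used. The standard proof (Theorem 4.1.2 of \cite{Ch1}) runs: by the $\Lambda$-tree version of Serre's lemma, a finitely generated group with no global fixed point contains a hyperbolic element $g$, so $\|g\|>0$; since based lengths satisfy $d(x,hx)\leqslant |h|_S\cdot\max_s d(x,sx)$, all the relevant distances lie in the convex subgroup $C$ of $\Lambda$ generated by a \emph{single} positive element (chosen using $\|g\|$ and the pairwise distances between axes/fixed sets of generators); $C$ has a unique maximal proper convex subgroup $C'$ (the union of all convex subgroups not containing that element), $C/C'$ is archimedean and order-embeds in $\mathbb{R}$, and collapsing by $C'$ leaves $\bar{g}$ with translation length $\|g\|+C'\neq 0$, so the induced action on the resulting $\mathbb{R}$-tree is non-trivial in one step --- no induction on a chain and no finite generation of $\Lambda_0$ required. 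I would rewrite your proof along these lines, replacing ``finitely generated subgroup'' by ``convex subgroup generated by one element'' and replacing the first stage by Serre's lemma.
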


Observe that in general nice properties of the action on a $\Lambda$-tree are not preserved when
passing to the corresponding action on an $\mathbb{R}$-tree above.

The next result was one of the first in the theory of group actions on $\mathbb{R}$-trees. Later it was
generalized to the case of an arbitrary $\Lambda$, see Theorem \ref{prop}.

\begin{theorem} \cite{Harrison}
Let $G$ be a group acting freely and  without inversions on an $\mathbb{R}$-tree $X$, and suppose
$g, h \in G\ \backslash \ \{1\}$. Then $\langle g, h \rangle$ is either free of rank two or abelian.
\end{theorem}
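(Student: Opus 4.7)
My strategy is to analyze the joint geometry of the axes of $g$ and $h$ in $X$ and split into two cases. Since $G$ acts freely and without inversions on $X$, every non-identity element of $G$ acts as a hyperbolic isometry, so has a unique invariant geodesic line, its \emph{axis}, on which it translates by a positive real number, its \emph{translation length}. Write $A_g, A_h$ for the axes of $g$ and $h$, with translation lengths $\ell(g), \ell(h) > 0$.

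\textbf{Case 1 ($A_g = A_h$).} On this common line, isometrically identified with $\mathbb{R}$, both $g$ and $h$ act as translations; since translations of $\mathbb{R}$ commute, so do $g$ and $h$. The free action also makes $G$ torsion-free, so $\langle g, h\rangle$ is a $2$-generated torsion-free abelian group embedded in $(\mathbb{R}, +)$ via the translation representation, hence free abelian of rank $1$ or $2$.

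\textbf{Case 2 ($A_g \neq A_h$).} I would first rule out the possibility that $A_g$ and $A_h$ share an end of $X$. If $A_g \cap A_h$ contained an unbounded ray $R$ toward a common end, then for points far out on $R$ the elements $g$ and $h$ act as translations along $R$; a short computation shows that the commutator $[g,h]$ is the identity on $R$. But an isometry of an $\mathbb{R}$-tree fixing a ray pointwise is elliptic, and so trivial by the free action. Then $g$ and $h$ commute, which for two hyperbolic isometries in a free, inversion-free action on an $\mathbb{R}$-tree forces a common axis (since $h$ must preserve $A_g$ and, not being elliptic, must translate along it), contradicting $A_g \neq A_h$. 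Thus $A_g \cap A_h$ is either empty or a bounded segment, and I can apply a ping-pong argument: choose a basepoint $v$ at the midpoint of the bridge between $A_g$ and $A_h$ (or of their overlap), let $U, V$ be the components of $X \setminus \{v\}$ containing the relevant ends of $A_g$ and $A_h$ respectively, and observe that for $|N|, |M|$ sufficiently large, $g^{\pm N}(V) \subseteq U$ and $h^{\pm M}(U) \subseteq V$. The classical ping-pong lemma then gives that $\langle g^N, h^M\rangle$ is free of rank $2$.

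To conclude that $\langle g, h\rangle$ itself is free of rank $2$, I would rely on a direct translation-length computation in the bridge geometry established above: for any non-trivial cyclically reduced word $w(g, h)$, one can write down $\ell(w(g,h))$ in terms of $\ell(g), \ell(h)$ and the diameter of $A_g \cap A_h$ (or the bridge), and check that this quantity is strictly positive, ruling out any non-trivial relation. The main obstacle is precisely this bookkeeping, which must be carried out uniformly over all reduced word types; the key geometric simplification that makes it tractable is the exclusion of a shared end at infinity, since this confines the essential interaction between $A_g$ and $A_h$ to a bounded region of $X$, where the ping-pong pattern and the translation-length arithmetic are both clean.
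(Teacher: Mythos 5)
The paper does not actually prove this theorem: it is quoted as a result of Harrison (and its generalization to arbitrary $\Lambda$ appears as part (i) of Theorem~\ref{prop}, again without proof), so there is no in-paper argument to compare against. Judged on its own terms, your outline is right in several places: the common-axis case does give an abelian (torsion-free, rank $\leq 2$) group once you observe that $[g,h]$ fixes the common axis pointwise and is therefore trivial by freeness; the exclusion of a shared end by the same commutator trick is correct; and the ping-pong argument for large powers is standard and sound.

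The genuine gap is the passage from ``$\langle g^N,h^M\rangle$ is free of rank two'' to ``$\langle g,h\rangle$ is free of rank two.'' The former does not imply the latter, and the step you propose to bridge it --- a uniform, strictly positive formula for $\ell(w(g,h))$ over all cyclically reduced words $w$ --- is not the routine bookkeeping you suggest. The claim that confining the interaction to a bounded region makes the arithmetic ``clean'' fails precisely in the hard case: when the overlap $\Delta$ of $A_g$ and $A_h$ (coherently oriented) is long compared with $\ell(g)$ and $\ell(h)$, products such as $gh^{-1}$ have translation length governed by $\ell(g)+\ell(h)-2\min\{\ell(g),\ell(h),\Delta\}$, which the geometry alone does not keep positive; only the freeness of the action rules out ellipticity of such elements, and no single basepoint $v$ gives a ping-pong partition for $g$ and $h$ themselves. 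The actual content of Harrison's theorem is a Nielsen-method induction: when the configuration of axes is not already a ping-pong configuration for $g,h$, replace the pair by $(g,\,g^{\pm1}h)$ (or similar), show that a suitable complexity (built from translation lengths and the overlap) strictly decreases, and argue that the process terminates either in a ping-pong position (whence freeness of rank two, since Nielsen transformations preserve the subgroup) or in a degeneration forcing a common axis. Without this inductive mechanism --- or an equivalent substitute --- your argument establishes the theorem only for sufficiently high powers of $g$ and $h$.
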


Let $(X,d$) be a metric space. Then $d$ is said to be a {\em length metric} if the distance between every pair of points $x,y\in X$ is equal to the infimum of the length of rectifiable curves joining them. (If there are no such curves then $d(x,y)=\infty.$) If $d$ is a length metric then $(X,d)$ is called a {\em length space}. 

\begin{theorem} \label{th:BP}\cite{Berestovski_Paoul} Every length space (resp. complete length space) $(X,d$) is the metric quotient of a
(resp. complete) $\mathbb R$-tree $(X, d)$ via the free isometric action of a locally free subgroup $\Gamma (X)$ of
the isometry group $Isom(X)$. 
\end{theorem}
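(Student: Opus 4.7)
The plan is to construct a ``universal rectifiable cover'' of $(X,d)$ that is an $\mathbb{R}$-tree and realize $X$ as its quotient by the action of the deck group. Fix a basepoint $x_{0}\in X$ and let $P(X)$ be the set of all rectifiable paths $\alpha\colon[0,\ell(\alpha)]\to X$ with $\alpha(0)=x_{0}$, parametrized by arc length. Call $\alpha\simeq\beta$ \emph{tree-equivalent} if the concatenation $\bar\beta\cdot\alpha$ can be reduced to a constant path by finitely many cancellations of ``backtracks'' (subarcs of the form $\gamma\cdot\bar\gamma$). Set $\tilde X=P(X)/\simeq$, with
\[
\tilde d([\alpha],[\beta]) \;=\; \ell_{\mathrm{red}}(\bar\alpha\cdot\beta),
\]
where $\ell_{\mathrm{red}}$ is the length of the backtrack-reduced representative. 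The endpoint map $[\alpha]\mapsto\alpha(\ell(\alpha))$ descends to a map $\pi\colon\tilde X\to X$; surjectivity uses that $(X,d)$ is a length space, and $\pi$ is $1$-Lipschitz. The deck group $\Gamma(X)$ is the set of tree-equivalence classes of loops at $x_{0}$ under concatenation, acting on $\tilde X$ by $\gamma\cdot[\alpha]=[\gamma\cdot\alpha]$; the orbits are precisely the fibres of $\pi$, which exhibits $X$ as the metric quotient $\tilde X/\Gamma(X)$.

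Next I would verify the three axioms (T1)--(T3) of Subsection~\ref{subs:lambda-def} for $\tilde X$. For (T1), the segment between $[\alpha]$ and $[\beta]$ is obtained by going backwards along the common reduced initial subpath and then forwards along the reduced form of $\bar\alpha\cdot\beta$, parametrized by arc length. For (T2) and (T3), the essential point is that the backtracking reduction of the concatenation of two reduced paths is itself obtained by cancelling the maximal common terminal/initial subpath; this furnishes a well-defined ``median'' operation and shows that any two segments in $\tilde X$ with a common endpoint intersect in a segment. The absence of embedded circles follows because any loop in $\tilde X$ would lift to a loop in $P(X)$ whose class is forced by reduction to be trivial. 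For completeness, a Cauchy sequence $\{[\alpha_{n}]\}$ has reduced lengths converging and the tails $\bar\alpha_{m}\alpha_{n}$ shrinking; using completeness of $(X,d)$ one builds a limiting rectifiable path whose class is the limit.

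For the group $\Gamma(X)$: the action is isometric by construction. It is free because if $\gamma$ is a loop with $\gamma\cdot[\alpha]=[\alpha]$ then $[\alpha\cdot\gamma\cdot\bar\alpha]$ is trivial, i.e.\ $\gamma$ reduces to the constant loop, so $\gamma=1$ in $\Gamma(X)$. To get local freeness, I would take a finitely generated subgroup $H=\langle\gamma_{1},\dots,\gamma_{k}\rangle\le\Gamma(X)$ and consider the minimal $H$-invariant subtree $\tilde X_{H}\subset\tilde X$. Each $\gamma_{i}$ is a hyperbolic isometry with an axis, and by item~(h) of Theorem~\ref{prop} no Baumslag--Solitar relations can hold; by (d) maximal abelian subgroups are malnormal; combined with the specific origin of $\Gamma(X)$ as reduced-loop classes, where a power relation $\gamma^{p}=\delta^{q}$ would force a common sub-path structure that collapses under reduction, one concludes that $H$ is free via a Bass--Serre/ping-pong argument on $\tilde X_{H}$.

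The main obstacles will be two. First, making the backtracking reduction rigorous on genuinely rectifiable paths (as opposed to piecewise-linear paths in a simplicial setting): one must show that iterated cancellation of backtracks converges to a well-defined reduced representative with a well-defined length, and that tree-equivalence is really an equivalence relation compatible with concatenation. Carrying this out in full generality requires treating the limits of cancellations carefully and is the technical core of the Berestovski--Plaut construction. Second, upgrading ``free action on $\tilde X$'' to the stronger statement that every finitely generated subgroup of $\Gamma(X)$ is free; this rules out abelian subgroups of rank $\ge 2$, which would require two commuting hyperbolic isometries with a common axis, and the argument needs the loop-concatenation structure of $\Gamma(X)$ rather than only the abstract $\mathbb{R}$-tree axioms.
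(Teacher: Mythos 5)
The paper does not reprove this theorem: it is quoted from \cite{Berestovski_Paoul}, and the only thing the survey records is the construction, which is essentially the one you propose --- the $\mathbb R$-tree is the space of based ``non-backtracking'' rectifiable paths with the distance between two paths equal to the sum of their lengths from the first bifurcation point to their endpoints, $\Gamma(X)$ is the set of loop classes, and the quotient map is the endpoint map. So your overall architecture is the intended one. One difference worth noting: the source works directly with non-backtracking representatives, which sidesteps your first acknowledged obstacle --- defining tree-equivalence by ``finitely many cancellations of backtracks'' is genuinely inadequate for rectifiable paths, whose backtracks can accumulate at all scales, and making the reduction converge is exactly the technical content of \cite{Berestovski_Paoul}; you correctly flag this as unproved.

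The genuine gap is the local freeness of $\Gamma(X)$. You try to deduce that every finitely generated subgroup $H\le\Gamma(X)$ is free from general properties of groups acting freely on $\mathbb R$-trees --- items (d) and (h) of Theorem~\ref{prop} plus a ping-pong argument on the minimal subtree. This cannot work: those properties hold for \emph{all} $\Lambda$-free groups, and by Rips' theorem (Theorem~\ref{th:Rips_0}) the finitely generated groups acting freely on $\mathbb R$-trees include free abelian groups of arbitrary finite rank and surface groups, which are CSA, contain no Baumslag--Solitar subgroups other than $\mathbb Z\times\mathbb Z$, and are not free. In particular Example~\ref{exam:ab_act} of this very paper exhibits a free action of $\mathbb Z^2$ on $\mathbb R$ itself, so two commuting hyperbolic isometries sharing an axis do not contradict freeness of the action, contrary to what your closing sentence suggests. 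To exclude $\mathbb Z^2$ and surface subgroups from $\Gamma(X)$ one must use the specific path-space structure --- for instance, that a geodesic of $\overline X$ containing the axis of $h\in H$ is itself an arc-length parametrized rectifiable path in $X$, and a dense orbit on a common axis would force that path to revisit the same germ on a dense set of parameters, which is incompatible with rectifiability --- or the approximation arguments of \cite{Berestovski_Paoul}. Without such an argument the clause ``locally free'' in the statement is not established; the theorem immediately following Theorem~\ref{th:BP} in the survey shows why this matters, since $\Gamma(X)$ is in general locally free but \emph{not} free and not a free product of surface and abelian groups, so local freeness is strictly stronger information than ``acts freely on an $\mathbb R$-tree''.
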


It is not hard to define an action of a free abelian group on an $\mathbb{R}$-tree.

\begin{example}
\label{exam:ab_act}
Let $A = \langle a,b \rangle$ be a free abelian group. Define an action of $A$ on $\mathbb{R}$
(which is an $\mathbb{R}$-tree) by embedding $A$ into $Isom(\mathbb{R})$ as follows
$$a \rightarrow t_1,\ \ b \rightarrow t_{\sqrt{2}},$$
where $t_\alpha(x) = x + \alpha$ is a translation. It is easy to see that
$$a^n b^m \rightarrow t_{n+m\sqrt{2}},$$
and since $1$ and $\sqrt{2}$ are rationally independent it follows that the action is free.
\end{example}

The following result was very important in the direction of classifying finitely generated
$\mathbb{R}$-free groups.

\begin{theorem} \cite{Morgan_Shalen:1991}
\label{th:Morgan_Shalen}
The fundamental group of a closed surface is $\mathbb{R}$-free, except for the non-orientable
surfaces of genus $1,2$ and $3$.
\end{theorem}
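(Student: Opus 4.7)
The plan is to prove the theorem in two parts: show that $\pi_1$ of the three listed non-orientable surfaces is \emph{not} $\mathbb{R}$-free, and construct an explicit free isometric action on some $\mathbb{R}$-tree for the fundamental group of every other closed surface.

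For the three excluded surfaces I would invoke the algebraic obstructions collected in Theorem \ref{prop}. First, $\pi_1(\mathbb{RP}^2) \cong \mathbb{Z}/2$ contains torsion, so part (c) rules it out. Next, the Klein bottle group $\pi_1(N_2) = \langle a,b \mid aba^{-1}b \rangle$ satisfies $aba^{-1} = b^{-1}$, so it contains the Baumslag--Solitar group $BS(1,-1)$, which is forbidden by part (h); equivalently, $a^2$ commutes with $b$ while $a$ does not, so the maximal abelian subgroup $\langle a^2, b\rangle \cong \mathbb{Z}^2$ is normalized by $a$ from outside, contradicting the CSA property (d). For $\pi_1(N_3) = \langle a, b, c \mid a^2 b^2 c^2\rangle$, using the alternative presentation $\langle x, y, c \mid [x,y]c^2\rangle$ (realizing $N_3$ as a torus with a crosscap) one locates an analogous $BS(1,-1)$-like configuration involving the one-sided curve $c$ and a two-sided curve whose square $c^2$ is central in a two-generator subgroup; this forces a failure of either (d), (h), or transitivity of commutation (e), and the verification is a direct computation in the presentation.

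For the remaining closed surfaces I would produce the free action directly. The torus case is handled by Example \ref{exam:ab_act}: embed $\pi_1(T^2) = \mathbb{Z}^2$ in $\mathrm{Isom}(\mathbb{R})$ using translations of length $1$ and $\sqrt{2}$. For an orientable surface $\Sigma_g$ with $g \geq 2$, I would equip $\Sigma_g$ with a minimal measured foliation $\mathcal{F}$ having no closed leaves (for instance, the stable foliation of a pseudo-Anosov diffeomorphism). The leaf space of the lifted foliation on $\widetilde{\Sigma}_g \cong \mathbb{H}^2$, metrized by the transverse measure, is the dual $\mathbb{R}$-tree $T_{\mathcal{F}}$, and $\pi_1(\Sigma_g)$ acts on $T_{\mathcal{F}}$ by isometries. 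Minimality plus the absence of closed leaves prevents any nontrivial deck transformation from stabilizing a leaf of the lift, so the action is free. For a closed non-orientable surface $N_g$ with $g \geq 4$, I would run the same dual-tree construction for a suitably chosen minimal measured foliation on $N_g$ itself, or alternatively lift a good measured foliation from the orientable double cover $\Sigma_{g-1}$ and descend.

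The main obstacle is the non-orientable case $g \geq 4$. One must either produce a measured foliation on $N_g$ whose dual tree carries a free $\pi_1(N_g)$-action, or pass through the double cover and control the deck involution carefully enough that its combination with $\pi_1(\Sigma_{g-1})$-elements yields no fixed points in the dual tree. The reason $g = 3$ is sharp is that every measured foliation on $N_3$ is forced to contain a leaf configuration corresponding to a one-sided simple closed curve whose carrying element necessarily fixes a point of any dual tree, which is the geometric shadow of the algebraic obstruction produced above; once the genus is at least $4$ there is enough room for a completely irrational foliation to avoid this. The orientable part of the construction is essentially classical in the spirit of Morgan--Shalen and Skora; the real novelty lies in the non-orientable argument and in proving sharpness at genus $3$.
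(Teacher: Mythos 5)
The paper itself offers no proof of this statement; it is quoted from Morgan--Shalen \cite{Morgan_Shalen:1991}, so your proposal can only be measured against that source. Your positive direction (dual $\mathbb{R}$-trees of suitable measured foliations, with the torus handled as in Example \ref{exam:ab_act}) is indeed the Morgan--Shalen route in outline, and your treatment of $N_1$ (torsion, excluded by (c) of Theorem \ref{prop}) and of $N_2$ (the Klein bottle group is $BS(1,-1)$, excluded by (d) or (h)) is correct.

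The genuine gap is your argument for $N_3$. The group $\pi_1(N_3)=\langle a,b,c\mid a^2b^2c^2\rangle$ contains the genus-$2$ orientable surface group with index $2$, hence is a torsion-free word-hyperbolic group. Such a group is CSA, is commutation-transitive, contains no $\mathbb{Z}\times\mathbb{Z}$ and no Baumslag--Solitar subgroup other than $\mathbb{Z}$, and every two-generated subgroup of it is free or cyclic; so none of the properties (c), (d), (e), (h), (i) of Theorem \ref{prop} fails for $\pi_1(N_3)$, and no ``direct computation in the presentation'' will locate a $BS(1,-1)$-like configuration, because there is none. The obstruction actually needed for $N_3$ is of a different nature: the generalization to tree-free groups of Lyndon's theorem that a relation $x^2y^2z^2=1$ forces $x$, $y$, $z$ to commute (proved for free actions on arbitrary $\Lambda$-trees via translation-length inequalities; see \cite{Ch1}). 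Applied to the defining relation of $\pi_1(N_3)$, this would force the group to be abelian, which it is not; hence $\pi_1(N_3)$ is not $\mathbb{R}$-free (indeed not $\Lambda$-free for any $\Lambda$). Relatedly, your closing ``sharpness at genus $3$'' paragraph inverts the logic: sharpness is supplied by this algebraic lemma, not by a classification of measured foliations on $N_3$; and your assertion that freeness of the dual action follows from ``minimality plus absence of closed leaves'' also needs the absence of cycles of saddle connections, to rule out nontrivial elements stabilizing singular leaves.
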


Then, in 1991 E. Rips completely classified finitely generated $\mathbb{R}$-free groups. The ideas
outlined by Rips were further developed by Gaboriau, Levitt and Paulin who gave a complete proof
of this classification in \cite{GLP}.

\begin{theorem}[Rips' Theorem]
\label{th:Rips_0}
Let $G$ be a finitely generated group acting freely and without inversions on an $\mathbb{R}$-tree.
Then $G$ can be written as a free product $G = G_1 \ast \cdots \ast G_n$ for some integer $n
\geqslant 1$, where each $G_i$ is either a finitely generated free abelian group, or the
fundamental group of a closed surface.
\end{theorem}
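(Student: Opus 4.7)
The plan is to apply the Rips machine, the fundamental tool for analyzing stable isometric actions of finitely generated groups on $\mathbb{R}$-trees. Since the action is free, arc stabilizers are trivial and the stability hypothesis is automatic. First I would pass to the (unique) minimal $G$-invariant subtree $T_0 \subseteq T$, which exists because $G$ is finitely generated, and observe that $G$ still acts freely on $T_0$. Fixing a finite generating set $\{s_1, \dots, s_k\}$ for $G$ and a basepoint $v \in T_0$, I would form the finite subtree $K = \operatorname{conv}\{v, s_1 v, \dots, s_k v\}$. The overlaps $K \cap s_i K$ equip $K$ with a finite system of partial isometries, equivalently a finite band complex $X$ whose leaf space reconstructs $T_0$; since $G$ is a quotient of $\pi_1(X)$ and the $G$-action lifts to the universal cover, the problem reduces to understanding the possible structure of such band complexes.

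Next I would run the Rips machine on $X$, iteratively applying the standard moves (collapsing short bands, absorbing bands into longer ones, excising surface-type subcomplexes) until the complex stabilizes. The key structural output, due to Bestvina--Feighn and Gaboriau--Levitt--Paulin, is a decomposition of the action as a graph of actions whose vertex actions fall into four standard types: simplicial, axial (an action on a line), interval-exchange (surface type), and Levitt (the exotic thin type). The crucial step specific to \emph{free} actions is eliminating Levitt components: a non-trivial thin component would force some non-trivial element to fix a non-degenerate arc, contradicting freeness. The convergence of the machine together with this exclusion step is the technical heart of the argument, and it is the principal obstacle.

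Finally, I would apply Bass--Serre theory to the resulting graph of actions. Since the $G$-action is free, all edge stabilizers vanish, so $G$ splits as a free product of the vertex groups. A simplicial free action yields a finitely generated free group; an axial free action embeds the vertex group into $\operatorname{Isom}(\mathbb{R})$, giving a finitely generated free abelian group as in Example \ref{exam:ab_act}; and a surface vertex action produces the fundamental group of a closed surface, consistent with Theorem \ref{th:Morgan_Shalen}. Absorbing free factors into abelian factors of rank one, one arrives at the asserted decomposition $G = G_1 \ast \cdots \ast G_n$ with each $G_i$ either finitely generated free abelian or a closed surface group.
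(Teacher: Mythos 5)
The paper itself offers no proof of this statement: it is quoted as a known result, attributed to Rips with the complete proof credited to Gaboriau--Levitt--Paulin \cite{GLP} (and Bestvina--Feighn \cite{BF} for the stable case), so there is nothing internal to compare your sketch against. Your overall route --- minimal subtree, finite band complex resolving the action, the Rips machine, classification of minimal components, then Bass--Serre theory with trivial edge groups --- is indeed the standard one.

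There is, however, a genuine error at the step you yourself single out as the technical heart. You claim that thin (Levitt/exotic) components are \emph{excluded} for free actions because such a component ``would force some non-trivial element to fix a non-degenerate arc.'' That is not true: thin minimal components are perfectly compatible with free actions. Levitt's exotic examples are precisely systems of isometries with independent generators (the pseudogroup-level analogue of freeness) whose dual $\mathbb{R}$-trees carry free actions of finitely generated \emph{free} groups, and no element fixes an arc. The correct mechanism is the opposite of an exclusion: in the thin case the band complex contains a naked band, which produces a splitting of the vertex group over the stabilizer of an arc; since the action is free that stabilizer is trivial, so one obtains a nontrivial \emph{free} splitting and must then run an induction on complexity (Grushko rank together with the accessibility/finiteness argument of \cite{GLP}, or the Bestvina--Feighn complexity of band complexes) to show the process terminates, with the thin pieces ultimately contributing free groups --- which are absorbed into the free abelian factors of rank one in the final statement. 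Without this induction and its termination argument your proof has no content precisely where the difficulty lies. A secondary point: Theorem \ref{th:Best_Feighn} as quoted requires $G$ finitely presented, so to handle a merely finitely generated $G$ you must either work with the pseudogroup/system-of-isometries formulation of \cite{GLP} throughout, or justify passing to finitely presented approximations of $G$ whose resolutions converge to the given action; saying ``$G$ is a quotient of $\pi_1(X)$'' does not by itself transfer the conclusion from $\pi_1(X)$ to $G$.
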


It is worth mentioning that
there are examples by Dunwoody [17] and Zastrow [38] of infinitely generated groups that are
not free products of fundamental groups of closed surfaces and abelian groups, but which act
freely on an R-tree. Zastrow's group G contains one of the two Dunwoody groups as a subgroup.
The other group is a Kurosh group.   

Berestovskii and Plaut proved the following result.
We first introduce necessary notation. If $(X,d)$ is the length space, then ${\mathbb R}$-tree $\overline X$ is defined as the space of based ``non-backtracking'' rectifiable paths
in $X$, where the distance between two paths is the sum of their lengths from the first
bifurcation point to their endpoints. The group $\Gamma (X) \subset \overline X$ is the subset of loops with
a natural group structure and the quotient mapping $\phi : \overline X\rightarrow X$ is the end-point map.
We will refer to $\overline X$ as the covering $\mathbb R$-tree of $X$. 

\begin{theorem} Let $X$ be $S_c$ (Sierpinski carpet), $M$ a complete Riemannian manifold $M_n$ of dimension $n=  2$, or the
Hawaiian earring $H$ with any compatible length metric $d$. Then $\Gamma  (X)$ is an infinitely generated,
locally free group that is not free and not a free product of surface groups and abelian groups, but
acts freely on the $\mathbb R$-tree $\overline X$. Moreover, the $\mathbb R$-tree $\overline X$ is a minimal invariant subtree with respect to  this action.
\end{theorem}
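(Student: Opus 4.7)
The plan is to apply Theorem~\ref{th:BP} directly: it provides the covering $\mathbb{R}$-tree $\overline{X}$, the endpoint map $\phi:\overline{X}\to X$, and the free isometric action of $\Gamma(X)\leq\mathrm{Isom}(\overline{X})$. Thus the existence of $\overline{X}$ and freeness are immediate, and the remaining work is to verify five things about $\Gamma(X)$: infinite generation, local freeness, non-freeness, the obstruction to being a free product of surface and abelian groups, and minimality of $\overline{X}$ as a $\Gamma(X)$-invariant subtree.

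For local freeness, I would take any finitely generated $H\leq\Gamma(X)$. By Theorem~\ref{prop}(a), $H$ also acts freely and without inversions on an $\mathbb{R}$-tree, so Rips' theorem~\ref{th:Rips_0} writes $H=H_1\ast\cdots\ast H_n$ with each $H_i$ free abelian or a closed surface group. The job is to kill both non-cyclic abelian factors and surface factors. For this one uses the concrete description of $\Gamma(X)$ as based non-backtracking rectifiable loops: any two non-trivial loops commuting in $\Gamma(X)$ would have to share arbitrarily long initial arcs (after passing to the covering tree), which together with Theorem~\ref{prop}(i) forces any two-generator subgroup to be free and excludes surface relations because a surface group contains non-commuting elements whose commutator is non-trivial while locally being forced to come from a tree-like concatenation pattern of the generating loops.

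For infinite generation and non-freeness, the key input is that each of $S_c$, a non-simply connected complete Riemannian $2$-manifold, and $H$ admits arbitrarily short homotopically non-trivial rectifiable loops accumulating at a point of $X$. These give a sequence $\gamma_k\in\Gamma(X)$ whose translation axes in $\overline{X}$ accumulate at the base point. A finitely generated free group acting freely on an $\mathbb{R}$-tree has discrete translation length spectrum on any bounded set, ruling out freeness; combined with local freeness (a finitely generated locally free group is free), this simultaneously forces infinite generation. The same small-loop sequence blocks any decomposition as a free product of surface and abelian groups: a Grushko/Kurosh-style argument over such a decomposition would concentrate all but finitely many $\gamma_k$ into a single conjugate of one factor, but the non-abelian surface case is excluded by local freeness and the abelian case is excluded because the $\gamma_k$ do not pairwise commute (their axes do not coincide).

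For minimality, every point of $\overline{X}$ is a non-backtracking based path, which is a limit of initial segments of loops in $\Gamma(X)$; hence any $\Gamma(X)$-invariant subtree containing the basepoint must accumulate to every point of $\overline{X}$, and closedness of subtrees forces equality. The main obstacle I expect is the free-product-decomposition step: local freeness makes a free-product-of-frees decomposition initially plausible, and one needs the uniform small-loop argument above — worked out in each of the three spaces by rather different geometric inputs (self-similarity for $S_c$, local Euclidean structure for $M_n$, explicit shrinking circles for $H$) — to extract a single obstruction that rules out any such splitting and thereby establishes the full statement.
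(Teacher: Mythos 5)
The paper does not actually prove this statement: it is quoted verbatim as a theorem of Berestovskii and Plaut \cite{Berestovski_Paoul}, so there is no internal proof to compare against, and your proposal has to stand on its own. It does not, because it rests on a misreading of what $\Gamma(X)$ is. As defined just before the theorem, $\overline{X}$ is the space of based non-backtracking \emph{rectifiable} paths and $\Gamma(X)$ is the set of such \emph{loops} under concatenation-and-cancellation; two distinct reduced loops are distinct group elements regardless of whether they are homotopic. Your arguments for infinite generation, non-freeness, and the failure of a surface-plus-abelian free product decomposition all hinge on ``arbitrarily short homotopically non-trivial rectifiable loops accumulating at a point of $X$.'' That premise is false for a general complete Riemannian $2$-manifold (a compact hyperbolic surface has positive systole; $\mathbb{R}^2$ and $S^2$ have no homotopically non-trivial loops at all), yet all such manifolds are covered by the theorem. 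The actual source of infinite generation is the abundance of short \emph{reduced} loops, which exist at every point of every space in the list, homotopically trivial or not.

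Two further steps would fail even with the right notion of loop. First, the claim that a finitely generated free group acting freely on an $\mathbb{R}$-tree has discrete translation length spectrum on bounded sets is false: Example \ref{exam:ab_act} already gives $\mathbb{Z}^2$ acting freely on $\mathbb{R}$ with translation lengths dense near $0$, and free groups admit free actions on $\mathbb{R}$-trees with dense orbits and length functions accumulating at $0$, so non-freeness cannot be extracted this way. Second, your derivation of local freeness from Rips' theorem cannot exclude non-cyclic free abelian factors, since $\mathbb{Z}^n$ is itself $\mathbb{R}$-free; ruling these out requires a specific argument about commuting reduced loops in $\Gamma(X)$, which you only gesture at. (Note also that local freeness is already part of the statement of Theorem \ref{th:BP}, so nothing needed to be re-proved there.) The minimality sketch is closest to workable -- showing every reduced path is an initial segment of a reduced loop does place it on a segment from the basepoint to an orbit point -- but you would still need to handle invariant subtrees not containing the basepoint, e.g.\ by arguing via the union of axes. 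As it stands, the proposal would need to be rebuilt from the correct definition of $\Gamma(X)$.
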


\subsection{Rips-Bestvina-Feighn machine}
\label{sec:rbf}

Suppose $G$ is a finitely presented group acting isometrically on an $\R$-tree $\Gamma$. We assume
the action to be non-trivial and minimal. Since $G$ is finitely presented there is a finite simplicial
complex $K$ of dimension at most $2$ such that $\pi_1(K) \simeq G$. Moreover, one can assume that
$K$ is a {\em band complex} with underlying {\em union of bands} which is a finite simplicial $\R$-tree $X$ with finitely many {\em bands}
of the type $[0,1] \times \alpha$, where $\alpha$ is an arc of the real line, glued to $X$ so that
$\{0\} \times \alpha$ and $\{1\} \times \alpha$ are identified with sub-arcs of edges of $X$.
Following \cite{BF} (the construction originally appears in \cite{MS})
one can construct a transversely measured lamination $L$ on $K$ and an equivariant map $\phi : \widetilde{K} \to
\Gamma$, where $\widetilde{K}$ is the universal cover of $K$, which sends leaves of the induced
lamination on $\widetilde{K}$ to points in $\Gamma$. The  complex $K$ together with the
lamination $L$ is called a band complex with $\widetilde{K}$ resolving the action of $G$ on
$\Gamma$.

Now, {\em Rips-Bestvina-Feighn machine} is a procedure which given a band complex $K$, transforms
it into another band complex $K'$ (we still have $\pi_1(K') \simeq G$), whose lamination splits into a
disjoint union of finitely many sub-laminations of several types - simplicial, surface, toral, thin
- and these sub-laminations induce a splitting of $K'$ into sub-complexes containing them. $K'$ can
be thought of as the ``normal form'' of the band complex $K$. Analyzing the structure of $K'$ and
its sub-complexes one can obtain some information about the structure of the group $G$.

In particular, in the case when the original action of $G$ on $\Gamma$ is {\em stable} one can
obtain a splitting of $G$. Recall that a non-degenerate (that is, containing more than one point)
subtree $S$ of $\Gamma$ is {\em stable} if for every non-degenerate subtree $S'$ of $S$, we have
$Fix(S') = Fix(S)$ (here, $Fix(I) \leqslant G$ consists of all elements which fix $I$ point-wise). The action
of $G$ on $\Gamma$ is {\em stable} if every non-degenerate subtree of $T$ contains a stable subtree.
We say  that a group $G$ {\em splits over} a subgroup $E$ is $G$ is either an amalgamated product or an HNN extension over $E$ (with the edge group $E$). 

\begin{theorem} \cite[Theorem 9.5]{BF}
\label{th:Best_Feighn}
Let $G$ be a finitely presented group with a nontrivial, stable, and minimal action on an
$\R$-tree $\Gamma$. Then either
\begin{enumerate}
\item[(1)] $G$ splits over an extension $E$-by-cyclic, where $E$ fixes a segment of $\Gamma$, or
\item[(2)] $\Gamma$ is a line. In this case, $G$ splits over an extension of the kernel of the
action by a finitely generated free abelian group.
\end{enumerate}
\end{theorem}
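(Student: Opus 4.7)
The plan is to invoke the machinery sketched in the preceding subsection to reduce the action of $G$ to a normal-form band complex and then read off a splitting from the resulting pieces. Since $G$ is finitely presented, I first build a band complex $K$ with $\pi_1(K)\simeq G$, a measured lamination $L$ on $K$, and an equivariant resolution $\phi\colon\widetilde{K}\to\Gamma$ sending leaves of the lifted lamination to points of $\Gamma$. After possibly modifying $K$ by elementary moves (collapses, subdivisions, slides, and the key Bestvina--Feighn processing moves that replace a band by a composition of thinner bands), I may assume that $\phi$ is a resolution in normal form and that $L$ is transversely measured with no invariant wandering arc.

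Next, I would run the Rips--Bestvina--Feighn machine on $K$ to obtain a decomposition of the normalized complex $K'$ into sub-complexes $K_1',\dots,K_m'$ whose laminations are, respectively, of simplicial, surface, toral, or thin type. This decomposition of $K'$ yields a graph-of-spaces structure on $K'$ and hence a graph-of-groups structure on $G$, in which each vertex group is $\pi_1$ of one of the pieces and each edge group corresponds to a component of the boundary between two pieces. The job is then to show that in each case the resulting edge group has the form required by the theorem. In a simplicial piece, edge groups fix arcs transverse to the lamination, so they fix segments of $\Gamma$ and give a splitting of the type in (1) with trivial cyclic quotient. In a surface piece, cutting along a simple closed leaf produces an edge group that is an extension of the arc-fixator $E$ by the cyclic group generated by the curve, again giving (1). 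A toral piece corresponds to the case when $\Gamma$ restricted to this sub-complex is a line, and the piece contributes an extension of the kernel of the line-action by a finitely generated free abelian group, which is where alternative (2) arises.

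The delicate step, and the main obstacle, is the thin case. Here Bestvina and Feighn show that stability forces the thin component to collapse, in the sense that a stable minimal action cannot be supported on a genuinely thin lamination after finitely many Rips-machine moves. The argument proceeds by extracting a nested sequence of sub-laminations and comparing their arc-fixators: stability forces the sequence to stabilize, and the terminating lamination is not thin but of one of the tame (simplicial, surface, toral) types. I would therefore spend the bulk of the proof carefully bookkeeping the transverse measures under the Rips moves, verifying that stability passes to the smaller subtrees that arise, and using this together with the Morse-theoretic estimates on complexity of band complexes to conclude that only finitely many moves are needed.

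Finally, I would combine the vertex-by-vertex analysis to extract a single edge of the resulting graph of groups whose edge group has the required form, distinguishing the two conclusions of the theorem according to whether at least one non-toral piece appears (giving (1)) or all pieces are toral and the combined action is on a line (giving (2)). The existence of at least one edge — i.e.\ the nontriviality of the splitting — follows from nontriviality and minimality of the original action together with the fact that the resolution $\phi$ is not constant on $\widetilde K$.
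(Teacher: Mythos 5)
The survey does not prove this theorem at all: it is quoted verbatim from Bestvina--Feighn \cite[Theorem 9.5]{BF}, and the surrounding text only sketches the machine. So the only meaningful comparison is between your outline and the actual argument in \cite{BF}. Your overall strategy --- resolve the action by a band complex, run the Rips--Bestvina--Feighn machine, decompose the normalized complex into minimal components of simplicial, surface, toral and thin type, and extract a splitting from each type --- is exactly the right (and essentially the only known) route.

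However, your treatment of what you correctly identify as the delicate step is wrong. You claim that ``stability forces the thin component to collapse,'' i.e.\ that a stable minimal action cannot be supported on a thin lamination after finitely many moves, and you propose to prove this by a stabilizing chain of sub-laminations. This is false: thin (exotic/Levitt) components do occur for stable actions, and no amount of processing removes them. In \cite{BF} stability is used for a different purpose --- to control arc stabilizers and guarantee that the relevant fixators are well defined and behave functorially under the moves --- while the thin components themselves are handled directly: a thin minimal component always contains a ``naked band,'' and cutting along it produces a splitting of $G$ over the stabilizer $E$ of an arc of $\Gamma$, which is conclusion (1) with trivial cyclic quotient. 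So the conclusion you need from the thin case is still true, but the argument you sketch for it would fail, and the bookkeeping you propose (showing the thin type terminates into a tame type) is not how the case is closed. A secondary inaccuracy: a toral (axial) component does not by itself mean $\Gamma$ is a line; if $\Gamma$ is not a line, a toral component still yields a splitting of type (1) over an extension of an arc stabilizer by a cyclic group, and case (2) is reserved for the situation where the whole tree is a line and the splitting is read off from the homomorphism $G \to Isom(\R)$ with finitely generated free abelian image.
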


The key ingredient of the Rips-Bestvina-Feighn machine is a set of particular operations, called
{\em moves}, on band complexes applied in a certain order. These operations originate from
the work of Makanin \cite{Mak82} and Razborov \cite{Razborov1} that ideas of
Rips are built upon.

Observe that the group $G$ in Theorem \ref{th:Best_Feighn} must be finitely presented. To obtain a
similar result about finitely generated groups acting on $\R$-trees one has to further restrict
the action. An action of a group $G$ on an $\R$-tree $\Gamma$ satisfies the {\em ascending
chain condition} if for every decreasing sequence
$$I_1 \supset I_2 \supset \cdots \supset I_n \supset \cdots$$
of arcs in $\Gamma$ which converge into a single point, the corresponding sequence
$$Fix(I_1) \subset Fix(I_2) \subset \cdots \subset Fix(I_n) \subset \cdots$$
stabilizes.

\begin{theorem} \cite{Guirardel:2008}
\label{th:Guirardel_0}
Let $G$ be a finitely generated group with a nontrivial minimal action on an $\R$-tree $\Gamma$.
If
\begin{enumerate}
\item[(1)] $\Gamma$ satisfies the ascending chain condition,
\item[(2)] for any unstable arc $J$ of $\Gamma$,
\begin{enumerate}
\item[(a)] $Fix(J)$ is finitely generated,
\item[(b)] $Fix(J)$ is not a proper subgroup of any conjugate of itself, that is, if $Fix(J)^g
\subset Fix(J)$ for some $g \in G$ then $Fix(J)^g = Fix(J)$.
\end{enumerate}
\end{enumerate}
Then either
\begin{enumerate}
\item[(1)] $G$ splits over a subgroup $H$ which is an extension of the stabilizer of an arc of
$\Gamma$ by a cyclic group, or
\item[(2)] $\Gamma$ is a line.
\end{enumerate}
\end{theorem}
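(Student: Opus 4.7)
The plan is to reduce the finitely generated case to the finitely presented one (Theorem \ref{th:Best_Feighn}) by an exhaustion argument, using the ACC hypothesis (1) together with the conditions (2a)--(2b) on unstable-arc fixers to control the limit. Fix a finite generating set $S$ of $G$, choose an increasing exhaustion $R_1 \subset R_2 \subset \cdots$ by finite subsets of the full set of relations $R$, and set $G_n = \langle S \mid R_n\rangle$. Each $G_n$ is finitely presented and $G$ is their direct limit under natural surjections $\pi_n : G_n \twoheadrightarrow G$. The $G$-action on $\Gamma$ pulls back via $\pi_n$ to a $G_n$-action; replace $\Gamma$ by a minimal $G_n$-invariant subtree $\Gamma_n$, which remains non-trivial since $\pi_n$ is surjective and the original $G$-action is non-trivial and minimal.

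One would like to apply Theorem \ref{th:Best_Feighn} to each action of $G_n$ on $\Gamma_n$, but Bestvina--Feighn require stability, which is not assumed. Guirardel's key technical step is to re-run the Rips machine under the weaker hypothesis of ACC: build a finite band complex $K_n$ with transversely measured lamination resolving the $G_n$-action, then decompose $K_n$ into its simplicial, surface, axial (toral), and thin (Levitt) sub-complexes. Under ACC the thin pieces cannot absorb the whole action, because minimal components of Levitt type are characterised by infinite strictly ascending chains of arc-fixers, in direct conflict with hypothesis (1). Consequently, for each $n$, at least one non-thin component persists and produces a splitting of $G_n$ over a subgroup which is an extension of an arc stabilizer by a cyclic group.

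It remains to pass these splittings to the limit over $n$. Condition (2a), finite generation of unstable-arc fixers, implies that any such fixer is the $\pi_n$-image of an already finitely generated subgroup of $G_n$ for $n$ large, so the edge groups of the $G_n$-splittings converge to genuine subgroups of $G$ rather than becoming ill-defined in the colimit. Condition (2b), that no fixer is a proper subgroup of any of its conjugates, rules out exactly the ascending HNN-type pathology that would otherwise obstruct descending a splitting through $\pi_n$. Combining the two, the $G_n$-splittings stabilise to a splitting of $G$ of the required form, giving conclusion (1). The only remaining possibility is that in the limit every non-thin component has been absorbed into a single axial piece on which $G$ acts by translations, forcing $\Gamma$ to be a line and giving conclusion (2).

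The main obstacle is the middle step: running a meaningful Rips-style decomposition without the stability hypothesis of Theorem \ref{th:Best_Feighn}. This demands reworking the normal-form analysis of the band complex so that every use of ``fixers eventually stabilise along nested chains'' is replaced by a direct appeal to (1) plus (2a)--(2b). A secondary but non-trivial difficulty is the Bass-Serre bookkeeping across the direct system $\{G_n\}$: one must check that the finite-stage splittings assemble into a genuine graph-of-groups decomposition of $G$ and not into a colimit with pathological, non-finitely-generated vertex or edge data -- it is precisely here that (2a) controls the size of edge groups and (2b) forbids the strictly ascending towers of conjugates that could spoil the descent.
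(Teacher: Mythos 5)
First, a remark on the comparison you were asked for: the survey does not prove Theorem \ref{th:Guirardel_0} at all --- it is stated as a quotation of Guirardel's result, so there is no in-paper argument to measure your attempt against. Judged on its own terms, your outline does capture the broad strategy of Guirardel's actual proof: approximate the action by actions of finitely presented groups surjecting onto $G$ (in Guirardel's paper these arise as geometric actions resolving the given one, which is essentially your exhaustion $G_n = \langle S \mid R_n\rangle$ in a more structured form), decompose each finite-stage action Rips-style, and use the ascending chain condition together with (2a)--(2b) to push a splitting through the direct limit. So the skeleton is right.

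As a proof, however, it has a genuine gap, and you flag it yourself: the entire mathematical content of the theorem lives in exactly the two steps you defer. Running the Rips machine without the stability hypothesis of Theorem \ref{th:Best_Feighn} is not a ``reworking of the normal-form analysis''; Guirardel has to set up a different formalism (systems of isometries, geometric actions, a transverse decomposition into simplicial, Seifert, axial and exotic parts, and strong limits thereof) and prove that the decomposition is compatible with the limit --- none of which is supplied here. Likewise, the descent of the finite-stage splittings to $G$ is asserted (``(2b) rules out exactly the ascending HNN-type pathology'') rather than argued; this is precisely where hypotheses (1), (2a), (2b) must actually be used, and your sketch gives no mechanism. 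There is also one concretely wrong claim: thin (Levitt) minimal components are not ``characterised by infinite strictly ascending chains of arc-fixers'' and are not excluded by the ascending chain condition --- they occur for perfectly stable actions (even free ones), and in Theorem \ref{th:Best_Feighn} they are one of the \emph{sources} of the splitting in conclusion (1), not an obstruction to be removed. Your dichotomy ``some non-thin component persists, hence a splitting'' therefore mislocates where the splitting comes from and would need to be replaced by the correct case analysis over all component types.
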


Now, we will discuss some applications of the above results which are based on the
construction outlined in \cite{Bestvina:1988} and \cite{Paulin:1988} making possible to obtain
isometric group actions on $\R$-trees as Gromov-Hausdorff limits of actions on hyperbolic spaces.
All the details can be found in \cite{Best}.

Let $(X, d_X)$ be a metric space equipped with an isometric action of a group $G$ which can be viewed as
a homomorphism $\rho: G \to Isom(X)$. Assume that $X$ contains a point $\varepsilon$ which is not
fixed by $G$. In this case, we call the triple $(X, \varepsilon, \rho)$ a {\em based $G$-space}.

Observe that given a based $G$-space $(X, \varepsilon, \rho)$ one can define a pseudometric $d =
d_{(X, \varepsilon, \rho)}$ on $G$ as follows
$$d(g,h) = d_X(\rho(g) \cdot \varepsilon, \rho(h) \cdot \varepsilon).$$
Now, the set ${\cal D}$ of all non-trivial pseudometrics on $G$ equipped with compact open topology and then  taken up to rescaling by positive
reals (projectivized), forms a topological space and we say that a sequence $(X_i, \varepsilon_i, \rho_i),\ i \in \N$
of based $G$-spaces {\em converges} to the based $G$-space $(X, \varepsilon, \rho)$ and write
$$\lim_{i \to \infty} (X_i, \varepsilon_i, \rho_i) = (X, \varepsilon, \rho)$$
if we have $[d_{(X_i, \varepsilon_i, \rho_i)}] \to [d_{(X, \varepsilon, \rho)}]$ for the projectivized equivariant pseudometrics in ${\cal D}.$ Now, the following
result is the main tool in obtaining isometric group actions on $\R$-trees from actions on
Gromov-hyperbolic spaces.

\begin{theorem} \cite[Theorem 3.3]{Best}
\label{th:Paulin}
Let $(X_i, \varepsilon_i, \rho_i), i \in \N$ be a convergent sequence of based $G$-spaces. Assume
that
\begin{enumerate}
\item[(1)] there exists $\delta > 0$ such that every $X_i$ is $\delta$-hyperbolic,
\item[(2)] there exists $g \in G$ such that the sequence $d_{X_i}(\varepsilon_i, \rho_i(g) \cdot
\varepsilon_i)$ is unbounded.
\end{enumerate}
Then there is a based $G$-space $(\Gamma, \varepsilon)$ which is an $\R$-tree and an isometric
action $\rho : G \to Isom(\Gamma)$ such that $(X_i, \varepsilon_i, \rho_i) \to (\Gamma, \varepsilon,
\rho)$.
\end{theorem}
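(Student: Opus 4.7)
The plan is to build $(\Gamma,\varepsilon)$ as a pointed Gromov--Hausdorff (or ultralimit) of rescaled copies of the $X_i$. First I would use the assumed convergence $[d_{(X_i,\varepsilon_i,\rho_i)}]\to[d_{(X,\varepsilon,\rho)}]$ in the projectivized space of pseudometrics, together with hypothesis~(2), to extract positive scaling factors $\lambda_i\to\infty$ such that the unprojectivized pseudometrics $\tfrac{1}{\lambda_i}d_{(X_i,\varepsilon_i,\rho_i)}$ converge pointwise on $G\times G$ to a finite pseudometric. Hypothesis~(2) is essential here: it forces $\lambda_i\to\infty$, which in turn lets us flatten the Gromov hyperbolicity constant of $X_i$ in the rescaled metric $d_{X_i}/\lambda_i$, since each $(X_i,d_{X_i}/\lambda_i)$ is $(\delta/\lambda_i)$-hyperbolic.

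Next, I would consider the pointed spaces $(X_i,\varepsilon_i,d_{X_i}/\lambda_i)$ and pass to a limit. The cleanest route is via an ultralimit with respect to a non-principal ultrafilter $\omega$ on $\mathbb{N}$: let
\[
\Gamma=\bigl\{(x_i)\in\prod_i X_i\ \big|\ \lim_\omega d_{X_i}(x_i,\varepsilon_i)/\lambda_i<\infty\bigr\}\big/\!\sim,
\]
where $(x_i)\sim(y_i)$ iff $\lim_\omega d_{X_i}(x_i,y_i)/\lambda_i=0$, equipped with the obvious limit metric and base point $\varepsilon=[\varepsilon_i]$. An ultralimit of pointed geodesic spaces is geodesic, and an ultralimit of uniformly $\delta_i$-hyperbolic spaces with $\delta_i\to 0$ is $0$-hyperbolic. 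By Proposition~\ref{pr:eq_def}, any geodesic $0$-hyperbolic space is an $\mathbb{R}$-tree (this is the crucial conceptual step, converting an asymptotic thinness-of-triangles condition into axioms (T1)--(T3)).

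For the action, define $\rho(g)\cdot[x_i]=[\rho_i(g)\cdot x_i]$; this is well-defined on $\Gamma$ precisely because, for each fixed $g$, convergence of the projectivized pseudometrics on $G$ gives a uniform bound on $d_{X_i}(\rho_i(g)\varepsilon_i,\varepsilon_i)/\lambda_i$, keeping translates of the basepoint at finite ultralimit distance. Each $\rho(g)$ is then an isometry because $\rho_i(g)$ is, and $g\mapsto\rho(g)$ is a homomorphism by construction. By design, $d_\Gamma(\rho(g)\varepsilon,\rho(h)\varepsilon)=\lim_\omega d_{X_i}(\rho_i(g)\varepsilon_i,\rho_i(h)\varepsilon_i)/\lambda_i$, which matches the limit of the given pseudometrics; this delivers the required convergence $(X_i,\varepsilon_i,\rho_i)\to(\Gamma,\varepsilon,\rho)$. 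Non-triviality of $\Gamma$ (i.e.\ that $\varepsilon$ is not fixed by all of $G$) follows from hypothesis~(2), which produces $g$ with $d_\Gamma(\varepsilon,\rho(g)\varepsilon)>0$ after rescaling.

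The main obstacle is the passage to the limit itself: one must argue simultaneously that the limit is non-empty and sufficiently rich (enough points to contain $G$-translates of $\varepsilon$ and the geodesics between them), that it inherits geodesicity, and that the rescaled hyperbolicity constants actually tend to zero along the chosen $\lambda_i$. The ultralimit formalism handles non-emptiness and geodesicity automatically, but one still has to choose the $\lambda_i$ carefully from the projective convergence so that $\lambda_i$ grows at the right rate (e.g., $\lambda_i$ proportional to $\sup_{g\in S}d_{X_i}(\varepsilon_i,\rho_i(g)\varepsilon_i)$ over a finite exhausting set $S\subset G$) to simultaneously make the limiting pseudometric nontrivial and finite; this is where hypothesis~(2) is used in a non-obvious quantitative way. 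An alternative, more classical route via Arzel\`a--Ascoli on the countable set of $G$-translates of $\varepsilon_i$ and geodesics between them gives the same conclusion but needs passage to a subsequence.
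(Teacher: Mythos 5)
The paper states this result only with a citation to Bestvina's survey and gives no proof of its own, so there is nothing internal to compare against; your proposal is a reconstruction of the standard argument. Your route --- rescale by $\lambda_i\to\infty$, take a pointed ultralimit, observe that the rescaled hyperbolicity constants $\delta/\lambda_i$ tend to $0$, and conclude that the limit is a geodesic $0$-hyperbolic space, hence an $\R$-tree --- is exactly the asymptotic-cone version that the paper itself alludes to in the sentence following the statement. The cited proof is organized slightly differently: it works directly with the limit pseudometric $d$ on $G$ (which is $0$-hyperbolic as a pointwise limit of $\delta/\lambda_i$-hyperbolic pseudometrics), quotients out the null pairs, and invokes the standard fact that any $0$-hyperbolic metric space embeds isometrically into an $\R$-tree, with $G$ acting by left translations on the union of segments joining orbit points. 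The two approaches deliver the same object; the ultralimit packaging automates non-emptiness and completeness, while the pseudometric-on-$G$ packaging avoids ultrafilters and, importantly, avoids any geodesicity assumption on the $X_i$. That last point is the one real caveat in your write-up: $\delta$-hyperbolicity here is the four-point Gromov-product condition, which does not presuppose that the $X_i$ are geodesic, so your step ``an ultralimit of pointed geodesic spaces is geodesic'' (needed before you can apply Proposition~\ref{pr:eq_def}) uses a hypothesis absent from the statement. Either restrict attention to the orbits $G\cdot\varepsilon_i$, whose ultralimit is a $0$-hyperbolic metric space that you then embed into an $\R$-tree, or add the standing assumption that the $X_i$ are length spaces, as the source implicitly does. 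A second, smaller point: hypothesis (2) gives $\lambda_i\to\infty$ only along a subsequence a priori; since the projectivized pseudometrics are assumed to converge for the full sequence, the limit is nonetheless well defined and its $0$-hyperbolicity can be verified along that subsequence, but this deserves a sentence. With these two repairs your argument is correct.
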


In fact, the above theorem does not guarantee that the limiting action of $G$ on $\Gamma$ has no
global fixed points. But in the case when $G$ is finitely generated and each $X_i$ is proper (closed
metric balls are compact), it is possible to choose base-points in $\varepsilon_i \in X_i$ to make
the action of $G$ on $\Gamma$ non-trivial (see \cite[Proposition 3.8, Theorem 3.9]{Best}).
Moreover, one can retrieve some information about stabilizers of arcs in $\Gamma$ (see
\cite[Proposition 3.10]{Best}).

Note that Theorem \ref{th:Paulin} can also be interpreted in terms of asymptotic cones (see
\cite{Drutu_Sapir:2005, Drutu_Sapir:2008} for details).

\smallskip

The power of Theorem \ref{th:Paulin} becomes obvious in particular when a finitely generated group
$G$ has infinitely many pairwise non-conjugate homomorphisms $\phi_i : G \to H$ into a word-hyperbolic
group $H$. In this case, each $\phi_i$ defines an action of $G$ on the Cayley graph $X$ of $H$ with
respect to some finite generating set. Now, one can define $X_i$ to be $X$ with a word metric rescaled so that
the sequence of $(X_i, \varepsilon_i, \rho_i), i \in \N$ satisfies the requirements of Theorem \ref{th:Paulin} and
thus obtain a non-trivial isometric action of $G$ on an $\R$-tree. Many results about word-hyperbolic
groups were obtained according to this scheme, for example, the following classical result.

\begin{theorem} \cite{Paulin:1991}
\label{th:Paulin_2}
Let $G$ be a word-hyperbolic group such that the group of its outer automorphisms $Out(G)$ is infinite.
Then $G$ splits over a virtually cyclic group.
\end{theorem}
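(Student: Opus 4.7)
The plan is to realize the infinite family of outer automorphisms as an unbounded sequence of isometric actions on the Cayley graph of $G$, rescale to obtain a limiting non-trivial isometric action on an $\R$-tree, and then feed this action into the Rips-Bestvina-Feighn machinery (Theorem \ref{th:Best_Feighn}) to extract a splitting over a virtually cyclic subgroup.

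First I would pick a finite generating set $S$ for $G$ and let $X$ be its Cayley graph with word metric $d_X$; since $G$ is word-hyperbolic, $X$ is $\delta$-hyperbolic for some $\delta \geqslant 0$. Choose representatives $\phi_i \in \mathrm{Aut}(G)$ of infinitely many distinct classes in $\mathrm{Out}(G)$. Each $\phi_i$ yields an isometric action $\rho_i : G \to \mathrm{Isom}(X)$ by precomposing the standard left action with $\phi_i$. For each $i$, pick a basepoint $\varepsilon_i \in X$ that minimizes the quantity $\lambda_i := \max_{s \in S} d_X(\varepsilon_i,\rho_i(s)\cdot\varepsilon_i)$. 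A standard pigeonhole argument shows $\lambda_i \to \infty$: if the $\lambda_i$ stayed bounded then the elements $\phi_i(s)$ for $s \in S$ would all lie in finitely many balls around $1$, and thus infinitely many $\phi_i$ would coincide up to an inner automorphism, contradicting the choice of representatives in $\mathrm{Out}(G)$.

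Next I would rescale $X$ by $1/\lambda_i$ to obtain based $G$-spaces $(X_i,\varepsilon_i,\rho_i)$ which remain $(\delta/\lambda_i)$-hyperbolic, hence uniformly $\delta$-hyperbolic in the limit (indeed $0$-hyperbolic), and for which there exists $g \in G$ with $d_{X_i}(\varepsilon_i,\rho_i(g)\cdot\varepsilon_i)$ unbounded (one of the generators realizes the scaled maximum $1$, so translation lengths of other elements need control; here the standard Bestvina-Paulin argument produces such a $g$ from the proper discontinuity of the unrescaled action together with the choice of $\varepsilon_i$). Applying Theorem \ref{th:Paulin} yields, after passing to a subsequence, a limit based $G$-space $(\Gamma,\varepsilon)$ which is an $\R$-tree with an isometric $G$-action $\rho$. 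By the choice of $\varepsilon_i$ as translation-length minimizers and the properness of $X$, one checks (as in \cite[Proposition 3.8, Theorem 3.9]{Best}) that $\rho$ is non-trivial and may be taken minimal.

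The main obstacle is extracting enough control on arc stabilizers to apply Theorem \ref{th:Best_Feighn}: I would need to show the action is \emph{stable} and that stabilizers of non-degenerate arcs are virtually cyclic. This uses the word-hyperbolicity of $G$ in a crucial way: in a $\delta$-hyperbolic group, an element whose action fixes a long segment in the rescaled Cayley graph must have a translation axis with small translation length relative to $\lambda_i$, and any two such elements nearly commute in an appropriate sense; passing to the limit, the stabilizer of any non-degenerate arc $J \subset \Gamma$ is virtually cyclic, and stabilizers of tripods are finite (compare \cite[Proposition 3.10]{Best}). Stability is then automatic since the chain of arc stabilizers along a nested family of arcs is an ascending chain of virtually cyclic subgroups with uniformly bounded torsion, hence eventually constant. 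Finally, since $G$ is word-hyperbolic it is finitely presented, so Theorem \ref{th:Best_Feighn} applies: either $\Gamma$ is a line, in which case $G$ splits over the kernel-by-$\mathbb{Z}$ (still virtually cyclic under the hyperbolicity constraint, since the kernel is then finite), or $G$ splits over an extension of a virtually cyclic arc stabilizer by $\Z$, which is again virtually cyclic. In either case $G$ splits over a virtually cyclic subgroup, as claimed.
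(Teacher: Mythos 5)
Your proposal is correct and follows essentially the same route the paper indicates: the paragraph preceding Theorem \ref{th:Paulin_2} sketches exactly this Bestvina--Paulin scheme (non-inner automorphisms give actions on the Cayley graph, rescale, pass to a limit $\R$-tree via Theorem \ref{th:Paulin}, then invoke the Rips--Bestvina--Feighn machine), citing \cite{Paulin:1991} for the details. Your write-up in fact supplies more of the argument (the pigeonhole bound $\lambda_i \to \infty$, virtually cyclic arc stabilizers, stability, and the reduction of the edge groups in Theorem \ref{th:Best_Feighn} to virtually cyclic subgroups using that elementary subgroups of a hyperbolic group are virtually cyclic) than the survey does.
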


Combined with the {\em shortening argument} due to Rips and Sela \cite{RipsSela:1994} this scheme
gives many other results about word-hyperbolic groups, for example, the theorems below.

\begin{theorem} \cite{RipsSela:1994}
\label{th:RipsSela}
Let $G$ be a torsion-free freely indecomposable word-hyperbolic group. Then the internal automorphism
group (that consists of automorphisms obtained by compositions of Dehn twists and inner automorphisms)  of $G$ has finite index in $Aut(G)$.
\end{theorem}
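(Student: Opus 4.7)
The plan is to argue by contradiction, combining the Bestvina--Paulin limiting construction (Theorem \ref{th:Paulin}), the Rips--Bestvina--Feighn machine (Theorem \ref{th:Best_Feighn}), and a delicate shortening argument due to Rips and Sela. Let $I\le\mathrm{Aut}(G)$ denote the internal automorphism subgroup. Assuming $[\mathrm{Aut}(G):I]=\infty$, pick representatives $\phi_i\in\mathrm{Aut}(G)$, $i\in\N$, of pairwise distinct cosets. Fix a finite generating set $S$ for $G$, let $X$ be the corresponding Cayley graph, and define the displacement
$$\mu(\psi)=\min_{x\in X}\max_{s\in S}d_X\bigl(x,\psi(s)\cdot x\bigr).$$
Precomposing each $\phi_i$ with a suitable inner automorphism (which lies in $I$ and so does not change the coset) we may assume $\mu(\phi_i)$ is attained at a vertex $\varepsilon_i\in X$.

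First I would observe that, since $G$ is word-hyperbolic and torsion-free, there are only finitely many $\mathrm{Inn}(G)$-classes, hence only finitely many $I$-classes, of automorphisms with bounded displacement: an automorphism is determined by its values on $S$, and these values lie in a ball of radius $\mu(\phi_i)$ around $\varepsilon_i$. Since the $\phi_i$ are pairwise $I$-inequivalent we conclude $\mu(\phi_i)\to\infty$. Rescaling $X$ by $1/\mu(\phi_i)$ produces based $G$-spaces $(X_i,\varepsilon_i,\rho_i)$, where $\rho_i(g)$ is left-multiplication by $\phi_i(g)$; these are $\delta_i$-hyperbolic with $\delta_i\to 0$ and have unit displacement at $\varepsilon_i$. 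Passing to a subsequence, Theorem \ref{th:Paulin} yields a limiting isometric action of $G$ on an $\R$-tree $\Gamma$. Standard arguments from \cite{Best} show that this action is non-trivial, minimal and stable, with infinite cyclic arc stabilizers (virtually cyclic arc stabilizers occur because $G$ is hyperbolic, and torsion-freeness upgrades this to cyclic). Since $G$ is finitely presented, Theorem \ref{th:Best_Feighn} delivers a splitting of $G$ over a cyclic-by-cyclic, hence cyclic, subgroup; the freely indecomposable hypothesis rules out a trivial edge group, so we obtain a non-trivial splitting of $G$ as an amalgam $A*_C B$ or HNN extension $A*_C$ over an infinite cyclic subgroup $C=\langle c\rangle$.

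The final and main obstacle is the shortening step: one must show that, for all sufficiently large $i$, there exists an element $\iota_i\in I$ (a product of a Dehn twist along $C$ and an inner automorphism) with $\mu(\iota_i\phi_i)<\mu(\phi_i)$. The idea is that the convergence $(X_i,\varepsilon_i,\rho_i)\to(\Gamma,\varepsilon,\rho)$ lets one locate, in $X_i$ near an approximate axis of $C$, a power $c^{k_i}$ whose Dehn twist reflects a portion of the $\phi_i$-images of $S$ across the cyclic edge group and strictly decreases the weighted length $\sum_{s\in S}d_{X_i}(\varepsilon_i,\phi_i(s)\varepsilon_i)$. Iterating this move, each $\phi_i$ is $I$-equivalent to some $\phi_i'$ with $\mu(\phi_i')$ bounded by a constant depending only on $G$ and the splitting. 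Combined with the first-paragraph finiteness observation, infinitely many $\phi_i$ then lie in the same $I$-coset, contradicting our choice of representatives. I expect the hardest part to be implementing this single Dehn-twist shortening precisely: it requires uniform control of how $X_i$ approximates $\Gamma$ in a neighborhood of the axis of $C$ and a geometric argument producing the twist exponent $k_i$. This is the substance of the Rips--Sela machinery and is the reason the finite-index refinement really goes beyond Theorem \ref{th:Paulin_2}, which only produced the existence of a single splitting.
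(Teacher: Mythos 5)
Your outline follows exactly the route the paper indicates for this result: the survey offers no proof of its own, merely stating that Theorem \ref{th:RipsSela} follows from the Bestvina--Paulin limiting scheme of Theorem \ref{th:Paulin} combined with the Rips--Sela shortening argument, which is precisely the architecture you describe (finiteness of bounded-displacement classes, a rescaled limit action on an $\R$-tree with cyclic arc stabilizers, a cyclic splitting via Theorem \ref{th:Best_Feighn}, and Dehn-twist shortening). Your sketch is a correct expansion of that outline, with the genuinely hard step --- the uniform Dehn-twist shortening --- honestly flagged rather than carried out, exactly as in the source \cite{RipsSela:1994} to which the paper defers.
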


\begin{theorem} \cite{Gromov:1987, Sela:1997}
\label{th:GromovSela}
Let $G$ be a finitely presented torsion-free freely indecomposable group and let $H$ be a
word-hyperbolic group. Then there are only finitely many conjugacy classes of subgroups of $G$
isomorphic to $H$.
\end{theorem}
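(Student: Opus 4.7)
The plan is to argue by contradiction using a Bestvina--Paulin-style limit construction (Theorem \ref{th:Paulin}), Sela's shortening argument, and the Rips--Bestvina--Feighn machine (Theorem \ref{th:Best_Feighn}), this time with the source group being the hyperbolic $H$ and the target the (non-hyperbolic in general) freely indecomposable $G$. Suppose there are infinitely many pairwise $G$-non-conjugate subgroups $H_i \le G$ with $H_i \cong H$. Choose isomorphisms $\alpha_i \colon H \to H_i$ and set $\phi_i = \iota_i \circ \alpha_i \colon H \hookrightarrow G$, where $\iota_i$ is the inclusion; then $H_i$ and $H_j$ are $G$-conjugate precisely when $\phi_j = c_g \circ \phi_i \circ \sigma$ for some $g \in G$ and $\sigma \in \mathrm{Aut}(H)$. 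So the hypothesis says the $\phi_i$ represent infinitely many double cosets in $\mathrm{Inn}(G) \backslash \mathrm{Hom}(H,G) / \mathrm{Aut}(H)$.

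Fix finite generating sets $S_H \subset H$ and $S_G \subset G$ and, within each double coset, replace $\phi_i$ by a representative that minimizes $\lambda_i := \max_{s \in S_H} |\phi_i(s)|_G$. If the $\lambda_i$ were bounded, only finitely many reduced representatives would be possible, contradicting the assumption. So $\lambda_i \to \infty$. Rescale the Cayley graph $X_G$ by $1/\lambda_i$ and view $H$ as acting on $X_G^{(i)} := (X_G, d_{X_G}/\lambda_i)$ via $\phi_i$, with basepoint $e_G$. Passing to a subsequence, the based $H$-spaces $(X_G^{(i)}, e_G, \phi_i)$ converge in the equivariant Gromov--Hausdorff topology to a based $H$-space $(\Gamma, \varepsilon, \rho)$; by the minimization some $s \in S_H$ acts with translation length $1$, so the limit action of $H$ is nontrivial.

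The crucial reduction is to identify $\Gamma$ as an $\mathbb{R}$-tree: since $G$ is not assumed hyperbolic, Theorem \ref{th:Paulin} does not apply to $X_G$ itself. Instead one pulls back the $X_G$-metric via $\phi_i$ to pseudometrics $d_i$ on $H$, and one checks that because $H$ is $\delta$-hyperbolic for its own word metric and each $\phi_i$ is injective, every rescaled pseudometric $d_i/\lambda_i$ is dominated by and asymptotically comparable to a $(\delta/\lambda_i)$-hyperbolic metric. In the limit this yields a $0$-hyperbolic pseudometric on $H$, whose associated metric space $\Gamma$ is an $\mathbb{R}$-tree carrying an isometric $H$-action. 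The shortening procedure ensures this action is minimal and \emph{stable} in the sense of Theorem \ref{th:Best_Feighn}, so $H$ either splits over an extension of an arc stabilizer by a cyclic group, or $\Gamma$ is a line. A JSJ analysis of the hyperbolic group $H$ matches this splitting against the abstract structure of $H$, and the freely indecomposability of $G$ is invoked to prevent the limit action from factoring through an accidental free decomposition; this forces two indices $i \neq j$ for which $\phi_j = c_g \circ \phi_i \circ \sigma$, contradicting pairwise $G$-non-conjugacy of the $H_i$.

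The main obstacle is the identification of $\Gamma$ as an $\mathbb{R}$-tree: all the hyperbolicity resides in the \emph{source} group $H$, while the ambient space $X_G$ is in general wildly non-hyperbolic, so the classical Bestvina--Paulin theorem cannot be applied directly. Transferring hyperbolicity from $H$ to the pulled-back pseudometrics requires essential use of injectivity of the $\phi_i$ and of the shortening, since a non-injective limit could collapse to a much smaller and non-tree-like space. Controlling the stabilizers of the limit arcs tightly enough to extract a genuine $G$-conjugacy between two of the original $\phi_i$ is where the deepest part of Sela's JSJ and modular-group machinery is needed.
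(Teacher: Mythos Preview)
The statement you are trying to prove is false as written: the survey has inadvertently swapped the roles of $G$ and $H$. Take $G = \mathbb{Z}^2$ (finitely presented, torsion-free, freely indecomposable) and $H = \mathbb{Z}$ (hyperbolic); then $G$ contains infinitely many pairwise non-conjugate infinite cyclic subgroups. The theorem of Gromov and Sela actually asserts that a torsion-free word-hyperbolic group contains only finitely many conjugacy classes of subgroups isomorphic to a fixed finitely presented freely indecomposable group --- consistent with the paragraph immediately preceding Theorem~\ref{th:RipsSela}, where the hyperbolic group is the \emph{target} of the homomorphisms $\phi_i$.

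You correctly locate the obstacle and it is fatal. The Bestvina--Paulin limit (Theorem~\ref{th:Paulin}) produces an $\mathbb{R}$-tree only because the sequence of spaces is uniformly $\delta$-hyperbolic, and in your setup the ambient Cayley graph $X_G$ need not be hyperbolic at all. Your workaround of ``pulling back hyperbolicity'' does not work: injectivity of $\phi_i$ and hyperbolicity of $H$ for its \emph{own} word metric say nothing about the pulled-back pseudometric $d_i(h,h') = |\phi_i(h)^{-1}\phi_i(h')|_G$. There is no reason this metric should be quasi-comparable to the word metric on $H$ (the image $H_i$ may be arbitrarily distorted in $G$), hence no reason it should be $\delta'$-hyperbolic for any $\delta'$. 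In the $\mathbb{Z}^2$ example the rescaled limits are copies of $\mathbb{R}$ sitting inside the asymptotic cone $\mathbb{R}^2$; no tree appears and no contradiction is reached.

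With the roles corrected the argument is exactly the scheme the paper sketches and which you reproduce in outline: infinitely many non-conjugate embeddings of the freely indecomposable group into the hyperbolic group, shortened within their $\mathrm{Inn}$-orbits, have unbounded stretch; Theorem~\ref{th:Paulin} applies because the \emph{target} is $\delta$-hyperbolic, yielding a nontrivial action of the source on an $\mathbb{R}$-tree with cyclic arc stabilizers; the shortening argument and Theorem~\ref{th:Best_Feighn} then force a free splitting of the source, contradicting free indecomposability. Note in particular that the free-indecomposability hypothesis is used on the \emph{source} of the $\phi_i$, which is another indication that the statement as printed has $G$ and $H$ transposed.
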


There are similar recent results for relatively hyperbolic groups \cite{GuiL}.

For more detailed account of applications of the Rips-Bestvina-Feighn machine please refer to
\cite{Best}.

\subsection{Lyndon length functions}
\label{sec:length_func}

In 1963 Lyndon (see \cite{Lyndon:1963}) introduced a notion of {\em length function on a group}
in an attempt to axiomatize cancelation arguments in free groups as well as free products with
amalgamation and HNN extensions, and to generalize them to a wider class of groups. The main idea
was to measure the amount of cancellation in passing to the reduced form of a product of reduced
words in a free group and free constructions, and it turned out that the cancelation process could
be described by rather simple axioms. 
The idea of using length functions became quite popular (see, for example, \cite{Harrison,
Chiswell:1976, Hoare1}), and then it turned out that the language of length functions described
the same class of groups as the language of actions on trees. 
Below we give the axioms of (Lyndon) length function and recall the main results in this field.

\smallskip

Let $G$ be a group and $\Lambda$ be an ordered abelian group. Then a function $l: G \rightarrow
\Lambda$ is called a {\em (Lyndon) length function} on $G$ if the following conditions hold:
\begin{enumerate}
\item [(L1)] $\forall\ g \in G:\ l(g) \geqslant 0$ and $l(1) = 0$,
\item [(L2)] $\forall\ g \in G:\ l(g) = l(g^{-1})$,
\item [(L3)] $\forall\ f, g, h \in G:\ c(f,g) > c(f,h)$ implies $c(f,h) = c(g,h)$,

\noindent where $c(f,g) = \frac{1}{2}(l(f) + l(g) - l(f^{-1}g))$.
\end{enumerate}

Observe that in general $c(f,g) \notin \Lambda$, but $c(f,g) \in \Lambda_{\mathbb{Q}} = \Lambda
\otimes_{\mathbb{Z}} \mathbb{Q}$, where $\mathbb{Q}$ is the additive group of rational numbers,
so, in the axiom (L3) we view $\Lambda$ as a subgroup of $\Lambda_{\mathbb{Q}}$. But in some cases
the requirement $c(f,g) \in \Lambda$ is crucial so we state it as a separate axiom \cite{KMS12}
\begin{enumerate}
\item [(L4)] $\forall\ f, g \in G:\ c(f,g) \in \Lambda.$
\end{enumerate}

It is not difficult to derive the following two properties of Lyndon length functions from the
axioms (L1) -- (L3):
\begin{itemize}
\item $\forall\ f, g \in G:\ l(f g) \leqslant l(f) + l(g)$,
\item $\forall\ f, g \in G:\ 0 \leqslant c(f,g) \leqslant \min\{l(f),l(g)\}$.
\end{itemize}

The following examples motivated the whole theory of groups with length functions.

\begin{example}
\label{exam:free_gr}
Given a free group $F(X)$ on the set $X$ one can define a (Lyndon) length function on $F$ as follows
$$w(X) \rightarrow |w(X)|,$$
where $| \cdot |$ is the length of the reduced word in $X \cup X^{\pm 1}$ representing $w$.
\end{example}

\begin{example}
\label{exam:free_prod}
Given two groups $G_1$ and $G_2$ with length functions $L_1 : G_1 \rightarrow \Lambda$ and $L_2 :
G_2 \rightarrow \Lambda$ for some ordered abelian group $\Lambda$ one can construct a length
function on $G_1 \ast G_2$ as follows (see \cite[Proposition 5.1.1]{Ch1}). For any $g \in G_1
\ast G_2$ such that
$$g = f_1 g_1 \cdots f_k g_k f_{k+1},$$
where $f_i \in G_1,\ i \in [1, k+1],\ f_i \neq 1,\ i \in [2, k]$ and $1 \neq g_i \in G_2,\ i \in
[1, k]$, define  $L(1)=0$ and if $g\neq 1$ then
$$L(g) = \sum_{i=1}^{k+1} L_1(f_i) + \sum_{j=1}^k L_2(g_j) \in \Lambda.$$
\end{example}

A length function $l: G \rightarrow \Lambda$ is called {\em free}, if it satisfies
\begin{enumerate}
\item [(L5)] $l(g^2) > l(g)$ for all non-trivial $g\in G.$
\end{enumerate}

Obviously, the $\mathbb{Z}$-valued length function constructed in Example \ref{exam:free_gr} is
free. The converse is shown below (see also \cite{Hoare1} for another proof of this result).

\begin{theorem} \cite{Lyndon:1963}
\label{th:lyndon}
Any group $G$ with a length function $L : G \rightarrow \mathbb{Z}$ can be embedded into a free
group $F$ of finite rank whose natural length function extends $L$.
\end{theorem}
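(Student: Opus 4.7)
The plan is to realise the length function $L$ geometrically via Chiswell's construction, use the freeness axiom (L5) to force the resulting action on a simplicial tree to be free, and then invoke Bass--Serre theory to conclude that $G$ is itself a free group that sits inside a finite-rank free group whose natural word-length extends $L$.

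First I would apply the Chiswell construction (the $\mathbb{Z}$-valued version of the equivalence between Lyndon length functions and tree actions discussed at the beginning of Section \ref{sec:length_func}) to build a based $\mathbb{Z}$-tree $(X, v_0)$ together with an isometric action of $G$ such that $d(v_0, g \cdot v_0) = l(g)$ for every $g \in G$ and such that the Chiswell cancellation $c(f,g)$ coincides with the Gromov product $(f v_0 \cdot g v_0)_{v_0}$. By Example \ref{examp:1}, a $\mathbb{Z}$-metric tree is exactly the vertex set of a simplicial tree $T$ with its path metric, so we obtain an action of $G$ on the simplicial tree $T$ by simplicial automorphisms.

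Next I would verify that axiom (L5), $l(g^2) > l(g)$ for every non-trivial $g$, forces every such $g$ to act as a hyperbolic isometry. If $g$ were elliptic with fixed vertex $w$, then $d(v_0, w) = d(g v_0, w)$ and the geodesic from $v_0$ to $g^2 v_0$ would pass through $w$, whence $l(g^2) = d(v_0, g^2 v_0) \leq 2\, d(v_0, w) = d(v_0, g v_0) = l(g)$, contradicting (L5); an inversion would make $g^2$ elliptic and lead to the same kind of contradiction. Hence the action of $G$ on $T$ is free and without inversions, so by the classical Bass--Serre theorem $G \cong \pi_1(G\backslash T)$ is a free group.

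Finally, to realise $L$ as the restriction of the natural length function on a free group $F$ of finite rank, I would pass to the minimal $G$-invariant subtree $T_0 \subseteq T$ and, after possibly replacing $v_0$ by a nearby vertex, assume $v_0 \in T_0$. When $G$ is finitely generated the quotient $G\backslash T_0$ is a finite graph, so $G$ is free of some finite rank $r$. I would then embed $T_0$ equivariantly into the Cayley tree $T_F$ of a free group $F$ of sufficiently large finite rank, chosen so that the valence at the image of $v_0$ is filled out to the full valence of $T_F$ at the identity; the embedding is a $G$-equivariant isometry, so the word length on $F$ restricts to $L$ on $G$.

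The main obstacle is this last step: arranging the embedding $T_0 \hookrightarrow T_F$ so that \emph{distances from $v_0$} are preserved exactly, not merely translation lengths along axes. The Bass--Serre basis coming from a spanning tree of $G\backslash T_0$ yields the right translation lengths of elements, but to obtain $l(g) = |g|_F$ for all $g$ one must coordinate the choice of base vertex $v_0$, the lift of the spanning tree, and the local ordering of edges at $v_0$; this can be arranged by allowing the rank of $F$ to be large enough to accommodate the full local valence of $T_0$ at $v_0$ together with the needed orientations of edges.
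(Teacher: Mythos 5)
The survey does not reproduce a proof of this theorem (it is quoted from Lyndon, whose own argument, like Hoare's later one cited alongside it, is a direct Nielsen-style cancellation argument from the axioms, with no trees involved), so your tree-theoretic route is a genuinely different, more modern strategy; it is essentially how Chiswell reinterpreted Lyndon's result. The decisive gap is in your last step, which you yourself flag as ``the main obstacle'' and then merely assert can be arranged. Proving that $G$ is free (Chiswell plus Bass--Serre) is strictly weaker than the statement: the content of the theorem is an embedding $\phi\colon G\to F$ with $|\phi(g)|_F=L(g)$ \emph{exactly}. The missing device is a $G$-invariant \emph{folded} labelling of the oriented edges of the subtree $T_0$ spanned by the orbit $Gv_0$ by $X^{\pm 1}$ for a finite set $X$: inverse edges get inverse labels, and at each vertex the outgoing labels are pairwise distinct. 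Such a labelling yields a homomorphism $\phi(g)=$ (label word of the geodesic from $v_0$ to $gv_0$), which is reduced as written because the labelling is folded, whence $|\phi(g)|_F=d(v_0,gv_0)=L(g)$, and injectivity follows from $L(g)>0$ for $g\neq 1$. Since the action is free, such a labelling descends to, and can be produced from, a proper edge-colouring of the finite quotient graph $G\backslash T_0$; this is where the finite rank of $F$ comes from, and it requires control of the valence at \emph{every} vertex of $T_0$, not only at $v_0$ as you suggest. Without this (or an equivalent) construction the theorem is not proved.

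Two further problems. First, Chiswell's construction, as quoted in Section \ref{sec:length_func}, requires axiom (L4), $c(f,g)\in\mathbb{Z}$; from (L1)--(L3) and (L5) alone you only obtain an action on a $\tfrac{1}{2}\mathbb{Z}$-tree, and the conclusion can then genuinely fail: on $G=\langle a\rangle$ the function $l(1)=0$, $l(a^n)=2|n|+1$ satisfies (L1)--(L3) and (L5), yet no element $w=vuv^{-1}$ of a free group satisfies $|w^n|=2|n|+1$ for all $n\neq 0$, since $|w^n|=2|v|+|n|\,|u|$ would force $2|v|=1$. So you must explicitly invoke the integrality of $c$, which is part of Lyndon's axiom system even though it is absent from the survey's literal statement. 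Second, your ellipticity computation is off: for an arbitrary fixed vertex $w$ of $g$ one only gets $d(v_0,g^2v_0)\le 2d(v_0,w)$ and $d(v_0,gv_0)\le 2d(v_0,w)$, which proves nothing; you need $w$ to be the point of $\mathrm{Fix}(g)$ nearest to $v_0$ together with $\mathrm{Fix}(g)\subseteq\mathrm{Fix}(g^2)$ to conclude $l(g^2)=2d(v_0,\mathrm{Fix}(g^2))\le 2d(v_0,\mathrm{Fix}(g))=l(g)$. Finally, your reduction to finitely generated $G$ is not a harmless aside: some finiteness is genuinely needed (a free group of infinite rank with its standard length function admits no length-preserving embedding into a finite-rank free group), so this hypothesis must be surfaced rather than left parenthetical.
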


\begin{example}
\label{exam:free_prod_2}
Given two groups $G_1$ and $G_2$ with free length functions $L_1 : G_1 \rightarrow \Lambda$ and $L_2 :
G_2 \rightarrow \Lambda$ for some ordered abelian group $\Lambda$, the length function on $G_1 \ast
G_2$ constructed in Example \ref{exam:free_prod} is free.
\end{example}

Observe that if a group $G$ acts on a $\Lambda$-tree $(X,d)$ then we can fix a point $x \in X$ and
consider a function $l_x : G \to \Lambda$ defined as $l_x(g) = d(x, g x)$. Such a function $l_x$ on
$G$ we call a {\em length function based at $x$}. It is easy to check that $l_x$ satisfies all the
axioms (L1) -- (L4) of Lyndon length function. Now if $\| \cdot \|$ is the translation length function
associated with the action of $G$ on $(X,d)$ (for $g\in G$, $||g|| = Inf \{d(p,gp), p\in X\}$), 
then the following properties show the connection between
$l_x$ and $\| \cdot \|$.
\begin{enumerate}
\item[(i)] $l_x(g) = \| g \| + 2 d(x, A_g)$ (where $A_g$ is the axis of $g$), if $g$ is not an inversion.
\item[(ii)] $\| g \| = \max\{0, l_x(g^2) - l_x(g)\}$.
\end{enumerate}
Here, it should be noted that for points $x \notin A_g$, there is a unique closest point of $A_g$
to $x$. The distance between these points is the one referred to in (i). While $A_g = A_{g^n}$ for
all $n \neq 0$ in the case where $g$ is hyperbolic, if $g$ fixes a point, it is possible that
$A_g \subset A_{g^2}$. We may have $l_x(g^2) - l_x(g) < 0$ in this case. Free actions are
characterized, in the language of length functions, by the facts (a) $\| g \| > 0$ for all $g
\neq 1$, and (b) $l_x(g^2) > l_x(g)$ for all $g \neq 1$. The latter follows from the fact that
$\| g^n \| = n \| g \|$ for all $g$. We note that there are properties for the translation length function
which were shown to essentially characterize actions on $\Lambda$-trees, up to equivariant isometry,
by Parry, \cite{Parry:1991}.

The following theorem is one of the most important results in the theory of length functions.

\begin{theorem} \cite{Chiswell:1976}
Let $G$ be a group and $l: G \rightarrow \Lambda$ a Lyndon length function satisfying 
condition (L4).
Then there are a $\Lambda$-tree $(X, d)$, an action of $G$ on $X$, and a point $x \in X$ such that
$l = l_x$.
\end{theorem}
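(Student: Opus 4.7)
The plan is to use Chiswell's original construction, which builds the $\Lambda$-tree $X$ directly from the data of the length function. I would start with the auxiliary set
\[
X_0 = \{(g,a) \in G \times \Lambda : 0 \leqslant a \leqslant l(g)\},
\]
thinking of $(g,a)$ as a candidate point at distance $a$ along a prospective geodesic from the basepoint toward the image of $g$. The relation $(g,a) \sim (h,b)$ iff $a = b$ and $c(g,h) \geqslant a$ reflects the idea that two such paths diverge only past their common prefix of length $c(g,h)$. Reflexivity and symmetry are immediate from (L1)--(L2); transitivity is the first serious use of (L3), in its equivalent ``ultrametric'' form $c(f,h) \geqslant \min(c(f,g), c(g,h))$, which must be derived from (L3) by a short case analysis.

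Setting $X = X_0/{\sim}$, I would equip $X$ with
\[
d([g,a],[h,b]) = a + b - 2\min\bigl(a,\,b,\,c(g,h)\bigr).
\]
Axiom (L4) is precisely what ensures $c(g,h) \in \Lambda$, so that $d$ is genuinely $\Lambda$-valued rather than merely valued in $\Lambda \otimes \MQ$. Well-definedness on classes is another application of the ultrametric inequality on $c$, and the metric axioms (M1)--(M4) reduce to short case analyses using the same ingredient. For the tree axioms, I would observe that the map $a \mapsto [g,a]$ is an isometric embedding of $[0, l(g)]_\Lambda$ into $X$ by direct computation with $d$; this furnishes the segment from $x_0 := [1,0]$ to $[g, l(g)]$. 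A general segment between $[g, l(g)]$ and $[h, l(h)]$ is then produced by concatenating the reverse radial path to the common meeting point $[g, c(g,h)] = [h, c(g,h)]$ with the forward radial path to $[h, l(h)]$, and axioms (T1)--(T3) follow from the formula for $d$ together with the ultrametric identity on $c$.

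For the action, for each $h \in G$ and $[g,a] \in X$ I would define $h \cdot [g,a]$ to be the unique point at distance $a$ from $h \cdot x_0 = [h, l(h)]$ along the segment joining it to $(hg) \cdot x_0 = [hg, l(hg)]$. Concretely this is piecewise: $[h, l(h) - a]$ when $a \leqslant l(h) - c(h, hg)$, and $[hg, a - l(h) + 2 c(h, hg)]$ otherwise. Well-definedness on equivalence classes, the group law $h_1 \cdot (h_2 \cdot p) = (h_1 h_2) \cdot p$, and the isometry property all reduce to identities relating lengths and cancellation values that follow from the Lyndon axioms. With $x = x_0$ one has $g \cdot x = [g, l(g)]$, whence
\[
l_x(g) = d([1,0],[g,l(g)]) = l(g),
\]
as required.

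The main obstacle is the verification of the tree axioms (T1)--(T3), and especially (T2), which encodes the $0$-hyperbolicity of $X$ hidden in axiom (L3); one has to translate the algebraic inequality on $c$ into the geometric statement that radial segments fit together correctly across meeting points. A secondary delicate point is the action: the piecewise formula must be checked for consistency on the crossover $a = l(h) - c(h,hg)$ and for compatibility with $\sim$, and both the isometry property and the composition law each require re-arranging several cancellation terms via repeated appeals to (L3) in its ultrametric form.
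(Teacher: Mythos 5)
Your proposal is exactly the construction the paper invokes: it cites Chiswell's constructive proof (quotienting $\{(g,a): 0\leqslant a\leqslant l(g)\}$ by $(g,a)\sim(h,b)$ iff $a=b\leqslant c(g,h)$, with $d([g,a],[h,b])=a+b-2\min(a,b,c(g,h))$ and the induced $G$-action), and your formulas, including the piecewise description of the action and the role of (L4) in keeping $c(g,h)\in\Lambda$, all check out. This is the same approach as the paper's, so there is nothing further to reconcile.
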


The proof is constructive, that is, one can define a $\Lambda$-metric space out of $G$ and $l$, and
then prove that this space is in fact a $\Lambda$-tree on which $G$ acts by isometries (see
\cite[Subsection 2.4]{Ch1} for details).

A length function $l: G \rightarrow \Lambda$ is  called {\it regular} if it satisfies the {\em
regularity} axiom:
\begin{enumerate}
\item[(L6)] $\forall\ g, f \in G,\ \exists\ u, g_1, f_1 \in G:$
$$g = u \circ g_1 \ \& \  f = u \circ f_1 \ \& \ l(u) = c(g,f).$$
\end{enumerate}

Observe that a regular length function need not be free, conversely freeness does not imply
regularity.  

\subsection{Finitely generated $\mathbb{R}^n$-free groups}
\label{se:R^n-free}

Guirardel proved the following result that describes the structure of finitely generated
$\mathbb{R}^n$-free groups, which is reminiscent of the Bass' structural theorem for $\Z^n$-free
groups. This is not by chance, since every $\Z^n$-free group is also $\R^n$-free, and ordered
abelian groups $\Z^n$ and $\R^n$ have a similar convex subgroup structure.  However, it is worth to
point out that the original Bass argument for $\Lambda = \Z \oplus \Lambda_0$ does not work in the
case of $\Lambda = \R \oplus \Lambda_0$.

\begin{theorem} \cite{G}
\label{th:Guirardel}
Let $G$ be a finitely generated, freely indecomposable $\mathbb{R}^n$-free group. Then $G$ can be
represented as the fundamental group of a finite graph of groups, where edge groups are cyclic and
each vertex group is a finitely generated $\mathbb{R}^{n-1}$-free.
\end{theorem}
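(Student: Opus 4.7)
The plan is to exploit the convex-subgroup filtration $0 < \mathbb{R}^{n-1}\oplus\{0\} < \mathbb{R}^n$ (with respect to the right lexicographic order) to reduce from a free action on an $\mathbb{R}^n$-tree to an action on an $\mathbb{R}$-tree, to which the Rips--Bestvina--Feighn machine applies. Let $T$ be an $\mathbb{R}^n$-tree on which $G$ acts freely and set $\Lambda_0 = \mathbb{R}^{n-1}\oplus\{0\}$; define an equivalence relation on $T$ by $x \sim y$ iff $d(x,y) \in \Lambda_0$. The quotient $\bar T = T/\!\sim$ inherits an $\mathbb{R}$-valued metric, a routine check shows $\bar T$ is an $\mathbb{R}$-tree, and $G$ acts on $\bar T$ by isometries. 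Each fiber $\pi^{-1}([x]) \subset T$ is a sub-$\mathbb{R}^{n-1}$-tree, and its $G$-stabilizer preserves it and acts freely on it; thus point-stabilizers in $\bar T$ are $\mathbb{R}^{n-1}$-free.

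The key technical step is that nondegenerate arcs in $\bar T$ have trivial pointwise stabilizer. If $e \in G$ fixes an arc $[p,q] \subset \bar T$, then $e$ preserves each of the disjoint $\mathbb{R}^{n-1}$-subtrees $\pi^{-1}(y)$ for $y \in [p,q]$; for any two $y \ne y'$ the bridge in $T$ between $\pi^{-1}(y)$ and $\pi^{-1}(y')$ is a canonical arc preserved by $e$, hence fixed pointwise, contradicting freeness of the $G$-action on $T$ unless $e = 1$. With this, the hypotheses of Theorem \ref{th:Guirardel_0} are trivially satisfied: the ACC on arc stabilizers is vacuous, arc stabilizers are trivially finitely generated, and no proper self-conjugation can occur. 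Theorem \ref{th:Guirardel_0} then delivers either (i) a splitting of $G$ over a cyclic subgroup (the ``$E$-by-cyclic'' edge group having $E=1$), or (ii) that $\bar T$ is a line, in which case $G$ surjects onto a subgroup of $(\mathbb{R},+)$ with $\mathbb{R}^{n-1}$-free kernel; free indecomposability of $G$ forces the quotient to be infinite cyclic, giving an HNN decomposition over an $\mathbb{R}^{n-1}$-free subgroup with trivial (hence cyclic) associated edge group.

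In case (i) the two vertex groups of the one-edge splitting are point-stabilizers of $\bar T$, hence $\mathbb{R}^{n-1}$-free, and we iterate the procedure on each vertex group that is not itself freely indecomposable as an $\mathbb{R}$-tree-free group. The main obstacle is ensuring this iteration terminates in a \emph{finite} graph of groups. For this we invoke accessibility: by Theorem \ref{prop}(d) and (h), every splitting encountered is over a small (in fact cyclic) subgroup, and Bestvina--Feighn accessibility for finitely presented groups --- extended by Sela/Dunwoody-type arguments to the finitely generated, small-stabilizer setting --- bounds the total number of edges in terms of $G$. Assembling the iterated splittings yields a finite graph of groups with cyclic edge groups and $\mathbb{R}^{n-1}$-free vertex groups; free indecomposability of $G$ rules out trivial edge groups, and finite generation of each vertex group follows from finite generation of $G$ together with the graph-of-groups structure. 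The departure from Bass's argument for $\Lambda = \mathbb{Z}\oplus\Lambda_0$ is exactly at this accessibility / continuous-line step: in Bass's setting the discreteness of the top layer $\mathbb{Z}$ produces the splitting combinatorially, whereas here the continuity of $\mathbb{R}$ makes the Rips machine (in Guirardel's form) indispensable.
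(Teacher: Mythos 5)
The paper itself does not prove this theorem (it is quoted from Guirardel \cite{G}, and the survey explicitly points to the technical Theorem 7.2 there), so I am judging your argument on its own terms. Your central claim --- that nondegenerate arcs in the quotient $\mathbb{R}$-tree $\bar T = T/\Lambda_0$ have trivial pointwise stabilizer --- is false, and the proof collapses there. Counterexample: let $G = \langle F(a,b), t \mid [t,a]=1\rangle$ be the extension of the centralizer of $a$. This is a finitely generated, freely indecomposable $\mathbb{Z}^2$-free (hence $\mathbb{R}^2$-free) group, and the quotient of its $\mathbb{Z}^2$-tree by $\Lambda_0$ is essentially the Bass--Serre tree of the HNN splitting over $\langle a\rangle$, whose arc stabilizers are nontrivial cyclic groups. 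Even more simply, for $G=\mathbb{Z}^2$ acting on $T=\mathbb{Z}^2$, the convex subgroup $\Lambda_0$ fixes the whole quotient line pointwise. The flaw in your bridge argument is that the fibers $\pi^{-1}(y)$ are not closed subtrees of $T$: already in $T=\mathbb{R}^2$ with the lexicographic metric, a segment joining points of two distinct fibers meets each fiber in a ray rather than in a single point, so there are no closest-point projections, hence no canonical bridge with endpoints that $e$ would be forced to fix. An element can translate within every fiber lying over an arc while fixing that arc in $\bar T$, without fixing any point of $T$.

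This is not a repairable detail but the crux of the theorem. Once arc stabilizers are allowed to be nontrivial, the hypotheses of Theorem \ref{th:Guirardel_0} (ascending chain condition, control of unstable arc stabilizers) must actually be verified, and the conclusion no longer hands you cyclic edge groups for free. Your terminating accessibility step is also unjustified: Bestvina--Feighn accessibility requires finite presentation, which is not a hypothesis here --- indeed finite presentability of finitely generated $\mathbb{R}^n$-free groups is Corollary \ref{co:Guirardel_1}, a \emph{consequence} of this theorem --- and accessibility of finitely generated groups over small (even cyclic) subgroups is false in general by Dunwoody's inaccessible group, so a ``Sela/Dunwoody-type extension'' cannot simply be invoked. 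Guirardel's actual proof must show that arc stabilizers of the quotient action are cyclic and that the action is suitably stable, and must obtain termination from a finer analysis of the $\mathbb{R}^n$-action itself; this is precisely the point at which, as the survey remarks, Bass's argument for $\mathbb{Z}\oplus\Lambda_0$ fails to carry over to $\mathbb{R}\oplus\Lambda_0$.
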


In fact, there is a more detailed version  of this result,  Theorem 7.2 in \cite{G},
which is rather technical, but gives more for applications. Observe also that neither Theorem
\ref{th:Guirardel} nor the more detailed version of it,  does not ``characterize" finitely generated
$\R^n$-free groups, i.e. the converse of the theorem does not hold. Nevertheless, the result is
very powerful and gives several important corollaries.

\begin{corollary} \cite{G}
\label{co:Guirardel_1}
Every finitely generated $\mathbb{R}^n$-free group is finitely presented.
\end{corollary}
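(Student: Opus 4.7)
The plan is to prove the corollary by induction on $n$, using Theorem \ref{th:Guirardel} as the key structural input together with standard facts about free products and graphs of groups.

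For the base case $n = 1$, a finitely generated $\mathbb{R}$-free group is, by Rips' Theorem (Theorem \ref{th:Rips_0}), a free product of finitely many finitely generated free abelian groups and surface groups. Each such factor has a standard finite presentation, and a free product of finitely many finitely presented groups is finitely presented, so the base case is immediate.

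For the inductive step, assume the statement for $n-1$ and let $G$ be a finitely generated $\mathbb{R}^n$-free group. First I would reduce to the freely indecomposable case using Grushko's theorem: write $G = G_1 \ast \cdots \ast G_k$ with each $G_i$ finitely generated and freely indecomposable. Each $G_i$, being a subgroup of the $\mathbb{R}^n$-free group $G$, is itself $\mathbb{R}^n$-free by Theorem \ref{prop}(a). Applying Theorem \ref{th:Guirardel} to each $G_i$ expresses $G_i$ as the fundamental group of a finite graph of groups whose edge groups are cyclic and whose vertex groups are finitely generated $\mathbb{R}^{n-1}$-free groups. By the induction hypothesis each such vertex group is finitely presented, and the cyclic edge groups are obviously finitely presented. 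A fundamental group of a finite graph of groups with finitely presented vertex groups and finitely generated (hence finitely presented) edge groups is finitely presented (standard Bass--Serre fact: take the presentations of the vertex groups together with one stable letter per edge and finitely many relations identifying the two embeddings of each edge group). Hence every $G_i$ is finitely presented, and therefore so is their free product $G$.

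The substantive content is entirely packaged into Theorem \ref{th:Guirardel}; everything else is bookkeeping with Grushko and Bass--Serre. The only mild subtlety I anticipate is making sure the vertex groups produced by Theorem \ref{th:Guirardel} are indeed finitely generated (so that the induction hypothesis applies) — but this is asserted in the statement of the theorem, so no extra argument is needed. The hard part, in the sense of real work, lies not in this corollary but in the theorem it rests on; the corollary itself is a clean induction.
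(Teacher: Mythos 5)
Your proposal is correct and follows exactly the route the paper indicates: the paper's entire justification is that the corollary ``comes from Theorem \ref{th:Guirardel} and elementary properties of free constructions by induction on $n$,'' and your argument is precisely that induction, with the base case handled by Rips' Theorem and the bookkeeping (Grushko decomposition, closure of finite presentability under finite graphs of groups with finitely generated edge groups) filled in correctly.
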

This comes from Theorem \ref{th:Guirardel} and elementary properties of free constructions by
induction on $n$.

Theorem \ref{th:Guirardel} and the Combination Theorem for
relatively hyperbolic groups proved by  Dahmani in  \cite{Dahmani:2003} imply the following.

\begin{corollary}
\label{co:Guirardel_2}
Every finitely generated $\mathbb{R}^n$-free group is hyperbolic relative to its non-cyclic
abelian subgroups.
\end{corollary}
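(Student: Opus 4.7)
The plan is to proceed by induction on $n$, using Theorem \ref{th:Guirardel} as the structural input and Dahmani's combination theorem for relatively hyperbolic groups as the engine for passing from vertex groups to the fundamental group of a graph of groups. For the base case $n = 1$, Rips' Theorem (Theorem \ref{th:Rips_0}) identifies every finitely generated $\mathbb{R}$-free group as a free product $G_1 \ast \cdots \ast G_k$ in which each $G_i$ is a finitely generated free abelian group or a closed-surface group. Since the free abelian factors of rank at least $2$ are non-cyclic abelian and surface groups are word-hyperbolic, such a free product is hyperbolic relative to the collection of non-cyclic free abelian factors.

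For the inductive step, let $G$ be finitely generated and $\mathbb{R}^n$-free, and assume the corollary holds in dimension $n-1$. Since relative hyperbolicity with a specified parabolic family is preserved under free products (take the union of peripheral families), I may assume $G$ is freely indecomposable. Theorem \ref{th:Guirardel} then presents $G$ as the fundamental group of a finite graph of groups with cyclic edge groups and finitely generated $\mathbb{R}^{n-1}$-free vertex groups $G_v$, each of which, by induction, is hyperbolic relative to its family $\mathcal{P}_v$ of maximal non-cyclic abelian subgroups.

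To feed this into Dahmani's combination theorem I must verify that each cyclic edge group $C_e$ embeds into an adjacent vertex group $G_v$ either as a subgroup of some $P \in \mathcal{P}_v$ or as a loxodromic (and hence relatively quasi-convex) cyclic subgroup. Here the CSA property of $\mathbb{R}^n$-free groups (Theorem \ref{prop}(d)) is crucial: the unique maximal abelian subgroup $M$ of $G_v$ containing $C_e$ is malnormal, so either $M$ is non-cyclic, in which case $M \in \mathcal{P}_v$ and $C_e$ is parabolic, or $M$ is infinite cyclic and $C_e$ is contained in a maximal cyclic subgroup of $G_v$ that is disjoint from every parabolic, hence loxodromic and relatively quasi-convex. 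In either case Dahmani's theorem applies and yields a relatively hyperbolic structure on $G$, with peripheral family obtained by amalgamating the $\mathcal{P}_v$ along those cyclic edge groups that become parabolic in the combination.

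The main obstacle is verifying that this resulting peripheral family in $G$ is precisely the collection of conjugacy classes of maximal non-cyclic abelian subgroups of $G$. For this I will use commutative transitivity and CSA for $G$ itself (Theorem \ref{prop}(d),(e)) together with Bass--Serre theory: any non-cyclic abelian subgroup of $G$ must fix a vertex in the Bass--Serre tree of the splitting, because amalgamation along cyclic subgroups of a CSA, commutatively transitive group cannot create new non-cyclic abelian subgroups outside the vertex groups. Consequently every maximal non-cyclic abelian subgroup of $G$ is conjugate into some $G_v$, where it is parabolic by induction, delivering the desired identification of the peripheral structure and completing the induction.
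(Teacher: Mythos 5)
Your proposal follows the paper's own (very terse) proof: the survey derives this corollary precisely from Theorem \ref{th:Guirardel} together with Dahmani's combination theorem \cite{Dahmani:2003}, by induction on $n$ with Rips' theorem handling the base case, and your write-up is a correct elaboration of exactly that route. The only point to flag is in your final paragraph: the claim that every non-cyclic abelian subgroup of $G$ is elliptic in the Bass--Serre tree does not follow from the coarse statement of Theorem \ref{th:Guirardel} alone, since an HNN extension over a cyclic edge group whose stable letter centralizes it produces a non-elliptic $\mathbb{Z}^2$ (extensions of centralizers of free groups are $\mathbb{R}^n$-free, so this genuinely occurs); one must either invoke the finer Theorem 7.2 of \cite{G} to arrange such subgroups to be vertex groups, or observe that any such new abelian subgroup has the form $C_e \times \mathbb{Z}$ and is absorbed into the peripheral family by the parabolic-amalgamation case of Dahmani's theorem, so the stated conclusion is unaffected.
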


A lot is known about groups which are hyperbolic relative to its maximal abelian subgroups ({\em toral
relatively hyperbolic groups}), so all of this applies to $\R^n$-free groups. We do not mention any of
these results here, because we discuss their much more general  versions in the next section in the
context of $\Lambda$-free groups for arbitrary $\Lambda$.

\section{Finitely presented $\Lambda$-free groups}\label{S3}
In  \cite{KMS12}  the following main
problem of the Alperin-Bass program was  solved  for finitely presented groups.

\medskip

{\bf Problem.}
{\it Describe finitely presented (finitely generated) $\Lambda$-free groups for an arbitrary ordered
abelian group $\Lambda$.}

The structure of finitely presented $\Lambda$-free groups is going to be described in Section \ref{se:fp}.

\subsection{Regular actions}
\label{subse:regact}

In this section we give a geometric characterization of group actions that come from regular
length functions.

\smallskip

Let $G$ act on a $\Lambda$-tree $\Gamma$. The action is {\em regular with respect to $x \in \Gamma$}
if for any $g,h \in G$ there exists $f \in G$ such that $[x, f x] = [x, g x] \cap [x, h x]$.

\smallskip

The next lemma shows that regular actions exactly correspond to regular length functions (hence the
term).
\begin{lemma} \cite{KMRS1}
Let $G$ act on a $\Lambda$-tree $\Gamma$. Then the action of $G$ is regular with respect to $x \in
\Gamma$ if and only if the length function $\ell_x: G \rightarrow \Lambda$ based at $x$ is regular.

If the action of $G$ is regular with respect to $x \in
\Gamma$ then it is regular with respect to any $y \in G x$.
\end{lemma}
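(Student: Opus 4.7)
The plan is to set up a dictionary between the algebraic regularity axiom (L6) for $\ell_x$ and the geometric regularity of the action at $x$, and then read the equivalence directly. The third assertion will then follow by equivariance of the action.

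First, I would establish two translations. (i) Using $\ell_x(g)=d(x,gx)$ and the fact that $g^{-1}$ acts by isometry, one gets
\[
c(g,h)=\tfrac12\bigl(d(x,gx)+d(x,hx)-d(gx,hx)\bigr).
\]
By axiom (T3) there is a point $p\in\Gamma$ with $[x,gx]\cap[x,hx]=[x,p]$, and a standard tree computation (equivalently, $0$-hyperbolicity of $\Gamma$ with respect to $x$) gives $d(x,p)=c(g,h)$. (ii) For $u\in G$ and $g_1=u^{-1}g$, the relation $g=u\circ g_1$ — meaning $\ell_x(g)=\ell_x(u)+\ell_x(g_1)$ — is equivalent, using $d(x,u^{-1}gx)=d(ux,gx)$, to $d(x,gx)=d(x,ux)+d(ux,gx)$, i.e.\ to $ux\in[x,gx]$.

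With this dictionary the equivalence is almost a tautology. For ($\Rightarrow$), given $g,h$ pick $f$ with $[x,fx]=[x,gx]\cap[x,hx]=[x,p]$; then $fx=p$, so $\ell_x(f)=d(x,p)=c(g,h)$, and from $fx\in[x,gx]\cap[x,hx]$ one reads off $g=f\circ (f^{-1}g)$ and $h=f\circ (f^{-1}h)$, verifying (L6). For ($\Leftarrow$), given $g,h$ apply (L6) to obtain $u$ with $g=u\circ g_1$, $h=u\circ h_1$ and $\ell_x(u)=c(g,h)$. Then $ux$ lies on both $[x,gx]$ and $[x,hx]$ at distance $c(g,h)=d(x,p)$ from $x$. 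Since a segment, being the isometric image of a segment in $\Lambda$, has a unique point at any given distance from an endpoint, $ux=p$, hence $[x,ux]=[x,gx]\cap[x,hx]$ and $f:=u$ witnesses regularity at $x$. The step I expect to require the most care is this uniqueness argument, since it is exactly where the tree axioms (geodesic uniqueness following from (T1)–(T3)) are needed to force $ux=p$ rather than merely $d(x,ux)=d(x,p)$.

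For the last sentence, write $y=g_0 x$ and, given $g,h\in G$, set $\tilde g=g_0^{-1}gg_0$, $\tilde h=g_0^{-1}hg_0$. By regularity at $x$ there is $\tilde f\in G$ with $[x,\tilde f x]=[x,\tilde g x]\cap[x,\tilde h x]$. Applying the isometry $g_0$ to both sides and noting that $g_0\tilde g x=gy$, $g_0\tilde h x=hy$, and $g_0\tilde f x=(g_0\tilde f g_0^{-1})y$, we obtain
\[
[y,(g_0\tilde f g_0^{-1})y]=[y,gy]\cap[y,hy],
\]
so $f:=g_0\tilde f g_0^{-1}$ works and the action is regular with respect to $y$.
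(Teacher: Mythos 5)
The paper states this lemma without proof, simply citing \cite{KMRS1}, so there is no in-text argument to compare against. Your proposal is correct and is the standard argument: the dictionary you set up --- $c(g,h)=(gx\cdot hx)_x$ equals $d(x,p)$ where $[x,gx]\cap[x,hx]=[x,p]$ (this uses $0$-hyperbolicity/(T3), as you note), and $g=u\circ g_1$ with $g_1=u^{-1}g$ if and only if $ux\in[x,gx]$ --- reduces both implications to reading off definitions, and you correctly identify the one nontrivial point, namely that in the ($\Leftarrow$) direction the equalities $ux\in[x,gx]$ and $d(x,ux)=d(x,p)$ force $ux=p$ because a segment is an isometric image of an interval in $\Lambda$ and hence has a unique point at a given distance from an endpoint. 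The conjugation argument for the last assertion is also correct: applying the isometry $g_0$ to $[x,\tilde f x]=[x,\tilde g x]\cap[x,\tilde h x]$ and unwinding $g_0\tilde g x=gy$ gives exactly the regularity condition at $y=g_0x$ with witness $g_0\tilde f g_0^{-1}$. I see no gaps.
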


\begin{lemma} \cite{KMRS1}
Let $G$ act freely on a $\Lambda$-tree $\Gamma$ so that all branch points of $\Gamma$ are
$G$-equivalent. Then the action of $G$ is regular with respect to any branch point in $\Gamma$.
\end{lemma}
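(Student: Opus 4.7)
The plan is to unpack the definition of regularity directly. Fix a branch point $x \in \Gamma$ and let $g,h \in G$. By axiom (T3), the intersection $[x,gx] \cap [x,hx]$ is a segment sharing the endpoint $x$ with both factors, so we can write it as $[x,y]$ for a unique $y \in \Gamma$. The whole task is to exhibit an $f \in G$ with $fx = y$.

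The argument will split into cases based on the position of $y$ on the two segments. First, if $y = x$, we take $f = 1$. If $y = gx$ (respectively $y = hx$), we take $f = g$ (respectively $f = h$), and regularity holds trivially. The remaining case is when $y$ is an interior point of both $[x,gx]$ and $[x,hx]$. Here I would argue that $y$ must be a branch point of $\Gamma$: the segments $[x,gx]$ and $[x,hx]$ continue strictly past $y$ in distinct directions (otherwise the intersection segment would extend further, contradicting the choice of $y$), while the direction back toward $x$ along $[x,y]$ is a third direction at $y$ distinct from both forward directions (assuming $y \neq x$, which holds in this subcase). Thus at least three non-degenerate germs of segments emanate from $y$, so $y$ is a branch point. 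By the hypothesis that all branch points are $G$-equivalent and $x$ itself is a branch point, we get $y \in Gx$, i.e. $y = fx$ for some $f \in G$.

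In all cases we have produced $f \in G$ with $[x,fx] = [x,gx] \cap [x,hx]$, which is exactly the condition that the action be regular with respect to $x$. The only slightly subtle step is the verification in the third case that the three directions at $y$ are genuinely pairwise distinct, which I expect to be the main (though still small) obstacle; it is a direct consequence of the definition of $y$ as the maximal common initial segment of $[x,gx]$ and $[x,hx]$, together with the uniqueness of geodesics in a $\Lambda$-tree.

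Since $x$ was an arbitrary branch point, the conclusion holds with respect to any branch point of $\Gamma$, as claimed. (The previous lemma that regularity at $x$ implies regularity at every $y \in Gx$ is not strictly needed here, since the $G$-equivalence of branch points means ``any branch point'' and ``any point of the orbit $Gx$'' coincide among branch points.)
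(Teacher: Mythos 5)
Your proof is correct and follows essentially the same route as the paper: the endpoint $y$ of the common initial segment is the center of the tripod on $x$, $gx$, $hx$, it is a branch point in the non-degenerate case, and the $G$-equivalence of branch points supplies the required $f$ with $fx=y$. If anything, your version is slightly more careful than the paper's, which disposes of $g=h$ and then asserts outright that the tripod center is a branch point, glossing over the degenerate configurations ($y$ equal to $x$, $gx$, or $hx$) that you treat explicitly.
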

\begin{proof}
Let $x$ be a branch point in $\Gamma$ and $g, h \in G$. If $g = h$ then $[x, g x] \cap [x, h x] =
[x, g x]$ and $g$ is the required element. Suppose $g \neq h$. Since the action is free then $g x
\neq h x$ and we consider the tripod formed by $x, g x, h x$. Hence, the center of the tripod $y$ is a
branch point in $\Gamma$ and by the assumption there exists $f \in G$ such that $y = f x$.
\end{proof}

%\begin{figure}[htbp]
%\label{reg_action}
%\centering{\mbox{\psfig{figure=reg_action.eps,height=2in}}}
%\caption{$\Gamma$ in Example \ref{example_non_reg}.}
%\end{figure}

\begin{example}
\label{example_non_reg}
Let $\Gamma'$ be the Cayley graph of a free group $F(x,y)$ with the base-point $\varepsilon$. Let
$\Gamma$ be obtained from $\Gamma'$ by adding an edge labeled by $z \neq x^{\pm 1}, y^{\pm 1}$ at
every vertex of $\Gamma'$. $F(x,y)$ has a natural action on $\Gamma'$ which we can extend to the
action on $\Gamma$. The edge at $\varepsilon$ labeled by $z$ has an endpoint not equal to
$\varepsilon$ and we denote it by $\varepsilon'$. Observe that the action of $F(x,y)$ on $\Gamma$
is regular with respect to $\varepsilon$ but is not regular with respect to $\varepsilon'$.
\end{example}

\subsection{Structure of finitely presented $\Lambda$-free groups}\label{se:fp}
A group $G$ is called  a regular $\Lambda$-free group if it acts freely and regularly on a $\Lambda$-tree. We proved the following results. 
\begin{theorem}\cite{KMS12} 
\label{th:main1}
Any finitely presented regular $\Lambda$-free group $G$  can be represented as a union of a finite
series of groups
$$G_1 < G_2 < \cdots < G_n = G,$$
where
\begin{enumerate}
\item $G_1$ is a free group,
\item $G_{i+1}$ is obtained from $G_i$ by finitely many HNN-extensions in which associated subgroups
are maximal abelian, finitely generated, and the associated isomorphisms preserve the length induced from $G_i$.
\end{enumerate}
\end{theorem}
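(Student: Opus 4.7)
The plan is to argue by induction on the length $n$ of the complete chain of convex subgroups
$$0 = A_0 < A_1 < \cdots < A_n = \Lambda$$
of the value group. Since $G$ is finitely generated, the values of a based Lyndon length function $\ell_x$ on a finite generating set lie in a finitely generated ordered subgroup of $\Lambda$, so without loss of generality $\Lambda$ itself is finitely generated and the chain above is finite. For each $i$ set
$$G_i = \{ g \in G \mid \ell_x(g) \in A_i \}.$$
This is a subgroup: convexity of $A_i$ applied to the triangle inequality $0 \leqslant \ell_x(gh) \leqslant \ell_x(g) + \ell_x(h)$ gives closure under products, and axiom (L2) closure under inverses. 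Clearly $G_1 \leqslant G_2 \leqslant \cdots \leqslant G_n = G$, and each $G_i$ acts freely (and, as I will argue, regularly) on the $A_i$-subtree spanned by the orbit $G_i\cdot x$.

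For the base case ($n=1$), $G_1$ acts freely and regularly on an archimedean $A_1$-tree. The main observation is that regularity forces all branch points to lie in a single $G_1$-orbit (this is essentially the converse of Lemma~2 in Subsection~\ref{subse:regact} under the archimedean assumption), so the quotient is a single vertex and the action is combinatorially equivalent to the action of a free group on a simplicial tree. Bass--Serre theory then gives that $G_1$ is free.

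For the inductive step, one passes from $G_i$ to $G_{i+1}$ by collapsing each $A_i$-subtree of $\Gamma$ to a point. The resulting quotient is a $(\Lambda/A_i)$-tree whose topmost archimedean layer $A_{i+1}/A_i$ carries the action of $G_{i+1}$ with $G_i$ sitting as a point-stabilizer. Applying Bass' structure theorem (Section~2.3, Theorem 4.9 of \cite{B}) with $\Lambda_0 = A_i$ decomposes $G_{i+1}$ as the fundamental group of a graph of groups whose vertex groups are conjugates of $G_i$, and whose edge groups are either trivial or finitely generated maximal abelian subgroups. Regularity of the original $\Lambda$-tree action descends to the quotient as the assertion that all branch points in the quotient lie in one $G_{i+1}$-orbit, which forces the underlying graph of groups to be a rose based at $G_i$; this converts the decomposition into a sequence of HNN extensions of $G_i$ of the required form. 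Maximality of the associated abelian subgroups is the CSA property of $\Lambda$-free groups (Theorem~\ref{prop}(d)), and length-preservation of the associated isomorphisms is automatic because the identifications are realized by equivariant isometries of subtrees in the $A_i$-tree.

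The main obstacle is twofold. First, one must rigorously establish that regularity of the action descends through the filtration in the precise sense needed to collapse Bass' graph-of-groups output to a bouquet of HNN extensions (rather than a nontrivial graph with amalgamated products); this requires understanding exactly how branch points of $\Gamma$ project to branch points of the quotient, and using Regularity axiom (L6) together with the convexity of $A_i$. Second, the induction requires $G_i$ to be finitely presented so that the inductive hypothesis applies; this should follow from the finite presentability of $G$ and finite generation of the (abelian) edge groups via standard graph-of-groups arguments, but verifying finite generation of the edge groups uses a nontrivial accessibility-type input and is the most delicate bookkeeping step.
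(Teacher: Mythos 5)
This survey states the theorem with a citation to \cite{KMS12} and does not reproduce the proof, so I can only assess your argument on its own terms; but it has genuine gaps, and the first one is fatal. Your base case is false: a finitely presented group acting freely and regularly on a tree over an archimedean $\Lambda$ need not be free. Take $G=\mathbb{Z}^2$ acting on $\mathbb{R}$ as in Example~\ref{exam:ab_act} ($a\mapsto t_1$, $b\mapsto t_{\sqrt{2}}$): the action is free, and it is regular with respect to $0$, since $[0,g\cdot 0]\cap[0,h\cdot 0]$ always equals $[0,f\cdot 0]$ for some $f\in\{1,g,h\}$. Here there are no branch points at all, the orbit of the base point is dense, and the action is in no sense ``combinatorially equivalent to the action of a free group on a simplicial tree''; yet with your definition $G_1=\{g\mid \ell_x(g)\in A_1\}$ you get $G_1=G=\mathbb{Z}^2$, which you would declare free. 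The theorem is consistent with this example precisely because it allows a nontrivial HNN step \emph{inside} a single archimedean layer ($\mathbb{Z}^2$ is an HNN extension of the free group $\mathbb{Z}$ over itself); a filtration indexed by the convex subgroups of $\Lambda$ is too coarse to produce that decomposition, so the inductive skeleton itself is wrong, not just the justification of one step.

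There are two further problems even if the base case were repaired. Your inductive step invokes Bass's theorem for $\Lambda=\mathbb{Z}\times\Lambda_0$, but the quotient $A_{i+1}/A_i$ is an arbitrary archimedean ordered group, typically a dense subgroup of $\mathbb{R}$ rather than $\mathbb{Z}$; the survey explicitly warns that the original Bass argument for $\mathbb{Z}\oplus\Lambda_0$ does not work for $\mathbb{R}\oplus\Lambda_0$, so the theorem you want to apply is simply unavailable in the needed generality. And your opening reduction --- ``WLOG $\Lambda$ is finitely generated, so the convex chain is finite'' --- is essentially circular: subadditivity only places $\ell_x(G)$ inside the \emph{convex hull} of the subgroup generated by the lengths of the generators, and that convex subgroup need not be finitely generated nor have a finite chain of convex subgroups. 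The assertion that the subgroup generated by $\ell(G)$ is finitely generated is exactly Theorem~\ref{co:main1}, which the survey describes as an output of the proof of the structure theorems, not an input. For orientation: the actual proof in \cite{KMS12} does not proceed by induction on convex subgroups plus Bass's theorem at all; it develops a theory of non-archimedean infinite words and a Makanin--Razborov-type elimination process for $\Lambda$-free groups, from which the free bottom group, the HNN structure, and the length-preservation of the associated isomorphisms are extracted simultaneously.
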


\begin{theorem}\cite{KMS12} 
\label{th:main4}
Any finitely presented $\Lambda$-free group can be isometrically embedded into a finitely presented
regular $\Lambda$-free group.
\end{theorem}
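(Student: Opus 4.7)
The plan is to use the structural Theorem \ref{th:main1} as a blueprint for the target group and produce a length-preserving embedding of $G$ into such an iterated HNN tower. I would induct on the height of the chain of convex subgroups of $\Lambda$ relevant to the length function of $G$. The base case, when the length function takes values in $\Z$, is handled by Theorem \ref{th:lyndon}: $G$ embeds length-preservingly into a finitely generated free group, which itself acts freely and regularly on its Cayley graph viewed as a $\Z$-tree.

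For the inductive step I would first reduce to the case where $G$ is freely indecomposable. The class of finitely presented regular $\Lambda$-free groups is closed under free products: by Theorem \ref{prop}(j) and Example \ref{exam:free_prod_2} the free product remains $\Lambda$-free with the natural length function, and the wedge action on the corresponding $\Lambda$-tree preserves regularity once the wedge point is chosen to be a branch point of each factor. Combined with Grushko's theorem this reduces us to the freely indecomposable case. The next ingredient is a structural result for finitely presented freely indecomposable $\Lambda$-free groups analogous to Guirardel's Theorem \ref{th:Guirardel}, which one would establish in the general $\Lambda$ setting: $G$ splits as the fundamental group of a finite graph of groups with cyclic edge groups and vertex groups that are $\Lambda'$-free for a proper convex subgroup $\Lambda' < \Lambda$. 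By the inductive hypothesis each vertex group $G_v$ embeds length-preservingly into a finitely presented regular $\Lambda'$-free group $H_v$.

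I would then assemble the embedding. Each cyclic edge group sits inside a maximal abelian subgroup of the ambient $H_v$, and properties (d) and (g) of Theorem \ref{prop} guarantee that these maximal abelian subgroups are malnormal and embed into $\Lambda$, so the associated isomorphisms can be chosen to preserve the length induced from the previous stage. Rebuilding $G$ as a sequence of HNN extensions of the $H_v$ over these maximal abelian subgroups yields a finitely presented group $\tilde{G}$ acting freely on a $\Lambda$-tree, with $G$ embedding isometrically. To upgrade the action from free to regular, one applies Chiswell's identification idea from \cite{Ch2}: each $\tilde{G}$-orbit of branch points lying outside a chosen reference orbit in the fundamental domain can be merged with the reference orbit by one additional length-preserving HNN extension over a maximal abelian subgroup. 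Finite presentability bounds the number of such orbits, so only finitely many HNN extensions are needed, and the output has the format required by Theorem \ref{th:main1}.

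The hard part will be the assembly step, and specifically the termination of the regularization procedure: each new HNN extension could in principle create new branch point orbits requiring further identification. The crucial leverage is that the original graph-of-groups decomposition controls the combinatorics of branch points in a fundamental domain, so the added HNN extensions can be chosen to identify precisely the orbits already present without introducing new ones. This finiteness argument, together with careful length-preservation at each HNN step, is what distinguishes the finitely presented case from Chiswell's abstract construction in \cite{Ch2} and what keeps the final output both regular and finitely presented.
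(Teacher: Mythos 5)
The survey states this theorem as a result of \cite{KMS12} without reproducing its proof, so your proposal has to be judged against the known architecture of that argument rather than against a proof printed here. Your overall silhouette (induct on the convex-subgroup hierarchy of $\Lambda$, split off a top layer, embed vertex groups by induction, reassemble by length-preserving HNN extensions, then regularize) is reasonable, but two of your steps conceal essentially all of the mathematical content.

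First, your inductive step rests on ``a structural result for finitely presented freely indecomposable $\Lambda$-free groups analogous to Guirardel's Theorem~\ref{th:Guirardel}, which one would establish in the general $\Lambda$ setting.'' That is not a lemma one establishes in passing: it is essentially Theorem~\ref{th:main1} itself, i.e.\ the main structure theorem of \cite{KMS12}, proved there by a long Makanin--Razborov-type elimination process on band complexes built from non-archimedean infinite words. The survey explicitly cautions that Bass's argument for $\Z \oplus \Lambda_0$ fails for $\R \oplus \Lambda_0$; for general $\Lambda$ the quotient by the maximal proper convex subgroup is a dense subgroup of $\R$, so there is no simplicial tree to act on and extracting a graph-of-groups splitting requires the full Rips--Bestvina--Feighn machinery together with delicate control of stability. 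Moreover the edge groups one obtains are maximal abelian of finite rank, not cyclic as you assert. Second, your induction is on ``the height of the chain of convex subgroups of $\Lambda$ relevant to the length function,'' but the finiteness of that chain is itself Theorem~\ref{co:main1}, which the survey says comes out of the proof of Theorem~\ref{th:main3} rather than being available beforehand; you would need to prove independently that $l(G)$ generates a finitely generated (hence finite-height) ordered subgroup before the induction can start. Finally, the regularization step --- merging branch-point orbits by finitely many additional length-preserving HNN extensions --- is asserted rather than proved, and your own worry about new orbits being created is not answered by the sentence that dismisses it; note that in the one case worked out in the literature ($\Lambda = \Z^n$, \cite{KMS2}) the regular target acts on a $\Z^m$-tree with $m$ possibly larger than $n$, which is already a warning that identifying orbits over the \emph{same} $\Lambda$ is not automatic. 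As written, the proposal is a plan whose hard kernel coincides with the theorems it is meant to invoke.
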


\begin{theorem}\cite{KMS12} 
\label{th:main3}
Any finitely presented  $\Lambda$-free group $G$ is $\mathbb{R}^n$-free for an appropriate
$n \in \mathbb{N}$, where $\mathbb{R}^n$ is ordered lexicographically.
\end{theorem}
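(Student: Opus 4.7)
The plan is to combine Theorems \ref{th:main4} and \ref{th:main1} with the structure theory of finitely generated ordered abelian groups. First, by Theorem \ref{th:main4}, $G$ embeds isometrically into a finitely presented regular $\Lambda$-free group $\widetilde G$. Since subgroups of $\Lambda'$-free groups are $\Lambda'$-free (Theorem \ref{prop}(a)), it suffices to prove the statement for $\widetilde G$, so I may assume $G$ itself is finitely presented and regular $\Lambda$-free.

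Applying Theorem \ref{th:main1} then gives a chain $G_1 < G_2 < \cdots < G_k = G$ in which $G_1$ is free and each $G_{i+1}$ is built from $G_i$ by finitely many HNN extensions along finitely generated maximal abelian subgroups with length-preserving associated isomorphisms. The central step is an induction on $i$ producing, for each $G_i$, a \emph{finitely generated} ordered abelian subgroup $\Lambda_i \leqslant \Lambda$ and a free action of $G_i$ on a $\Lambda_i$-tree $T_i$. The base case is clear: $G_1$ is free and acts freely on its Cayley graph, a $\mathbb{Z}$-tree. For the inductive step I would perform a Bass--Serre-style construction: starting from $T_i$, attach along the axes of each associated maximal abelian subgroup edges whose lengths live in a fresh copy of $\mathbb{Z}$ placed above $\Lambda_i$ in right lexicographic order. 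This yields a $\Lambda_i \oplus \mathbb{Z}^{r_i}$-tree (with $r_i$ the number of stable letters introduced at step $i$) on which $G_{i+1}$ acts freely. The length-preserving hypothesis on the HNN isomorphisms is precisely what allows the axes of $A$ and $tAt^{-1} = B$ to be identified isometrically so that the attached edges assemble into a genuine tree.

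Once the induction terminates, $G$ acts freely on a $\Lambda_k$-tree with $\Lambda_k$ finitely generated. Such a $\Lambda_k$ admits an order-preserving embedding into $\mathbb{R}^n$ with right lexicographic order: by the structural decomposition \eqref{eq:order-convex} recalled in Section \ref{subs:ord_ab}, $\Lambda_k$ is a direct sum of finitely many finitely generated archimedean ordered abelian factors, and any such factor embeds into $\mathbb{R}$ by H\"older's theorem. Theorem \ref{prop}(b) then upgrades $\Lambda_k$-freeness to $\mathbb{R}^n$-freeness, completing the argument.

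I expect the main obstacle to be the inductive HNN step, that is, verifying that the attached-edge construction genuinely produces a $\Lambda_i \oplus \mathbb{Z}^{r_i}$-tree on which $G_{i+1}$ acts freely, with no inadvertent point stabilizers. This rests essentially on both the \emph{maximality} of the associated abelian subgroups (so that their axes are not already shared with other group elements) and the \emph{length-preserving} property of the associated isomorphisms (so that the gluing is compatible with the ambient metric). These are exactly the properties guaranteed by Theorem \ref{th:main1}, which is precisely why the preliminary reduction to the regular finitely presented case via Theorem \ref{th:main4} is indispensable.
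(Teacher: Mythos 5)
The paper itself gives no proof of this theorem (it is a survey statement imported from \cite{KMS12}), so the comparison is with the architecture the surrounding text indicates. Your reduction via Theorem \ref{th:main4} and Theorem \ref{prop}(a), and your endgame --- a finitely generated ordered abelian value group decomposes via its convex subgroup chain \eqref{eq:order-convex} into archimedean factors, each embeds in $\mathbb{R}$ by H\"older, and Theorem \ref{prop}(b) upgrades to $\mathbb{R}^n$-freeness --- are correct and do match the intended route: the survey explicitly says that Theorem \ref{co:main1} (finite generation of $\langle l(G)\rangle$) ``comes from the proof of Theorem \ref{th:main3}.''

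The genuine gap is your central inductive step. First, a sanity check: as written, your induction starts from $\Lambda_1=\mathbb{Z}$ and at each stage adjoins fresh copies of $\mathbb{Z}$ lexicographically, so it would terminate with a free action on a $\mathbb{Z}^m$-tree with lexicographic order --- but whether every finitely presented $\Lambda$-free group is $\mathbb{Z}^k$-free is precisely the open Problem \ref{th:main2} of this survey, so the step cannot work as stated. The concrete failure points are these. (i) The length-preservation in Theorem \ref{th:main1} is with respect to the length \emph{induced from $G_i$}, i.e.\ the original $\Lambda$-valued length; once you replace the $\Lambda$-tree for $G_i$ by your inductively rebuilt $\Lambda_i$-tree $T_i$, the length function changes and there is no reason the associated isomorphisms remain length-preserving for the new length --- the very hypothesis you rely on to glue evaporates after the base case. (ii) The Bass-type converse construction (cf.\ the quoted Theorem 4.9 of \cite{B}, whose converse direction carries restrictive hypotheses, and Theorem \ref{th:Z^n_desrc}, which requires \emph{separated} HNN extensions with cyclically reduced associated subgroups and the condition that $a$ is not conjugate to $\phi(a)^{-1}$) does not apply to arbitrary length-preserving HNN extensions along maximal abelian subgroups; Theorem \ref{th:main1} guarantees none of these extra conditions, so freeness of the action on the attached-edge complex, and even its being a tree, are unverified. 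The proof in \cite{KMS12} instead works with the original $\Lambda$-valued length function throughout (via the infinite-words machinery) to establish finite generation of $\langle l(G)\rangle$ directly, and only then passes to an $\mathbb{R}^n$-tree; it never needs, and does not obtain, a discretely ordered value group.
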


In his book \cite{Ch1} Chiswell (see also \cite{Remeslennikov:1989})  asked the following
very important question (Question 1, p. 250): If $G$ is a finitely generated $\Lambda$-free group,
is $G$ $\Lambda_0$-free for some finitely generated abelian ordered group $\Lambda_0$? The
following result answers this question in the affirmative in the strongest form. It comes from the
proof of Theorem \ref{th:main3} (not the statement of the theorem itself).

\begin{theorem}\cite{KMS12} 
\label{co:main1}
Let $G$ be a finitely presented group with a free Lyndon length function $l : G \to \Lambda$. Then
the subgroup $\Lambda_0$ generated by $l(G)$ in $\Lambda$ is finitely generated.
\end{theorem}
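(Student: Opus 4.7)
The plan is to apply the structural results \ref{th:main1} and \ref{th:main4} and induct on the resulting filtration. By Theorem \ref{th:main4}, $G$ embeds isometrically into a finitely presented regular $\Lambda$-free group $\tilde G$. Because the embedding preserves lengths, $l(G)\subseteq l(\tilde G)$, and because every subgroup of a finitely generated abelian group is itself finitely generated, it suffices to prove the statement with $G$ replaced by $\tilde G$. So from now on I will assume $G$ acts freely and regularly on a $\Lambda$-tree.

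Now apply Theorem \ref{th:main1} to obtain a finite filtration $G_1<G_2<\cdots<G_n=G$ with $G_1$ a finitely generated free group and each $G_{i+1}$ produced from $G_i$ by a finite sequence of HNN extensions whose associated subgroups are maximal finitely generated abelian and whose associated isomorphisms preserve length. I will show by induction on $i$ that $l(G_i)$ is contained in a finitely generated subgroup $\Lambda_i\leq\Lambda$. For the base case, fix a free generating set $X$ of $G_1$. Every element $g\in G_1$ is a reduced word in $X^{\pm 1}$, and iterated application of $l(uv)=l(u)+l(v)-2c(u^{-1},v)$ expresses $l(g)$ as a $\mathbb{Z}$-linear combination of the values $l(x)$ for $x\in X^{\pm 1}$ together with the Gromov products encountered along the way. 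Regularity (L6) lets every such Gromov product $c(u^{-1},v)$ be realized as $l(w)$ for some $w\in G$, so an easy induction on word length keeps all these values in the finitely generated subgroup $\Lambda_1$ generated by $\{l(x):x\in X\}\cup\{c(x^\varepsilon,y^\delta):x,y\in X,\ \varepsilon,\delta\in\{\pm 1\}\}$.

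For the inductive step, consider one HNN extension $\langle G_i,t\mid t^{-1}At=A'\rangle$ in which $A,A'\leq G_i$ are maximal finitely generated abelian and the identification $\phi:A\to A'$ is length-preserving. The $\Lambda$-tree on which $G_{i+1}$ acts is built by equivariantly gluing copies of the $G_i$-tree along translates of the axes of $A$ and $A'$, and every element has a Britton-reduced form $w=g_0t^{\varepsilon_1}g_1\cdots t^{\varepsilon_k}g_k$. Tracking the path from the base-point to $w\cdot x$ through this glued tree decomposes $l(w)$ into a sum in which each crossing of a $t$-edge contributes $\pm l(t)$ and each traversal of a vertex-subtree contributes the displacement of a specific element of $G_i$, with cancellation corrections at the gluing loci that, again by regularity, equal lengths of elements of $G_i$. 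The length-preserving property of $\phi$ guarantees that different coset representatives in $A$ produce the same contributions modulo $\Lambda_i$. Since only finitely many stable letters $t_1,\ldots,t_m$ are introduced at this step, we obtain $l(G_{i+1})\subseteq\Lambda_i+\mathbb{Z}\cdot l(t_1)+\cdots+\mathbb{Z}\cdot l(t_m)=:\Lambda_{i+1}$, still finitely generated.

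The hardest step will be this geometric bookkeeping inside the HNN extension: verifying that the length-preserving condition on $\phi$ together with regularity confines every cancellation term appearing in the Britton-reduced length computation to $\Lambda_i+\mathbb{Z}l(t)$, so that no new generators beyond the stable letters are needed. Once that is in place, iterating the inductive step gives $l(G)\subseteq\Lambda_n$, whence $\Lambda_0$ is a subgroup of the finitely generated abelian group $\Lambda_n$ and is therefore itself finitely generated, which is the desired conclusion.
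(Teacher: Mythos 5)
The survey does not actually prove this statement: it is quoted from \cite{KMS12} with the explicit remark that it ``comes from the proof of Theorem \ref{th:main3} (not the statement of the theorem itself)'', i.e.\ from the internal machinery of the elimination process over non-Archimedean words, not from the statements of the structure theorems you invoke. Your plan --- reduce to the regular case via Theorem \ref{th:main4}, then induct along the filtration of Theorem \ref{th:main1} --- is natural, but both halves of the induction contain unproved claims that are exactly where the difficulty lives. In the base case, the recursion $l(x_1\cdots x_n)=l(x_1\cdots x_{n-1})+l(x_n)-2c\bigl((x_1\cdots x_{n-1})^{-1},x_n\bigr)$ produces Gromov products of \emph{long} words against generators; such a product is the distance from the base point to the median of three orbit points, and there is no identity expressing it as a $\mathbb{Z}$-combination of the $l(x)$ and the $c(x^{\varepsilon},y^{\delta})$ for generators $x,y$. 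The closed formula $l(g)=\sum_i l(x_i)-2\sum_i c(x_i^{-1},x_{i+1})$ holds only when the basis satisfies a Nielsen-type non-overlapping condition on successive cancellations, and producing such a basis for a free action on an arbitrary $\Lambda$-tree is itself a nontrivial piece of the \cite{KMS12} machinery. Regularity does not rescue the argument: it only tells you that each such Gromov product equals $l(w)$ for \emph{some} $w\in G$, which is circular for the purpose of bounding the subgroup that $l(G)$ generates.

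The inductive step has the same defect in sharper form. The cancellation corrections at your gluing loci are Gromov products of elements of $G_{i+1}$, and regularity identifies them with lengths of elements of $G_{i+1}$ --- not of $G_i$ --- so nothing confines them to $\Lambda_i+\mathbb{Z}\cdot l(t)$. Moreover, Theorem \ref{th:main1} as stated gives a group-theoretic HNN decomposition with length-preserving associated isomorphisms, but it does not hand you the description of the $G_{i+1}$-tree as an equivariant gluing of copies of the $G_i$-tree along axes, nor a formula for $l$ on Britton-reduced words; that is an additional construction (the ``converse'' direction of Bass-type theorems, which requires extra hypotheses even in the $\mathbb{Z}\oplus\Lambda_0$ case). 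You yourself flag this bookkeeping as ``the hardest step,'' and it is in fact the entire content of the theorem: as written, the proposal is a plausible outline with its two essential lemmas left unproved.
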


The following result  follows directly from Theorem \ref{th:main1} and Theorem \ref{th:main4} by a
simple application of Bass-Serre Theory.

\begin{theorem}\cite{KMS12} 
\label{co:main5}
Any finitely presented $\Lambda$-free group $G$ can be obtained from free groups by a finite sequence of amalgamated
free products and HNN extensions along maximal abelian subgroups, which are free abelian groups of finite rank.
\end{theorem}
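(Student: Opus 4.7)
The plan is to use Theorem \ref{th:main4} to embed $G$ into a finitely presented regular $\Lambda$-free group $\tilde{G}$, invoke the HNN-hierarchy of Theorem \ref{th:main1} for $\tilde{G}$, and then repeatedly apply Bass-Serre theory to the action of $G$ on the Bass-Serre tree at each stage, peeling off one layer of the hierarchy at a time and inducting on its length.

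Concretely, by Theorem \ref{th:main4} we have an isometric embedding $G \hookrightarrow \tilde{G}$, and by Theorem \ref{th:main1} there is a chain
$$G_1 < G_2 < \cdots < G_n = \tilde{G}$$
with $G_1$ free and each $G_{i+1}$ obtained from $G_i$ by finitely many HNN extensions along finitely generated maximal abelian subgroups. I will induct on $n$. The case $n=1$ is immediate: $G$ is a subgroup of a free group, hence itself free, and no amalgams or HNN extensions are required. For $n>1$, view the top level of the hierarchy as expressing $\tilde{G}$ as the fundamental group of a graph of groups with a single vertex carrying $G_{n-1}$ and finitely many loop edges carrying maximal abelian subgroups of $G_{n-1}$. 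Let $T$ denote the associated Bass-Serre tree.

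The restricted action of $G \leq \tilde{G}$ on $T$ exhibits $G$, via Bass-Serre theory, as the fundamental group of a graph of groups with vertex stabilizers of the form $G \cap g G_{n-1} g^{-1}$ and edge stabilizers of the form $G \cap gAg^{-1}$, where $A$ ranges over the maximal abelian HNN-edge groups of $\tilde{G}$. Each edge stabilizer is a subgroup of a finitely generated abelian group, hence finitely generated abelian; by Theorem \ref{prop}(c),(g) it is torsion-free and embeds in $\Lambda$, and is therefore free abelian of finite rank. To keep the decomposition manageable I will invoke Bestvina-Feighn accessibility: since $G$ is finitely presented and the edge stabilizers are (slender) abelian, the graph of groups is finite and every vertex group is itself finitely presented. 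Each such vertex group is then a finitely presented $\Lambda$-free subgroup of the regular $\Lambda$-free group $G_{n-1}$, which has hierarchy length $n-1$, so the inductive hypothesis applies directly (with $G_{n-1}$ in place of $\tilde{G}$, bypassing a new application of Theorem \ref{th:main4}). Splicing the decomposition of $G$ at the $n$-th level to the inductive decompositions of its vertex groups assembles $G$ from free groups by finitely many amalgams and HNN extensions with free-abelian-of-finite-rank edge groups.

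The two subtle points are the main obstacles. The first is the appeal to Bestvina-Feighn accessibility, which is needed at each level to prevent the graph from becoming infinite or the vertex groups from losing finite presentability; one checks that abelian edge stabilizers are slender in the sense required. The second is arranging that the edge groups in the final decomposition are actually \emph{maximal} abelian in the adjacent vertex groups, not merely abelian. Here one uses the CSA property from Theorem \ref{prop}(d): any abelian subgroup of the $\Lambda$-free vertex group sits in a unique maximal abelian subgroup (malnormality), and one replaces each edge group $G \cap gAg^{-1}$ by this maximal overgroup, verifying that the induced inclusions still yield a graph-of-groups presentation of $G$. As an alternative, entirely internal, route one could instead combine Theorem \ref{th:main3} with Guirardel's Theorem \ref{th:Guirardel} and Rips' Theorem \ref{th:Rips_0}, inducting on $n$ in $\mathbb{R}^n$-freeness; but the Theorem \ref{th:main1} / Theorem \ref{th:main4} route above is the most direct.
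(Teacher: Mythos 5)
Your proposal follows exactly the route the paper itself indicates: its entire proof of this theorem is the remark that the result ``follows directly from Theorem \ref{th:main1} and Theorem \ref{th:main4} by a simple application of Bass-Serre Theory,'' and your argument is precisely that application, carried out by induction on the length of the regular hierarchy with the induced action of $G$ on the Bass--Serre tree at each level. The extra details you supply (accessibility to keep the decomposition finite and the vertex groups finitely presented, and the CSA-based adjustment to make edge groups maximal abelian --- which is more accurately a fold of the form $V_1 *_E V_2$ into $V_1 *_M (M *_E V_2)$ than a literal enlargement of the edge group) go beyond what the paper records but are consistent with its intended argument.
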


The following result concerns  abelian subgroups of $\Lambda$-free groups. For $\Lambda = {\mathbb Z}^n$ it follows from the main structural result for ${\mathbb Z}^n$-free groups and \cite{[66]}, for $\Lambda = {\mathbb R}^n$ it was proved in \cite{G}. The statement 1) below answers  Question 2 (page 250) from \cite{Ch1} in the affirmative for finitely presented $\Lambda$-free groups.

\begin{theorem}\cite{KMS12} 
\label{co:main1b}
Let $G$ be a finitely presented $\Lambda$-free group. Then:
\begin{itemize}
\item [1)] every abelian subgroup of $G$ is a free abelian group of finite rank, which is uniformly bounded from above by the rank of the abelianization of $G$.
\item [2)] $G$ has only finitely many conjugacy classes of
maximal non-cyclic abelian subgroups,
\item [3)] $G$  has a finite classifying space and the cohomological dimension of $G$  is $r$ where $r$  is the maximal rank of an abelian subgroup of $G$, except if $r=1$ when the cohomological dimension of $G$ is 1 or 2.
\end{itemize}
\end{theorem}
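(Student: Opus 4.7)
The plan is to leverage three earlier results in combination: Theorem \ref{th:main3} (so $G$ is $\mathbb{R}^n$-free for some $n$), Theorem \ref{co:main5} (so $G$ is the fundamental group of a finite graph of groups built in finitely many steps from free groups by amalgamated products and HNN extensions along finitely generated free abelian subgroups that are maximal abelian in the intermediate groups), and Corollary \ref{co:Guirardel_2} (so $G$ is hyperbolic relative to its non-cyclic abelian subgroups). Each of the three conclusions of the theorem is derived by combining one of these inputs with a standard Bass--Serre or Mayer--Vietoris argument.

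For part (1), I would induct on the height of the series $G_1 < G_2 < \cdots < G_n = G$ produced by Theorem \ref{th:main1}. An abelian subgroup $A \leq G_{i+1}$ acts on the Bass--Serre tree of the HNN extensions that express $G_{i+1}$ over $G_i$; being abelian it either fixes a vertex, in which case it is conjugate into $G_i$ and the inductive hypothesis applies, or it stabilizes a line, in which case its kernel on translation length is conjugate into an edge group (finitely generated free abelian by Theorem \ref{th:main1}(2)) and the whole of $A$ embeds into a finitely generated free abelian group of rank one larger. The base case, a free group, has only cyclic abelian subgroups. For the uniform bound by $\mathrm{rank}(G^{\mathrm{ab}})$, I would use the CSA property (Theorem \ref{prop}(d)) to replace $A$ by the unique maximal abelian $M \supseteq A$, then exploit the length-preserving identifications in Theorem \ref{th:main1}(2): they guarantee that at each HNN step the stable letters contribute independent generators to the abelianization and the new maximal abelian vertex retracts onto its image in $G^{\mathrm{ab}}$, giving a homomorphism $G^{\mathrm{ab}} \to M$ whose image has finite index in $M$.

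For part (2), I would appeal to the standard finiteness of conjugacy classes of maximal parabolic subgroups in a toral relatively hyperbolic group. Corollary \ref{co:Guirardel_2} identifies the peripheral structure of $G$ with the family of maximal non-cyclic abelian subgroups, whence that family is finite up to conjugacy. As an independent check, this can also be read off directly from the iterated graph-of-groups picture of Theorem \ref{co:main5}: at each of the finitely many stages, only the edge groups and their images under the new HNN/amalgamation identifications can produce new conjugacy classes of maximal non-cyclic abelian subgroups, and at each stage only finitely many are produced.

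For part (3), I would prove the existence of a finite classifying space and the value of the cohomological dimension simultaneously by induction on the construction of Theorem \ref{co:main5}. A free group and a free abelian group of rank $k$ have finite classifying spaces (wedges of circles, respectively the $k$-torus) of cohomological dimension $1$ and $k$. The double mapping cylinder construction shows that an amalgamated product or HNN extension of type $F$ groups along a type $F$ subgroup is again of type $F$; the Mayer--Vietoris sequence then yields $\mathrm{cd}(G) \leq \max(\mathrm{cd}(G_v),\, \mathrm{cd}(G_e)+1)$ at each step, and by induction $\mathrm{cd}(G) \leq r$, since edge groups are free abelian of rank at most $r-1$ (or $r$ when they are themselves maximal). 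The matching lower bound follows from the inclusion $\mathbb{Z}^r \hookrightarrow G$ and monotonicity of $\mathrm{cd}$ under torsion-free subgroups. The $r=1$ caveat records exactly the dichotomy between a nontrivial free group (cd $1$) and a nontrivial surface subgroup contribution (cd $2$), both of which are consistent with maximal abelian rank one.

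The main obstacle will be the uniform rank bound in (1): that every abelian subgroup is free abelian of finite rank is a straightforward induction, but the tight bound by $\mathrm{rank}(G^{\mathrm{ab}})$ forces one to show that the maximal abelian subgroups contribute independently to the abelianization and are not collapsed by the edge identifications. This is precisely where the length-preserving assumption in Theorem \ref{th:main1}(2) is indispensable, and organizing the retraction coherently across the finite tower of HNN extensions is the technical heart of the argument.
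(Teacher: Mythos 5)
The paper itself offers no proof of this theorem: it is quoted from \cite{KMS12}, with the remark immediately preceding it that the $\Lambda=\mathbb{Z}^n$ case follows from the structural result plus \cite{[66]} and the $\Lambda=\mathbb{R}^n$ case is Guirardel's \cite{G}; combined with Theorem \ref{th:main3}, the intended route is therefore a reduction of the general finitely presented case to the $\mathbb{R}^n$-free case. Your route is genuinely different --- a direct Bass--Serre and Mayer--Vietoris induction along the tower of Theorems \ref{th:main1}, \ref{th:main4}, \ref{co:main5} --- and in principle that is a reasonable, more self-contained strategy. However, as written it has concrete gaps beyond the one you flag yourself.

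First, in part (3) your stated inequality $\mathrm{cd}(G)\le\max(\mathrm{cd}(G_v),\mathrm{cd}(G_e)+1)$ does not yield $\mathrm{cd}(G)\le r$: the edge groups in Theorem \ref{co:main5} are \emph{maximal} abelian subgroups, so they may attain the maximal rank $r$, and a separated (length-preserving) HNN extension along a rank-$r$ edge group does not increase $r$ but your bound only gives $\mathrm{cd}\le r+1$. Your own parenthetical ``(or $r$ when they are themselves maximal)'' concedes exactly this; closing it requires a finer criterion for when an HNN extension over a codimension-zero abelian subgroup raises the cohomological dimension, which is not supplied. Second, in part (1) the dichotomy ``fixes a vertex or stabilizes a line'' omits the case of an abelian subgroup all of whose elements are elliptic but which fixes only an end of the Bass--Serre tree; since you have not yet established finite generation of $A$, this case must be ruled out (or handled) before the induction closes, and it is also where freeness (as opposed to mere finite rank) of $A$ could fail. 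Third, the uniform bound by $\mathrm{rank}(G^{\mathrm{ab}})$ is asserted via a ``retraction of $M$ onto its image in $G^{\mathrm{ab}}$'' that is not constructed; you correctly identify this as the technical heart, but no mechanism is given to exclude the edge identifications collapsing the image of a maximal abelian subgroup in homology. Part (2) is essentially fine, though deducing it from Corollary \ref{co:Guirardel_2} is mildly circular, since a relatively hyperbolic structure with abelian peripherals already presupposes finitely many peripheral conjugacy classes; your alternative graph-of-groups count is the honest version.
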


\begin{theorem}\cite{KMS12} 
\label{co:Lambda_Guirardel_2}
Every finitely presented  $\Lambda$-free group is hyperbolic relative to its non-cyclic abelian
subgroups.
\end{theorem}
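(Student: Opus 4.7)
The plan is to obtain this as an almost immediate corollary of results already stated earlier in the survey, by passing to the $\mathbb{R}^n$-free case.

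First, I would invoke Theorem \ref{th:main3}: if $G$ is a finitely presented $\Lambda$-free group then $G$ is $\mathbb{R}^n$-free for some $n \in \mathbb{N}$ (with the lexicographic order on $\mathbb{R}^n$). This reduces the general ordered abelian group case to the much more tractable archimedean-chain situation $\Lambda = \mathbb{R}^n$, where Guirardel's structure theorem applies. Since a finitely presented group is in particular finitely generated, the hypotheses of the $\mathbb{R}^n$-free theory are met.

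Next, I would apply Corollary \ref{co:Guirardel_2}, which states that every finitely generated $\mathbb{R}^n$-free group is hyperbolic relative to its non-cyclic abelian subgroups. Chaining the two facts gives the claim directly: $G$ is finitely generated and $\mathbb{R}^n$-free, hence relatively hyperbolic with respect to its non-cyclic abelian peripheral subgroups. It is worth remarking that the non-cyclic abelian subgroups of $G$ are well-behaved by Theorem \ref{co:main1b}: they are finitely generated free abelian of bounded rank, with only finitely many conjugacy classes of maximal such subgroups, which is exactly the kind of peripheral structure required for a toral relatively hyperbolic group.

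There is no real obstacle once Theorem \ref{th:main3} is in hand; the whole content lies in passing from an arbitrary ordered abelian coefficient group $\Lambda$ down to $\mathbb{R}^n$. The main technical work is hidden inside Theorem \ref{th:main3} and inside Corollary \ref{co:Guirardel_2} (whose proof itself relies on Guirardel's decomposition in Theorem \ref{th:Guirardel} together with Dahmani's combination theorem for relatively hyperbolic groups). Thus the proof proposal is simply: invoke Theorem \ref{th:main3} to replace $\Lambda$ with $\mathbb{R}^n$, then quote Corollary \ref{co:Guirardel_2} to conclude.
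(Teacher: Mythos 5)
Your argument is correct, but it is not the route the paper takes. The paper derives Theorem \ref{co:Lambda_Guirardel_2} directly from the structural Theorem \ref{th:main1} (every finitely presented regular $\Lambda$-free group is built from a free group by finitely many HNN-extensions along maximal abelian, finitely generated associated subgroups, combined with Theorem \ref{th:main4} to pass from the regular to the general case, as in Theorem \ref{co:main5}) together with Dahmani's Combination Theorem \cite{Dahmani:2003}: the abelian hierarchy itself is exactly the input that combination theorem needs. You instead first invoke Theorem \ref{th:main3} to replace $\Lambda$ by $\mathbb{R}^n$ and then quote Corollary \ref{co:Guirardel_2}, which rests on Guirardel's independent structure theorem (Theorem \ref{th:Guirardel}) plus the same combination theorem. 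Both chains are valid as the survey is organized, and both ultimately bottom out in Dahmani's result; the difference is which structure theorem supplies the splitting. The paper's route is the more economical one logically, since in \cite{KMS12} Theorem \ref{th:main3} is itself a consequence of the structural theorems you are trying to avoid, so your detour through $\mathbb{R}^n$-freeness quietly re-imports Theorem \ref{th:main1} anyway; what your route buys is that it reuses, verbatim, a corollary already established earlier in the survey, and your closing remark that Theorem \ref{co:main1b} makes the peripheral subgroups finitely generated free abelian of bounded rank (so that $G$ is genuinely toral relatively hyperbolic) is a worthwhile addition that the paper leaves implicit.
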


This follows from  the structural Theorem \ref{th:main1} and  the Combination Theorem for relatively
hyperbolic groups \cite{Dahmani:2003}.

The following results answers affirmatively  the strongest form of the Problem (GO3) from the Magnus
list of open problems \cite{BaumMyasShpil:2002}, in the case of finitely presented groups.

\begin{corollary}
\label{co:Lambda_Guirardel_3}
Every finitely presented  $\Lambda$-free group  is biautomatic.
\end{corollary}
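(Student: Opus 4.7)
The plan is to deduce biautomaticity as an immediate consequence of Corollary~\ref{co:Lambda_Guirardel_2}, invoking a known combination theorem that lifts biautomaticity from peripheral subgroups to the ambient relatively hyperbolic group. The geometric content is already supplied by the structural and relative hyperbolicity results; the argument is essentially a citation together with a verification that the peripheral subgroups have biautomatic structures of the right kind.

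First, by Theorem~\ref{co:main1b}(1)--(2), every maximal non-cyclic abelian subgroup of a finitely presented $\Lambda$-free group $G$ is free abelian of finite rank, and there are only finitely many conjugacy classes of such subgroups. Combined with Corollary~\ref{co:Lambda_Guirardel_2}, this says that $G$ is hyperbolic relative to a finite family of free abelian subgroups, i.e., $G$ is a \emph{toral relatively hyperbolic} group.

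Second, finitely generated free abelian groups $\mathbb{Z}^{r}$ are biautomatic, with particularly well-behaved biautomatic structures: one can take a prefix-closed regular language of unique normal forms over a standard basis (for instance a ShortLex language) for which multiplication by a generator produces only uniformly bounded "parallel" perturbations, and which provides coset representatives for finitely generated subgroups. These are exactly the technical properties needed as input to combination theorems for biautomaticity in the relatively hyperbolic setting.

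Finally, one applies Rebbechi's theorem on algorithmic properties of relatively hyperbolic groups, which asserts that a group hyperbolic relative to a finite collection of biautomatic peripheral subgroups admitting appropriate coset-compatible biautomatic structures is itself biautomatic. Specialized to the toral case it applies directly to $G$, giving the corollary. The main obstacle, such as it is, is bookkeeping: one must verify that the standard biautomatic structure on $\mathbb{Z}^{r}$ satisfies the precise coset conditions required by Rebbechi's combination theorem; for free abelian groups this is routine. Beyond this, no new geometric input is needed over what has already been established in Theorems~\ref{co:main5} and~\ref{co:main1b} and Corollary~\ref{co:Lambda_Guirardel_2}.
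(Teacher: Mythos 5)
Your argument is exactly the paper's: deduce biautomaticity from Corollary~\ref{co:Lambda_Guirardel_2} (relative hyperbolicity with abelian peripherals) together with Rebbechi's theorem \cite{Rebeca}, with your extra detail on the peripheral subgroups supplied by Theorem~\ref{co:main1b}. This matches the paper's one-line proof, so no further comparison is needed.
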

\begin{proof} This follows from Theorem \ref{co:Lambda_Guirardel_2} and Rebbecchi's result
\cite{Rebeca}.
\end{proof}
\begin{definition} A {\em hierarchy} for a group $G$ is a way to construct it  starting with trivial groups by repeatedly taking amalgamated products $A*_CB$ and $HNN$ extensions $A*_{C^t=D}$ whose vertex groups have shorter length hierarchies. The hierarchy is {\em quasi convex} if the amalgamated subgroup $C$ is a finitely generated subgroup that embeds by a quasi-isometric embedding, and if
 $C$ is malnormal in $A*_cB$ or $A*_{C^t=D}$ .
\end{definition}
\begin{theorem}
\label{th:Lambda_quasi-convex_hierarchy}
Every finitely presented $\Lambda$-free group $G$ has a quasi-convex hierarchy with abelian edge groups.
\end{theorem}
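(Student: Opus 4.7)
The plan is to combine the structural description of $G$ given by Theorem~\ref{co:main5} with the theory of relatively hyperbolic groups supplied by Theorem~\ref{co:Lambda_Guirardel_2} and the CSA property of Theorem~\ref{prop}(d). Theorem~\ref{co:main5} writes $G$ as obtained from free groups by finitely many amalgamations and HNN extensions along free abelian edge groups of finite rank which are maximal abelian in the corresponding vertex groups. Since each free factor admits a trivial hierarchy built from the trivial group by iterated free products (i.e.\ HNN extensions with trivial edge group), these two hierarchies concatenate into a single hierarchy of $G$ from trivial groups with abelian edge groups at every nontrivial step. I would take this concatenated hierarchy as the candidate and verify that it is quasi-convex.

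For malnormality, Theorem~\ref{prop}(d) gives that $G$ is CSA, so every maximal abelian subgroup of $G$ is malnormal in $G$. Each edge group $C$ in the hierarchy is maximal abelian in its vertex groups, and, by a standard property of amalgams along maximal abelian subgroups in CSA groups, $C$ remains maximal abelian in the amalgam $A\ast_{C}B$ (respectively in $A\ast_{C^{t}=D}$) and hence in $G$. Since each intermediate amalgam or HNN extension sits as a subgroup of $G$ containing $C$, malnormality in $G$ restricts to malnormality at every stage of the hierarchy.

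For quasi-isometric embedding, Theorem~\ref{co:Lambda_Guirardel_2} says that $G$ is toral relatively hyperbolic with peripheral structure the maximal non-cyclic abelian subgroups. A standard theorem in the theory of relatively hyperbolic groups (due to Osin/Hruska) states that finitely generated parabolic subgroups of a relatively hyperbolic group are undistorted, as are cyclic subgroups generated by a loxodromic element. Hence each finitely generated abelian edge group $C$ (which is either cyclic and loxodromic, or equal to a peripheral $\MZ^{n}$) is undistorted in $G$. Undistortedness descends to intermediate subgroups: if $C\leq H\leq G$ with $H$ finitely generated, then $d_{G}\leq d_{H}$ on $H$, so an inequality $d_{C}\leq Kd_{G}+K$ on $C$ implies $d_{C}\leq Kd_{H}+K$. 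Thus each edge group embeds by a quasi-isometric embedding into the corresponding amalgam or HNN extension in the hierarchy.

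The main obstacle I anticipate is identifying cleanly that the maximal abelian edge groups of Theorem~\ref{co:main5}, which are a priori only maximal abelian in the vertex groups, are indeed either peripheral or generated by a loxodromic element in the relatively hyperbolic structure supplied by Theorem~\ref{co:Lambda_Guirardel_2}, so that the undistortedness result applies. Once this identification is made, the three ingredients assemble as follows: Theorem~\ref{co:main5} furnishes the hierarchy skeleton with abelian edge groups, Theorem~\ref{prop}(d) provides malnormality of the edge groups at every stage, and Theorem~\ref{co:Lambda_Guirardel_2} together with standard relatively hyperbolic geometry provides the quasi-isometric embedding, yielding the desired quasi-convex hierarchy with abelian edge groups.
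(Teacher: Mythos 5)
The survey states this theorem without proof, and your assembly of Theorem~\ref{co:main5} (the hierarchy skeleton), Theorem~\ref{prop}(d) (CSA, for malnormality) and Theorem~\ref{co:Lambda_Guirardel_2} (toral relative hyperbolicity, for the quasi-isometric embedding) is clearly the intended route. The quasi-isometric embedding part of your argument is sound: the dichotomy you flag as the ``main obstacle'' is resolved by the standard fact that in a toral relatively hyperbolic group every abelian subgroup is either conjugate into a peripheral (maximal non-cyclic abelian) subgroup or is cyclic and generated by a hyperbolic element, and both kinds are undistorted; your descent of undistortion from $G$ to the intermediate subgroups of the hierarchy is also correct.

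The malnormality step, however, has a genuine gap. The HNN extensions produced by Theorems~\ref{th:main1} and~\ref{co:main5} are not all separated: centralizer extensions, in which the two associated subgroups coincide and the associated isomorphism is the identity, do occur --- they appear explicitly among the building operations in Theorem~\ref{th:Z^n_desrc}, and they are unavoidable whenever $G$ contains a non-cyclic abelian subgroup. For such an extension $\langle A,t\mid [t,C]=1\rangle$ the edge group $C$ is \emph{not} maximal abelian in the resulting group (its centralizer grows to $C\times\langle t\rangle$) and is \emph{not} malnormal there, since $C^t=C$. The simplest instance is $G=\Z\times\Z$, which is finitely presented and $\Z^2$-free, yet every splitting of it is an HNN extension of $\Z$ over a normal copy of $\Z$. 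So your claim that each edge group ``remains maximal abelian in the amalgam \dots and hence in $G$'' fails exactly at these steps, and with the definition of quasi-convex hierarchy used here (malnormality of every edge group) the argument does not close. Your reasoning does establish malnormality for the amalgamated products and for the separated HNN extensions; the centralizer extensions must be treated as a separate, non-malnormal case --- and indeed the version of Wise's theorem for toral relatively hyperbolic groups that the survey goes on to apply needs only quasiconvexity, not malnormality, of the edge groups, which is the cleanest way to repair both your proof and the statement.
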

\begin{theorem}\cite{wise} Suppose $G$ is toral relatively hyperbolic and has a malnormal quasi convex hierarchy. Then $G$ is virtually special (therefore has a finite index subgroup that is a subgroup of a right angled Artin group (RAAG)). \end{theorem}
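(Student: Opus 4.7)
The plan is to extract the hierarchy from the structural theorems of Section \ref{se:fp} and then verify the two properties (malnormality and quasi-isometric embedding of edge groups) at every level using the CSA property and relative hyperbolicity.

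First, I would apply Theorem \ref{co:main5} to write $G$ as the result of a finite sequence of amalgamated free products and HNN extensions along maximal abelian subgroups, each of which is free abelian of finite rank, starting from finitely generated free groups. This sequence is exactly the combinatorial data of a hierarchy: the free groups at the bottom have the trivial (length one) hierarchy, and each amalgamation or HNN extension produces the next stratum with strictly shorter-length hierarchies at the vertex groups. In particular, every intermediate group $H$ appearing as a vertex group in this construction is a subgroup of $G$, so by Theorem \ref{prop}(a) it is itself finitely presented and $\Lambda$-free, and hence inherits the structural theorems we need.

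Next I would verify malnormality of the edge groups. At each amalgamation $A*_C B$ or HNN extension $A*_{C^t=D}$ appearing in the hierarchy, the edge subgroup $C$ is a maximal abelian subgroup of the resulting vertex group (which is $\Lambda$-free by the previous paragraph). By the CSA property of $\Lambda$-free groups, Theorem \ref{prop}(d), every maximal abelian subgroup of a $\Lambda$-free group is malnormal. This takes care of the malnormality condition uniformly at every level of the hierarchy.

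Finally, for the quasi-isometric embedding requirement, I would invoke Theorem \ref{co:Lambda_Guirardel_2}: every finitely presented $\Lambda$-free group is toral relatively hyperbolic, with its non-cyclic maximal abelian subgroups as peripheral subgroups. In a toral relatively hyperbolic group the peripheral (free abelian) subgroups are undistorted, and finitely generated subgroups of peripheral subgroups inherit the quasi-isometric embedding; the cyclic edge groups, being virtually cyclic inside a hyperbolic/relatively hyperbolic setting, are automatically undistorted and quasi-convex. Applying this at each stage of the hierarchy to the ambient vertex group (which, by the first paragraph, is again finitely presented $\Lambda$-free and thus toral relatively hyperbolic) shows the edge group embeds quasi-isometrically.

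The main obstacle is the quasi-isometric embedding step: Theorem \ref{co:main5} guarantees free-abelian edge groups but a priori only inside $G$, whereas the definition of hierarchy asks for quasi-isometric embedding into each vertex group used in the construction. Handling this cleanly requires noting that each intermediate vertex group is again a finitely presented $\Lambda$-free group (hence toral relatively hyperbolic, with maximal abelian subgroups undistorted), so the argument goes through by induction on the length of the hierarchy once one organizes the sequence from Theorem \ref{co:main5} into a proper tree of constructions.
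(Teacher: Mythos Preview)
You are proving the wrong statement. The theorem you were asked to prove is Wise's theorem: \emph{if} a toral relatively hyperbolic group has a malnormal quasi-convex hierarchy, \emph{then} it is virtually special. Your argument, however, takes a finitely presented $\Lambda$-free group and constructs a malnormal quasi-convex hierarchy for it using Theorems~\ref{co:main5}, \ref{prop}(d), and \ref{co:Lambda_Guirardel_2}. That is an argument for Theorem~\ref{th:Lambda_quasi-convex_hierarchy} (and for verifying the \emph{hypotheses} of Wise's theorem in the $\Lambda$-free setting), not for the statement itself.

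The actual content of Wise's theorem is entirely different and far deeper: one must show that a group with such a hierarchy acts properly and cocompactly on a CAT(0) cube complex whose finite-index subgroup embeds in a right-angled Artin group. This requires Wise's machinery of special cube complexes, wallspaces, the Malnormal Special Quotient Theorem, and an inductive cubulation along the hierarchy; none of that is touched by your sketch. In the paper this theorem is simply \emph{cited} from \cite{wise} without proof, precisely because its proof lies outside the scope of the survey. So there is no ``paper's own proof'' to compare against, and your proposal does not address the stated conclusion at all.
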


As a corollary one gets the following result.
\begin{corollary}
\label{co:Lambda_quasi-convex_hierarchy_1}
Every finitely presented  $\Lambda$-free group $G$ is locally undistorted, that is, every finitely generated subgroup of $G$ is quasi-isometrically embedded into $G$.
\end{corollary}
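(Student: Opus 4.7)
The plan is to bootstrap from Theorem \ref{th:Lambda_quasi-convex_hierarchy} and the Wise theorem just stated to conclude that $G$ is virtually special, and then to extract local undistortion from this combined with the toral relatively hyperbolic structure of $G$ provided by Corollary \ref{co:Lambda_Guirardel_2}.

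First, I would assemble the virtually special conclusion. By Corollary \ref{co:Lambda_Guirardel_2}, $G$ is hyperbolic relative to its non-cyclic abelian subgroups, and by Theorem \ref{co:main1b}(1) these peripheral subgroups are free abelian of finite rank, so $G$ is toral relatively hyperbolic. By Theorem \ref{th:Lambda_quasi-convex_hierarchy}, $G$ admits a quasi-convex hierarchy with abelian edge groups; malnormality of the edge groups at each step is already built into the definition of quasi-convex hierarchy used in this paper (and is ultimately guaranteed by the CSA property, Theorem \ref{prop}(d)). Hence the Wise theorem applies and yields a finite index subgroup $G_0 \leq G$ embedding in a right-angled Artin group.

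Now fix a finitely generated $H \leq G$ and set $H_0 := H \cap G_0$. Since $H_0$ has finite index in $H$ and $G_0$ has finite index in $G$, it suffices to prove that $H_0$ is quasi-isometrically embedded in $G_0$. The key intermediate claim I would invoke is that in a virtually compact special toral relatively hyperbolic group, every finitely generated subgroup is relatively quasi-convex with respect to the inherited peripheral structure. This is part of the cubical hierarchy machinery of Wise: a convex hull of an $H_0$-orbit in the CAT(0) cube complex dual to the hierarchy is $H_0$-cocompact, and this convexity translates directly into relative quasi-convexity in the toral relatively hyperbolic structure.

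Finally, I would finish by Hruska's characterization of relatively quasi-convex subgroups in a toral relatively hyperbolic group: such a finitely generated subgroup is quasi-isometrically embedded in the ambient word metric provided its intersections with conjugates of the peripheral subgroups are undistorted. In the toral case the peripherals are finitely generated free abelian, so every subgroup is automatically undistorted there, and the condition is free. Therefore $H_0$, and hence $H$, is quasi-isometrically embedded in $G$. The main obstacle is the middle step --- extracting relative quasi-convexity of $H_0$ from the virtually special hypothesis; this is where the heavy CAT(0) cube complex machinery is actually used. Once this is in hand, the passage from relative quasi-convexity to undistortion in the full word metric is routine via Hruska's theory.
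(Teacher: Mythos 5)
Your closing step is fine (Hruska: a relatively quasiconvex subgroup of a toral relatively hyperbolic group whose peripheral intersections are undistorted is quasi-isometrically embedded, and with finitely generated free abelian peripherals the intersection condition is automatic), but the key intermediate claim on which everything rests is false, and this is a genuine gap. It is not true that in a virtually compact special toral relatively hyperbolic group every finitely generated subgroup is relatively quasiconvex, nor that the convex hull of an orbit of a finitely generated subgroup in the dual CAT(0) cube complex is cocompact. The fundamental group of a closed hyperbolic $3$-manifold fibering over the circle is virtually compact special and hyperbolic, hence toral relatively hyperbolic with empty peripheral structure, yet its fiber surface subgroup is finitely generated, exponentially distorted, and not (relatively) quasiconvex. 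Likewise Bestvina--Brady kernels and Mihailova-type subgroups of right-angled Artin groups are finitely generated, have non-cocompact cubical convex hulls, and can be arbitrarily distorted. So virtual specialness plus toral relative hyperbolicity cannot by themselves deliver local undistortion; some input specific to $\Lambda$-free groups is indispensable.

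The argument consistent with the paper uses the hierarchy directly rather than its cubical consequence. By Theorem \ref{co:main5} (equivalently Theorem \ref{th:Lambda_quasi-convex_hierarchy}), $G$ is obtained from free groups by finitely many amalgamated products and HNN extensions over maximal abelian subgroups, which are parabolic in the relatively hyperbolic structure of Corollary \ref{co:Lambda_Guirardel_2} and malnormal by the CSA property (Theorem \ref{prop}(d)). One then proves by induction on the length of this hierarchy that $G$ is locally relatively quasiconvex, using the combination theorems of Dahmani \cite{Dahmani:2003} for amalgamation along parabolic subgroups, the base case being free groups; only at that point does Hruska's criterion enter, exactly as in your last paragraph. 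The survey itself states the corollary without proof immediately after Wise's theorem and defers to \cite{KMS12}, but the logical content is this inductive local relative quasiconvexity, not the passage through RAAGs; as written, your proof would ``establish'' local undistortion for every virtually special toral relatively hyperbolic group, which is false.
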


\begin{corollary}
\label{co:Lambda_special}
Every finitely presented $\Lambda$-free group $G$ is  virtually special, that is, some subgroup of
finite index in $G$ embeds into a right-angled Artin group.
\end{corollary}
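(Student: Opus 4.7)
The plan is to obtain the result as a direct consequence of Wise's theorem on toral relatively hyperbolic groups with malnormal quasi-convex hierarchies, using the earlier structural results in the section as inputs. I would proceed as follows.

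First, I would invoke Theorem \ref{co:Lambda_Guirardel_2} to record that the finitely presented $\Lambda$-free group $G$ is hyperbolic relative to its non-cyclic abelian subgroups, i.e.\ $G$ is toral relatively hyperbolic. Next, I would invoke Theorem \ref{th:Lambda_quasi-convex_hierarchy} to obtain a quasi-convex hierarchy for $G$ with abelian edge groups; by Theorem \ref{co:main1b}(1) these edge groups are in fact free abelian of finite rank. These are the two structural ingredients needed to feed into Wise's machine.

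Second, I would verify the malnormality hypothesis required by Wise's theorem. The hierarchy produced by Theorem \ref{th:Lambda_quasi-convex_hierarchy} has abelian edge groups, and by the structure theorems for finitely presented $\Lambda$-free groups the edge groups occur as maximal abelian subgroups of the adjacent vertex groups (cf.\ the HNN decomposition of Theorem \ref{th:main1} and the amalgamation version in Theorem \ref{co:main5}). The CSA property (Theorem \ref{prop}(d)), which passes to all subgroups of $G$ since the class of $\Lambda$-free groups is subgroup-closed (Theorem \ref{prop}(a)), then forces every maximal abelian subgroup of a vertex group to be malnormal in it. Hence the hierarchy is a malnormal quasi-convex hierarchy in the sense required by Wise.

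Third, I would apply Wise's theorem (the result labeled \cite{wise} in the excerpt): since $G$ is toral relatively hyperbolic and admits a malnormal quasi-convex hierarchy, $G$ is virtually special, i.e.\ has a finite index subgroup that embeds in a right-angled Artin group. This gives precisely the statement of the corollary.

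The main conceptual obstacle is the verification that the edge groups in the Wise-type hierarchy are simultaneously quasi-convex, finitely generated, and malnormal in the appropriate vertex groups; once the structural Theorems \ref{th:main1} and \ref{co:main5} together with the CSA property are in hand, this reduces to a bookkeeping check, and the deep work is hidden inside Theorem \ref{th:Lambda_quasi-convex_hierarchy} and Wise's theorem. Thus the proposal really consists of assembling the four previously cited results in the correct order.
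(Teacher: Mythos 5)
Your proposal is correct and follows essentially the same route as the paper: combine Theorem \ref{co:Lambda_Guirardel_2} (toral relative hyperbolicity) with Theorem \ref{th:Lambda_quasi-convex_hierarchy} and feed the result into Wise's theorem. Note only that the paper's definition of a quasi-convex hierarchy already builds in malnormality of the edge group, so your separate CSA-based verification of malnormality, while sound, is subsumed in the statement of Theorem \ref{th:Lambda_quasi-convex_hierarchy}.
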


The following result answers in the affirmative to Question 3
(page 250) from \cite{Ch1} in the case of finitely presented groups. 

\begin{theorem}\cite{KMS12} 
\label{co:Lambda_special_0}
Every finitely presented $\Lambda$-free group is right orderable and virtually orderable.
\end{theorem}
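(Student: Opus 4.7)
The statement splits into two independent claims about a finitely presented $\Lambda$-free group $G$: right-orderability and virtual bi-orderability. My plan is to obtain the latter as a near-immediate corollary of results already proved in the paper, and to devote the bulk of the argument to the former, which I will deduce from local indicability.

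For virtual orderability, I would invoke Corollary \ref{co:Lambda_special}, yielding a finite-index subgroup $H\leqslant G$ that embeds in a right-angled Artin group $A(\Gamma)$. A theorem of Duchamp and Krob shows that every right-angled Artin group is bi-orderable (one orders the Magnus-type expansion in a free associative algebra and pulls the order back), and bi-orderability is inherited by subgroups; hence $H$ is bi-orderable and $G$ is virtually orderable.

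For right-orderability, the plan is to prove that $G$ is \emph{locally indicable} and then apply the Burns--Hale theorem, by which every locally indicable group is right-orderable. Any finitely generated subgroup $K\leqslant G$ is again $\Lambda$-free by Theorem \ref{prop}(a), is $\mathbb{R}^n$-free by Theorem \ref{th:main3}, and is therefore finitely presented by Corollary \ref{co:Guirardel_1}; so it suffices to show that every finitely presented $\Lambda$-free group surjects onto $\mathbb{Z}$. I would proceed by induction on the hierarchy of Theorem \ref{co:main5}, which exhibits $G$ as built from free groups by finitely many amalgamated products and HNN extensions along maximal abelian subgroups which are finitely generated free abelian. Free groups are indicable, each HNN step admits the surjection sending the stable letter to $1$ and the base group to $0$, and for an amalgamation $G_0*_A G_1$ along a finite-rank free abelian subgroup $A$ that is malnormal by the CSA property of Theorem \ref{prop}(d), one can align the inductively supplied indicability maps on $G_0$ and $G_1$ along $A$ to obtain a surjection onto $\mathbb{Z}$.

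The main obstacle will be the inductive step for \emph{local} (rather than just global) indicability, because an arbitrary finitely generated subgroup $K\leqslant G$ need not respect the splitting of $G$ in Theorem \ref{co:main5}. To handle this I would run Bass--Serre theory on the action of $K$ on the Bass--Serre tree of $G$'s splitting, producing an induced graph-of-groups decomposition of $K$ whose vertex groups are finitely generated subgroups of conjugates of the vertex groups of $G$ and whose edge groups are subgroups of the free abelian edge groups of $G$. If the induced splitting is non-trivial, local indicability of $K$ follows from the work of Howie on graphs of locally indicable groups with locally indicable edge groups. If it is trivial, then $K$ lies in a single conjugate of a vertex group and we reduce the complexity of the $\mathbb{R}^n$-hierarchy supplied by Theorem \ref{th:main3}; the base case $n=1$ of finitely generated $\mathbb{R}$-free groups is handled by Rips' Theorem \ref{th:Rips_0}, since free products of surface groups and finite-rank free abelian groups are locally indicable.
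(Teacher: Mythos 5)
The paper itself states this theorem without proof, citing \cite{KMS12}, so there is no in-text argument to compare against; your two-pronged strategy --- virtual orderability from Corollary \ref{co:Lambda_special} together with bi-orderability of right-angled Artin groups, and right-orderability from local indicability via Burns--Hale --- is the natural route, and the first prong is complete and correct. Your reduction of local indicability of $G$ to indicability of every nontrivial finitely presented $\Lambda$-free group is also sound, provided you apply Theorem \ref{th:main3} to $G$ first and only then Theorem \ref{prop}(a) and Corollary \ref{co:Guirardel_1} to the subgroup $K$ (as written you apply Theorem \ref{th:main3} to $K$ before knowing $K$ is finitely presented). Note that this reduction makes your entire third paragraph superfluous: there is no need to induce a splitting on $K$ or to invoke Howie, since $K$ is itself a finitely presented $\Lambda$-free group with its own hierarchy.

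The genuine gap is the amalgamated product step of your induction. For $G = G_0 *_A G_1$ with $A$ free abelian of rank $r\geq 1$, Mayer--Vietoris identifies $H^1(G;\mathbb{Q})$ with the kernel of the difference of restriction maps $H^1(G_0;\mathbb{Q})\oplus H^1(G_1;\mathbb{Q})\to H^1(A;\mathbb{Q})$, and mere indicability of $G_0$ and $G_1$ does not force this kernel to be nonzero: if both restrictions are injective and their images meet only in $0$, no ``alignment'' of the two maps exists. So the inductive hypothesis ``each vertex group surjects onto $\mathbb{Z}$'' is too weak, and the sentence ``one can align the inductively supplied indicability maps along $A$'' is precisely the point that needs proof. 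The missing ingredient is Theorem \ref{co:main1b}(1): since $A$ is an abelian subgroup of rank $r$ of each vertex group, the abelianization of each $G_i$ has rank at least $r$, whence either some restriction $H^1(G_i;\mathbb{Q})\to H^1(A;\mathbb{Q})\cong\mathbb{Q}^r$ is non-injective or both images fill out $\mathbb{Q}^r$ and therefore coincide; in either case the kernel is nonzero and $G$ is indicable. But once Theorem \ref{co:main1b}(1) is invoked, the hierarchy induction is unnecessary altogether: applied directly to a nontrivial finitely presented $\Lambda$-free group $K$, which contains a copy of $\mathbb{Z}$ because it is torsion-free by Theorem \ref{prop}(c), it shows the abelianization of $K$ has rank at least $1$. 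Combined with your reduction this gives local indicability, and Burns--Hale finishes the argument.
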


Since right-angled Artin groups are linear and
the class of linear groups is closed under finite extension we get the following

\begin{theorem}
\label{co:Lambda_quasi-convex_hierarchy_2}
Every finitely presented  $\Lambda$-free group is  linear and, therefore, residually finite, equationally noetherian. It also has decidable word, conjugacy, subgroup membership, and diophantine problems.
\end{theorem}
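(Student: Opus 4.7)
The plan is to derive each assertion from Corollary \ref{co:Lambda_special} (a finite-index subgroup $H \leq G$ embeds in a right-angled Artin group $A_\Gamma$), Theorem \ref{co:Lambda_Guirardel_2} (toral relative hyperbolicity), Corollary \ref{co:Lambda_quasi-convex_hierarchy_1} (quasi-isometric embedding of finitely generated subgroups), together with a few classical external inputs. For linearity I would use that RAAGs are linear via a faithful embedding $A_\Gamma \hookrightarrow \mathrm{GL}_n(\mathbb{Z})$ (coming for instance from the Humphries-type representation of the associated Coxeter group, or from the Davis--Januszkiewicz/Salvetti construction), which makes $H$ linear. Since $[G:H] < \infty$, the induced representation $\mathrm{Ind}_H^G \rho$ is a faithful finite-dimensional representation of $G$, so $G$ itself is linear. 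Residual finiteness then follows from Mal'cev's theorem on finitely generated linear groups (alternatively, directly from residual finiteness of RAAGs, a property preserved under finite extensions), and equational Noetherianity from Bryant's theorem that every finitely generated linear group is equationally Noetherian.

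For decidability, the word problem follows from McKinsey's algorithm, which applies to any finitely presented residually finite group. For the conjugacy problem I would invoke the toral relative hyperbolicity provided by Theorem \ref{co:Lambda_Guirardel_2} together with Bumagin's theorem: a relatively hyperbolic group whose peripheral subgroups have decidable conjugacy problem has decidable conjugacy problem itself, and finitely generated free abelian groups trivially qualify. The diophantine problem (decidability of systems of equations and inequations) is Dahmani's theorem on the decidability of the existential theory of a toral relatively hyperbolic group.

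The subgroup membership problem is the step I expect to be the main obstacle, because in a general RAAG membership is undecidable by Mikhailova's construction, so the mere RAAG embedding is not enough. The crucial extra input is Corollary \ref{co:Lambda_quasi-convex_hierarchy_1}: every finitely generated subgroup of $G$ is quasi-isometrically embedded and therefore relatively quasi-convex in the peripheral structure of Theorem \ref{co:Lambda_Guirardel_2}. Combining this with the Manning--Martinez-Pedroza theorem that relatively quasi-convex subgroups of virtually special toral relatively hyperbolic groups are separable, every finitely generated subgroup of $G$ is separable, and separability together with finite presentability and the already-established decidability of the word problem yields a uniform membership algorithm (enumerate products of generators to certify membership, enumerate finite quotients to certify non-membership). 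The delicate point is to verify that the local undistortion supplied by Corollary \ref{co:Lambda_quasi-convex_hierarchy_1} is precisely what upgrades to relative quasi-convexity in the abelian peripheral structure, and that the separability statements, originally proved in the word-hyperbolic special setting, do apply in the virtually special toral relatively hyperbolic setting we are in.
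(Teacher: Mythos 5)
Your proposal is correct and follows essentially the same route as the paper: the paper's entire justification is the one line preceding the theorem (RAAGs are linear and linearity passes to finite extensions, applied to Corollary \ref{co:Lambda_special}) together with the remark that decidability of equations follows from Dahmani \cite{Dahmani:2003}. Everything else you supply --- Mal'cev for residual finiteness, equational Noetherianity of linear groups, McKinsey for the word problem, Bumagin-type results via toral relative hyperbolicity for conjugacy, and the undistortion-to-relative-quasiconvexity-to-separability chain for subgroup membership --- is left completely implicit in the paper, and your treatment of the membership problem (correctly flagged as the delicate point, since Mikhailova rules out deducing it from the RAAG embedding alone) is a legitimate and more careful argument than anything the source provides.
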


Indeed, decidability of equations follows from \cite{Dahmani:2003}. Results of Dahmani and Groves \cite{DG}
imply the following two corollaries.

\begin{corollary}
Let $G$ be a finitely presented  $\Lambda$-free group. Then:
\begin{itemize}
\item $G$ has a non-trivial abelian splitting and one can find such a splitting effectively,
\item $G$ has a non-trivial abelian JSJ-decomposition and one can find such a decomposition
effectively.
\end{itemize}
\end{corollary}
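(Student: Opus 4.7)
The plan is to reduce both assertions directly to the algorithmic results of Dahmani and Groves \cite{DG} on toral relatively hyperbolic groups, whose hypotheses have essentially been verified by earlier theorems in the paper. First I would observe that a finitely presented $\Lambda$-free group $G$ fits precisely into their framework: by Corollary \ref{co:Lambda_Guirardel_2}, $G$ is hyperbolic relative to its non-cyclic abelian subgroups; by Theorem \ref{co:main1b}(1) every such subgroup is finitely generated free abelian; and by Theorem \ref{co:main1b}(2) there are only finitely many conjugacy classes of maximal non-cyclic abelian subgroups. So $G$, together with this peripheral structure, is toral relatively hyperbolic with finitely generated abelian parabolics.

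Second, the main theorems of \cite{DG} provide two algorithms taking as input a finite presentation of a toral relatively hyperbolic group (together with finite generating tuples for representatives of the conjugacy classes of its maximal parabolic subgroups): one that decides whether the group admits a non-trivial splitting over an abelian subgroup and exhibits such a splitting when it exists, and one that outputs a canonical abelian JSJ decomposition. Feeding $G$ into these algorithms yields exactly the two bullet points of the corollary. For the existence (as opposed to mere decidability) of a non-trivial abelian splitting, I would invoke Theorem \ref{co:main5}, which exhibits $G$ as a finite iteration of amalgamated products and HNN extensions of free groups along maximal (finitely generated free) abelian subgroups; unless $G$ is trivial or infinite cyclic this tower contains a visible non-trivial abelian splitting of $G$, which the Dahmani--Groves algorithm will necessarily detect.

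The principal obstacle is the bookkeeping needed to package the input data effectively. To invoke \cite{DG} one requires not just a finite presentation but also effective finite generating tuples for representatives of the conjugacy classes of maximal non-cyclic abelian subgroups. This is where the constructive content of the earlier sections is used: the explicit HNN tower of Theorem \ref{th:main1}, together with the uniform bound on the rank of abelian subgroups in Theorem \ref{co:main1b}(1) and the finiteness of conjugacy classes in Theorem \ref{co:main1b}(2), allows one to enumerate such representatives algorithmically from the stable letters and edge data of the tower. Once this packaging is done, the proof is a direct appeal to \cite{DG}, and both effectivity statements follow simultaneously.
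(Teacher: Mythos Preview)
Your proposal is correct and follows essentially the same route as the paper: the paper's entire justification is the single sentence ``Results of Dahmani and Groves \cite{DG} imply the following two corollaries,'' relying implicitly on Theorem~\ref{co:Lambda_Guirardel_2} and Theorem~\ref{co:main1b} to place $G$ in the toral relatively hyperbolic setting. Your write-up simply unpacks this citation, making explicit the verification of the Dahmani--Groves hypotheses, the effective packaging of the peripheral data, and the edge cases for the existence claim --- all of which the paper leaves to the reader.
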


\begin{corollary}
The isomorphism problem is decidable in the class of finitely presented   $\Lambda$-free groups. 
\end{corollary}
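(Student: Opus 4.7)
The plan is to reduce the isomorphism problem in the class of finitely presented $\Lambda$-free groups to the isomorphism problem for toral relatively hyperbolic groups, which is solved by Dahmani--Groves. By Corollary \ref{co:Lambda_Guirardel_2}, every finitely presented $\Lambda$-free group is hyperbolic relative to its non-cyclic abelian subgroups, and by Theorem \ref{co:main1b} the maximal non-cyclic abelian subgroups are finitely generated free abelian groups falling into finitely many conjugacy classes. Hence such a group is toral relatively hyperbolic in the sense of \cite{DG}, so once the peripheral structure is available effectively, their algorithm applies.

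First, given two finite presentations $\langle X_1\mid R_1\rangle$ and $\langle X_2\mid R_2\rangle$ of finitely presented $\Lambda$-free groups $G$ and $H$, I would invoke the effective abelian JSJ-decomposition (the corollary immediately preceding the statement, also due to Dahmani--Groves) to produce, for each group, a finite graph of groups whose vertex and edge groups are algorithmically presented. From Theorem \ref{co:main5} and Theorem \ref{co:main1b}, the maximal non-cyclic abelian subgroups appear, up to conjugacy, among vertex stabilizers in the abelian JSJ, so one extracts from the JSJ output a finite list of generating sets for representatives of the conjugacy classes of maximal non-cyclic abelian subgroups, together with their ranks.

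Second, with this peripheral structure in hand, I would feed $(G,\mathcal P_G)$ and $(H,\mathcal P_H)$ into the Dahmani--Groves algorithm for the isomorphism problem for toral relatively hyperbolic groups \cite{DG}. That algorithm decides whether there is an isomorphism $G\to H$ sending $\mathcal P_G$ to $\mathcal P_H$ up to conjugacy. Since the peripheral structures are canonical (the conjugacy classes of maximal non-cyclic abelian subgroups are invariants of the abstract group), this answers the abstract isomorphism question for $G$ and $H$.

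The main obstacle is the effectiveness of the peripheral structure: the existence of the toral relatively hyperbolic structure from Corollary \ref{co:Lambda_Guirardel_2} is a priori non-constructive, so I rely on the effective JSJ-construction of Dahmani--Groves together with the structural Theorem \ref{th:main1} to actually produce finite generating sets of representatives of the maximal non-cyclic abelian subgroups. The remaining verification---that different choices of representatives give Dahmani--Groves-equivalent inputs---is routine once the subgroup membership and conjugacy problems are known to be decidable, which is provided by Theorem \ref{co:Lambda_quasi-convex_hierarchy_2}.
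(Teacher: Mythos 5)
Your proposal is correct and follows essentially the same route as the paper, which derives this corollary directly from the Dahmani--Groves solution of the isomorphism problem for toral relatively hyperbolic groups \cite{DG} combined with Corollary \ref{co:Lambda_Guirardel_2} and Theorem \ref{co:main1b}. The extra machinery you invoke to compute the peripheral structure effectively is not needed as a separate step: the Dahmani--Groves algorithm accepts bare finite presentations of groups known abstractly to be toral relatively hyperbolic, and the effective identification of the maximal non-cyclic abelian subgroups is carried out inside their work.
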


\subsection{Limit groups}
\label{sec:limit_gps}

Limit groups (or finitely generated fully residually free groups) play an important part in modern group theory. They appear in many different situations:
in combinatorial group theory as groups discriminated by $G$ ($\omega$-residually $G$-groups or
fully residually $G$-groups) \cite{Baumslag:1962, Baumslag:1967, Myasnikov_Remeslennikov:2000,
BMR1, Baumslag_Miasnikov_Remeslennikov:2002}, in the algebraic
geometry over groups as the coordinate groups of irreducible varieties over $G$
\cite{BMR1, KMNull,
KMIrc, Imp, Sela1}, groups universally
equivalent to $G$ \cite{Remeslennikov:1989, GaglioneSpellman:1993, Myasnikov_Remeslennikov:2000}, limit groups of
$G$ in the Gromov-Hausdorff topology \cite{Champetier_Guirardel:2005}, in the theory of equations
in groups \cite{Lyndon2, Razborov1, Razborov:1995, KMNull,
KMIrc, Imp, Groves:2005(2)}, in group actions
\cite{BF, GLP, MRS2, Groves:2005(1),
G},  in the solutions of Tarski problems \cite{KM4, Sela6},
etc. Their numerous characterizations  connect  group
theory, topology and logic.

Recall, that   a group $G$ is called {\em fully residually free} if for any non-trivial $g_1, \ldots,
g_n \in G$ there exists a homomorphism $\phi$ of $G$ into a free group such that $\phi(g_1), \ldots,
\phi(g_n)$ are non-trivial.

It is a crucial result  that every limit group admits
a free action on a $\mathbb{Z}^n$-tree for an appropriate $n \in \mathbb{N}$, where $\mathbb{Z}^n$
is ordered lexicographically (see \cite{KMIrc}). The proof comes in several steps.
The initial breakthrough is due to Lyndon, who introduced a construction of the  free  $\Zt$-completion $\FZt$ of a free group $F$ (nowadays it is called Lyndon's free
$\Zt$-group) and showed that this group, as well as all its subgroups,  is fully residually free \cite{Lyndon2}.
Much later Remeslennikov proved that every finitely generated fully residually free group has a free Lyndon length function with values in $\mathbb{Z}^n$ (but not necessarily  ordered lexicographically) \cite{Remeslennikov:1989}.  That was a first link between limit groups and free actions on $\mathbb{Z}^n$-trees. In 1995 Myasnikov and Remeslennikov showed that Lyndon free exponential group $\FZt$  has a free Lyndon length function with values in $\mathbb{Z}^n$ with  lexicographical ordering \cite{Myasnikov_Remeslennikov:1995} and stated a conjecture that every limit group embeds into $\FZt$. Finally, Kharlampovich and Myasnikov proved that every limit group $G$ embeds into $\FZt$  \cite{KMIrc}.

Below, following \cite{MRS2} we construct a free $\Zt$-valued
length function on $\FZt$ which combined with the result of Kharlampovich and Myasnikov mentioned above
 gives a free $\mathbb{Z}^n$-valued length function on a given  limit group $G$. There are various algorithmic applications of
these results which are based on the technique of infinite words and Stallings foldings techniques for subgroups of $\FZt$ (see
\cite{KMRS, KMS12}).

\subsection{Lyndon's free group $\FZt$}

Let $A$ be an associative unitary ring. A group $G$ is termed an {\em $A$-group} if it is equipped
with a function ({\em exponentiation}) $G \times A \rightarrow G$:
$$(g,\alpha) \rightarrow  g^\alpha $$
satisfying the following conditions for arbitrary $g,h \in G$ and $\alpha, \beta \in A$:
\begin{enumerate}
\item[(Exp1)] $g^1 = g, \ \ \ g^{\alpha + \beta} = g^{\alpha} g^{\beta}, \ \ \ g^{\alpha \beta} =
(g^{\alpha})^{\beta}$,
\item[(Exp2)] $g^{-1} h^{\alpha} g = (g^{-1} h g)^{\alpha}$,
\item[(Exp3)] if $g$ and $h$ commute, then $(gh)^{\alpha} = g^{\alpha} h^{\alpha}$.
\end{enumerate}
The axioms (Exp1) and (Exp2) were introduced originally by R. Lyndon in \cite{Lyndon2}, the
axiom (Exp3) was added later in \cite{Myasnikov_Remeslennikov:1994}. A homomorphism $\phi: G
\rightarrow H$ between two $A$-groups is termed an $A$-homomorphism if $\phi(g^\alpha) = \phi(g)^\alpha$
for every $g \in G$ and $\alpha \in A$. It is not hard to prove (see, \cite{Myasnikov_Remeslennikov:1994})
that for every group $G$ there exists an $A$-group $H$ (which is unique up to an $A$-isomorphism)
and a homomorphism $\mu: G \longrightarrow H$ such that for every $A$-group $K$ and every $A$-homomorphism
$\theta: G \longrightarrow K$, there exists a unique $A$-homomorphism $\phi : H \longrightarrow K$
such that $\phi \mu = \theta$. We denote $H$ by $G^A$ and call it the {\em $A$-completion} of $G$.

In \cite{Myasnikov_Remeslennikov:1996} an effective  construction of $\FZt$ was given
in terms of extensions of centralizers. For a group $G$ let  $S = \{C_i \mid i \in I\}$ be a set of
representatives of conjugacy classes of proper cyclic centralizers in $G$, that is, every proper
cyclic centralizer in $G$ is conjugate to one from $S$, and no two centralizers from $S$ are conjugate.
Then the HNN-extension
$$H = \langle \ G, s_{i,j} \ \ (i \in I, j \in \mathbb{N}) \ \mid \ [s_{i,j}, u_i] = [s_{i,j},s_{i,k}] =
1\, (u_i \in C_i, i \in I, j,k \in \mathbb{N})\ \rangle,$$
is termed an {\em extension of cyclic centralizers} in $G$. Now the group $\FZt$ is
isomorphic to the direct limit of the following infinite chain of groups:
\begin{equation}
\label{eq:FZt}
F = G_0 < G_1 < \cdots < G_n < \cdots < \cdots,
\end{equation}
where $G_{i+1}$ is obtained from $G_i$ by  extension of all cyclic centralizers in $G_i$.

\subsection{Limit groups embed into $\FZt$}
\label{subs:limit_gps_embed}

The following results illustrate the connection of limit groups and finitely generated
subgroups of $\FZt$.

\begin{theorem} \cite{KMIrc}
\label{th:embKM}
Given a finite presentation  of a finitely generated fully residually free group $G$ one can
effectively construct an embedding $\phi : G \rightarrow \FZt$ (by specifying the images of
the generators of $G$).
\end{theorem}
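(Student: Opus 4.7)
The plan is to produce the embedding in two stages: first invoke a structure theorem that embeds $G$ into a group built from a free group by iterated extensions of cyclic centralizers (an ICE-tower), and then observe that such a tower sits canonically inside the directed chain (\ref{eq:FZt}) whose colimit is $\FZt$.

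More precisely, I would use the structural result asserting that every finitely generated fully residually free group $G$ embeds into a tower
\[
F = H_0 < H_1 < \cdots < H_n
\]
where each $H_{i+1}$ is obtained from $H_i$ by adjoining new stable letters that commute with a specified cyclic centralizer of $H_i$. Since $\FZt$ is the directed limit of the chain (\ref{eq:FZt}) in which $G_{i+1}$ is obtained from $G_i$ by extending \emph{all} cyclic centralizers in $G_i$, each $H_i$ embeds into the corresponding $G_i$ by matching the stable letter used to extend a given centralizer $C \subset H_i$ with an appropriate $s_{k,0}$ in $G_{i+1}$ in the conjugacy class of $C$. Composing with the inclusion $G_n \hookrightarrow \FZt$ then yields an embedding $G \hookrightarrow \FZt$ in which each original generator of $G$ is represented by a concrete word in the $F$-generators and the exponents $s_{k,j}$.

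To make this effective, starting from the given finite presentation of $G$ I would run the Kharlampovich--Myasnikov algorithm, based on the Makanin--Razborov process for systems of equations over free groups, to produce the tower $H_0 < H_1 < \cdots < H_n$ together with explicit expressions of each generator of $G$ as an element of $H_n$. Rewriting these expressions under the identifications $H_i \hookrightarrow G_i$ above immediately gives the images of the generators of $G$ in $\FZt$, and the embedding property is inherited from the embedding $G \hookrightarrow H_n$ combined with the fact that each centralizer extension step in (\ref{eq:FZt}) is itself an embedding.

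The main obstacle will be the effective construction of the ICE-tower. The \emph{existence} of some tower containing $G$ is comparatively soft, as it follows from limit groups being $\mathbb{Z}^n$-free and the interpretation of $\mathbb{Z}^n$-freeness in terms of iterated centralizer extensions; but bounding $n$ and algorithmically locating the cyclic subgroups along which to extend requires the full Makanin--Razborov diagram for the system of defining relators of $G$ together with an inductive procedure that lifts each terminal branch of that diagram into an explicit extension of centralizers, and proving that this procedure terminates on every input. Once the algorithmic core is in place, the translation of the resulting expressions into a word map $G \to \FZt$ is purely a matter of bookkeeping along the tower.
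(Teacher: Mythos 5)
The survey does not actually prove this theorem; it is quoted from \cite{KMIrc}, and in the text the logical flow runs in the opposite direction to yours: Theorem \ref{th:embKMR} (the embedding of $G$ into a finite tower of centralizer extensions) is presented as a consequence of Theorem \ref{th:embKM} together with the realization of $\FZt$ as the union of the chain (\ref{eq:FZt}), not as a stepping stone towards it. Your two-stage outline --- run the Makanin--Razborov/NTQ machinery to place $G$ inside a finite tower $F = H_0 < \cdots < H_n$ of extensions of cyclic centralizers, then include each $H_i$ levelwise into the corresponding $G_i$ of (\ref{eq:FZt}) --- is nevertheless the strategy of the actual proofs in \cite{KMIrc,Imp}, and the second stage is indeed essentially bookkeeping (modulo checking that the element whose centralizer is extended at step $i$ still generates its own centralizer after the inclusion $H_i \hookrightarrow G_i$, so that it is conjugate to one of the distinguished representatives $C_k$).

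The genuine problem is in how you propose to obtain the tower. Your claim that the mere existence of an ICE-tower containing $G$ is ``comparatively soft'' because it ``follows from limit groups being $\mathbb{Z}^n$-free'' is circular in this development and incorrect as a mathematical implication: in this circle of ideas the $\mathbb{Z}^n$-freeness of limit groups (Corollary \ref{co:embKMR_6}) is itself deduced from the embedding into $\FZt$, and in any case $\mathbb{Z}^n$-freeness does not imply embeddability into a tower of centralizer extensions --- by Theorem \ref{th:Z^n_desrc} the class of $\mathbb{Z}^n$-free groups is built using length-preserving amalgams and separated HNN extensions as well as centralizer extensions, and such groups need not be fully residually free, let alone subgroups of $\FZt$. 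Consequently both the existence and the effectivity of the tower rest entirely on the Makanin--Razborov/NTQ analysis of the defining relations of $G$, which you invoke as a black box (``the Kharlampovich--Myasnikov algorithm''); that analysis, including the proof that the canonical NTQ approximations of an irreducible system discriminate $G$ and therefore embed it, is the whole content of \cite{KMIrc} and is not reproduced or replaced by anything in your argument.
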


Combining Theorem \ref{th:embKM} with the result on the representation of $\FZt$ as a union of a
sequence of extensions of centralizers one can get the following theorem.

\begin{theorem} \cite{Imp}
\label{th:embKMR}
Given a finite presentation of a finitely generated fully residually free group $G$ one can
effectively construct a finite sequence of extensions of centralizers
$$F < G_1 < \cdots < G_n,$$
where $G_{i+1}$ is an extension of the  centralizer of some element $u_i \in G_i$ by an infinite
cyclic group $\Z$, and an embedding $\psi^\ast : G \rightarrow G_n$ (by specifying the images of
the generators of $G$).
\end{theorem}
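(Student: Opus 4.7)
The plan is to build on Theorem \ref{th:embKM}, which already provides an effective embedding $\phi: G \to \FZt$, and then trace the images of the finitely many generators of $G$ through the direct limit presentation (\ref{eq:FZt}) of $\FZt$ as a union of iterated extensions of centralizers. The remaining work is to refine the chain so that each successive inclusion $G_i < G_{i+1}$ comes from extending a \emph{single} cyclic centralizer by a single copy of $\Z$.

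First I would apply Theorem \ref{th:embKM} to obtain $\phi: G \to \FZt$ together with explicit expressions for the images $\phi(g_1),\ldots,\phi(g_m)$ of a generating set of $G$ as words in the generators of $\FZt$. Each such word involves only finitely many of the stable letters $s_{i,j}$ introduced across the chain (\ref{eq:FZt}), so there exists an integer $N$ with $\phi(G) \leq G_N$. Both $N$ and the finite set $T$ of stable letters occurring in the images are read off effectively from the output of Theorem \ref{th:embKM}.

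Next I would construct a sub-chain $F = H_0 < H_1 < \cdots < H_n$ by processing the letters in $T$ one at a time, in an order compatible with the level stratification in (\ref{eq:FZt}). Each step takes a stable letter $s_{i,j}\in T$ that in $\FZt$ commutes with some element $u_i$ at level $k$, and extends $H_\ell$ by adjoining a single new generator commuting with the image of $u_i$ in $H_\ell$, subject to the same commutation relations that define the HNN extension at level $k+1$ of (\ref{eq:FZt}). Each such step is, by construction, a rank-one extension of the centralizer of (the image of) $u_i$ in $H_\ell$ by an infinite cyclic group. After all letters of $T$ are processed we have $\phi(G) \leq H_n$, and we set $G_i := H_i$ and $\psi^\ast := \phi$ viewed as a map into $G_n$; the images of the generators of $G$ are then specified exactly as they were in Theorem \ref{th:embKM}.

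The main obstacle, and the one requiring the most care, is ensuring that the one-letter-at-a-time refinement is genuinely a chain of centralizer extensions in the sense of Section 3.3: when several letters $s_{i,j_1}, s_{i,j_2},\ldots$ of (\ref{eq:FZt}) all centralize the same $u_i$, one must add them in an order so that after the first is adjoined, the centralizer of $u_i$ in the current $H_\ell$ has grown to $C_{G_k}(u_i) \oplus \Z$, and the next extension is then an extension of this new centralizer by another $\Z$. This is a bookkeeping argument, but it must be performed effectively from the data given by Theorem \ref{th:embKM}: one needs an algorithm that produces both the ordering of $T$ and the witness elements $u_i \in H_\ell$ together with proofs that $[s_{i,j},u_i]=1$ and $[s_{i,j},s_{i,j'}]=1$ hold in $H_\ell$, so that the resulting HNN presentation is recognizably an extension of a cyclic centralizer. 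Granting these verifications, $\psi^\ast=\phi$ is injective because $\phi$ was, and the construction terminates after $|T|$ steps.
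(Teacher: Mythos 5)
Your proposal is correct and follows essentially the same route the paper indicates: it combines the effective embedding $\phi: G \to \FZt$ of Theorem \ref{th:embKM} with the representation (\ref{eq:FZt}) of $\FZt$ as a union of iterated centralizer extensions, using finite generation of $G$ to land in a finite stage and then refining to one rank-one centralizer extension at a time. The bookkeeping point you flag (that adjoining several commuting stable letters over the same $u_i$ decomposes into successive extensions of the growing centralizer) is exactly the detail implicit in the paper's one-line derivation.
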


Now Theorem \ref{th:embKMR} implies the following important corollaries.

\begin{corollary} \cite{Imp}
\label{co:embKMR_1}
For every freely indecomposable non-abelian finitely generated fully residually free group one can
effectively find a non-trivial splitting (as an amalgamated product or HNN extension) over a cyclic
subgroup.
\end{corollary}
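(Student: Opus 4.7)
The plan is to combine Theorem \ref{th:embKMR} with Bass--Serre theory, descending along the chain of centralizer extensions. First, invoke Theorem \ref{th:embKMR} to effectively construct, from a finite presentation of $G$, an embedding $\psi^\ast : G \hookrightarrow G_n$, where $F = G_0 < G_1 < \cdots < G_n$ is a chain in which each $G_i$ is an HNN extension of $G_{i-1}$ obtained by extending the cyclic centralizer $C(u_{i-1})$ by an infinite cyclic group. In particular the associated subgroups in each HNN extension are cyclic.

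Next, let $T_n$ denote the Bass--Serre tree of the final HNN decomposition $G_n = G_{n-1} \ast_{C(u_{n-1})}$, and consider the induced action of $G$ on $T_n$. By Bass--Serre theory, this action gives a graph-of-groups decomposition of $G$ whose edge groups have the form $G \cap g\,C(u_{n-1})\,g^{-1}$, and are therefore subgroups of cyclic groups, hence cyclic. Two cases occur. If $G$ does not globally fix a vertex of $T_n$, the induced decomposition is a non-trivial cyclic splitting of $G$ and we are done. Otherwise $G$ fixes a vertex, so after replacing $\psi^\ast$ by an appropriate conjugate embedding, we obtain an embedding $G \hookrightarrow G_{n-1}$, and we iterate the argument one level lower in the chain.

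The iteration terminates after at most $n$ steps. If no non-trivial splitting is found, then $G$ embeds, up to conjugation, into $F = G_0$, so $G$ is free. But a non-abelian free group has rank at least two and is freely decomposable, contradicting the hypothesis that $G$ is freely indecomposable and non-abelian. Hence the procedure must produce a non-trivial cyclic splitting at some stage $i \leqslant n$.

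The principal obstacle is making the dichotomy effective: given the finitely many generators of $\psi^\ast(G)$ written in normal form in the HNN extension $G_i = G_{i-1} \ast_{C(u_{i-1})}$, one must decide whether they all lie in a single conjugate of $G_{i-1}$, and if so compute the conjugating element. This rests on the normal form theorem for HNN extensions together with the computability of normal forms, conjugacy, and membership in cyclic subgroups inside the iterated centralizer extensions $G_i$; these are tractable because the chain $F < G_1 < \cdots < G_n$ is built by an explicit inductive procedure starting from a free group, and Theorem \ref{th:embKMR} provides explicit words realizing $\psi^\ast$ on the generators of $G$.
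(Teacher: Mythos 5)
Your overall strategy --- embed $G$ into the top of the chain of centralizer extensions via Theorem \ref{th:embKMR}, let $G$ act on the Bass--Serre tree of the top-level splitting, and descend one level whenever $G$ is elliptic, reaching a contradiction with free indecomposability and non-abelianness at the level of $F$ --- is the right skeleton, and it is essentially the route that makes the corollary ``follow from'' Theorem \ref{th:embKMR} (the paper itself gives no proof, only the attribution to \cite{Imp}). The descent step and the termination argument are fine: if $G$ fixes a vertex of the tree for $G_i = G_{i-1} \ast_{C} (C \times \langle t_i\rangle)$ it must be conjugate into $G_{i-1}$ (it cannot lie in the abelian vertex group $C \times \langle t_i\rangle$), and a finitely generated freely indecomposable non-abelian group cannot embed into a free group as a subgroup equal to the whole of a conjugate's intersection pattern, i.e.\ cannot itself be free.

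The genuine gap is your claim that ``the associated subgroups in each HNN extension are cyclic,'' and hence that the induced edge groups $G \cap g\,C(u_{i-1})\,g^{-1}$ are cyclic. The group $G_i$ is obtained by extending the \emph{centralizer} $C_{G_{i-1}}(u_{i-1})$, not the cyclic group $\langle u_{i-1}\rangle$, and in an iterated centralizer extension these centralizers are free abelian of rank that grows along the chain (already $C_{G_1}(u_0) \cong \mathbb{Z}^2$ if $u_1$ lies in the first extended centralizer; non-cyclic centralizers must be extended in order to embed limit groups containing $\mathbb{Z}^k$ for $k \geq 3$). Consequently your construction produces a non-trivial splitting of $G$ over a finitely generated free \emph{abelian} subgroup --- which is the content of Corollary \ref{co:embKMR_4} --- but not, as written, over a cyclic one. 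Passing from the abelian splitting to a cyclic splitting is precisely the nontrivial part hidden in the word ``cyclic'' in the statement, and it requires additional structural input (e.g.\ the maximality and malnormality properties of the abelian edge and vertex groups recorded in Corollary \ref{co:embKMR_4} and Theorem \ref{prop}(d), which let one re-split along a cyclic direct factor or along $\langle u_{i-1}\rangle$ itself); this step is missing from your argument. The effectiveness discussion is acceptable in outline but is likewise only sketched.
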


\begin{corollary} \cite{Imp}
\label{co:embKMR_2}
Every finitely generated fully residually free group is finitely presented. There is an algorithm
that, given a presentation of a finitely generated fully residually free group $G$ and generators
of the subgroup $H$, finds a finite presentation for $H$.
\end{corollary}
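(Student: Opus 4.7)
The plan is to derive both parts of the statement from Theorem \ref{th:embKMR}, which provides an effective embedding of $G$ into a group $G_n$ obtained from a finitely generated free group $F$ by a finite chain of extensions of centralizers $F < G_1 < \cdots < G_n$. First, for finite presentability, I would show by induction on the tower height that $G_n$ is $\mathbb{Z}^N$-free for some $N$: at each step, a free Lyndon length function on $G_i$ with values in a finitely generated ordered abelian group extends to one on $G_{i+1}$ by assigning the new stable letter a length in a fresh $\mathbb{Z}$ factor placed in the top coordinate of the lexicographic order. Since $\mathbb{Z}^N$ embeds as an ordered abelian group into $\mathbb{R}^N$ with lexicographic order, $G_n$, and hence its subgroup $G$, is $\mathbb{R}^N$-free. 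Corollary \ref{co:Guirardel_1} then immediately yields that $G$ is finitely presented.

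For the algorithm, given generators of a subgroup $H \leq G$, I would proceed inductively down the tower. Use Theorem \ref{th:embKMR} to compute the images of the generators of $H$ inside $G_n$. The top-level HNN structure $G_n = G_{n-1} *_{C_{G_{n-1}}(u_{n-1})}$ has a Bass--Serre tree $T_n$ on which $H$ acts. Using the Stallings-like folding techniques for subgroups of $\FZt$ developed in \cite{KMRS, KMS12} together with the infinite-word normal form for elements of $G_n$, one effectively computes a finite quotient graph for this action, producing a finite graph-of-groups decomposition of $H$ whose vertex groups are finitely generated subgroups of conjugates of $G_{n-1}$ and whose edge groups are finitely generated free abelian subgroups of conjugates of $C_{G_{n-1}}(u_{n-1})$. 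Recurse into each vertex group, descending through the tower until reaching finitely generated subgroups of the free group $F$, which are handled by classical Stallings foldings. Finally, reassemble the finite presentations using the standard formula for the fundamental group of a finite graph of groups, yielding a finite presentation of $H$.

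\textbf{Main obstacle.} The abstract finite presentability is clean once one has $\mathbb{R}^N$-freeness and invokes Corollary \ref{co:Guirardel_1}. The principal difficulty lies in the algorithmic step: effectively producing a \emph{finite} graph-of-groups decomposition for a finitely generated subgroup at each level of the tower. Cocompactness of the $H$-action on the Bass--Serre tree is not automatic from finite generation; one must exploit the malnormality of the centralizers $C_{G_{i-1}}(u_{i-1})$ in $G_{i-1}$ (coming from the CSA property, Theorem \ref{prop}(d)) together with the infinite-word and fold machinery in $\FZt$ to bound the size of a fundamental domain and to algorithmically recognize equality of words in $G_n$. Guaranteeing that the recursion terminates constructively, with explicit generators extracted for every vertex group at each descent, is the core technical task; once this is in place, the assembly into a finite presentation is a routine application of the graph-of-groups formalism.
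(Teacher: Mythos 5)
Your derivation of the first sentence is correct but takes a genuinely different route from the one the paper relies on. The survey (following \cite{Imp}) gets finite presentability directly from Theorem \ref{th:embKMR} combined with the structure theorem for finitely generated subgroups of iterated centralizer extensions (stated in the survey as Corollary \ref{co:embKMR_4}): $G$ is built from finite-rank free abelian groups by finitely many free products, amalgams and separated HNN extensions over finitely generated abelian subgroups, and centralizer extensions, each of which preserves finite presentability. Your route instead passes through $\mathbb{Z}^N$-freeness of the tower $G_n$ (essentially the content of \cite{MRS2} and Corollary \ref{co:embKMR_6}), then $\mathbb{R}^N$-freeness via Theorem \ref{prop}(b), then Corollary \ref{co:Guirardel_1}. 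Within the survey's framework this is logically sound and not circular, but it leans on Guirardel's $\mathbb{R}^n$-free structure theorem, a later and considerably heavier result than what is needed; the paper's route is both more elementary and, crucially, already effective.

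For the second sentence there is a genuine gap. You correctly identify that the whole difficulty is to show that the folding/Bass--Serre process at each level of the tower terminates in a \emph{finite} graph of groups with finitely generated vertex and edge groups, but you then leave this as "the core technical task" rather than proving it. That task is not a routine supplement: it is exactly the content of the effective subgroup structure theorem for iterated centralizer extensions proved in \cite{Imp} (Corollary \ref{co:embKMR_4} here), established there via the infinite-words and $\FZt$-foldings machinery of \cite{KMRS}. In particular, malnormality of centralizers alone does not bound a fundamental domain for the $H$-action on the Bass--Serre tree; one needs the finiteness and termination statements of the folding algorithm for subgroups of $\FZt$, which is where all the work lies. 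The intended argument is simply to invoke that effective structure theorem and then assemble the presentation by the graph-of-groups formula, as you indicate at the end; as written, your proposal describes the shape of that argument but does not supply its decisive step.
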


\begin{corollary} \cite{Imp}
\label{co:embKMR_3}
Every finitely generated residually free group $G$ is a subgroup of a direct product of finitely
many fully residually free groups; hence, $G$ is embeddable into $\FZt \times \cdots \times \FZt$.
If $G$ is given as a coordinate group of a finite system of equations, then this embedding can be
found effectively.
\end{corollary}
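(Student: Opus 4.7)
The plan is to realize $G$ as a coordinate group over a free group $F$, decompose the associated algebraic set in $F^n$ into its irreducible components, and then feed each component's coordinate ring (which will be a limit group) into Theorem \ref{th:embKM}.

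First, I would rewrite $G$ as $G = F(X)/R(S)$ for some finite system of equations $S(X) = 1$ over $F$ in variables $X = (x_1,\dots,x_n)$, where $R(S) = \{w \in F(X) : w(a) = 1 \text{ for all } a \in V_F(S)\}$ is the radical and $V_F(S) \subseteq F^n$ is the algebraic set defined by $S$. A finitely generated group arises this way precisely when it is residually free, so starting from a finite presentation of $G$ this rewriting amounts to replacing the normal closure of the relators by its radical, and the input in the effective half of the statement (``given as a coordinate group of a finite system of equations'') gives such $S$ directly.

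Next, I would use that $F$ is equationally Noetherian, so the Zariski topology on $F^n$ is Noetherian and one has an irredundant decomposition
\[
V_F(S) = V_F(S_1) \cup \cdots \cup V_F(S_k)
\]
into finitely many irreducible components. At the level of radicals $R(S) = \bigcap_{i=1}^k R(S_i)$, so the diagonal map
\[
G = F(X)/R(S) \longrightarrow \prod_{i=1}^k F(X)/R(S_i)
\]
is injective. Each factor $F(X)/R(S_i)$ is the coordinate group of an irreducible algebraic set over $F$, hence a finitely generated fully residually free group (a limit group); this already yields the first assertion of the corollary. For the second assertion I would apply Theorem \ref{th:embKM} to each factor to obtain embeddings into $\FZt$ and compose with the diagonal map, placing $G$ inside $\FZt \times \cdots \times \FZt$.

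The main obstacle is the effectiveness clause. The reformulation of $G$ as a coordinate group and the application of Theorem \ref{th:embKM} (which is effective by hypothesis, producing the images of generators) cost nothing further algorithmically. The substantive step is computing the irreducible decomposition of $V_F(S)$, i.e.\ producing explicit finite defining systems $S_1,\dots,S_k$ from $S$. This rests on the algorithmic irreducible decomposition of algebraic sets over free groups developed in the Kharlampovich--Myasnikov algebraic geometry over groups program, which I would invoke as a black box; once its output is available, feeding each $S_i$ through Theorem \ref{th:embKM} completes the effective construction.
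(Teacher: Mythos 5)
Your argument is correct and is essentially the proof behind the cited result: present $G$ as a coordinate group $F(X)/R(S)$, use equational Noetherianity of $F$ to get the finite irreducible decomposition $V_F(S)=V_F(S_1)\cup\cdots\cup V_F(S_k)$, embed $G$ diagonally into $\prod_i F(X)/R(S_i)$ with each factor fully residually free, and then apply Theorem~\ref{th:embKM}; the effectiveness indeed reduces to the algorithmic irreducible decomposition of Kharlampovich--Myasnikov. The survey itself offers no proof beyond asserting that the corollary follows from Theorem~\ref{th:embKMR}, so your write-up actually supplies the missing first step (the reduction to a product of limit groups) that the paper leaves implicit.
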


Let $K$ be an HNN-extension of a group $G$ with associated subgroups $A$ and $B$. Then $K$ is called a
separated HNN-extension if for any $g\in G,\ A^g \cap B = 1$.

\begin{corollary} \cite{Imp}
\label{co:embKMR_4}
Let a group $G$ be obtained from a free group $F$ by finitely many centralizer extensions. Then
every finitely generated subgroup $H$ of $G$ can be obtained from free abelian groups of finite
rank by finitely many operations of the following type: free products, free products with abelian
amalgamated subgroups at least one of which is a maximal abelian subgroup in its factor, free
extensions of centralizers, separated HNN-extensions with abelian associated subgroups at least
one of which is maximal.
\end{corollary}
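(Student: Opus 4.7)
The plan is to induct on the number $n$ of centralizer extensions used to build $G$ from the free group $F$, using Bass--Serre theory at the outermost centralizer extension to pass the structural description from $G_{n-1}$ down to an arbitrary finitely generated subgroup $H \leq G$.

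The base case $n=0$ is immediate: $G = F$ is free, so by Nielsen--Schreier every finitely generated $H \leq F$ is free of finite rank, hence obtained from copies of $\mathbb{Z}$ (free abelian of rank one) by iterated free products. For the inductive step I would write $G = G_{n-1} *_C A$, where $C = C_{G_{n-1}}(u)$ is a cyclic maximal abelian subgroup of $G_{n-1}$ and $A = C \oplus \mathbb{Z}^k$ is the extended centralizer. Let $T$ be the Bass--Serre tree of this amalgamation; $H$ acts on $T$, and since $H$ is finitely generated, it acts cocompactly on its unique minimal invariant subtree. The resulting finite graph-of-groups decomposition of $H$ has vertex groups of two kinds—finitely generated subgroups of conjugates of $G_{n-1}$, to which the inductive hypothesis applies, and subgroups of conjugates of the free abelian group $A$, which are free abelian of finite rank—and edge groups that are cyclic (subgroups of conjugates of $C$).

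The heart of the argument is verifying that each amalgamation or HNN extension in this decomposition satisfies the maximality/separation hypotheses of the statement. Here I would invoke the fact, inherited inductively, that $G_{n-1}$ is a limit group and that $C$ is a malnormal maximal abelian subgroup of $G_{n-1}$ (this is a general feature of maximal abelian subgroups of limit groups, cf.\ Theorem \ref{prop}(d)). Malnormality passes to intersections: $H \cap gCg^{-1}$ is malnormal abelian in the vertex group $H \cap gG_{n-1}g^{-1}$, and therefore maximal abelian there, so the amalgamation edges are maximal abelian on the $G_{n-1}$-side, as required. The same malnormality argument handles separatedness for HNN extensions coming from loops at a $G_{n-1}$-type vertex: a nontrivial element in the intersection of two distinct conjugates of an edge group would violate malnormality of $gCg^{-1}$ in $gG_{n-1}g^{-1}$.

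The step I expect to be the main obstacle is the treatment of edges and loops attached to the purely abelian vertex groups of type $H \cap gAg^{-1}$. On the free abelian side a cyclic edge group is almost never maximal, so neither maximality nor separatedness is available directly. The resolution is to recognize the relevant subconfiguration, namely a $G_{n-1}$-type vertex group $B$ amalgamated with an abelian vertex group along a cyclic subgroup of the shape $B *_{\langle c \rangle} (\langle c \rangle \oplus \mathbb{Z}^m)$, and to collapse it to a single free extension of the centralizer of $c$ in $B$—one of the four operations permitted by the statement. Once this bookkeeping is done systematically across the finite quotient graph, every remaining edge is between two $G_{n-1}$-type vertex groups (handled by malnormality as above) or between two abelian vertex groups (which can only be amalgamations over a whole common abelian subgroup, giving a larger abelian group), and the decomposition of $H$ is expressed entirely through the four allowed operations.
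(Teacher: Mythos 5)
The survey itself offers no proof of this corollary: it is stated, together with Corollaries \ref{co:embKMR_1}--\ref{co:embKMR_6}, as a consequence of Theorem \ref{th:embKMR} with a citation to \cite{Imp}, so there is no in-paper argument to compare against. Your outline is, however, essentially the standard argument from the literature: induct on the length of the tower of centralizer extensions, let $H$ act on the Bass--Serre tree of the top amalgam $G = G_{n-1} *_C A$ with $A = C \oplus \mathbb{Z}^k$, pass to the induced finite graph-of-groups decomposition of $H$, use the CSA/malnormality property (Theorem \ref{prop}(d)) to see that a nontrivial edge group $H \cap gCg^{-1}$ is maximal abelian in the adjacent $G_{n-1}$-type vertex group and that distinct conjugates intersect trivially (giving separatedness of the HNN extensions), and collapse each abelian vertex onto one of its neighbours to realize a free extension of a centralizer. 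This is the right approach and the key points are all present.

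A few details deserve correction or expansion. First, $C = C_{G_{n-1}}(u)$ is in general free abelian of finite rank, not cyclic, once $n \geq 2$; the edge groups are therefore finitely generated free abelian, which is still enough to conclude that the vertex groups are finitely generated. Second, for the collapse $B *_{Q} P$ with $Q = H \cap gCg^{-1}$ and $P = H \cap gAg^{-1}$ to be a \emph{free} extension of the centralizer $Q = C_B(q)$, you need $Q$ to be a direct summand of $P$; this holds because $P/Q$ embeds in $A/C \cong \mathbb{Z}^k$ and is therefore torsion-free, but it should be said. Third, the Bass--Serre tree of an amalgam is bipartite, so after collapsing there are no edges ``between two $G_{n-1}$-type vertex groups''; rather, an abelian vertex of valence greater than one has all its incident edge groups equal to the single subgroup $Q$ (since $A$ is abelian, all edges at the vertex $gA$ have stabilizer $H \cap gCg^{-1}$), and after using one incident edge for the centralizer extension the remaining incident edges become amalgamations or HNN extensions over $Q$, which is maximal abelian in the $G_{n-1}$-side factor --- exactly the operations permitted by the statement. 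With these repairs the argument goes through.
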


\begin{corollary} \cite{Imp,Groves_Wilton:2009}
\label{co:embKMR_5}
One can enumerate all finite presentations of finitely generated fully residually free groups.
\end{corollary}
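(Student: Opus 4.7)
The plan is to enumerate finitely generated fully residually free groups constructively ``from the inside,'' exploiting the tower structure provided by Theorem \ref{th:embKMR}, rather than trying to filter an enumeration of arbitrary finite presentations (which would require some form of deciding fully residual freeness of a given raw presentation, a much harder question). Every finitely generated fully residually free group embeds effectively as a subgroup of some finite iterated extension of cyclic centralizers of a free group, and subgroup presentations inside such a tower are computable by Corollary \ref{co:embKMR_2}; these two ingredients together will be enough.

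First, I would enumerate the towers themselves. A tower is specified by finite data: an integer $r\geqslant 0$ (the rank of the base free group $F=F(x_1,\ldots,x_r)$) together with a finite list of ``extension instructions'' $u_1,\ldots,u_N$, where each $u_i$ is given as a word in the generators of the group $G_{i-1}$ already constructed (with $G_0=F$), and where $G_i$ denotes the extension of the centralizer $C_{G_{i-1}}(u_i)$ by an infinite cyclic factor. Such data form a recursive set, and starting from the standard presentation of $F$ one obtains step by step an explicit finite presentation of each $G_i$ by adjoining a fresh generator $s_i$ together with the relation $[s_i,u_i]=1$ (plus the obvious commutation relations with the previously adjoined $s_j$ whose instructions $u_j$ already commute with $u_i$ in $G_{i-1}$).

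Second, for each tower with top group $G_N$ and known finite presentation, I would enumerate all finite tuples $(w_1,\ldots,w_k)$ of words in the generators of $G_N$. The group $G_N$ is itself a finitely generated fully residually free group, since the class of such groups is closed under extension of cyclic centralizers by $\mathbb{Z}$ (the fact underlying Theorem \ref{th:embKMR} and the construction of $\FZt$ in \eqref{eq:FZt}). Therefore Corollary \ref{co:embKMR_2} applies and yields an algorithm that, on input the presentation of $G_N$ and the tuple $(w_1,\ldots,w_k)$, outputs a finite presentation of $H=\langle w_1,\ldots,w_k\rangle\leqslant G_N$. I would add every such presentation to the output list.

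Completeness is then immediate from Theorem \ref{th:embKMR}: given any finitely generated fully residually free group $G$ with generators $g_1,\ldots,g_m$, the theorem furnishes a tower $F<G_1<\cdots <G_N$ and an explicit embedding $\psi^{\ast}:G\hookrightarrow G_N$ in which the images $\psi^{\ast}(g_1),\ldots,\psi^{\ast}(g_m)$ are specified as words in the generators of $G_N$. The underlying tower and this particular tuple of words are eventually produced by the scheme above, and at that point the algorithm of Corollary \ref{co:embKMR_2} outputs a finite presentation of $\langle \psi^{\ast}(g_1),\ldots,\psi^{\ast}(g_m)\rangle\cong G$. The main obstacle to a more naive approach is precisely that fully residual freeness of a raw presentation is not a property we can test on the spot; the ``generate-and-present'' strategy circumvents this by construction, guaranteeing soundness automatically while capturing every isomorphism class via the embedding theorem.
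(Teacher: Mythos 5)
Your argument is correct and is essentially the proof the paper intends: the corollary is presented as a consequence of Theorem \ref{th:embKMR} together with the subgroup-presentation algorithm of Corollary \ref{co:embKMR_2}, which is exactly the generate-and-present scheme you describe (enumerate iterated centralizer-extension towers, enumerate finite generating tuples, output the computed presentation of each subgroup, with soundness from closure of full residual freeness under centralizer extensions and subgroups, and completeness from the effective embedding). The only cosmetic point is that your list contains one presentation per isomorphism class rather than literally every finite presentation presenting such a group; if the latter reading is wanted, one appends the standard semi-decision procedure of enumerating Tietze transformations applied to the presentations already produced.
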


\begin{corollary} \cite{KMIrc}
\label{co:embKMR_6}
Every finitely generated fully residually free group acts freely on some $\Z^n$-tree with
lexicographic order for a suitable $n$.
\end{corollary}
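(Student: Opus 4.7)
The plan is to pass through Lyndon's free $\Zt$-group $\FZt$: first transport a $\Zt$-valued free length function on $\FZt$ down to $G$ by restriction along an embedding $G \hookrightarrow \FZt$, and then shrink the value group to a finitely generated ordered abelian subgroup, which must be $\Z^n$ with lexicographic order.

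First, by Corollary~\ref{co:embKMR_2} the finitely generated fully residually free group $G$ is finitely presented, and by Theorem~\ref{th:embKM} there is an explicit embedding $\phi: G \hookrightarrow \FZt$. I would then invoke the Myasnikov--Remeslennikov construction (following \cite{MRS2}) of a free Lyndon length function $\ell: \FZt \to \Zt$, built inductively along the tower \eqref{eq:FZt} of cyclic centralizer extensions that defines $\FZt$. Pulling back yields a free Lyndon length function $\ell_G := \ell \circ \phi : G \to \Zt$, since axioms (L1)--(L5) are all inherited from $\ell$ upon restriction to a subgroup.

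Next, since $G$ is finitely presented and $\ell_G$ is free, Theorem~\ref{co:main1} applies, so the subgroup $\Lambda_0 \subseteq \Zt$ generated by $\ell_G(G)$ is finitely generated. Now $\Zt$ is hereditary discretely ordered, with convex chain $0 < E_0 < E_1 < \cdots$ and quotients $E_{k+1}/E_k \cong \Z$. The convex subgroups of $\Lambda_0$ (with the induced order) are exactly the intersections $\Lambda_0 \cap E_k$, and each successive quotient $(\Lambda_0 \cap E_{k+1})/(\Lambda_0 \cap E_k)$ embeds into the discrete group $E_{k+1}/E_k$; hence $\Lambda_0$ is also hereditary discrete. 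By the structure theorem recalled at the end of Subsection~\ref{subs:ord_ab}, a finitely generated hereditary discrete ordered abelian group is isomorphic, as an ordered group, to $\Z^n$ with the lexicographic order for some $n$.

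Finally, viewing $\ell_G$ as a free Lyndon length function $G \to \Z^n$, Chiswell's theorem (stated in Subsection~\ref{sec:length_func}) produces a $\Z^n$-tree $(X,d)$ on which $G$ acts by isometries with $\ell_G = \ell_x$ at some base-point $x$. Axiom (L5), combined with the identity $\|g\| = \max\{0, \ell_x(g^2) - \ell_x(g)\}$ noted in Subsection~\ref{sec:length_func}, gives $\|g\| > 0$ for every $1 \neq g \in G$, so every nontrivial element acts as a hyperbolic isometry; this is the desired free action on a $\Z^n$-tree with lex order. The main obstacle in this plan is the first step, namely producing a $\Zt$-valued free length function on $\FZt$ itself: this requires length-preserving bookkeeping along every cyclic centralizer extension in the tower \eqref{eq:FZt}, and is the technical heart of \cite{MRS2} that we take as given.
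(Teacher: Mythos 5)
Your proposal follows essentially the same route the paper sketches in Subsections \ref{sec:limit_gps} and \ref{subs:limit_gps_embed}: embed $G$ into $\FZt$ via Theorem \ref{th:embKM}, pull back the free $\Zt$-valued Lyndon length function of \cite{MRS2}, observe that the values on $G$ generate a finitely generated ordered subgroup isomorphic to $\Z^n$ with the lexicographic order, and apply Chiswell's theorem together with (L5) to obtain a free action. The only deviation is your appeal to Theorem \ref{co:main1} (and hence to finite presentability via Corollary \ref{co:embKMR_2}) to bound the value group; this heavy tool is avoidable, since subadditivity $l(fg)\leqslant l(f)+l(g)$ already forces $l(G)$ into the convex subgroup $E_n\cong\Z^{n+1}$ (lexicographically ordered) of $\Zt$ determined by the maximal degree of the lengths of a finite generating set, which works for any finitely generated subgroup of $\FZt$.
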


\subsection{Description of $\mathbb{Z}^n$-free groups}
\label{subs:description_Z^n}

Given two $\Zt$-free groups $G_1,\ G_2$ (with free Lyndon lengths functions $\ell _1$ and $\ell _2$) and maximal abelian subgroups $A \leqslant G_1,\ B \leqslant
G_2$ such that
\begin{enumerate}
\item[(a)] $A$ and $B$ are cyclically reduced, 
\item[(b)] there exists an isomorphism $\phi : A \rightarrow B$ such that $\ell _2(\phi(a)) = \ell _1(a)$ for any $a \in A$.
\end{enumerate}
Then we call the amalgamated free product
$$\langle G_1, G_2 \mid A \stackrel{\phi}{=} B \rangle$$
the {\em length-preserving amalgam} of $G_1$ and $G_2$.

Given a $\Zt$-free group $H$ and non-conjugate maximal abelian subgroups $A, B \leqslant H$ such that \begin{enumerate}
\item[(a)] $A$ and $B$ are cyclically reduced,
\item[(b)] there exists an isomorphism $\phi : A \rightarrow B$ such that $\ell(\phi(a)) = \ell(a)$ and
$a$ is not conjugate to $\phi(a)^{-1}$ in $H$ for any $a \in A$.
\end{enumerate}
Then we call the HNN extension
$$\langle H, t \mid t^{-1} A t = B \rangle$$
the {\em length-preserving separated HNN extension} of $H$.

We now get the description of
regular $\Z^n$-free groups in the following form.

\begin{theorem}\cite{KMRS}
\label{th:complete_desrc}
A finitely generated group $G$ is regular $\Z^n$-free if and only if it can be obtained from free
groups by finitely many length-preserving separated HNN extensions and centralizer extensions.
\end{theorem}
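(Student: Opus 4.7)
The plan is to prove the two implications separately. The forward (``if'') direction constructs the regular action from the sequence of algebraic operations, while the backward (``only if'') direction extracts the sequence from the given action, with the main structural input being Theorem \ref{th:main1}.

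For the ``if'' direction, I would proceed by induction on the length of the sequence of operations. The base case is that a finitely generated free group $F$ is regular $\Z$-free, since $F$ acts freely on its Cayley graph (a $\Z$-tree) with all vertices forming a single orbit. Two lemmas then suffice for the inductive step. First, a centralizer extension $\langle G, s_1, \ldots, s_k \mid [s_i, u] = [s_i, s_j] = 1 \rangle$ of a regular $\Z^n$-free group $G$ is regular $\Z^{n+1}$-free: geometrically one attaches at each translate of the axis of $u$ a copy of a $\Z$-line, promoting the $\Z^n$-tree to a $\Z^{n+1}$-tree with lex order. Second, a length-preserving separated HNN extension $\langle G, t \mid t^{-1} A t = B \rangle$ of a regular $\Z^n$-free group $G$ is again regular $\Z^n$-free: the Bass-Serre tree of the HNN extension is combined with the action on the $\Z^n$-tree to produce a new $\Z^n$-tree, and separatedness guarantees freeness of the action. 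In both cases, regularity, meaning that all branch points lie in a single $G$-orbit, needs to be verified at each step.

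For the ``only if'' direction, I would invoke Theorem \ref{th:main1} to obtain a hierarchy $G_1 < G_2 < \cdots < G_n = G$ with $G_1$ free and each $G_{i+1}$ obtained from $G_i$ by finitely many length-preserving HNN extensions along maximal abelian, finitely generated associated subgroups. It remains to refine each such HNN extension into a composition of length-preserving separated HNN extensions and centralizer extensions. The CSA property of Theorem \ref{prop}(d) is the key lever: maximal abelian subgroups are malnormal, so in an HNN extension $\langle G_i, t \mid t^{-1} A t = B \rangle$, either $A$ and $B$ are non-conjugate, in which case malnormality forces $A^g \cap B = 1$ for every $g$ and the extension is automatically separated, or after replacing $t$ by a suitable $g t$ we may assume $A = B$ and the associated isomorphism is a length-preserving automorphism $\phi : A \to A$. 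In the latter situation one rewrites the extension as a centralizer extension by adjoining new generators commuting with $A$, possibly followed by a separated extension that absorbs $\phi$.

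The main obstacle will be the analysis of non-trivial length-preserving automorphisms $\phi : A \to A$ of a finitely generated free abelian $A$, and showing that the resulting HNN extension can indeed be decomposed as advertised while preserving regularity, length-preservation, and maximality of the abelian associated subgroups at every intermediate stage. Careful bookkeeping of the induced $\Z^n$-valued length function, together with the observation that in a $\Z^n$-free group with lex order a length-preserving self-map of a maximal abelian subgroup is very rigid, should reduce the analysis to the identity case, which is precisely the centralizer extension; the remaining ambiguity gets absorbed by a separated HNN extension in the enlarged group.
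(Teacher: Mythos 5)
The survey contains no proof of Theorem \ref{th:complete_desrc}: it is imported from \cite{KMRS1}, where it is proved directly by the machinery of infinite (non-Archimedean) words over $\Z^n$ and Stallings-type foldings. Your derivation is therefore a genuinely different route: you deduce the $\Z^n$ case from the later and more general structure theorem, Theorem \ref{th:main1} of \cite{KMS12}, which historically was obtained by generalizing the proof of the very statement at hand. Within the survey's framework this is a legitimate shortcut, and your key reduction is essentially right. By commutation transitivity (Theorem \ref{prop}(d),(e)), if the maximal abelian associated subgroups $A$ and $B$ are non-conjugate then not only is $A^g\cap B=1$ for all $g$, but no $a\in A$ can be conjugate to $\phi(a)^{-1}\in B$ either (such a conjugacy would force $A^g=B$), so both conditions in the definition of a separated HNN extension come for free. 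If $A$ and $B$ are conjugate, you may take $A=B$, and since $\ell(\phi(a))=\ell(a)$ forces $\|\phi(a)\|=\|a\|$ on the common axis, the signed translation length embeds $A$ into $\Z^n$ and the automorphism must be $\pm\mathrm{id}$; the case $-\mathrm{id}$ is excluded because it would produce a $\langle a,t\mid t^{-1}at=a^{-1}\rangle$ subgroup, contradicting Theorem \ref{prop}(h). So the conjugate case is exactly a centralizer extension, and the final clause of your sketch --- a residual separated extension that ``absorbs $\phi$'' --- is a red herring: there is nothing left to absorb.

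Two points need shoring up before this counts as a proof. First, Theorem \ref{th:main1} is stated for finitely \emph{presented} groups while the theorem concerns finitely \emph{generated} ones, so you must first argue that finitely generated $\Z^n$-free groups are finitely presented (e.g.\ via $\R^n$-freeness and Corollary \ref{co:Guirardel_1}); without this the invocation of Theorem \ref{th:main1} is not available. Second, the entire mathematical weight of the ``if'' direction --- that a centralizer extension, respectively a length-preserving separated HNN extension, of a regular $\Z^n$-free group again carries a free regular $\Z^m$-valued length function --- is dispatched in two sentences. Building the new tree (equivalently the new length function), and verifying both freeness and regularity of the extended action, is precisely where \cite{KMRS1} spends its effort, and ``the Bass--Serre tree is combined with the action on the $\Z^n$-tree'' is a statement of intent rather than an argument; in particular it is exactly here that the separatedness and length-preservation hypotheses must be used, and your sketch does not show where.
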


\begin{theorem}\cite{KMS12}
\label{th:Z^n_desrc}
A finitely generated group $G$ is $\Z^n$-free if and only if it can be obtained from free groups by 
a finite sequence of length-preserving amalgams, length-preserving separated HNN extensions, and 
centralizer extensions.
\end{theorem}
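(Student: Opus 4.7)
The plan is to establish the two directions separately. The sufficiency direction is an inductive construction, while the necessity direction reduces to the regular case via Theorem \ref{th:main4} combined with a Bass--Serre style analysis.

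For sufficiency, I would argue by induction on the number of operations used to build $G$ from free groups. Free groups are $\Z$-free, hence $\Z^n$-free for every $n \geq 1$. A centralizer extension of a $\Z^n$-free group is $\Z^{n+1}$-free, the length function being extended lexicographically so that the new coordinate records the integer exponent of the new stable letter in a reduced form (this is the standard Lyndon/Myasnikov--Remeslennikov construction). For a length-preserving amalgam $G_1 *_A G_2$ of $\Z^n$-free groups along a cyclically reduced maximal abelian subgroup $A$, the length-preserving hypothesis allows one to glue the given actions of $G_1$ and $G_2$ on their $\Z^n$-trees along a common axis (or fixed point) of $A$; the corresponding Bass--Serre tree over $\Z^n$ carries a free isometric action of the amalgam. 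The length-preserving separated HNN case is analogous, and the ``separated'' hypothesis (no element is conjugate to the inverse of its image) is precisely what rules out inversions in the combined action.

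For necessity, let $G$ be a finitely generated $\Z^n$-free group. Since $\Z^n$ embeds as an ordered abelian group into $\R^n$, Theorem \ref{prop}(b) shows $G$ is also $\R^n$-free, and Corollary \ref{co:Guirardel_1} gives that $G$ is finitely presented. By Theorem \ref{th:main4}, $G$ embeds isometrically into a finitely presented regular $\Z^n$-free group $\widehat{G}$, which by Theorem \ref{th:complete_desrc} is built from free groups by length-preserving separated HNN extensions and centralizer extensions. The action of $G$ on the $\Z^n$-tree of $\widehat{G}$ is free and isometric but no longer regular. To obtain a decomposition of $G$ in the stated form, I would induct on $n$. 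The base case $n=1$ is the Nielsen--Schreier theorem: finitely generated $\Z$-free groups are free. For the inductive step, view $\Z^n$ as $\Z^{n-1} \oplus \Z$ and collapse the subtrees corresponding to the largest proper convex subgroup $\Z^{n-1}$; this yields a simplicial $\Z$-tree on which $G$ acts, with the stabilizers of vertices acting freely on $\Z^{n-1}$-subtrees. By the Bass-type theorem cited in the excerpt, $G$ splits as a graph of groups with $\Z^{n-1}$-free vertex groups and maximal abelian edge groups; branch points in distinct $G$-orbits contribute amalgams, while loops in the graph of groups contribute separated HNN extensions, and the centralizer extensions come from the inductive application of Theorem \ref{th:complete_desrc} inside each vertex group after re-embedding into its own regular completion.

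The main obstacle is verifying the technical ``length-preserving'' and ``separated'' conditions at each step of the induction. Concretely, one must choose base points for the inherited vertex length functions compatibly across each edge so that the edge identifications are genuinely length-preserving, and one must rule out the existence of an element $a$ of an edge group that is conjugate in the ambient vertex group to $\phi(a)^{-1}$; this uses the fact that a hyperbolic isometry of a $\Z^n$-tree cannot conjugate its translation axis to a reversed copy of itself. A secondary difficulty is to arrange that the edge groups are cyclically reduced and maximal abelian in the ambient vertex groups; both follow from the CSA/malnormality property of Theorem \ref{prop}(d) applied at each level of the induction, together with Theorem \ref{prop}(g) which forces each abelian subgroup to embed into $\Z^n$.
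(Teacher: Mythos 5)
The survey itself states this theorem without proof, citing \cite{KMS12}, so your proposal can only be measured against the argument in that reference. Your high-level strategy --- sufficiency by induction on the number of operations, necessity by reducing to the regular case (Theorem \ref{th:complete_desrc}) and inducting on $n$ via Bass's splitting theorem for $\mathbb{Z}\times\Lambda_0$-actions --- is a reasonable reconstruction of the general shape of the result, but the necessity direction has a genuine gap exactly at the point you label ``the main obstacle,'' and it cannot be deferred: it is the actual content of the theorem. Bass's theorem hands you a graph of groups whose edge groups are maximal abelian (or trivial) subgroups of $\mathbb{Z}^{n-1}$-free vertex groups, but the statement to be proved requires each edge group to be \emph{cyclically reduced} with respect to a based length function on its vertex group and each edge identification to be length-preserving. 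A based length function has a single base point, and ``cyclically reduced'' means the common axis of the abelian edge group passes through that base point; when a vertex group carries several incident edges whose edge groups have distinct axes, no single choice of base point lies on all of them, so ``choose base points compatibly across each edge'' fails as stated. One must instead conjugate edge groups and renormalize length functions, which is precisely what \cite{KMS12} does using the machinery of infinite words and regular completions (building on \cite{KMRS} and \cite{KMS2}) rather than a direct appeal to \cite{B}. Relatedly, your suggestion that the centralizer extensions arise by ``re-embedding each vertex group into its own regular completion'' is circular: embedding a vertex group $H$ into a regular completion $\widehat{H}$ decomposes $\widehat{H}$, not $H$, which is the same difficulty one level down; the induction should instead apply the full statement of the theorem to the finitely generated $\mathbb{Z}^{n-1}$-free vertex groups.

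Two smaller issues in the sufficiency direction. Gluing two $\mathbb{Z}^n$-trees ``along a common axis of $A$'' is a slogan, not a construction: one must assemble a tree of $\mathbb{Z}^n$-trees over the Bass--Serre tree of the splitting, verify axioms (T1)--(T3), and check that the combined action is free; the length-preserving hypothesis is what makes the two $\Lambda$-metrics agree on the overlap. And the role of the ``separated'' hypothesis is not to rule out inversions (the Bass--Serre construction already yields an action without inversions); together with the condition that $a$ is never conjugate to $\phi(a)^{-1}$, it is what forces elements of the HNN extension that are not conjugate into the base group to act hyperbolically, i.e., it is needed for freeness. These points are fixable along standard lines, but the first paragraph's issue is where the proof as proposed would actually break down.
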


Using this description it was proved in \cite{BK} that $\mathbb Z ^n$-free groups are $CAT (0)$.

\section{Products of trees}
\label{sec:products}

\subsection{Lattices from square complexes}

Lattices in products of trees provide examples for many interesting group theoretic properties,  for example there are finitely presented infinite simple groups \cite{burger-mozes:simple}, 
\cite{rattaggi:simple}
and many are not residually finite \cite{wise1}. 
For a great survey of  results of Burger, Mozes, Zimmer
on simple infinite groups acting on products of trees see \cite{mozes:survey}.

Torsion free lattices that are acting simply transitively on the vertices of the product of trees (not interchanging the factors) are fundamental groups of square complexes with just one vertex, a complete bipartite link and a VH-structure. There are many of such lattices, see \cite{stix-av} for a mass formula, but very rarely these lattices arise from an arithmetic context. The main purpose of this chapter is
to concentrate on the arithmetic case, mentioning other cases as well  as a mass formula for the relevant square complexes.

In this section we give a quick introduction to the geometry of square complexes and fix the terminology.

\subsubsection{Square complexes and products of trees}

A square complex $S$ is a $2$-dimensional combinatorial cell complex: its $1$-skeleton consists of a graph $S^1 = (V(S),E(S))$ with set of vertices $V(S)$, and set of oriented edges $E(S)$. The $2$-cells of the square complex come from a set of squares $S(S)$ that are combinatorially glued to the graph $S^1$ as explained below. Reversing the orientation of an edge $e \in E(S)$ is written as $e \mapsto e^{-1}$ and the set of unoriented edges is the quotient set 
\[
\ov{E}(S) = E(S)/(e \sim e^{-1}).
\]

More precisely, a square $\square$ is described by an equivalence class of $4$-tuples of oriented edges $e_i \in E(S)$
\[
\square = (e_1,e_2,e_3,e_4)
\]
where  the origin of $e_{i+1}$ is the terminus of $e_i$ (with $i$ modulo $4$). Such $4$-tuples describe the same square if and only if they belong to the same orbit under the dihedral action  generated by cyclically permuting the edges $e_i$ and by  the reverse orientation  map 
\[
(e_1,e_2,e_3,e_4) \mapsto (e_4^{-1},e_3^{-1},e_2^{-1},e_1^{-1}).
\]
We think of a square shaped $2$-cell glued to the (topological realization of the) respective edges of the graph $S^1$.
For more details on square complexes we refer for example to \cite{burger-mozes:lattices}. Examples for square complexes are given by products of trees.

\begin{remark}
We note, that in our definition of a square complex each square is determined by its boundary.
The group actions considered in the present survey are related only to such complexes.
\end{remark}

Let $T_n$ denote the $n$-valent tree. The product of trees 
\[
M = T_{m} \times T_{n}
\]
is a Euclidean building of rank $2$ and a square complex.  Here we are interested in  lattices, i.e., groups $\Gamma$ acting discretely and cocompactly on $M$ respecting the structure of square complexes.  The quotient $S = \Gamma \backslash M$ is a finite square complex, typically with orbifold structure coming from the stabilizers of cells.

We are especially interested in the case where $\Gamma$ is torsion free and acts simply transitively on the set of vertices of $M$. These yield the smallest quotients $S$ without non-trivial orbifold structure. Since $M$ is a CAT(0) space, any finite group stabilizes a cell of $M$ by the Bruhat--Tits fixed point lemma. Moreover, the stabilizer of a cell is pro-finite, hence compact, so that a discrete group $\Gamma$ acts with trivial stabilizers on $M$ if and only if $\Gamma$ is torsion free.

Let $S$ be a square complex. For $x \in V(S)$, let $E(x)$ denote the set of oriented edges originating in $x$. 
The \textbf{link} at the vertex $x$ in $S$ is the (undirected multi-)graph ${\mathbb Lk}_x$ whose set of vertices is $E(x)$ and whose set of edges in ${\mathbb Lk}_x$ joining vertices $a,b \in E(x)$ are given by corners $\gamma$ of squares in $S$ containing the respective edges of $S$,  see \cite{burger-mozes:lattices}. 

A covering space of a square complex admits a natural structure as a square complex such that the covering map preserves the combinatorial structure. In this way, a connected square complex admits a universal covering space.

\begin{proposition}
The universal cover of a finite connected square complex is a product of trees if and only if the link $ {\mathbb Lk}_x$ at each vertex $x$ is a complete bipartite graph.
\end{proposition}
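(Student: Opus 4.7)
My plan is to prove the two directions separately, using the geometry of links to produce a global VH-structure in the converse direction.

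For the forward direction, I would argue by direct computation. At any vertex $(v,w)$ of a product $T \times T'$ of trees, the oriented edges originating at $(v,w)$ split into the set of oriented edges at $v$ in $T$ (call them \emph{horizontal}) and the set of oriented edges at $w$ in $T'$ (\emph{vertical}). The squares based at $(v,w)$ are exactly indexed by pairs (horizontal edge, vertical edge), and each such square contributes exactly one corner at $(v,w)$ joining its two incident edges in the link. Hence ${\mathbb Lk}_{(v,w)}$ is the complete bipartite graph on the horizontal vs.\ vertical sides. Since the covering map $\widetilde{S}\to S$ is a combinatorial isomorphism on stars, the same complete bipartite description descends to every link of $S$.

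For the converse, assume every link of $S$ is complete bipartite. The same then holds for the simply connected universal cover $\widetilde{S}$. First I would globalize the bipartitions of individual links into a single $2$-coloring $c\colon\ov{E}(\widetilde{S})\to\{H,V\}$. The mechanism is that, within any square $(e_1,e_2,e_3,e_4)$, at each corner the two incident half-edges are adjacent in the link, hence lie on opposite sides of the bipartition there; so consecutive edges of a square have opposite colors, and opposite edges of a square have the same color. Starting from the bipartition at a fixed base vertex, propagate the coloring by ``reflecting across squares'' through the $1$-skeleton. Any ambiguity between two propagation paths gives a closed loop of propagation steps; by simple connectedness of $\widetilde{S}$ this loop bounds a disc tiled by squares, and the single-square compatibility just noted ensures the colorings agree around each square. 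So $c$ is well-defined globally.

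With the VH-coloring in place, let $\Gamma_H$ and $\Gamma_V$ denote the subgraphs of $\widetilde{S}^1$ consisting only of $H$-edges and $V$-edges respectively. The main step, and the one I expect to be the main obstacle, is to show that each connected component of $\Gamma_H$ (and symmetrically of $\Gamma_V$) is a tree. The idea is that squares provide a ``parallel transport'' of the two $H$-sides across their connecting $V$-sides: any combinatorial $H$-cycle in $\Gamma_H$ bounds a disc in $\widetilde{S}$ tiled by squares, and one can push this $H$-cycle inward across each outermost square (whose two $H$-edges are parallel) to a strictly shorter $H$-cycle, until reaching the empty cycle. Once $\Gamma_H$ and $\Gamma_V$ are shown to be disjoint unions of trees, pick components $T_H\subset\Gamma_H$ and $T_V\subset\Gamma_V$ containing a chosen base vertex $\widetilde{x}_0$, and define a map $\Phi\colon\widetilde{S}\to T_H\times T_V$ on vertices by $\Phi(y)=(y_H,y_V)$, where $y_H,y_V$ are the endpoints of the (unique, by VH-alternation and simple connectedness) normalized path $\widetilde{x}_0\to y_H\to y$ using first $H$-edges then $V$-edges. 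The square-filling obstruction for well-definedness is exactly the compatibility of squares with the VH-structure, and $\Phi$ extends to a combinatorial isomorphism of square complexes, completing the proof.
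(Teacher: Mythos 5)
The paper itself gives no argument here beyond citing Ballmann--Brin, Theorem~C, so what you are attempting is a self-contained combinatorial proof of a fact the paper treats as known. Most of your outline is sound: the forward direction is a correct direct computation of the link of $(v,w)$ in $T\times T'$, and the globalization of the link bipartitions to a single colouring of $\ov{E}(\widetilde{S})$ is fine --- consistency of the propagation around a single square is exactly the corner computation you indicate, and simple connectedness (via a disc diagram) reduces an arbitrary closed propagation loop to that case. The final normal-form map $\Phi$ is also the right endgame, and it is where the \emph{completeness} of the bipartite links enters (every $VH$ corner spans a square, so $VH$ can be commuted to $HV$).

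The genuine gap is in the step you yourself flag as the main obstacle: that the components of $\Gamma_H$ are trees. The move you propose --- push an $H$-cycle inward across an outermost square of the bounding disc, replacing an $H$-edge by the ``parallel'' one --- does not produce a closed path in $\Gamma_H$. The two $H$-edges of a square are \emph{opposite} sides, joined by the two $V$-edges, so the substituted edge is not incident to the neighbouring edges of the cycle; for the pushed-in curve to close up at a vertex $v$ of the original cycle, the two boundary squares at $v$ would have to share their $V$-edge at $v$, which is not automatic since the link at $v$ may have many vertical vertices. No well-founded decreasing complexity is set up either, and even if the length could be driven down, short $H$-cycles (length one or two) still have to be excluded separately. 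The standard ways to close this hole are: (i) take a reduced disc diagram $D$ with boundary the $H$-cycle and observe that, since $\partial D$ contains no $V$-edges, every dual curve crossing the $V$-edges of $D$ is closed, and an innermost closed dual curve in a reduced diagram forces two squares to meet along a corner in a way that contradicts the links being (simple, complete) bipartite graphs; or (ii) avoid $\Gamma_H$-cycles altogether by collapsing each $H$-component and each square onto its $V$-direction, obtaining a quotient graph with connected fibres onto whose fundamental group the trivial group $\pi_1(\widetilde{S})$ surjects, so the quotient is a tree. Either of these (or simply invoking Ballmann--Brin as the paper does) is needed before the rest of your argument goes through.
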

\begin{proof}
This is well known and follows, for example, from \cite{Ballmann-Brin1995} Theorem C.
\end{proof}

%-----------------------------------------------------------------------------------------------------------------
\subsubsection{VH-structure}  \label{sec:VHstructure}
A  {\em vertical/horizontal structure}, in short a {\em VH-structure}, on a square complex $S$ consists of a bipartite structure $\ov{E}(S) = E_v \sqcup E_h$ on the set of unoriented edges of $S$ such that for every vertex $x \in S$ the link ${\mathbb Lk}_x$ at $x$ is the complete bipartite graph on the induced partition of $E(x) = E(x)_v \sqcup E(x)_h$. Edges in $E_v$ (resp.\ in $E_h$) are called vertical (resp.\ horizontal) edges. See \cite{wise1} for general facts on VH-structures. The {\em partition size} of the VH-structure is the function  $V(S) \to 
\mathbb N \times \mathbb N$ on the set of vertices 
\[
x \mapsto (\# E(x)_v, \# E(x)_h)
\]
or just the corresponding tuple of integers if the function is constant. Here $\#(-)$ denotes the cardinality of a finite set. 

We record the following basic fact, see \cite{burger-mozes:lattices} after Proposition 1.1:

\begin{proposition} \label{prop:VHstructureanduniversalcover}
Let $S$ be a square complex. The following are equivalent.
\begin{enumerate}
\item 
The universal cover of $S$ is a product of trees $T_m \times T_n$ and the group of covering transformations does not interchange the factors.
\item 
There is a VH-structure on $S$ of constant partition size $(m,n)$. \hfill $\square$
\end{enumerate}
\end{proposition}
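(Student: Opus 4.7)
The plan is to bootstrap from the preceding proposition (complete bipartite links iff universal cover is a product of trees) and enrich both sides of the bi-implication with the bipartition data coming from a VH-structure.

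For the direction (1) $\Rightarrow$ (2), I would decompose the edges of $\tilde S = T_m \times T_n$ tautologically: an edge is \emph{horizontal} if it lies in a slice $T_m \times \{v\}$ and \emph{vertical} if it lies in $\{u\} \times T_n$. At every vertex of $\tilde S$ this gives $m$ horizontal and $n$ vertical edges, and the link at every vertex is $K_{m,n}$ with this bipartition. Since the deck group $\Gamma$ does not interchange the two factors, the coloring is $\Gamma$-invariant and descends to an edge partition on $S$ whose induced partition of each link is precisely the complete bipartite one — that is, a VH-structure of constant partition size $(m,n)$.

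For (2) $\Rightarrow$ (1), the bipartite link condition built into a VH-structure immediately lets the previous proposition apply, so $\tilde S = T_a \times T_b$ for some $a,b$. Lifting the VH-coloring to $\tilde S$, at each vertex $x$ the link $K_{a,b}$ carries two bipartitions of $E(x)$: the one coming from the lifted VH-structure, and the tautological one from the product. Since the bipartition of a complete bipartite graph is unique as an unordered pair of subsets, these two bipartitions agree at every vertex up to swap of sides. The key step is to promote this to a \emph{global} agreement: any edge $e$ joining vertices $x,y$ has a fixed VH-color (inherited from $S$) and a fixed factor label (from $T_a \times T_b$), so if the two partitions match at $x$ without swapping, the same holds at $y$, and connectedness of $\tilde S$ propagates this across the entire universal cover. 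After relabeling we may assume horizontal equals the $T_m$-direction, so $\{a,b\} = \{m,n\}$ and $\tilde S = T_m \times T_n$ with the horizontal/vertical partition matching the product decomposition; deck transformations, which preserve the VH-coloring defined on $S$, therefore preserve the factors, i.e., do not interchange them.

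The main obstacle I expect is precisely this global-consistency step in (2) $\Rightarrow$ (1): locally both partitions of the link $K_{a,b}$ agree only up to swap, and one must exploit that the VH-color and factor label of each edge are globally well-defined (the former by construction on $S$, the latter by the product structure on $\tilde S$) to force the swap choice to be constant on a connected cover. Once that is in hand, everything else — matching $(a,b)$ with $(m,n)$ and ruling out factor interchange — is an immediate consequence.
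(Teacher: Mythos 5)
The paper does not actually prove this proposition: it is recorded as a ``basic fact'' with a pointer to Burger--Mozes (after their Proposition~1.1), so there is no in-paper argument to compare yours against. Your proof is correct and is the standard one: (1)$\Rightarrow$(2) is the tautological descent of the factor-colouring, and for (2)$\Rightarrow$(1) you correctly identify the only real issue, namely upgrading the vertexwise agreement of the two bipartitions of the link to a global agreement, and your propagation along edges does settle it, since an edge carries both a globally defined VH-colour and a globally defined factor label and cannot lie in two different classes of either partition. Two micro-steps are worth a sentence each in a written version: first, the uniqueness of the bipartition of $K_{m,n}$ as an unordered pair follows because its complement is $K_m \sqcup K_n$, which has exactly two connected components (valid for all $m,n\geq 1$); second, the final claim that a deck transformation preserving the two edge classes genuinely preserves the factors deserves justification, e.g.\ the horizontal slices $T_m\times\{v\}$ are exactly the connected components of the horizontal $1$-skeleton, so such an automorphism permutes the slices of each type and is therefore a product of two tree automorphisms. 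With those two remarks added, the argument is complete.
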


\begin{corollary} \label{cor:latticesversusVH}
Torsion free cocompact lattices $\Gamma$ in  $\mathbb Aut(T_m) \times \mathbb Aut(T_n)$ not interchanging the factors and 
up to conjugation correspond uniquely to finite square complexes with a VH-structure of partition size $(m,n)$ up to isomorphism.
\end{corollary}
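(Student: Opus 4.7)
The plan is to establish the correspondence as two mutually inverse constructions, using Proposition \ref{prop:VHstructureanduniversalcover} as the central tool and carefully tracking what the appropriate equivalence relations are on both sides.

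First, I would go from lattices to square complexes. Given a torsion free cocompact lattice $\Gamma \leq \mathbb Aut(T_m) \times \mathbb Aut(T_n)$ not interchanging the factors, form the quotient $S = \Gamma \backslash (T_m \times T_n)$. Because $\Gamma$ is torsion free, the discussion preceding Proposition \ref{prop:VHstructureanduniversalcover} (invoking the Bruhat--Tits fixed point lemma on the CAT(0) space $T_m \times T_n$) shows $\Gamma$ acts freely, so $S$ inherits the structure of a square complex and $T_m \times T_n \to S$ is a covering. Cocompactness of the action makes $S$ finite. Since $\Gamma$ does not interchange the two tree factors, the implication $(1) \Rightarrow (2)$ of Proposition \ref{prop:VHstructureanduniversalcover} produces a VH-structure on $S$ of constant partition size $(m,n)$. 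Conjugate lattices yield isomorphic quotients via the conjugating element, so this descends to a well-defined map from conjugacy classes to isomorphism classes.

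Next I would build the inverse. Given a finite square complex $S$ with a VH-structure of partition size $(m,n)$, Proposition \ref{prop:VHstructureanduniversalcover}, implication $(2) \Rightarrow (1)$, shows that the universal cover $\widetilde{S}$ is (after a choice of identification) the product of trees $T_m \times T_n$, and that $\Gamma = \pi_1(S)$ acts by deck transformations without interchanging the factors. As deck transformations of a universal cover, the action is free and, since $S$ is finite, cocompact; moreover it acts by combinatorial automorphisms preserving the VH-structure, hence through $\mathbb Aut(T_m) \times \mathbb Aut(T_n)$. The action is properly discontinuous with finite (indeed trivial) stabilizers on a locally finite complex, so the image is a torsion free discrete cocompact lattice. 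The choice of identification $\widetilde{S} \cong T_m \times T_n$ is unique up to an element of $\mathbb Aut(T_m) \times \mathbb Aut(T_n)$ (using that we do not allow interchanging the factors), so the lattice is well defined up to conjugation, and isomorphic square complexes lift to conjugate lattices.

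Finally I would check that the two constructions are mutually inverse. Starting from $\Gamma$, the complex $S = \Gamma \backslash (T_m \times T_n)$ has $T_m \times T_n$ as universal cover with deck group $\Gamma$, so applying the inverse construction returns the same conjugacy class. Starting from $S$, quotienting its universal cover by $\pi_1(S)$ recovers $S$ up to isomorphism by covering space theory.

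The main technical point, and mildly the ``hard'' step, is the bookkeeping of the two equivalence relations: a conjugation in $\mathbb Aut(T_m) \times \mathbb Aut(T_n)$ corresponds exactly to changing the identification of $\widetilde{S}$ with $T_m \times T_n$ as a VH-labelled product, so the prohibition on interchanging factors is essential to get a \emph{unique} correspondence rather than one up to the $\mathbb{Z}/2$ ambiguity of swapping $T_m$ and $T_n$ (which only matters for $m = n$). Everything else is a direct invocation of the preceding proposition and standard covering space theory, so I would keep the write-up brief, citing Proposition \ref{prop:VHstructureanduniversalcover} at each use.
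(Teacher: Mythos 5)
Your proposal is correct and follows essentially the same route as the paper: the quotient construction in one direction, and in the other direction Proposition \ref{prop:VHstructureanduniversalcover} plus the identification of $\pi_1(S,x)$ with the deck group of the universal cover, with the base-point/identification ambiguity accounting exactly for conjugation. The paper's proof is just a terser version of the same argument, so no further comparison is needed.
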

\begin{proof}
A lattice $\Gamma$ yields a finite square complex $S = \Gamma \backslash T_m \times T_n$ of the desired type. Conversely, a finite square complex $S$ with VH-structure of constant partition size $(m,n)$ has universal covering space $M = T_m \times T_n$ by Proposition~\ref{prop:VHstructureanduniversalcover}, and the choice of a base point vertex $\tilde{x} \in M$ above the vertex $x \in S$  identifies $\pi_1(S,x)$ with the lattice $\Gamma = \mathbb Aut(M/S) \subseteq \mathbb Aut(T_m) \times \mathbb Aut(T_n)$. The lattice depends on the  chosen base points only up to conjugation.
\end{proof}

Simply transitive torsion free lattices not interchanging the factors as in Corollary~\ref{cor:latticesversusVH} correspond to square complexes with only one vertex and a VH-structure, necessarily of constant partition size. These will be studied in the next section.

%-----------------------------------------------------------------------------------------------------------------
\subsubsection{$1$-vertex square complexes}

Let $S$ be a square complex with just one vertex $x \in S$ and a VH-structure $\ov{E}(S) = E_v \sqcup E_h$. Passing from the origin to the terminus of an oriented edge induces a fixed point free involution on $E(x)_v$ and on $E(x)_h$. Therefore the partition size is necessarily a tuple of even integers.

\begin{definition} \label{defi:BMstructureingroup}
A  {\em vertical/horizontal structure}, in short {\em VH-structure},  in a group $G$ is an ordered pair $(A,B)$ of finite subsets $A,B \subseteq G$ such that the following holds.
\begin{enumerate}
\item \label{defiitem:AandBinvolution} Taking inverses induces fixed point free involutions on $A$ and $B$.
\item The union $A \cup B$ generates $G$.
\item \label{defiitem:ABequalsBA} The product sets $AB$ and $BA$ have size $\#A \cdot \#B$ and $AB = BA$.
\item \label{defiitem:AB2torsion} The sets $AB$ and $BA$ do not contain $2$-torsion.
\end{enumerate}
The tuple $(\#A,\#B)$ is called the {\em valency vector} of the VH-structure in $G$.
\end{definition}

%-----------------------------------------------------------------------------------------------------------------

Similar as in  \cite{burger-mozes:lattices} \S6.1, starting from a VH-structure the following construction 
\begin{equation} \label{eq:constructionSAB}
(A,B) \leadsto S_{A,B}
\end{equation}
yields a square complex $S_{A,B}$ with one vertex and VH-structure starting from a VH-structure $(A,B)$ in a group $G$. The vertex set $V(S_{A,B})$ contains just one vertex $x$. The set of oriented edges of $S_{A,B}$ is the disjoint union 
\[
E(S_{A,B}) = A \sqcup B
\]
with the orientation reversion map given by $e \mapsto e^{-1}$. Since $A$ and $B$ are preserved under taking inverses, there is a natural vertical/horizontal structure such that $E(x)_h = A$ and $E(x)_v = B$. 
The squares of $S_{A,B}$ are constructed as follows. Every relation in $G$ 
\begin{equation}\label{eq:relationabba} 
ab = b'a'
\end{equation}
with $a,a' \in A$ and $b,b' \in B$ (not necessarily distinct) gives rise to a $4$-tuple 
\[
(a,b,a'^{-1},b'^{-1}).
\] 
The following relations are equivalent to \eqref{eq:relationabba} 
\begin{eqnarray*}
a'b^{-1} & = &  b'^{-1}a, \\
a^{-1}b' & = & ba'^{-1}, \\
a'^{-1}b'^{-1} & = & b^{-1}a^{-1}.
\end{eqnarray*}
and we consider the four $4$-tuples so obtained
\[
(a,b,a'^{-1},b'^{-1}), \quad (a',b^{-1},a^{-1},b'), \quad (a^{-1},b',a',b^{-1}), \quad (a'^{-1},b'^{-1},a,b)
\]
as equivalent. A square $\square$ of $S_{A,B}$ consists of an equivalence class of such $4$-tuples arising from a relation of the form \eqref{eq:relationabba}.

\begin{lemma}
The link ${\mathbb Lk}_x$ of $S_{A,B}$ in $x$ is the complete bipartite graph $L_{A,B}$ with vertical vertices labelled by $A$ and horizontal vertices labelled by $B$.
\end{lemma}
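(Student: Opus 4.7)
My plan is to read off the vertex and edge structure of ${\mathbb Lk}_x$ directly from the construction, establish bipartiteness with parts $A$ and $B$, and conclude by a counting argument: after matching the total number of corners to $\#A \cdot \#B$, I verify that every pair in $A \times B$ is realized by at least one corner.

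By the construction \eqref{eq:constructionSAB} the oriented edges at $x$ form $E(x) = A \sqcup B$, which is the vertex set of ${\mathbb Lk}_x$. An edge of ${\mathbb Lk}_x$ is a corner of a square in $S_{A,B}$. For the square $\square \sim (a, b, a'^{-1}, b'^{-1})$ coming from a relation $ab = b'a'$, reading off the pair of outgoing edges at each of the four positions of the boundary $4$-cycle gives the corners
\[
\{a, b'\}, \quad \{a^{-1}, b\}, \quad \{a'^{-1}, b^{-1}\}, \quad \{a', b'^{-1}\},
\]
each with one element in $A$ and one in $B$. Hence ${\mathbb Lk}_x$ is bipartite with the advertised parts.

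Next I would count corners. By Condition (3) of Definition~\ref{defi:BMstructureingroup}, the map $A \times B \to AB$ is a bijection and $AB = BA$, so for each $(a,b) \in A \times B$ there is a unique $(b',a') \in B \times A$ with $ab = b'a'$, producing $\#A \cdot \#B$ $(A,B,A,B)$-form $4$-tuples. The four representatives of $\square$ listed in the construction exhaust the $(A,B,A,B)$-form tuples in the dihedral orbit; using Conditions (1) and (4) I would check that these four are genuinely distinct, since any coincidence forces a non-trivial rotation or reflection to fix a tuple and thereby produces $2$-torsion in $A$, $B$, or $AB$. Thus the number of squares is $\#A \cdot \#B/4$ and the total corner count is $\#A \cdot \#B = |A \times B|$.

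For each $(\alpha, \beta) \in A \times B$ I would then exhibit a corner $\{\alpha, \beta\}$. The element $\beta^{-1}\alpha$ lies in $B^{-1}A = BA = AB$, so it factors uniquely as $\beta^{-1}\alpha = a_0 b_1$ with $a_0 \in A$, $b_1 \in B$. Rearranging gives $\alpha \cdot b_1^{-1} = \beta \cdot a_0$, a relation of the required form with $a = \alpha$, $b' = \beta$, whose associated square contributes the position-$1$ corner $\{a, b'\} = \{\alpha, \beta\}$. Combined with the total count, this forces every pair in $A \times B$ to be realized by exactly one corner, so ${\mathbb Lk}_x = L_{A,B}$. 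The main obstacle in this plan is the counting step: verifying that each dihedral orbit contains exactly four $(A,B,A,B)$-representatives demands a short but careful analysis of the fixed points of non-trivial rotations and reflections of the $4$-tuple, which is precisely what Conditions (1) and (4) are designed to rule out.
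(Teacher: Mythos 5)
Your proof is correct and follows essentially the same route as the paper: establish that every pair in $A \times B$ is realized by a corner (the paper does this via the relation $\alpha^{-1}\beta = b'a'$, you via factoring $\beta^{-1}\alpha$ in $AB$), then count the squares as $(\#A\cdot\#B)/4$ using the no-$2$-torsion conditions to cap the number of link edges at $\#A\cdot\#B$ and force equality with $L_{A,B}$. The only cosmetic difference is that you phrase the distinctness of the four equivalent $4$-tuples in terms of fixed points of the dihedral action, where the paper argues via distinctness of the four left-hand sides $\{ab,\ a'b^{-1},\ a^{-1}b',\ a'^{-1}b'^{-1}\}$.
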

\begin{proof}
By \ref{defiitem:ABequalsBA} of Definition~\ref{defi:BMstructureingroup} every pair $(a,b) \in A \times B$ occurs on the left hand side  in a relation of the form \eqref{eq:relationabba} and therefore the link ${\mathbb Lk}_x$ contains $L_{A,B}$.

If \eqref{eq:relationabba} holds, then the set of left hand sides of equivalent relations 
\[
\{ab, a'b^{-1},a^{-1}b',a'^{-1}b'^{-1}\}
\]
is a set of cardinality $4$, because $A$ and $B$ and $AB$ do not contain $2$-torsion by Definition~\ref{defi:BMstructureingroup} \ref{defiitem:AandBinvolution} + \ref{defiitem:AB2torsion} and the right hand sides of the equations are unique by 
Definition~\ref{defi:BMstructureingroup} \ref{defiitem:ABequalsBA}. Therefore $S_{A,B}$ only contains $(\#A \cdot \#B)/4$ squares. It follows that ${\mathbb Lk}_x$ has at most as many edges as $L_{A,B}$, and, since it contains $L_{A,B}$, must agree with it.
\end{proof}

\begin{definition}
We will call the complex $S_{A,B}$ a $(\#A,\#B)$-complex, keeping in mind, that there are many complexes with the same valency vector. Also, if a group is a fundamental group of 
a $(\#A,\#B)$-complex, we will call it a $(\#A,\#B)$-group.
\end{definition}

\subsection{Mass formula for one vertex square complexes with VH-structure}  \label{sec:mass_formula}
In this section we present a mass formula for the number of one vertex square complexes with VH-structure up to isomorphism where each square complex is counted with the inverse order of its group of automorphisms as its weight \cite{stix-av}.

Let $A$ (resp.\ $B$) be a set with fixed point free involution of size $2m$ (resp.\ $2n$). 
In order to count one vertex square complexes $S$ with VH-structure with vertical/horizontal partition $A \sqcup B$ of oriented edges  we introduce the generic matrix
\[
X = (x_{ab})_{a\in A, b \in B}
\]
with rows indexed by $A$ and columns indexed by  $B$ and with $(a,b)$-entry a formal variable $x_{ab}$. Let $X^t$ be the transpose of $X$, let $\tau_A$ (resp.\ $\tau_B$) be the permutation matrix for $e \mapsto e^{-1}$ for $A$ (resp.\ $B$). For a square $\square$  we set 
\[
x_\square = \prod_{e \in \square} x_e
\]
where the product ranges over the edges $e = (a,b)$ in the link of $S$ originating from $\square$ and $x_e = x_{ab}$. 
Then the sum of the $x_\square$, when $\square$ ranges over all possible squares with edges from $A \sqcup B$, reads 
\[
\sum_{\square} x_\square =  \frac{1}{4}\tr\big((\tau_A X \tau_B X^t\big)^2),
\]
and the number of  one vertex square complexes $S$ with VH-structure of partition size $(2m,2n)$ and edges labelled by $A$ and $B$  is given by 
\begin{equation} \label{eq:labeled_mass_formula}
\widetilde{{\rm BM}}_{m,n} = \frac{1}{(mn)!} \cdot \frac{\partial^{4mn}}{\prod_{a,b} \partial x_{ab} } \left( \frac{1}{4}\tr\big((\tau_A X \tau_B X^t\big)^2)\right)^{mn}. 
\end{equation}
Note that this is a constant polynomial.

We can turn this into a mass formula for the number of one vertex square complexes with VH-structure up to isomorphism where each square complex is counted with the inverse order of its group of automorphisms as its weight. We simply need to divide by the order of the universal relabelling 
\[
\#({\mathbb Aut}(A,\tau_A) \times {\mathbb Aut}(B,\tau_B)) = 2^n(n)! \cdot 2^m(m)!.
\]
Hence the mass of one vertex square complexes with VH-structure is given by
\begin{equation} \label{eq:mass_formula}
{\rm BM}_{m,n} = \frac{1}{2^{n+m+2nm}(n)! \cdot (m)! \cdot (mn)!} \cdot \frac{\partial^{4mn}}{\prod_{a,b} \partial x_{ab} } \left( \tr\big((\tau_A X \tau_B X^t\big)^2)\right)^{mn}.
\end{equation}
The formula \eqref{eq:labeled_mass_formula} reproduces the numerical values of $\widetilde{{\rm BM}}_{m,n}$ for small values $(2m,2n)$ that were computed by Rattaggi in \cite{rattaggi:thesis} table B.3. Here small means $mn \leq 10$.

\subsection{Arithmetic groups acting on products of two trees}

There is a deep connection between arithmetic lattices in products of trees and quaternion algebras. 
For background on quaternion algebras see \cite{vigneras}.

For any ring $R$ we consider the $R$-algebra
$$\mathbb{H}(R)=\{a=a_0 +a_1{\bf i}+a_2{\bf j}+a_3{\bf k} ; a_1,a_2,a_3,a_4 \in R\},$$
with $R$-linear multiplication given by ${\bf i}{\bf j}={\bf k}=-{\bf j}{\bf i}$, ${\bf i}^2={\bf j}^2=-1$.
An example of such an algebra are classical Hamiltonian quaternions $\mathbb{H}(\mathbb{R})$.
Recall, that the (reduced) norm is a homomorphism

$$|\cdot|: \mathbb{H}(R)^\times \to R^\times, ||a||= a_0^2+a_1^2+a_2^2+a_3^2.$$

Let $\mathbb{H}(\mathbb{Z}) $ be the integer quaternions.
Let $S_p$ be the set of integer quaternions 
$$a=a_0 + a_1{\bf i}+a_2{\bf j} + a_3{\bf k} \in \mathbb{H}(\mathbb{Z}) $$
with $a_0>0$, $a_0$ odd, and $|a|^2=p$, so the reduced norm of $a$ is $p$. Then, by a result of Jacobi,
$\#S_p=p+1.$

\medskip

If $p\equiv 1(\rm mod\ 4)$ is prime, 
then $x^2 \equiv -1(\rm mod\ p)$ has a solution in $\mathbb{Z}$,
so, by Hensel's Lemma, $x^2=-1$ has a solution $i_p$ in $\mathbb{Q}_p$, where
$\mathbb{Q}_p$ is the field of $p$-adic numbers.

\medskip

Define the following homomorphism
%$$\psi_p : \mathbb{H}(\mathbb{Z}) \mapsto PGL_2(\mathbb{Q}_p)$$

\[
\psi_p : \mathbb{H}(\mathbb{Z}[\frac{1}{p}])^\times \to PGL_2(\mathbb{Q}_p).
\]
by
$$\psi_p(a) = \left(\begin{array}{rr}
a_0 + a_1i_p & a_2 + a_3i_p \\
-a_2+ a_3i_p & a_0 - a_1i_p
\end{array}\right)$$

\begin{theorem}[\cite{lps}]

$\psi_p(S_p)$ contains $p+1$ elements and generates the free group $\Gamma _p$
of rank $(p+1)/2$.

\end{theorem}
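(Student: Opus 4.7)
The plan is to break the statement into three parts and attack them in order: (a) $\psi_p$ is injective on $S_p$, so $\#\psi_p(S_p)=\#S_p=p+1$; (b) $\psi_p(S_p)$ is closed under inversion in $\PGL_2(\mathbb{Q}_p)$ and partitions into $(p+1)/2$ inverse pairs; (c) no nontrivial reduced word in $\psi_p(S_p)$ represents the identity. Together (b) and (c) identify $\langle \psi_p(S_p)\rangle$ with a free group of rank $(p+1)/2$ on the $(p+1)/2$ chosen pairs.

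For (a), note that by Jacobi's four squares theorem the number of representations of $p$ as an ordered sum of four integer squares is $8(p+1)$; choosing signs and a unique odd coordinate labelled $a_0 > 0$ exactly cuts this number by the factor $8$, giving $\#S_p = p+1$. Suppose $\psi_p(a)=\psi_p(a')$ with $a,a' \in S_p$. Then $a=\lambda a'$ for some $\lambda\in\mathbb{Q}_p^\times$ in $\mathbb{H}(\mathbb{Q}_p)^\times$. Because both $a$ and $a'$ have rational entries, $\lambda\in\mathbb{Q}^\times$; because both have reduced norm $p$, we get $\lambda^2=1$; because $a_0,a_0'>0$, we get $\lambda=1$.

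For (b), observe that $S_p$ is stable under the involution $a \mapsto \bar a = a_0 - a_1\mathbf{i}-a_2\mathbf{j}-a_3\mathbf{k}$, and this involution has no fixed points (else $a_1=a_2=a_3=0$ and $a_0^2=p$ with $p$ prime, impossible). The identity $a\bar a = |a|^2 = p$ is a central scalar, so $\psi_p(\bar a)=\psi_p(a)^{-1}$ in $\PGL_2(\mathbb{Q}_p)$. Together with injectivity this gives the partition $\psi_p(S_p) = \bigsqcup_{i=1}^{(p+1)/2}\{g_i,g_i^{-1}\}$.

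The main obstacle is (c), for which the key tool is unique factorization for integer quaternions of $p$-power norm. Specifically, I would prove the following: every $a \in \mathbb{H}(\mathbb{Z})$ with $|a|^2 = p^n$ admits a factorization
\[
a = u \cdot p^k \cdot \alpha_1 \alpha_2 \cdots \alpha_m,
\]
where $u$ is a Lipschitz unit in $\{\pm 1,\pm\mathbf{i},\pm\mathbf{j},\pm\mathbf{k}\}$, each $\alpha_i\in S_p$, no neighbouring letters satisfy $\alpha_{i+1}=\bar\alpha_i$, and $2k+m=n$; moreover this factorization is unique. The existence step is a right-division Euclidean argument in the Hurwitz order restricted to elements of odd reduced norm (so nothing escapes the Lipschitz order), and the uniqueness step reduces to the cancellation lemma: if $\alpha,\beta\in S_p$ and $p\mid \alpha\beta$ in $\mathbb{H}(\mathbb{Z})$, then $\beta = \bar\alpha$ up to a unit (which on $S_p$, normalized by $a_0>0$ and $a_0$ odd, forces equality). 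Granting this, let $w=\alpha_1\cdots\alpha_m$ be any nontrivial reduced word in $\psi_p(S_p)$ (reduced meaning $\alpha_{i+1}\ne\bar\alpha_i$, which by (b) is the correct notion of freely reduced). The product has norm $p^m$ in $\mathbb{H}(\mathbb{Z})$. If $\psi_p(w)=1$ in $\PGL_2(\mathbb{Q}_p)$, then $\alpha_1\cdots\alpha_m$ is a scalar in $\mathbb{H}(\mathbb{Q}_p)^\times$, hence a rational scalar $c\in\mathbb{Q}^\times$ with $c^2=p^m$; in particular $m$ is even and $\alpha_1\cdots\alpha_m = \pm p^{m/2}$. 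But $\pm p^{m/2}$ already has the factorization with $u=\pm 1$, $k=m/2$, and empty word, contradicting uniqueness unless $m=0$. This forces the word to be trivial, proving freeness and completing the argument.
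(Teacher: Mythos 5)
The paper does not actually prove this theorem: it is quoted from \cite{lps}, so there is no internal proof to compare against. Your proposal is correct and reconstructs essentially the argument of that source: the count $\#S_p=p+1$ via Jacobi's four-square formula and the free action of the eight Lipschitz units, the pairing $\psi_p(\bar a)=\psi_p(a)^{-1}$ coming from $a\bar a=|a|^2=p$ being central, and freeness via unique factorization of integer quaternions of norm $p^n$ as $u\,p^k\,\alpha_1\cdots\alpha_m$ with $\alpha_i\in S_p$ and no adjacent conjugate pair --- this is exactly Dickson's factorization theorem as used in Corollary 3.2 of \cite{lps}. The one piece you do not prove is that factorization lemma itself; you state it correctly and flag it as the main obstacle, and granting it your conclusion is airtight (a nontrivial reduced word representing the identity in $\PGL_2(\mathbb{Q}_p)$ would be a rational scalar of norm $p^m$, giving $\pm p^{m/2}$ a second, distinct factorization). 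So the proposal is a faithful and correct rendering of the standard proof, modulo a classical arithmetic input that is reasonable to cite rather than reprove.
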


$\Gamma_p$ acts freely and transitively on the vertices of the $(p+1)$-regular
tree $T_{p+1}$.

\bigskip

%Now we define $$\psi_{p,l} : \mathbb{H}(\mathbb{Z}) \mapsto PGL_2(\mathbb{Q}_p)\times PGL_2(\mathbb{Q}_l)$$

\[
\psi_{p,l} : \mathbb{H}(\mathbb{Z}[\frac{1}{pl}])^\times \to PGL_2(\mathbb{Q}_p)\times PGL_2(\mathbb{Q}_l).
\]

by
$$\psi_{p,l}(a) = \left[\begin{pmatrix}
a_0 + a_1i_p & a_2 + a_3i_p \\
-a_2+ a_3i_p & a_0 - a_1i_p
\end{pmatrix}, \begin{pmatrix}
a_0 + a_1i_l & a_2 + a_3i_l \\
-a_2+ a_3i_l & a_0 - a_1i_l
\end{pmatrix}\right],$$

where $$a=a_0 + a_1{\bf i}+a_2{\bf j} + a_3{\bf k} \in \mathbb{H}(\mathbb{Z}), $$ $p,l \equiv 1(\rm mod\ 4)$ are two distinct primes 
and $i_p \in \mathbb{Q}_p$, $i_l \in \mathbb{Q}_l $ satisfy the conditions
$i^2_p+1=0$ and $i^2_l+1=0$, as above.

Let $S_p$ be as above and $S_l$ be the set of integer quaternions 
$$a=a_0 + a_1{\bf i}+a_2{\bf j} + a_3{\bf k} \in 
\mathbb{H}(\mathbb{Z}[\frac{1}{pl}]) $$
with $a_0>0$, $a_0$ odd, and $|a|^2=l$. Then $S_l=l+1.$

Let $\Gamma_{p,l}$ be the subgroup of $PGL_2(\mathbb{Q}_p)\times PGL_2(\mathbb{Q}_l)$ 
generated by $\psi(S_p \cup S_l)$.

Mozes has proved the following result:

\begin{theorem}[\cite{mozes}] If $p,l \equiv 1(\rm mod\ 4)$ are two distinct prime numbers, then
$$\Gamma_{p,l}<PGL_2(\mathbb{Q}_p)\times PGL_2(\mathbb{Q}_l)< Aut(T_{p+1})\times Aut(T_{l+1})$$
is a $(p+1,l+1)$-group (see definition 3 above).

\end{theorem}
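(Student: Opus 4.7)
The strategy is to verify the four axioms of Definition~\ref{defi:BMstructureingroup} for the pair $(A,B) := (\psi_{p,l}(S_p), \psi_{p,l}(S_l))$ in $\Gamma_{p,l}$, and then to identify $\Gamma_{p,l}$ with the fundamental group of the associated one-vertex square complex $S_{A,B}$ via its universal cover. Axiom (ii) (that $A \cup B$ generates) is built into the definition of $\Gamma_{p,l}$. For axiom (i), if $a = a_0 + a_1 {\bf i} + a_2 {\bf j} + a_3 {\bf k} \in S_p$ then the conjugate $\bar a = a_0 - a_1 {\bf i} - a_2 {\bf j} - a_3 {\bf k}$ also lies in $S_p$ (same positive odd first coordinate and same reduced norm $p$), and $a \bar a = p$ is a central scalar, so $\psi_{p,l}(\bar a) = \psi_{p,l}(a)^{-1}$. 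The involution is fixed-point-free since $a = \bar a$ would force $a_1 = a_2 = a_3 = 0$ and $a_0^2 = p$, impossible.

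The heart of the proof is axiom (iii). For $(a,b) \in S_p \times S_l$ the product $c = ab$ has $|c|^2 = pl$ and scalar part $c_0 = a_0 b_0 - (a_1 b_1 + a_2 b_2 + a_3 b_3)$. Reducing $a_0^2 + a_1^2 + a_2^2 + a_3^2 = p$ modulo $4$ and using $p \equiv 1 \pmod 4$ with $a_0$ odd yields $a_1 \equiv a_2 \equiv a_3 \equiv 0 \pmod 2$, and likewise for $b$; hence $a_1 b_1 + a_2 b_2 + a_3 b_3 \equiv 0 \pmod 4$ while $a_0 b_0$ is odd, so $c_0$ is a nonzero odd integer. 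Set $\varepsilon(a,b) := \operatorname{sgn}(c_0) \cdot ab$, an integer quaternion of norm $pl$ with positive odd scalar part. Jacobi's four-square theorem gives $r_4(pl) = 8(p+1)(l+1)$; for odd norm $pl$ exactly one coordinate of any representation is odd, so by four-fold coordinate symmetry together with the sign choice for the odd coordinate, there are precisely $(p+1)(l+1)$ integer quaternions of norm $pl$ with positive odd scalar part. This matches $|S_p \times S_l|$, so bijectivity of $\varepsilon$ reduces to injectivity.

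For injectivity, suppose $ab = \lambda a'b'$ in $\mathbb{H}^\times$ with $(a,b),(a',b') \in S_p \times S_l$ and $\lambda \in \mathbb{Q}^\times$. Reduced norms force $\lambda = \pm 1$, and rearranging gives $\psi_p(a')^{-1} \psi_p(a) = \lambda \cdot \psi_p(b') \psi_p(b)^{-1}$ in $PGL_2(\mathbb{Q}_p)$. The left-hand side lies in the LPS free group $\Gamma_p$, while the right-hand side lies in $PGL_2(\mathbb{Z}_p)$ — the stabilizer of the base vertex of $T_{p+1}$ — because $|b|^2 = |b'|^2 = l$ is a $p$-adic unit. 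Since $\Gamma_p$ acts freely on $T_{p+1}$, both sides equal the identity; by the LPS-injectivity of $\psi_p|_{S_p}$ this yields $a = a'$, and then $b = b'$ from positivity of first coordinates. The symmetric parameterization $S_l \times S_p \to \{c : |c|^2 = pl,\ c_0 > 0 \text{ odd}\}$ via $(b',a') \mapsto \operatorname{sgn}(c_0) \cdot b'a'$ is bijective onto the same target, so $AB = BA$ in $\Gamma_{p,l}$ with $|AB| = (p+1)(l+1)$. Axiom (iv) follows from the oddness of $c_0$: if $\psi_{p,l}(c)^2 = 1$ with $c \in AB$, then writing $c = c_0 + \vec c$ and expanding shows $c^2 = (c_0^2 - |\vec c|^2) + 2 c_0 \vec c$ is scalar iff $c_0 \vec c = 0$; the case $c_0 = 0$ is excluded by oddness, and $\vec c = 0$ would make $c^2 = pl$ a rational square, contradicting the distinctness of the primes.

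Having verified the four axioms, $(A,B)$ is a VH-structure of valency vector $(p+1, l+1)$ in $\Gamma_{p,l}$, and the one-vertex square complex $S_{A,B}$ has universal cover $T_{p+1} \times T_{l+1}$ by Proposition~\ref{prop:VHstructureanduniversalcover}. The identity map on $A \cup B$ induces a surjection $\pi_1(S_{A,B}) \twoheadrightarrow \Gamma_{p,l}$; the action of $\pi_1(S_{A,B})$ on its universal cover factors through this surjection and the embedding $\Gamma_{p,l} \hookrightarrow PGL_2(\mathbb{Q}_p) \times PGL_2(\mathbb{Q}_l)$, but deck transformations act freely, so the kernel must be trivial. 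Thus $\Gamma_{p,l} \cong \pi_1(S_{A,B})$ is a $(p+1,l+1)$-group. The chief obstacle is the injectivity step, which crucially combines the LPS freeness theorem on $T_{p+1}$ with the observation that $\psi_p(S_l) \subset PGL_2(\mathbb{Z}_p)$ lies in a vertex stabilizer — this is precisely where the distinctness of $p$ and $l$, and the fact that $l$ is a $p$-adic unit, enters in an essential way.
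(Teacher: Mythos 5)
The paper states this theorem without proof, quoting it from Mozes, so there is no internal argument to compare against; I assess your proposal on its own terms. Your verification of the four axioms of Definition~\ref{defi:BMstructureingroup} is essentially sound: the conjugation involution, the mod~$4$ computation forcing $c_0$ odd, the Jacobi count $r_4(pl)=8(p+1)(l+1)$ matched against $\#(S_p\times S_l)$, and the exclusion of $2$-torsion are all correct. Your injectivity argument — placing $\psi_p(a')^{-1}\psi_p(a)$ in $\Gamma_p$ and $\psi_p(b')\psi_p(b)^{-1}$ in the vertex stabilizer $PGL_2(\mathbb{Z}_p)$, then invoking freeness of the LPS action — is a clean alternative to the classical route via Dickson's unique factorization of odd-norm integer quaternions, and it correctly isolates where distinctness of $p$ and $l$ enters. (Two small points: fixed-point-freeness of the involution in $PGL_2\times PGL_2$ also requires ruling out $a=-\bar a$, which $a_0>0$ does instantly; and you should say explicitly that $\psi_{p,l}$ is injective on $S_p$ and $S_l$, so that the valency vector really is $(p+1,l+1)$.)

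The genuine gap is in the last paragraph. You assert that ``the action of $\pi_1(S_{A,B})$ on its universal cover factors through'' the surjection $\pi_1(S_{A,B})\twoheadrightarrow\Gamma_{p,l}$, and deduce triviality of the kernel from freeness of deck transformations. But that factorization is precisely what must be proved: since the deck action is free, it factors through the quotient if and only if the kernel is trivial, so the argument as written is circular. The issue is not cosmetic: the four axioms of Definition~\ref{defi:BMstructureingroup} alone do \emph{not} force $G\cong\pi_1(S_{A,B})$. For instance $G=\mathbb{Z}/7$ with $A=\{\pm 1\}$, $B=\{\pm 2\}$ satisfies all four axioms with valency vector $(2,2)$, yet $\pi_1(S_{A,B})\cong\mathbb{Z}^2\not\cong G$. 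The missing step is to show that $\Gamma_{p,l}$ has no relations beyond the square relations, equivalently that it acts freely and vertex-transitively on the product of the Bruhat--Tits trees $T_{p+1}\times T_{l+1}$. This is fixable with tools you already have: from $AB=BA$ and $A=A^{-1}$, $B=B^{-1}$ one gets $\Gamma_{p,l}=\langle A\rangle\cdot\langle B\rangle$; since $\langle A\rangle$ projects onto $\Gamma_p$ in the $p$-factor while lying in $PGL_2(\mathbb{Z}_l)$ in the $l$-factor (and symmetrically for $\langle B\rangle$), simple transitivity of $\Gamma_p$ on $T_{p+1}$ and of $\Gamma_l$ on $T_{l+1}$ yields vertex-transitivity of $\Gamma_{p,l}$ on the product and triviality of the base-vertex stabilizer. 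The quotient is then a one-vertex square complex with complete bipartite links and fundamental group $\Gamma_{p,l}$, which one identifies with $S_{A,B}$; only at that point does Proposition~\ref{prop:VHstructureanduniversalcover} finish the proof.
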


Rattaggi \cite{rattaggi:thesis} extended this construction to primes
$p \equiv 3(\rm mod\ 4)$ using solutions in $\mathbb{Z}$ of $x^2+y^2\equiv -1 (\rm mod\ p)$.

\bigskip

Note, that the constructions of Mozes and Rattaggi can not be extended for products
of trees of the same valency. However, some applications of arithmetic lattices, like
constructions of fake quadrics in algebraic geometry, do require arithmetic lattices acting
on products of trees of the same valency, see \cite{stix-av} for more detailed motivation.

Now we describe arithmetic lattices acting simply transitively on products of trees
of the same valency. They turn out to be related to quaternion algebras in finite characteristic
and are arithmetic lattices in $\PGL_2(\mathbb{F}_q((t))) \times \PGL_2(\mathbb{F}_q((t)))$.

 Let $D$ be a quaternion algebra over a global function field $K/\mathbb{F}_q$
 of a smooth curve over $\mathbb{F}_q$. 
Let $S$ consist of the ramified places of $D$ together with two distinct unramified $\mathbb{F}_q$-rational places $\tau$ and $\infty$.
An $S$-arithmetic subgroup $\Gamma$ of the projective linear 
group $G = PGL_{1,D}$ of $D$ acts as a cocompact lattice
 on the product of trees  $T_{q+1} \times T_{q+1}$ that are
 the Bruhat--Tits buildings for $G$ locally at $\tau$ and $\infty$.
There are plenty of such arithmetic lattices, however, in general the action of $\Gamma$ on the set of vertices will not be simply transitive.

Let $q$ be an odd prime power and let $c \in \mathbb{F}_q^\ast$ be a non-square. 
Consider the $\mathbb{F}_q[t]$-algebra with non-commuting variables $Z,F$
$B = \mathbb{F}_q[t]\{Z,F\}/(Z^2 = c, F^2 = t(t-1), ZF = -FZ)$,
an $\mathbb{F}_q[t]$-order in the quaternion algebra $D = B \otimes \mathbb{F}_q(t)$ over $\mathbb{F}_q(t)$ ramified in $t=0,1$.
Let $G = PGL_{1,B}$
be the algebraic group over $\mathbb{F}_q[t]$ of units in $B$ modulo the center.

The following results give first examples of lattices acting on products of trees in finite characteristic.

\begin{theorem}[\cite{stix-av}]

Let $q$ be an odd prime power,  and choose a generator $\delta$ of the cyclic group $\mathbb{F}_{q^2}^\ast$. Let 
$\tau \not= 1$ be an element of $\mathbb{F}_q^\ast$, and
 $\zeta \in \mathbb{F}_{q^2}^\ast$ be an element with 
norm $\zeta^{1+q} = (\tau-1)/\tau$. Let $G/\mathbb{F}_q[t]$ be the algebraic group of G.
The irreducible arithmetic lattice $G(\mathbb{F}_q[t,\frac{1}{t(t-\tau)}])$ has
 the following presentation: 
\begin{eqnarray*}
G(\mathbb{F}_q[t,\frac{1}{t(t-\tau)}])  & \simeq & \left\langle \  d,a,b  \ \left| 
\begin{array}{c}
d^{q+1} =  a^2 = b^2 = 1 \\[0.3ex]
(d^i a d^{-i})(d^{j} b d^{-j}) = (d^l b d^{-l})(d^{k} a d^{-k})  \\[0.3ex]
\text{ for all } 0 \leq i,l \leq q \text{ and } j,k \text{ determined by  } (\star) 
\end{array} 
\right.
\right\rangle,
\end{eqnarray*}
where $(\star)$ is the system of equations in  the quotient group 
$\mathbb{F}_{q^2}^\ast/\mathbb{F}_q^\ast$

$\delta^{j-l} =  (1 -  \zeta \delta^{(i-l)(1-q)}) \cdot  \delta^{(q+1)/2} \ \text{ and } \
\delta^{k-i}  =  (1 -  \frac{1}{\zeta \delta^{(i-l)(1-q)}}) \cdot \delta^{(q+1)/2}.$

\end{theorem}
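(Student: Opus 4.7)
The plan is to realize $\Gamma = G(\mathbb{F}_q[t,\frac{1}{t(t-\tau)}])$ as the fundamental group of a one-vertex complex of groups obtained as the orbifold quotient $\Gamma\backslash (T_{q+1}\times T_{q+1})$, and to read off the presentation from that combinatorial model. First I would set up the action on the product of Bruhat--Tits trees at the unramified places $\tau$ and $\infty$: since $D$ splits at both places, $G(K_v)=PGL_2(K_v)$ acts on $T_{q+1}$ for $v\in\{\tau,\infty\}$, and $\Gamma$ embeds discretely and cocompactly in $G(K_\tau)\times G(K_\infty)$. A class-number / strong-approximation analysis for the $\mathbb{F}_q[t,\frac{1}{t(t-\tau)}]$-order $B[1/(t(t-\tau))]$ would show that $\Gamma$ is transitive on vertices of $T_{q+1}\times T_{q+1}$ and that the stabilizer of a chosen base vertex $v_0$ is the image of the subalgebra $\mathbb{F}_{q^2}=\mathbb{F}_q[Z]\subset B$, namely $\mathbb{F}_{q^2}^\times/\mathbb{F}_q^\times$, cyclic of order $q+1$. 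Define $d$ as the image in $\Gamma$ of a preimage of $\delta$; this already produces $d^{q+1}=1$.

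Next I would produce the edge generators $a,b\in B[1/(t(t-\tau))]^\times$, with reduced norms supported at $\infty$ (for $a$) and at $\tau$ (for $b$), by explicit $\mathbb{F}_q[t]$-linear combinations of $1,Z,F,ZF$ whose norm equals a unit times the appropriate uniformizer. Such an $a$ sends $v_0$ to an adjacent vertex in $T_\infty$ while fixing the projection of $v_0$ in $T_\tau$, so $a$ acts as an edge-inversion; one has $a^2=1$ in $\Gamma$ because $a^2$ is a scalar in $B$ (hence trivial modulo the center), and symmetrically $b^2=1$. Since $d$ acts as a rotation of order $q+1$ on the $q+1$ vertical (resp.\ horizontal) edges at $v_0$, the conjugates $d^i a d^{-i}$ (resp.\ $d^j b d^{-j}$) for $0\le i,j\le q$ exhaust these edges.

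The computational heart, and main obstacle, is the derivation of the square relations. For fixed $(i,l)$ I would expand the product $(d^i a d^{-i})(d^l b d^{-l})$ inside $B$ using $Z^2=c$, $F^2=t(t-1)$, and $ZF=-FZ$; conjugation by $d$ multiplies the ``$F$-part'' by $\delta^{1-q}\in \mathbb{F}_{q^2}^\times$, owing to $ZFZ^{-1}=-F$ combined with the Frobenius $x\mapsto x^q$ on $\mathbb{F}_{q^2}$. The product can then be rewritten as $(d^l b d^{-l})(d^k a d^{-k})$ for uniquely determined $j,k$; comparing coefficients in $\mathbb{F}_{q^2}$ and reducing modulo $\mathbb{F}_q^\times$ yields exactly the system $(\star)$, with the normalization $\zeta^{1+q}=(\tau-1)/\tau$ appearing as the precise condition ensuring that the resulting element has the reduced norm required to lie in $\Gamma$ (i.e., becomes a unit after inverting $t(t-\tau)$).

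Finally, completeness of these relations follows from the standard presentation theorem for the fundamental group of a one-vertex complex of groups applied to $\Gamma\backslash(T_{q+1}\times T_{q+1})$: the vertex group contributes $d^{q+1}=1$, the two orbifold edge-loops contribute $a^2=b^2=1$, and each $2$-cell of the orbifold contributes one square relation. A count confirms that the number of $2$-cells equals $(q+1)^2/4$, matching the $(q+1)^2$ pairs $(i,l)$ modulo the four-fold dihedral symmetry of a square relation, so no further relations remain. Outside of the explicit quaternion-algebra computation leading to $(\star)$, all remaining steps are routine applications of Bruhat--Tits theory and the complex-of-groups machinery.
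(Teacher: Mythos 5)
This theorem is quoted in the survey from \cite{stix-av} without proof, so there is no in-paper argument to compare against; I can only assess your outline on its own terms. Your overall strategy is the right one and is essentially the strategy of the cited source: realize the lattice via its action on the two Bruhat--Tits trees at the split places $\tau$ and $\infty$, prove vertex-transitivity with vertex stabilizer $\mathbb{F}_{q^2}^\times/\mathbb{F}_q^\times$ of order $q+1$, exhibit involutions $a,b$ moving the base vertex one step in each tree factor, and read off the presentation from the resulting one-vertex (orbifold) square complex. The identification of $d$ with a generator of the nonsplit torus acting simply transitively on the $q+1$ neighbours in each factor, and of $a^2=b^2=1$ as coming from trace-zero elements whose squares are central, are correct and are the right ingredients.

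That said, as written this is a plan rather than a proof, and the three genuinely hard steps are all deferred. First, the ``class-number/strong-approximation analysis'' establishing transitivity on vertices and the precise stabilizer is a substantive arithmetic input (class number one for the order $B[\frac{1}{t(t-\tau)}]$ together with surjectivity of the reduced norm onto the relevant unit groups); it is asserted, not proved. Second, the derivation of $(\star)$ --- the computational heart, as you say --- is only described; in particular the role of the normalization $\zeta^{1+q}=(\tau-1)/\tau$ and the appearance of the twist $\delta^{(i-l)(1-q)}$ must actually be extracted from the multiplication in $B$, and nothing in your sketch verifies that the resulting $j,k$ are well defined in $\mathbb{Z}/(q+1)$. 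Third, your completeness count is off: $(q+1)^2/4$ is the number of squares in the quotient by the \emph{torsion-free} sublattice $\Gamma_\tau$ that acts simply transitively (as in the lemma on $S_{A,B}$ earlier in this survey), not in the orbifold quotient by $G(\mathbb{F}_q[t,\frac{1}{t(t-\tau)}])$ itself, which has a vertex group of order $q+1$ and correspondingly far fewer cell orbits. To close the argument you must either run the complex-of-groups presentation theorem on the orbifold quotient directly (with its correct, smaller cell count and the attendant edge- and square-stabilizer data), or first establish the presentation of $\Gamma_\tau$ on the genuine one-vertex square complex and then extend by the cyclic vertex group; conflating the two quotients leaves the ``no further relations remain'' step unjustified.
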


We are now prepared to establish presentations for the arithmetic lattices with VH-structures
in finite characteristic.

\begin{theorem}[\cite{stix-av}]
Let $1 \not= \tau \in {\mathbb F}_q^\ast$, let $c \in {\mathbb F}_q^\ast$ be a non-square and let  ${\mathbb F}_q[Z]/{\mathbb F}_q$ be the quadratic field extension  with $Z^2 = c$. The group $\Gamma_\tau$  is a torsion free arithmetic lattice in $G$ with finite presentation 
\[
\Gamma_\tau = \left\langle 
\begin{array}{c}
{a}_\xi, {b}_\eta \text{ for } \xi,\eta \in \bF_q[Z]  \text{ with } \\[1ex]
N(\xi) = -c \text{ and } N(\eta) = \frac{c\tau}{1-\tau}  
\end{array}
\left|
\begin{array}{c}
a_\xi a_{-\xi} = 1, \ b_\eta b_{-\eta} = 1 \\[1ex]
\text{ and }  a_\xi b_\eta = b_\lambda a_\mu   \text{ if in } \bF_q[Z]: \\[1ex]
\eta = \lambda^q(\lambda + \xi)^{1-q}  \text{ and } \mu = \xi^q (\xi+\lambda)^{1-q}
\end{array} 
\right.
\right\rangle
\]
which acts  simply transitively via the Bruhat Tits action on the vertices of $T_{q+1} \times T_{q+1}$,
so the $\Gamma_\tau$ groups are $(q+1,q+1)$-groups.
\end{theorem}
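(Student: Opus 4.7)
The plan is to realise $\Gamma_\tau$ concretely inside the unit group of the order $B$, identify the generators $a_\xi, b_\eta$ with specific quaternion units of the correct reduced norm, and then verify the presentation by a combination of explicit quaternion algebra computation (for the relations) and a simply-transitive action argument (to show there are no further relations).

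First, I would set up the local picture. At the two unramified $\mathbb{F}_q$-rational places $\tau$ and $\infty$ of $\mathbb{F}_q(t)$, the algebra $D$ splits, giving continuous embeddings of $G(\mathbb{F}_q(t))$ into $\PGL_2(\mathbb{F}_q((t-\tau)))$ and $\PGL_2(\mathbb{F}_q((1/t)))$, and hence a diagonal action of $\Gamma_\tau$ on the product $T_{q+1}\times T_{q+1}$ of the Bruhat--Tits trees. The key observation is that because $D$ is ramified at $t=0$ and $t=1$, the standard strong approximation argument shows that the $S$-arithmetic group $G(\mathcal{O}_S)$, with $S=\{\tau,\infty\}$ and $\mathcal{O}_S = \mathbb{F}_q[t, 1/t(t-\tau)]$, is a cocompact lattice in $\PGL_2(\mathbb{F}_q((t-\tau)))\times \PGL_2(\mathbb{F}_q((1/t)))$. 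I would then identify $\Gamma_\tau$ as the subgroup of that lattice acting simply transitively on vertices by choosing representatives of units with the correct normalised reduced norms.

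Next, I would identify the generators. Elements of $B$ of the shape $\xi + F$ and $\eta Z + F$ (appropriately normalised) with $\xi,\eta\in\mathbb{F}_q[Z]\subset B$ have reduced norm a unit times $t(t-1)$ up to a factor coming from $N(\xi)$ and $N(\eta)$. The constraint $N(\xi)=-c$ is exactly what turns $a_\xi$ into a unit at $\infty$ inducing a single edge-step at $\tau$, and similarly $N(\eta)=c\tau/(1-\tau)$ makes $b_\eta$ an edge-step at $\infty$. Counting points of the norm-$-c$ conic over $\mathbb{F}_q$ in $\mathbb{F}_q[Z]/\mathbb{F}_q$ gives $q+1$ solutions (and likewise for the other norm value by the parameter choice), which matches the valency of each tree. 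The involutions $a_\xi a_{-\xi}=1$ and $b_\eta b_{-\eta}=1$ follow directly from $(\xi+F)(-\xi+F)=-N(\xi)+F^2 = c+t(t-1)$, projecting to the identity in $\PGL_{1,B}$ after clearing the common scalar.

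The heart of the proof is the commutation relation $a_\xi b_\eta = b_\lambda a_\mu$. Here I would multiply the two chosen representatives in $B$ and rearrange, using $ZF=-FZ$ and the Frobenius on $\mathbb{F}_q[Z]/\mathbb{F}_q$ (the non-trivial Galois automorphism is $x\mapsto x^q$ on $\mathbb{F}_{q^2}$). A direct expansion produces an identity of the form $(\xi+F)(\eta Z+F) = u\cdot (\lambda Z +F)(\mu+F)$ in $B$ for a central unit $u$, provided $\lambda,\mu$ satisfy exactly the stated equations $\eta = \lambda^q(\lambda+\xi)^{1-q}$ and $\mu=\xi^q(\xi+\lambda)^{1-q}$; this is where the $(1-q)$ exponent enters, as it is the twist required to balance the Galois conjugate coming from passing $F$ past an element of $\mathbb{F}_q[Z]$. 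The main obstacle of the proof will be carrying out this quaternionic calculation cleanly and checking that the maps $(\xi,\eta)\mapsto(\lambda,\mu)$ and its analogue pair up the $(q+1)^2$ products into $(q+1)^2/4$ squares (each represented four times), which is what is needed both for the presentation and for the VH-structure on the resulting one-vertex square complex.

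Finally, to see that the listed relations are defining and that the action is simply transitive, I would use the construction of Section~\ref{sec:VHstructure}. The generators and the relations above describe a pair $(A,B)$ satisfying Definition~\ref{defi:BMstructureingroup} with $\#A=\#B=q+1$, so by the construction $(A,B)\leadsto S_{A,B}$, the associated one-vertex square complex $S_{A,B}$ has complete bipartite link $L_{q+1,q+1}$. By Proposition~\ref{prop:VHstructureanduniversalcover} its universal cover is $T_{q+1}\times T_{q+1}$, so the abstract group $\widetilde\Gamma$ defined by the presentation acts freely and simply transitively on the vertices of this product of trees. The homomorphism $\widetilde\Gamma\to \Gamma_\tau\subseteq G$ sending abstract generators to the concrete quaternion units factors this geometric action, so it must be an isomorphism; in particular $\Gamma_\tau$ is torsion free (free action on a CAT(0) space) and realises the stated presentation. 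Arithmeticity is automatic as $\Gamma_\tau$ is commensurable with $G(\mathcal{O}_S)$ inside $G$.
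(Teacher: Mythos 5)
The survey gives no proof of this theorem; it is quoted from \cite{stix-av}, so your proposal can only be measured against the strategy of that reference, and in outline you do follow it: explicit quaternion representatives for the generators, the count of $q+1$ solutions of each norm equation via surjectivity of the norm $\mathbb{F}_{q^2}^\times\to\mathbb{F}_q^\times$, verification of the relations inside the order $B$, and the one-vertex square complex with complete bipartite link to control the action. However, the one computation you actually carry out is wrong. In $B$ the element $F$ does not commute with $\xi\in\mathbb{F}_q[Z]$: from $ZF=-FZ$ one gets $F\xi=\xi^qF$, so $(\xi+F)(-\xi+F)=-\xi^2+(\xi-\xi^q)F+t(t-1)$, which is neither $-N(\xi)+F^2$ nor central unless $\xi\in\mathbb{F}_q$. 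The inverse of $\xi+F$ in $\PGL_{1,B}$ is represented by the Galois conjugate $\xi^q-F$, so with the naive choice $a_\xi=\xi+F$ one gets $a_\xi a_{(-\xi)^q}=1$ rather than $a_\xi a_{-\xi}=1$; the normalisation of the representatives (and hence the whole bookkeeping that produces the exponents $q$ and $1-q$ in the relations) has to be set up to absorb this conjugation, and cannot be waved through.

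More seriously, your concluding step is circular. Conditions \ref{defiitem:ABequalsBA} and \ref{defiitem:AB2torsion} of Definition~\ref{defi:BMstructureingroup} are statements about the subsets $A,B$ inside a group --- that the $(q+1)^2$ products $a_\xi b_\eta$ are pairwise distinct, that $AB=BA$, and that $AB$ contains no $2$-torsion --- and they cannot be read off from the presentation for the abstract group $\widetilde\Gamma$; they must be verified for the concrete quaternion elements inside $\Gamma_\tau$. That verification is the arithmetic heart of the proof: one must show that the $q+1$ elements $a_\xi$ send the standard vertex of the Bruhat--Tits tree at $\tau$ to its $q+1$ distinct neighbours while fixing the standard vertex at $\infty$ (and dually for the $b_\eta$), and that the group they generate is discrete, torsion free, and transitive on vertices (via a covolume, index, or normal-form argument). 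Only after that does the quotient become a one-vertex square complex with link $L_{A,B}$, and only then is the surjection $\pi_1(S_{A,B})\to\Gamma_\tau$ an isomorphism by comparison of the two actions on $T_{q+1}\times T_{q+1}$. As written, ``the homomorphism factors this geometric action, so it must be an isomorphism'' presupposes exactly the simple transitivity of the Bruhat--Tits action of $\Gamma_\tau$ that the theorem asserts.
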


\begin{corollary}[\cite{stix-av}]
The number of commensurability classes of arithmetic lattices acting simply transitively on a product
of two trees grows at least linearly on $q$.

\end{corollary}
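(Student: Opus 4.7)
The plan is to leverage the rigidity of arithmetic lattices to show that the parameter $\tau$ is recorded (up to a finite ambiguity) in the commensurability class of $\Gamma_\tau$. First, I would unpack the arithmetic input: for each $\tau \in \mathbb{F}_q^\ast \setminus \{1\}$, the lattice $\Gamma_\tau = G(\mathbb{F}_q[t,\tfrac{1}{t(t-\tau)}])$ is an $S$-arithmetic subgroup of $G = \mathrm{PGL}_{1,D}$ for the fixed quaternion algebra $D/\mathbb{F}_q(t)$ ramified exactly at $\{0,1\}$, with $S = S_\tau = \{0,1,\tau,\infty\}$. Projecting away from the two (compact) ramified factors realizes $\Gamma_\tau$ as a cocompact lattice in $\mathrm{PGL}_2(K_\tau) \times \mathrm{PGL}_2(K_\infty) \simeq \mathrm{PGL}_2(\mathbb{F}_q((t)))^2$.

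Next I would invoke the standard classification of arithmetic lattices in semisimple groups over local fields (a Margulis--type rigidity theorem, in the function field setting proved by Venkataramana, Margulis and others): two arithmetic lattices in the same ambient semisimple group derived from absolutely simple $K$-groups are commensurable (after conjugation) if and only if the underlying $K$-forms of $G$ are isomorphic \emph{and} the defining $S$-data match, under some isomorphism of the arithmetic setup.  In our situation the $K$-group is fixed (the same $D$), so commensurability of $\Gamma_\tau$ and $\Gamma_{\tau'}$ forces the existence of a $K$-automorphism of the underlying projective line that sends the ramified set $\{0,1\}$ to itself and the pair $\{\tau,\infty\}$ of ``tree places'' to $\{\tau',\infty\}$.

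Then I would count such identifications. The automorphisms of $\mathbb{P}^1_{\mathbb{F}_q}$ preserving $\{0,1,\infty\}$ form a copy of the symmetric group $S_3$, and allowing a further swap between the distinguished place $\tau$ and either $0$ or $1$ produces only finitely many, say at most $C$ (with $C \leq 24$) candidate relations $\tau \sim \tau'$. Consequently, the equivalence relation ``commensurable'' on the set $\{\Gamma_\tau : \tau \in \mathbb{F}_q^\ast\setminus\{1\}\}$ has classes of size at most $C$, so there are at least $(q-2)/C$ distinct commensurability classes, which is $\Omega(q)$.

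The main technical hurdle will be Step~2: pinning down, in the function-field / positive-characteristic setting, the precise statement that the $S$-arithmetic data is a commensurability invariant when we identify $K_\tau$ and $K_{\tau'}$ noncanonically with $\mathbb{F}_q((t))$. I would handle this by passing to the commensurator $\mathrm{Comm}_G(\Gamma_\tau)$: by arithmeticity and density of the commensurator, $\mathrm{Comm}_G(\Gamma_\tau)$ is contained in (and essentially equal to) the image of $G(K)$ via the adelic embedding associated with $S_\tau$, so one recovers both $D$ and $S_\tau$ from $\Gamma_\tau$ up to the finite automorphism group identified above. Modulo this rigidity input, the counting argument is immediate and yields the stated linear lower bound.
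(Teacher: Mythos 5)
The survey states this corollary without proof (it is imported from \cite{stix-av}), so there is no in-paper argument to compare against. Your strategy --- use Margulis commensurator rigidity to show that the commensurability class of $\Gamma_\tau$ remembers the global arithmetic datum (the field $\mathbb{F}_q(t)$, the quaternion algebra $D$ with ramification locus $\{0,1\}$, and the pair of split places $\{\tau,\infty\}$) up to isomorphism, and then count orbits of $\tau$ under the resulting finite ambiguity --- is the natural one and is essentially how the cited reference proceeds; the rigidity input you need does exist for irreducible cocompact lattices in products of two rank-one groups over local fields of positive characteristic. One detail is wrong but harmless: an isomorphism of the arithmetic data cannot ``swap $\tau$ with $0$ or $1$,'' because it must carry ramified places to ramified places (ramification at $v$ is equivalent to compactness of $G(K_v)$, which is visible from the lattice). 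The correct M\"obius ambiguity is the set of $\sigma\in\PGL_2(\mathbb{F}_q)$ with $\sigma(\{0,1\})=\{0,1\}$ and $\sigma(\{\tau,\infty\})=\{\tau',\infty\}$, which yields at most four candidates $\tau'$ for each $\tau$; this only shrinks your constant $C$.

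The step that genuinely needs attention is the implicit restriction to $\mathbb{F}_q$-linear isomorphisms. Automorphisms of $\mathbb{F}_q(t)$ as an abstract field (equivalently, of the ambient group $\PGL_2(\mathbb{F}_q((t)))\times\PGL_2(\mathbb{F}_q((t)))$) also include the Galois action on the constant field. A Frobenius twist fixes $\{0,1,\infty\}$, preserves $D$ up to isomorphism (it sends the non-square $c$ to the non-square $c^p$), and sends $\tau\mapsto\tau^{p^i}$, so $\Gamma_\tau$ and $\Gamma_{\tau^{p^i}}$ have isomorphic arithmetic data and each commensurability class can a priori contain up to $C\cdot[\mathbb{F}_q:\mathbb{F}_p]$ of the parameters. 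Your count then yields only about $(q-2)/(C\log_p q)$ classes, which falls short of a literally linear lower bound when $q$ ranges over all powers of a fixed prime. You must either bound the degree $[\mathbb{F}_q:\mathbb{F}_p]$, rule out the semilinear identifications, or otherwise absorb this factor; as written, the proposal does not address this, and it is the one place where the argument fails to deliver the stated conclusion rather than merely being imprecise.
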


The following group is the smallest example of a torsion free arithmetic lattice acting on a product of two trees.

\[
\Gamma_2 = \left\langle g_0,g_1, g_2, g_3  \ \left| \ 
g_0g_1^{-1}  = g_1g_2, \ g_0g_3^{-1} = g_3g_2^{-1}, \ g_0g_3 = g_1^{-1}g_0^{-1}, \ g_2g_3 = g_1g_2^{-1} 
\right. \right\rangle
\]
where, in the notation of Theorem 37 with a generator $\delta$ of the cyclic group $\bF_3[Z]^\times$, we have $g_i = a_{\delta^i}$ for $i$ even and $g_i = b_{\delta^i}$ for $i$ odd.

\subsection{Fibered products of square complexes}

The fibered product of two 1-vertex square complexes was defined
in \cite{burger-mozes:lattices} in the following way:

Let 1-vertex square complex $X$ be given by the data: $A1, A2$ and 
$S  \subset  A1 \times A2 \times A1 \times A2 $
then the fibered product $Y$ is defined by the data: $A1 \times A1, A2 \times A2$ and 
$R = \{((a1,a2),(b1,b2),(a1^{\prime},a2^{\prime}),(b1^{\prime},b2^{\prime})) :
 (ai,bi,ai^{\prime},bi^{\prime}) \in S, i = 1,2\}$.
 
 It was shown in \cite{BMZ} that if the fundamental group of $X$ is just infinite
 (no proper normal subgroups of infinite index), then the fundamental group of $Y$
 is not residually finite.
 
 This result creates a machinery to construct many non-residually finite groups using
 arithmetic lattices. In particular, all families of arithmetic groups from the previous section
 generate families of non-residually finite groups.

\section{Lattices in products of $n \geq 3$ trees}

\subsection{Arithmetic higher-dimensional lattices}

Similar to products of two trees, there are many arithmetics lattices in products of
$n \geq 3$ trees, see, for example, \cite{livne},
but it is difficult to get simply transitive actions.

We start with the following example of a group acting simply transitively on a product of three trees,
with 9 generators and 26 relations.

$G= \{a1,a2,b1,b2,b3,c1,c2,c3,c4 | a1*b1*a2*b2,a1*b2*a2*b1^{-1},a1*b3*a2^{-1}*b1,a1*b3^{-1}*a1*b2^{-1},a1*b1^{-1}*a2^{-1}*b3,a2*b3*a2*b2^{-1},a1*c1*a2^{-1}*c2^{-1},a1*c2*a1^{-1}*c3,a1*c3*a2^{-1}*c4^{-1},a1*c4*a1*c1^{-1},a1*c4^{-1}*a2*c2,
a1*c3^{-1}*a2*c1,a2*c3*a2*c2^{-1},a2*c4*a2^{-1}*c1,c1*b1*c3*b3^{-1},c1*b2*c4*b2^{-1},c1*b3*c4^{-1}*b2,
c1*b3^{-1}*c4*b3,c1*b2^{-1}*c2*b1,c1*b1^{-1}*c4*b1^{-1},c2*b2*c3^{-1}*b3^{-1},c2*b3*c4*b1,
c2*b3^{-1}*c3*b3,c2*b2^{-1}*c3*b2,c2*b1^{-1}*c3*b1^{-1},c3*b1*c4*b2\}$.

The group $G$ is an arithmetic lattice acting simply transitively on a product of three trees,
and a particular case of a $n$-dimensional construction of arithmetic lattices,
described below. 

\begin{theorem}[\cite{sv}]
If $p_1,...,p_n$ are n distinct prime numbers, then there is an arithmetic lattice
$$\Gamma_{p_1,...,p_n}<PGL_2(\mathbb{Q}_{p_1})\times...\times PGL_2(\mathbb{Q}_{p_n})< Aut(T_{p_1+1})\times... \times Aut(T_{p_n+1})$$
acting simply transitively on the product of $n$ trees.
\end{theorem}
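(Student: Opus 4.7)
The plan is to extend the Mozes--Rattaggi construction from two primes to $n$ primes by replacing the Hamilton quaternions with a more flexible quaternion algebra. First I would choose a definite quaternion algebra $D$ over $\mathbb{Q}$ that is split (unramified) at each $p_i$. By the classification of quaternion algebras via the Brauer group, such $D$ exists for any prescribed set of non-ramified places; explicitly one can take $D$ ramified exactly at $\{\infty, q\}$ for some auxiliary prime $q \notin \{p_1,\ldots,p_n\}$. Fix a maximal $\mathbb{Z}$-order $\mathcal{O} \subset D$, and for each $i$ fix a local splitting $\psi_{p_i}: D \otimes_\mathbb{Q} \mathbb{Q}_{p_i} \xrightarrow{\sim} M_2(\mathbb{Q}_{p_i})$ sending $\mathcal{O} \otimes \mathbb{Z}_{p_i}$ onto $M_2(\mathbb{Z}_{p_i})$. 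Assembling these local maps yields a diagonal embedding
\[
\Psi : \mathcal{O}[\tfrac{1}{p_1 \cdots p_n}]^\times / Z \hookrightarrow \PGL_2(\mathbb{Q}_{p_1}) \times \cdots \times \PGL_2(\mathbb{Q}_{p_n}),
\]
whose image is an $S$-arithmetic cocompact lattice by a standard Borel--Harish-Chandra argument in the definite case.

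Next, following the LPS template, for each $i$ I would pick a set $S_{p_i} \subset \mathcal{O}$ of $p_i+1$ sign-normalized representatives of the $\mathcal{O}^\times$-orbits of quaternions of reduced norm $p_i$. Since such an $s \in S_{p_i}$ has norm a uniformizer at $p_i$ but a unit at every $p_j$ with $j \neq i$, the image $\Psi(s)$ moves the base vertex of $T_{p_i+1}$ to an adjacent vertex while lying in $\PGL_2(\mathbb{Z}_{p_j})$ and hence stabilizing the base vertex of each other tree $T_{p_j+1}$. Define $\Gamma_{p_1,\ldots,p_n}$ to be the subgroup of $\prod_i \PGL_2(\mathbb{Q}_{p_i})$ generated by $\Psi\bigl(\bigcup_i S_{p_i}\bigr)$. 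Transitivity on vertices of $\prod_i T_{p_i+1}$ then follows coordinate-wise from the LPS theorem: any vertex $(v_1, \ldots, v_n)$ is reached by concatenating, for each $i$, a word in $\Psi(S_{p_i})$ that traces the unique reduced geodesic from the base vertex of $T_{p_i+1}$ to $v_i$.

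The core of the argument is to verify freeness, hence simple transitivity. Within each $S_{p_i}$ we have the involutive relations coming from pairing $s$ with its conjugate. The genuinely new relations come from mixed products $st$ with $s \in S_{p_i}$, $t \in S_{p_j}$, $i \neq j$: the quaternion $st$ has squarefree norm $p_i p_j$ and, by essentially unique factorization of such quaternions in the maximal order $\mathcal{O}$ modulo $\mathcal{O}^\times$, admits a unique re-writing $st = \lambda \cdot t' s'$ with $\lambda \in \mathcal{O}^\times$, $s' \in S_{p_i}$, $t' \in S_{p_j}$. These are exactly the Mozes-type square relations, producing one $(p_i+1, p_j+1)$-complex for each unordered pair $(i,j)$. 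Taken together, the relations present $\Gamma_{p_1,\ldots,p_n}$ as the fundamental group of a one-vertex nonpositively curved cube complex whose $2$-skeleton is the union of the pairwise Mozes complexes. An inductive application of Gromov's link condition identifies the universal cover with $\prod_i T_{p_i+1}$, so the action is simply transitive on vertices.

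The main obstacle is bridging pairwise commutation to the global cube-complex structure: one must verify that no additional higher-order relations are imposed on the generators. Concretely, for every triple $i < j < k$ and every $(s,t,u) \in S_{p_i} \times S_{p_j} \times S_{p_k}$, the two sequences of pairwise commutations that re-order $stu$ into the form $u'' t'' s''$ must yield identical triples; equivalently, the link at the unique vertex of the candidate complex must be the complete $n$-partite graph on vertex classes of sizes $p_i + 1$, which is the link condition required for the universal cover to split as a product of trees. This consistency follows from associativity in $D$ together with uniqueness of factorization of squarefree-norm elements of $\mathcal{O}$, but is the technical heart of \cite{sv}. A secondary subtlety, namely handling $p_i = 2$ or $p_i \equiv 3 \pmod 4$ where Hamilton's quaternions ramify, is resolved up front by the freedom to place $\mathrm{Ram}(D)$ away from $\{p_1, \ldots, p_n\}$.
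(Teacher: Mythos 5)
Your proposal is essentially the paper's argument: $S$-arithmetic units of a definite quaternion algebra, generating sets $S_{p_i}$ of reduced-norm-$p_i$ elements that fix the base vertex in every other factor, pairwise Mozes--Rattaggi square relations, and the resulting one-vertex cube complex with universal cover $T_{p_1+1} \times \cdots \times T_{p_n+1}$ --- indeed you are more careful than the paper's sketch about the choice of ramification (so as to cover $p_i = 2$ and $p_i \equiv 3 \bmod 4$) and about the consistency of triple reorderings needed for the link condition. The one step needing repair is the ``concatenation'' argument for vertex-transitivity: a word $w_j$ in $\Psi(S_{p_j})$ stabilizes only the base vertex of $T_{p_i+1}$ for $i \neq j$, not the vertex $v_i$ already reached, so the product of the coordinate-wise geodesic words need not land on $(v_1,\dots,v_n)$; the correct (and standard) fix, which is what the paper invokes as ``homogeneity and connectedness,'' is to observe that the orbit of the base vertex contains all of its neighbours and then induct on the distance to an arbitrary vertex.
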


\begin{proof}

The group of $S$-integral quaternions (in the example above $S = \{3,5,7\})$, let's say with $n$ 
equals the cardinality of $S$, should have the 
following structure: there are sets of generators $S_p$ for every $p \in 
 S$ and relations of type $ab=b^{\prime}a^{\prime}$ for every pair of distinct primes $p$, 
$\ell \in S$ with $a,a^{\prime}\in S_p $ and $b,b^{\prime} \in S_\ell$. That's because for 
every subset $T \subseteq S$ the $T$-integral quaternions form a subgroup 
of the $S$-integral ones.
By work of Mozes \cite{mozes} and Rattaggi  \cite{rattaggi:thesis} we know that $S_p \cup 
S_\ell$ generates the $\{p,\ell\}$-integral quaternions and that it acts 
simply transitively on vertices of a product of trees $T_{p+1} \times T_{\ell + 1}$.
We denote this group $\Gamma_{p,l}$ for future references.

 Since these quaternions are integral at all other primes, this 
subgroup fixes the standard vertex of the (Bruhat-Tits)-tree 
for that component.

 This means that the group generated by all the $S_p$'s for $p \in S$ will 
map the standard vertex to all its neighbours, hence by homogeneity 
and connectedness the action is simply transitive on vertices also on the product  
$T_{p_1} \times ... \times T_{p_n}$ where $S = \{p_1, ..., p_n\}$.  
We then may conclude that the action is simple transitive on 
vertices. This identifies the arithmetic group with the combinatorial 
 fundamental group of the cube complex, whose $\pi_1$ is generated by 
edges (these are the $S_p$'s for direction "p") and relations all come 
 from the 2-dim cells, hence the squares, all of which are also squares 
 in one of the subgroups with just two primes. Because we know all 
squares for these subgroups, we know all relations for the S-integral 
quaternions. These relations give the relations for the group $\Gamma_{p_1,...,p_n}$.
In the example above, the group $G$, acting on a product of three trees
has three 2-dimensional subgroups $\Gamma_{3,5}$, $\Gamma_{5,7}$, $\Gamma_{3,7}$.

\end{proof}

Another example of a group acting cocompactly on a product
of more than two trees is the lattice of Glasner-Mozes \cite{glasner-mozes}, its construction is inspired by
Mumford's fake projective plane.

\subsection{Non-residually finite higher-dimensional lattices}

We give an elementary construction of a family of non-residually finite groups acting
simply transitively on products of $n \geq 3$ trees (for more details see \cite{sv}).

\begin{theorem}[\cite{sv}]
There exist a family of non-residually finite groups acting simply transitively
on a product of $n \geq 3$ trees.
\end{theorem}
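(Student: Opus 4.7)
The plan is to adapt the Burger--Mozes--Zimmer fibered product construction from the previous subsection (which produces non-residually finite simply transitive lattices on products of two trees) to the $n$-dimensional setting, starting from the arithmetic simply-transitive lattices $\Gamma_{p_1,\ldots,p_n}$ of the previous theorem.

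First, I would extend the fibered product construction from $1$-vertex square complexes to $1$-vertex $n$-cube complexes. Such a cube complex $X$ with universal cover a product of $n$ trees is determined combinatorially by $n$ edge sets $A_1,\ldots,A_n$ (each closed under inversion) and, for every pair $i<j$, a set of square relations $S_{ij}\subset A_i\times A_j\times A_i\times A_j$ such that the link at the vertex is a complete multipartite graph on the parts $A_1,\ldots,A_n$. I would define the fibered product $Y=X\boxtimes X$ to have edge sets $A_i\times A_i$ and square relations
\[
R_{ij}=\{((a,a'),(b,b'),(c,c'),(d,d')):(a,b,c,d)\in S_{ij}\text{ and }(a',b',c',d')\in S_{ij}\},
\]
and verify, exactly as in the two-dimensional case of \cite{burger-mozes:lattices}, that the link of $Y$ at its single vertex is again the relevant complete multipartite graph. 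By the link criterion, $Y$ is a $1$-vertex $n$-cube complex whose universal cover is a product of $n$ trees of valencies $(\#A_i)^2$, and $\pi_1(Y)$ acts simply transitively on its vertices.

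Second, I would show that for every pair $i<j$ the coordinate sub-cube-complex $Y_{ij}\subset Y$ obtained by retaining only the edges in $(A_i\times A_i)\cup(A_j\times A_j)$ coincides with the two-dimensional fibered product of the corresponding two-dimensional face $X_{ij}\subset X$. Because $Y$ is non-positively curved and $Y_{ij}$ is a locally convex (coordinate) sub-complex, the inclusion induces an injective homomorphism $\pi_1(Y_{ij})\hookrightarrow\pi_1(Y)$. For the arithmetic lattice $\pi_1(X_{ij})=\Gamma_{p_i,p_j}$ of Mozes--Rattaggi, the Burger--Mozes Normal Subgroup Theorem forces $\Gamma_{p_i,p_j}$ to be just-infinite. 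The Burger--Mozes--Zimmer criterion recalled after \eqref{eq:mass_formula} then implies that $\pi_1(Y_{ij})$ is not residually finite, and since $\pi_1(Y_{ij})\hookrightarrow\pi_1(Y)$, neither is $\pi_1(Y)$.

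Finally, varying $n$ and the primes $p_1,\ldots,p_n$ produces an infinite family of such groups $\pi_1(Y)$, each acting simply transitively on a product of $n$ trees. The main obstacle will be verifying the link condition for the fibered product in the $n$-dimensional setting — ensuring that the new squares $R_{ij}$ fit together to give a complete multipartite link rather than just a graph with the right edge count — and establishing $\pi_1$-injectivity of the two-dimensional face $Y_{ij}\hookrightarrow Y$ with enough care that the element witnessing the failure of residual finiteness in $\pi_1(Y_{ij})$ survives in $\pi_1(Y)$. The existence of the just-infinite two-dimensional face, which is the whole point of starting from the arithmetic lattices of the previous theorem, is then what lets the BMZ mechanism kick in.
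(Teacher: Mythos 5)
Your proposal follows essentially the same route as the paper's (admittedly sketchy) proof: start from the arithmetic lattice acting simply transitively on a product of $n$ trees, form the fibered product (which the paper describes as taking the union of the generators and relations of the fibered products of the two-dimensional sub-lattices, i.e.\ exactly your $n$-cube complex $Y=X\boxtimes X$), and deduce non-residual finiteness from the just-infiniteness of the two-dimensional arithmetic pieces via the criterion of \cite{BMZ}. Your write-up is in fact more careful than the paper's sketch about the link condition and the $\pi_1$-injectivity of the locally convex two-dimensional faces, but the underlying argument is the same.
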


\begin{proof} This is just a sketch.
We start with an arithmetic lattice $\Gamma$ acting simply transitively on a product of $n$
trees. There are $n$ 2-dimensional groups induced by $\Gamma$ in a unique way.
For each of them we apply the fibered product construction and take the union of all
generators and relations of each fibered product. The resulting group is non-residually finite
by \cite{BMZ}.
\end{proof}
\section{Open problems}

We will mention here some open problems and conjectures.

{\bf $\Lambda$-free groups.}

\begin{conjecture} \cite{KMS12} Every finitely generated $\Lambda$-free group is finitely presented.
\end{conjecture}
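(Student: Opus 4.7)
The plan is to reduce this conjecture to Corollary \ref{co:Guirardel_1} (every finitely generated $\mathbb{R}^n$-free group is finitely presented) by upgrading Theorem \ref{th:main3} from the finitely presented setting to the finitely generated setting. That is, I would aim to show that any finitely generated $\Lambda$-free group $G$ is $\mathbb{R}^n$-free for some $n\in\mathbb{N}$, whence Guirardel's corollary concludes the argument.

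I would proceed in three stages. Stage A: given $G = \langle g_1,\dots,g_k\rangle$ acting freely on a $\Lambda$-tree with associated Lyndon length function $l:G\to\Lambda$, prove the finitely generated analogue of Theorem \ref{co:main1}, i.e.\ that $\Lambda_0 := \langle l(G)\rangle \leq \Lambda$ is finitely generated. This is exactly Chiswell's Question 1 from \cite{Ch1} in its original formulation. Stage B: apply the structure theorem for finitely generated ordered abelian groups (the decomposition \eqref{eq:order-convex} into archimedean quotients, each of which embeds into $\mathbb{R}$) to embed $\Lambda_0$ as an ordered subgroup of $(\mathbb{R}^n,\leq_{\mathrm{lex}})$. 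By Theorem \ref{prop}(b), $G$ is then $\mathbb{R}^n$-free. Stage C: invoke Corollary \ref{co:Guirardel_1}.

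The hard part is Stage A. One approach is to pass to the minimal $G$-invariant subtree $X_0 \subseteq X$ and exploit the fact that $G$ acts cocompactly on $X_0$, so that the Bruhat--Tits-style "overlap" data $c(g_i^{\pm1}, g_j^{\pm1})$ together with the translation lengths $\|g_i\|$ should generate a group containing all $l(G)$ by induction on word length in the generators, using the cancellation identity $l(gh)=l(g)+l(h)-2c(g^{-1},h)$ repeatedly. The difficulty is that the $c$-values themselves proliferate as words get longer, and without finitely many defining relators one lacks the algebraic identities that, in the finitely presented case, collapse this proliferation. An alternative approach is to adapt the peeling argument of Bass and Guirardel: take the maximal proper convex subgroup $E\leq \Lambda_0$, form the quotient $\Lambda_0/E$ (an archimedean, hence $\mathbb{R}$-embeddable, ordered group), let $G$ act on the induced $\mathbb{R}$-tree $X_0/E$, and use Bestvina--Feighn's Theorem \ref{th:Best_Feighn} (or Guirardel's Theorem \ref{th:Guirardel_0} for the non-finitely-presented setting) to produce a graph-of-groups decomposition of $G$ whose vertex groups are $E$-free and hence, by induction on the height of the convex filtration, finitely presented.

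The principal obstacle across all these strategies is transferring a finiteness argument through the Rips-Bestvina-Feighn machine without assuming $G$ itself is finitely presented. This is why Theorem \ref{th:Best_Feighn} must be replaced by Theorem \ref{th:Guirardel_0}, which requires checking the ascending chain condition and that arc stabilizers are finitely generated and not properly subconjugate to themselves. Verifying the ACC for the action on $X_0/E$ from general $\Lambda$-free hypotheses, and simultaneously controlling arc stabilizers inside $G$, is where I expect the work to concentrate; once these geometric ingredients are in place, the inductive scheme should run and deliver both the finite presentation and, as a byproduct of the structural description, the embedding into $\mathbb{R}^n$-free groups that closes the loop back to Guirardel's result.
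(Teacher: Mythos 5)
The statement you are trying to prove is stated in the paper as an open \emph{conjecture} (it appears in the ``Open problems'' section with no proof), so there is no proof in the paper to compare against; what you have written is a research programme, not a proof, and it contains an unclosed gap at its core. Your Stage A --- showing that for a merely finitely generated $\Lambda$-free group $G$ the subgroup $\Lambda_0=\langle l(G)\rangle$ is finitely generated --- is exactly Chiswell's Question~1, which the paper answers (Theorem \ref{co:main1}) only for finitely presented $G$, and which it obtains as a byproduct of the proof of Theorem \ref{th:main3}, i.e.\ of the very structure theory whose finitely generated analogue you are trying to establish. In fact, given the results already in the paper, your Stages A--B (``every finitely generated $\Lambda$-free group is $\mathbb{R}^n$-free'') and the conjecture itself are logically equivalent: the conjecture plus Theorem \ref{th:main3} yields $\mathbb{R}^n$-freeness, and $\mathbb{R}^n$-freeness plus Corollary \ref{co:Guirardel_1} yields the conjecture (this is precisely the remark the paper makes after stating the conjecture). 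So your reduction replaces the open problem by an equivalent open problem; it does not solve it.

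Concretely, neither of your two proposed attacks on Stage A closes. The word-length induction on the Gromov products $c(g_i^{\pm1},g_j^{\pm1})$ fails for the reason you yourself identify: without finitely many defining relators there is no mechanism forcing the new $c$-values arising from longer words back into a fixed finitely generated subgroup of $\Lambda$. The ``peeling'' approach via the maximal proper convex subgroup $E$ and the induced action on an $\mathbb{R}$-tree runs into two unverified obstacles: (i) Theorem \ref{th:Guirardel_0} requires the ascending chain condition and finite generation plus non-self-subconjugacy of unstable arc stabilizers, none of which are established for the quotient action of an arbitrary finitely generated $\Lambda$-free group; and (ii) even granting one splitting, the induction needs the vertex groups to be again finitely generated, $E$-free, and the splitting process to terminate --- an accessibility statement that is exactly the kind of thing known to fail for finitely generated (as opposed to finitely presented) groups in general, cf.\ Dunwoody's inaccessible groups. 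Until one of these obstacles is overcome, the argument does not constitute a proof, and the statement remains a conjecture.
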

This would imply that all finitely generated $\Lambda$-free groups are $\mathbb R ^n$-free.

\begin{problem}\cite{KMS12}
\label{th:main2}
Is any  finitely presented $\Lambda$-free group $G$  also $\mathbb{Z}^k$-free for an appropriate $k \in
\mathbb{N}$ and lexicographically ordered $\mathbb{Z}^k$?
\end{problem}

{\bf Arithmetic groups.}

\begin{conjecture} (Caprace)
The number of non-commensurable arithmetic lattices acting simply transitively on product
of two trees is bounded polymonially on $q$.
\end{conjecture}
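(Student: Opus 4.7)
The plan would be to reduce the counting problem to the classification of pairs $(D, \mathcal{O})$ consisting of a quaternion algebra $D$ over a global function field $K/\mathbb{F}_q$ together with an $\mathcal{O}_{K,S}$-order $\mathcal{O} \subset D$, whose unit group modulo center acts simply transitively on the Bruhat--Tits buildings at two distinguished places. The commensurability class is essentially determined by the isomorphism class of $D$ together with the set $S$ of places where we invert (plus the constant field), so one wants a polynomial-in-$q$ bound on the number of such pairs.

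The first step is to show that the curve $C/\mathbb{F}_q$ whose function field is $K$ must have very small genus and very few points of the required local type. Indeed, a simply transitive action on $T_{q+1} \times T_{q+1}$ gives a $1$-vertex square complex $S_{A,B}$ with $\#A = \#B = q+1$, hence the Euler characteristic is $\chi(S) = 1 - (q+1) + (q+1)^2/4$. Translating through the Gauss--Bonnet formula for $S$-arithmetic quotients (the analogue of the mass formula in Section \ref{sec:mass_formula}) and comparing with Harder's formula for the tamagawa volume of $PGL_{1,D}$, one gets a polynomial equation in $q$ that constrains $g(C)$, the degree of the ramification divisor of $D$, and the degrees of the two splitting places $\tau$ and $\infty$. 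The second step is to count quaternion algebras compatible with the constraints extracted in step one; by the local-global principle the set of ramification places must have even cardinality and is otherwise unconstrained, so the number of possible $D$'s with ramification divisor of degree $\leq d$ is at most $\binom{\#C(\mathbb{F}_q) + O(q^{d/2})}{d}$, which is polynomial in $q$ once $d$ is bounded. The third step is to show that the two splitting places must both be $\mathbb{F}_q$-rational (forcing valency exactly $q+1$ rather than $q^r+1$), thereby killing contributions from higher-degree places, and that once $D$ is fixed, the level structure (the order $\mathcal{O}$) is determined by the commensurability class together with the simply transitive constraint up to finitely many choices bounded polynomially in $q$.

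The hard part will be step one: without a Margulis-type arithmeticity theorem for $\mathbb{Aut}(T_{q+1}) \times \mathbb{Aut}(T_{q+1})$ (which is false in general -- there exist non-arithmetic lattices and even simple ones by Burger--Mozes), one cannot rule out arithmetic lattices arising from exotic global function fields of large genus or from quaternion algebras with highly ramified behavior at auxiliary places. Thus one needs a direct combinatorial argument, most likely via the VH-structure of Section \ref{sec:VHstructure}, that bounds the arithmetic complexity of the defining data in terms of the valency $q+1$. A natural route is to exploit Proposition~\ref{prop:VHstructureanduniversalcover} and the fact that in a $1$-vertex $(q+1,q+1)$-complex every square gives a commutation-type relation in $D^\times/Z(D^\times)$; counting independent such relations should force the reduced trace of every generator to lie in a very small $\mathbb{F}_q$-subspace of $K$, hence force $[K : \mathbb{F}_q(t)]$ to be bounded independently of $q$.

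A secondary obstacle is to distinguish commensurability from isomorphism: two different simply transitive lattices constructed from non-isomorphic orders in the same $D$ are still commensurable, so the lower bound of Corollary~38 in \cite{stix-av} (linear in $q$) already uses that different values of $\tau$ yield non-commensurable lattices -- presumably because $\tau$ controls the set $S$ of inverted places. This suggests that the true invariant of the commensurability class is the pair $(D, S)$ modulo isomorphism, and the conjecture should amount to showing that for a $(q+1,q+1)$-complex arising arithmetically, the cardinality $\#S$ and the genus of the underlying curve are both $O(1)$, with the residue degrees at places in $S$ also bounded. If this boundedness can be extracted from the geometry of the $1$-vertex complex, then the polynomial count follows from elementary counts on curves over $\mathbb{F}_q$.
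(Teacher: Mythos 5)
This statement is listed in the paper's final section as an open problem: it is a conjecture attributed to Caprace, and the paper offers no proof of it. So there is no argument of the authors to compare yours against, and what you have written is a research plan rather than a proof. The plan is sensible in outline --- reducing commensurability classes to isomorphism classes of pairs $(D,S)$ with $D$ a quaternion algebra over a global function field and $S$ the set of inverted places, and then counting such pairs --- and your observation that the two distinguished split places must be $\mathbb{F}_q$-rational (else the local tree has valency $q^r+1 \neq q+1$) is correct. But the proposal does not close the conjecture, and you say so yourself.

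The genuine gap is your ``step one,'' which is exactly the open content of the conjecture: you need to show that a simply transitive arithmetic lattice on $T_{q+1}\times T_{q+1}$ can only arise from a curve of bounded genus, a ramification divisor of bounded degree, and a set $S$ of bounded size with bounded residue degrees. The Euler characteristic of the one-vertex square complex is determined by $q$, but converting that single number into bounds on $g(C)$, $\deg(\mathrm{ram}(D))$ and $\# S$ via a Prasad--Harder type volume formula is not automatic: the covolume involves special values of the zeta function of $K$ and local factors at the ramified places, and a priori a large genus could be compensated by other terms. Your fallback --- that the commutation relations coming from the squares ``should force the reduced trace of every generator to lie in a very small $\mathbb{F}_q$-subspace of $K$'' and hence bound $[K:\mathbb{F}_q(t)]$ --- is stated without any argument and is precisely the missing idea. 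Until that boundedness is established, the subsequent polynomial counts of quaternion algebras and orders, which are fine as conditional statements, do not yield the conjecture. The statement remains open, and your write-up should be presented as a proposed strategy, not a proof.
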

\begin{problem}
What is the structure and properties of groups acting on products of three and more trees.
\end{problem}

\end{document}